\documentclass[a4paper,11pt]{amsart}
\usepackage{amssymb,amsmath,amscd,tikz,url}
\usepackage[all]{xy}
\usepackage{hyperref}

\newtheorem{thm}{Theorem}[section]
\newtheorem{conjecture}[thm]{Conjecture}
\newtheorem{cor}[thm]{Corollary}
\newtheorem{lem}[thm]{Lemma}
\newtheorem{prop}[thm]{Proposition}

\newtheorem{rem}[thm]{Remark}
\newtheorem{notation}[thm]{Notation}
\newtheorem{ex}[thm]{Example}

\newtheorem{con}[thm]{Condition}

\newcommand{\Zset}{\mathbb{Z}}
\newcommand{\Rset}{\mathbb{R}}

\newcommand{\Cset}{\mathbb{C}}

\def\red{{\rm red}}
\def\ss{{\rm ss}}
\def\top{{\rm top}}

\newcommand{\bc}{\bf c}

\newcommand{\ab}{{\rm ab}}
\def\en{{\rm en}}

\newcommand{\fss}{{\rm fss}}

\def\fo{{\mathfrak o}}

\def\PPhi{{\Phi}}

\def\rG{{{\rm G}}}

\def\ch{{\rm ch}}
\def\cB{{\mathcal B}}

\def\cP{{\mathcal P}}

\def\Cent{{\rm Z}}
\def\Z{{\rm Z}}
\def\Nor{{\rm N}}

\def\Rep{{\rm Rep}}
\def\ie{{\it i.e.,\,}}

\def\cJ{{\mathcal J}}

\def\im{{\rm im\,}}

\def\H{{\rm H}}

\def\KLR{{\rm KLR}}
\def\SL{{\rm SL}}

\def\SO{{\rm SO}}

\def\GL{{\rm GL}}

\def\PGL{{\rm PGL}}

\def\der{{\rm der}}
\def\Aut{{\rm Aut}}
\def\Hom{{\rm Hom}}
\def\End{{\rm End}}

\def\top{{\rm top}}
\def\triv{{\rm triv}}
\def\unr{{\rm unr}}

\def\bI{{\mathbf I}}
\def\bW{{\mathbf W}}

\def\Lie{{\rm Lie}}

\def\der{{\rm der}}

\def\temp{{\rm temp}}

\def\Irr{{\mathbf {Irr}}}
\def\Mod{{\rm Mod}}
\def\Lie{{\rm Lie}}

\def\cG{{\mathcal G}}

\def\cH{{\mathcal H}}

\def\cM{{\mathcal M}}
\def\cO{{\mathcal O}}

\def\cT{{\mathcal T}}
\def\cU{{\mathcal U}}

\def\cW{{\mathcal W}}

\def\fs{{\mathfrak s}}

\def\fB{{\mathfrak B}}

\def\fB{{\mathfrak B}}

\def\Ad{{\rm Ad}}
\def\triv{{\rm triv}}

\def\q{{/\!/}}

\def\ab{{\rm ab}}
\def\der{{\rm der}}

\def\PPhi_F{{\rm Frob}}
\def\Frob{{\rm Frob}}
\def\Flag{{\rm Flag}}

\def\cpt{{\rm cpt}}
\def\square{{\natural}}

\newcommand{\Q}{\mathbb Q}
\newcommand{\R}{\mathbb R}
\newcommand{\C}{\mathbb C}
\newcommand{\matje}[4]{\left(\begin{smallmatrix} #1 & #2 \\ 
#3 & #4 \end{smallmatrix}\right)}
\def\lexp#1#2{{\kern\scriptspace\vphantom{#2}^{#1}\kern-\scriptspace#2}}

\begin{document}
\title{The principal series of $p$-adic groups with disconnected centre}   


\author[A.-M. Aubert]{Anne-Marie Aubert}
\address{IMJ-PRG, U.M.R. 7586 du C.N.R.S., U.P.M.C., Paris, France}
\email{anne-marie.aubert@imj-prg.fr}
\author[P. Baum]{Paul Baum}
\address{Mathematics Department, Pennsylvania State University,  University Park, PA 16802, USA}
\email{baum@math.psu.edu}
\thanks{The second author was partially supported by NSF grant DMS-0701184}
\author[R. Plymen]{Roger Plymen}
\address{School of Mathematics, Southampton University, Southampton SO17 1BJ,  England \emph{and} 
School of Mathematics, Manchester University, Manchester M13 9PL, England}
\email{r.j.plymen@soton.ac.uk \quad plymen@manchester.ac.uk}
\author[M. Solleveld]{Maarten Solleveld}
\address{Radboud Universiteit Nijmegen, Heyendaalseweg 135, 6525AJ Nijmegen, the Netherlands}
\email{m.solleveld@science.ru.nl}

\date{\today}
\subjclass[2010]{20G05, 22E50}
\keywords{reductive $p$-adic group, representation theory, geometric structure, local Langlands conjecture}
\maketitle

\begin{abstract}
Let $\cG$ be a split connected reductive group over a local non-archimedean field.
We classify all irreducible complex $\cG$-representations in the principal series, 
irrespective of the (dis)connectedness of the centre of $\cG$. This leads to a local 
Langlands correspondence for principal series representations, which satisfies all 
expected properties. We also prove that the ABPS conjecture about the geometric 
structure of Bernstein components is valid throughout the principal series of $\cG$.
\end{abstract}

\tableofcontents

\section{Introduction}
\label{sec:intro}

In this paper, we construct (granted a mild restriction on the residual characteristic) 
a local Langlands correspondence throughout the principal series of any connected split 
reductive $p$-adic group $\cG$.   In addition, we prove that the ABPS geometric structure 
conjecture is valid throughout the principal series of $\cG$.

We do not assume that the centre of $\cG$ is connected.   Previous results on this subject 
needed the condition that the centre of $\cG$ is connected, see Kazhdan--Lusztig \cite{KL}, 
Reeder \cite{R}, Aubert--Baum--Plymen--Solleveld \cite{ABPS2}.   

Let $F$ be a local non-Archimedean field and let $\cG$ be the group of the $F$-rational points of 
an $F$-split connected reductive algebraic group, and
let $\mathcal T$ be a maximal torus in $\cG$. The principal series consists 
of all $\mathcal G$-representations that are constituents of parabolically induced representations 
from irreducible characters of $\mathcal T$.   Let $\fB(\cG)$ denote the Bernstein spectrum of $\cG$, 
and let $\fB(\cG, \cT)$ be the subset of $\fB(\cG)$ given by all cuspidal pairs $(\cT, \chi)$, 
where $\chi$ is a character of $\cT$.     

For each $\fs\in\mathfrak{B}(\cG,\cT)$ we construct a commutative triangle of bijections 
\begin{equation}\label{eq:introTriangle}
\xymatrix{   
& (T^\fs/\!/W^\fs)_2 \ar[dr]\ar[dl] & \\ 
\Irr (\cG)^\fs \ar[rr] & & \Psi(G)_{\en}^{\fs} 
}
\end{equation}
Here $T^\fs$ and $W^\fs$ are Bernstein's torus and finite group for $\fs$, and 
$(T^\fs \q W^\fs)_2$ is the extended quotient of the second kind resulting from the action of
$W^\fs$ on $T^\fs$.   Equivalently, $(T^\fs \q W^\fs)_2$ is the set of equivalence classes 
of irreducible representations of the crossed product algebra
$\cO(T^\fs) \rtimes W^\fs$:
\[
(T^\fs \q W^\fs)_2 \simeq \Irr(\cO(T^\fs) \rtimes W^\fs).
\]
Here $\Irr(\cG)^\fs$ is the Bernstein component of $\Irr(\cG)$ attached to $\fs \in \fB(\cG,\cT)$, 
$\Psi(G)^{\fs}_{\en}$ is the set of enhanced 
Langlands parameters associated to $\fs$, and  $G$ is the Langlands dual of $\cG$.   

In examples, $(T^\fs /\!/ W^\fs)_2$ is much simpler to directly calculate than either 
$\Irr (G)^\fs$ or $\Psi(G)_{\en}^{\fs}$.

The point $\fs \in \fB(\cG, \cT)$ determines a certain complex reductive group $H^\fs$ 
in the Langlands dual group $G$.   If $\cG$ has connected centre, then:
\begin{itemize}
\item $H^\fs$ is connected
\item Bernstein's finite group $W^\fs$ is the Weyl group of $H^\fs$
\item Bernstein's torus $T^\fs$ is the maximal torus of $H^\fs$
\item the action of $W^\fs$ on $T^\fs$ is the standard action of the Weyl group of 
$H^\fs$ on the maximal torus of $H^\fs$.
\end{itemize}   

If $\cG$ does not have connected centre, then: 
\begin{itemize}
\item $H^\fs$ can be non-connected
\item $W^\fs$ is the semidirect product
$W^\fs = \cW^{H_0^\fs} \rtimes \pi_0(H^\fs)$
where $H_0^\fs$ is the identity component of $H^\fs$, and $\cW^{H_0^\fs}$ 
is the Weyl group of $H_0^\fs$
\item $T^\fs$ is the maximal torus $T$ of $H_0^\fs$ 
\item $W^\fs = \Nor_{H^\fs}(T)/T$.   The action on $T$ is the evident conjugation action, 
and $\Nor_{H^\fs}(T)$ is the normalizer in $H^\fs$ of $T$.
\end{itemize}  
See Lemma \ref{lem:centrals} and Eqn. \eqref{eq:WsH}.    

Semidirect products by $\pi_0(H^\fs)$ occur frequently in this paper, e.g.
\[
\cH^\fs = \cH(H_0^\fs) \rtimes \pi_0(H^\fs)
\]
Here $\cH^\fs$ is a finite type algebra attached by Bernstein to $\fs$ and $\cH(H_0^\fs)$ 
is the affine Hecke algebra 
of $H_0^\fs$, with parameter $q$   equal to the cardinality of the residue field.   
Thus, $\cH^\fs$ is an extended affine Hecke algebra.

Similarly, $\pi_0(H^\fs)$ acts on Lusztig's asymptotic algebra $\mathcal{J}(H_0^\fs)$.   
The crossed product algebra 
\[
\mathcal{J}(H_0^\fs) \rtimes \pi_0(H^\fs)
\]
features crucially in Section \ref{sec:mainH}.

In the above commutative triangle, the right slanted arrow is constructed and proved 
to be a natural bijection by suitably generalising the Springer correspondence for 
finite and affine Weyl groups (Sections \ref{sec:celldec} and \ref{sec:affSpringer}),
and by comparing the involved parameters (Sections \ref{sec:Borel} and \ref{sec:comppar}).

The left slanted arrow in \eqref{eq:introTriangle} is defined and proved to be 
a bijection by applying the representation theory
of affine Hecke algebras and, in particular, Lusztig's asymptotic algebra.   
However, in order to apply this theory, it is necessary 
to prove the equality of certain $2$-cocycles, see \S \ref{sec:mainH}.  
The technical issues that are confronted in this paper arise from Clifford theory 
and are very closely connected to the analysis of these $2$-cocycles.

Similar 2-cocycles for connected non-split groups can be non-trivial.   
Hence, for connected non-split groups, a twisted extended quotient must be used in the 
statement of the ABPS geometric structure conjecture.  The ABPS conjecture for connected 
non-split reductive $p$-adic groups is developed in \cite{ABPS7}.

The horizontal arrow in our main result (see the above commutative triangle and 
Theorem \ref{thm:main} and Proposition \ref{prop:LLC}) generalises the Kazhdan--Lusztig 
parametrization of the irreducible representations of affine Hecke algebras with equal
parameters (\S \ref{sec:repAHA}), and also generalises the Reeder--Roche 
parametrization of the irreducible $\cG$-representations in the principal series for 
groups $\cG$ with connected centre (cf. \S \ref{sec:Roche}). We note that most of the
representations considered by Roche--Reeder have positive depth.

We use the new
input from $( T^\fs /\!/ W^\fs )_2$ to prove that, although the horizontal arrow in
\eqref{eq:introTriangle} is in general not canonical, every element of 
$\Irr (\cG)^\fs$ does canonically determine a Langlands parameter for $\cG$.
To establish the horizontal arrow as a local Langlands correspondence for these
representations, we also show that it satisfies all the desiderata of Borel, 
see Sections \ref{sec:LLC} and \ref{sec:func}. Thus we prove the local Langlands 
conjectures for a class of representations which contains elements of 
arbitrarily high depth.

The union over all the $\fs\in\fB (\cG, \cT)$ of
the extended quotients of the second kind $( T^\fs /\!/ W^\fs )_2$ 
is the extended quotient of the second kind
$(\Irr (\mathcal T) /\!/ \mathcal W^\cG )_2$, with $\mathcal W^\cG=\Nor_{\cG}(\cT)/\cT$, 
and the triangles \eqref{eq:introTriangle} for different $\fs$ combine to a 
bijective commutative diagram
\[ 
\xymatrix{   & (\Irr (\mathcal T) /\!/ \mathcal W^\cG )_2 \ar[dr]\ar[dl] & \\  
\Irr(\cG ,\cT)    \ar[rr] & & 
\Psi (G)_\en^{\text{prin}} 
}
\]
where $\Psi (G)_\en^{\text{prin}}$ denotes the collection of enhanced L-parameters
for the principal series of $\cG$, and $\Irr(\cG ,\cT)$ denotes the collection of 
irreducible principal series representations of $\cG$. All this holds under the 
restrictions on the residual characteristic stated in Condition \ref{con:char}.

The ABPS geometric structure conjecture includes the assertion  that the local Langlands 
correspondence factors through the appropriate extended quotient.   
This extended quotient is much easier to directly calculate than either the source or 
the target of the Langlands correspondence.   

In fact, the ABPS conjecture has more precision, as explained
in \S \ref{sec:unip} of this paper.    With this level of precision, we provide a complete proof, 
in \S \ref{sec:unip} and \S \ref{sec:cochar},  of the conjecture for the principal series  
of all connected split  reductive $p$-adic groups (always
with the restriction on the residue characteristic).   This includes, in \S \ref{sec:unip},  
a labelling by unipotent classes in $H^\fs$ and, in \S \ref{sec:cochar}, a complete account 
of the relation between correcting cocharacters and $L$-packets.

\section{Extended quotients}  
\label{sec:extquot}

Let $\Gamma$ be a finite group acting on a topological space $X$ 
\[ 
\Gamma \times X \to X.
\]
The quotient space $X/\Gamma$ is obtained by collapsing each orbit to a point. 
For $x\in X $, $\Gamma_x$ denotes the stabilizer group of $x$:
$$ 
\Gamma_x = \{\gamma\in \Gamma : \gamma x = x\}.
$$
$c(\Gamma_x)$ denotes the set of conjugacy classes of  $\Gamma_x$. 
The \emph{extended quotient of the first kind}
is obtained by replacing the orbit of $x$ by $c(\Gamma_x)$. This is done as follows:\\

\noindent Set $\widetilde{X} = \{(\gamma, x) \in \Gamma \times X : \gamma x = x\}$, a 
subspace of $\Gamma \times X$. The group $\Gamma$ acts on $\widetilde{X}$:
\begin{align*}
\Gamma &\times \widetilde{X} \to \widetilde{X}\\
\alpha(\gamma, x) = & (\alpha\gamma \alpha^{-1}, \alpha x), \quad\quad \alpha \in \Gamma,
\quad (\gamma, x) \in \widetilde{X}.
\end{align*}

\noindent The extended quotient, denoted $ X/\!/\Gamma $,  is $\widetilde{X}/\Gamma$. 
Thus the extended quotient  $ X/\!/\Gamma $ is the usual quotient for the action of 
$\Gamma$ on $\widetilde{X}$. 
The projection $\widetilde{X} \to X$,  $(\gamma, x) \mapsto x$ is $\Gamma$-equivariant 
and so passes to quotient spaces 
\[
\rho\colon  X/\!/\Gamma \to X/\Gamma.
\]   
This map will be referred to as the projection of the extended quotient onto the ordinary quotient.\\

\noindent The inclusion 
\begin{align*}
X&\hookrightarrow \widetilde{X}\\
x&\mapsto (e,x)\qquad e=\text{identity element of }\Gamma
\end{align*}
is $\Gamma$-equivariant and so passes to quotient spaces to give an inclusion 
$X/\Gamma\hookrightarrow X/\!/\Gamma$. This will be referred to as the inclusion 
of the ordinary quotient in the extended quotient.

With $\Gamma ,\: X ,\: \Gamma_x$ as above, let $\Irr (\Gamma_x)$ 
be the set of (equivalence classes of) irreducible representations of $\Gamma_x$. 
The \emph{extended quotient of the second kind}, 
denoted $(X/\!/\Gamma)_2$, is constructed by replacing the orbit of x (for the given action of 
$\Gamma$ on $X$) by $\Irr(\Gamma_x)$. This is done as follows : \\

\noindent Set $\widetilde{X}_2 = \{(x, \tau)\thinspace \big|\thinspace x \in X$ and
$\tau \in \Irr(\Gamma_x)\}$. Then $\Gamma$ acts on $\widetilde{X}_2.$
\begin{align*}
& \Gamma \times \widetilde{X}_2 \to \widetilde{X}_2 , \\
& \gamma(x, \tau) = (\gamma x, \gamma_*\tau) ,
\end{align*}
where $\gamma_*\colon \Irr(\Gamma_x)\rightarrow \Irr(\Gamma_{\gamma x})$.
Now we define
\[
(X/\!/\Gamma)_2  := \:\widetilde{X}_2/\Gamma , 
\]
i.e. $ (X/\!/\Gamma)_2 $ is the usual quotient for the action of $\Gamma$ on $\widetilde{X}_2$.
The projection $\widetilde{X}_2\rightarrow  X$\quad $(x, \tau) \mapsto x$\quad is 
$\Gamma$-equivariant  and so passes to quotient spaces to give the projection of 
$(X/\!/\Gamma)_2$ onto $X/\Gamma$.
\[
\rho_2 \colon (X/\!/\Gamma)_2 \longrightarrow X/\Gamma  
\]
Denote by $\mathrm{triv}_x$ the trivial one-dimensional representation of $\Gamma_{x}$.
\noindent The inclusion 
\begin{align*}
X&\hookrightarrow \widetilde{X}_2\\
x&\mapsto (x, \mathrm{triv}_x)
\end{align*}
is $\Gamma$-equivariant and so passes to quotient spaces to give an inclusion
\[
X/\Gamma\hookrightarrow (X/\!/\Gamma)_2
\] 
This will be referred to as the inclusion of the ordinary quotient in the extended quotient
of the second kind.  

Notice that the fibers $\rho^{-1}(\Gamma x)$ and $\rho_2^{-1}(\Gamma x)$ always have the
same number of elements. Hence there exist non-canonical bijections 
$\epsilon \colon X/\!/\Gamma\rightarrow (X/\!/\Gamma)_2$ with commutativity in the diagrams
\begin{equation}\label{eq:diagramsExtquots} 
\xymatrix{
X/\!/\Gamma \ar[dr]_{\rho} \ar[rr]^{\epsilon}  &  &  (X/\!/\Gamma)_2 \ar[dl]^{\rho_2}  &
X/\!/\Gamma  \ar[rr]^{\epsilon}  &  &  (X/\!/\Gamma)_2  \\
& X/\Gamma & & & X/\Gamma \ar[ul] \ar[ur] & }
\end{equation}
To construct a bijection $\epsilon$, some choices must be made. 
We will make use of a family $\psi$ of bijections
\[
\psi_x : c (\Gamma_x) \to \Irr (\Gamma_x)
\]
such that for all $x \in X$:
\begin{itemize}
\item $\psi_x ([1]) = \mathrm{triv}_x$;
\item $\psi_{\gamma x} ([\gamma g \gamma^{-1}]) = \phi_x ([g]) \circ \mathrm{Ad}_\gamma^{-1}$
for all $g \in \Gamma_x, \gamma \in \Gamma$.
\end{itemize}
We shall refer to such a family of bijections as a $c$-$\Irr$ system. Clearly $\psi$
induces a map $\widetilde X \to {\widetilde X}_2$ which preserves the $X$-coordinates.
By the second property this map is $\Gamma$-equivariant, so it descends to a map
\[
\epsilon = \epsilon_\psi : X /\!/ \Gamma \to (X /\!/ \Gamma )_2 .
\]
Observe that $\epsilon_\psi$ makes the diagrams from \eqref{eq:diagramsExtquots} commute,
the first by construction and the second by the first property of $\psi_x$.
The restriction of $\epsilon_\psi$ to the fiber over $\Gamma x \in X / \Gamma$
is $\psi_x$, and in particular is bijective. Therefore $\epsilon_\psi$ is bijective.

Next we will define a twisted version of an extended quotient.
Let $\natural$ be a given function which assigns to each 
$x \in X$ a 2-cocycle $\natural(x) : \Gamma_x \times \Gamma_x \to \C^\times$ 
where $\Gamma_x = \{\gamma \in \Gamma : \gamma x = x\}$.   
It is assumed that $\natural(\gamma x)$ and $\gamma_*\natural(x)$ define the same
class in $H^2 (\Gamma_x , \C^\times)$,
where $\gamma_* : \Gamma_x \to \Gamma_{\gamma x}, \alpha \mapsto \gamma \alpha \gamma^{-1}$.    
Define 
\[
\widetilde X_2^\natural : = \{(x,\rho) : x \in X, \rho \in \Irr \,\C[\Gamma_x, \natural(x)] \}.
\]
We  require, for every $(\gamma,x) \in \Gamma \times X$, a definite  algebra isomorphism
\[
\phi_{\gamma,x} : \C[\Gamma_x,\natural(x)]  \to \C[\Gamma_{\gamma x},\natural (\gamma x)]
\]
such that:
\begin{itemize}
\item $\phi_{\gamma,x}$ is inner if $\gamma x = x$;
\item $\phi_{\gamma',\gamma x} \circ \phi_{\gamma,x} = 
\phi_{\gamma' \gamma,x}$ for all $\gamma',\gamma \in \Gamma, x \in X$.
\end{itemize}
We call these maps connecting homomorphisms, because they are reminiscent of a connection
on a vector bundle.
Then we can define $\Gamma$-action on $\widetilde X_2^\natural$ by
\[
\gamma \cdot (x,\rho) = (\gamma x, \rho \circ \phi_{\gamma,x}^{-1}).
\]
We form the \emph{twisted extended quotient}
\[
(X\q \Gamma)_2^\natural : = \widetilde{X}_2^\natural/\Gamma.
\]
Notice that this reduces to the extended quotient of the second kind
if $\natural (x)$ is trivial for all $x \in X$.
We will apply this construction in the following two special cases.  

\smallskip

{\bf 1.} Given two finite groups $\Gamma_1$, $\Gamma$ and a group homomorphism
$\Gamma \to \Aut(\Gamma_1)$, we can form the semidirect product  $\Gamma_1 \rtimes\Gamma$.  
Let $X = \Irr \, \Gamma_1$.   Now $\Gamma$ acts on $\Irr\, \Gamma_1$ and we get $\square$ 
as follows. Given $x \in \Irr\, \Gamma_1$ choose an irreducible representation  
$\pi_x: \Gamma_1 \to \GL(V)$ whose isomorphism class is $x$. 
For each $\gamma \in \Gamma$ consider $\pi_x$ twisted by $\gamma$ \ie consider 
$\gamma \cdot \pi_x : \gamma_1 \mapsto \pi_x (\gamma^{-1} \gamma_1 \gamma)$.
Since $\gamma \cdot \pi_x$ is equivalent to $\pi_{\gamma x}$, there exists 
a nonzero intertwining operator 
\[
T_{\gamma,x} \in \Hom_{\Gamma_x} (\gamma \cdot \pi_x , \pi_{\gamma x}) .
\]
By Schur's lemma it is unique up to scalars, but in general there is no preferred choice.
For $\gamma, \gamma' \in \Gamma_x$ there exists a unique $c \in \C^\times$ such that
\[
T_{\gamma,x} \circ T_{\gamma',x} = c T_{\gamma \gamma',x}.
\]
We define the 2-cocycle by $\square (x) (\gamma,\gamma') = c$. Let $N_{\gamma,x}$
with $\gamma \in \Gamma_x$ be the standard basis of $\C [\Gamma_x,\natural(x)]$.
The algebra homomorphism $\phi_{g,x}$ is essentially conjugation by $T_{g,x}$,
but the precise definition is 
\begin{equation}\label{eq:twisting}
\phi_{g,x} (N_{\gamma,x}) = \lambda N_{g \gamma g^{-1},g x} \quad \text{if} \quad
T_{g,x} T_{\gamma,x} T_{g,x}^{-1} = \lambda T_{g \gamma g^{-1}, g x}, \lambda \in \C^\times .
\end{equation}
Notice that \eqref{eq:twisting} does not depend on the choice of $T_{g,x}$.
This leads to a new formulation  of a classical theorem of Clifford.

\begin{lem} 
\label{lem:Clifford} 
There is a bijection
\[
\Irr (\Gamma_1 \rtimes \Gamma) \longleftrightarrow (\Irr \, \Gamma_1 \q \Gamma)_2^{\square}.
\]
\end{lem}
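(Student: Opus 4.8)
The plan is to construct the bijection by pairing the classical Clifford-theory description of $\Irr(\Gamma_1 \rtimes \Gamma)$ with the explicit combinatorial definition of the twisted extended quotient given just above. Recall that, by Clifford theory, every irreducible representation $\sigma$ of $\Gamma_1 \rtimes \Gamma$ restricts to $\Gamma_1$ as a direct sum of $\Gamma$-conjugates of a single irreducible $\pi_x$, $x \in \Irr\,\Gamma_1$; the $\Gamma$-orbit of $x$ is an invariant of $\sigma$. Fixing such an $x$, the representations $\sigma$ lying over the orbit $\Gamma x$ are governed by the stabilizer $\Gamma_x$ together with the obstruction $2$-cocycle measuring the failure of the projective representation $\gamma \mapsto T_{\gamma,x}$ of $\Gamma_x$ (on the space $V$ of $\pi_x$) to be an honest representation. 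That obstruction cocycle is exactly $\square(x)$ as defined in \eqref{eq:twisting}. So I would first recall/cite the precise form of Clifford's theorem: the representations of $\Gamma_1 \rtimes \Gamma$ lying over $\Gamma x$ are in bijection with $\Irr\,\C[\Gamma_x,\square(x)]$, via $\rho \mapsto \Ind_{\Gamma_1 \rtimes \Gamma_x}^{\Gamma_1 \rtimes \Gamma}(\pi_x \otimes \rho)$, where $\pi_x \otimes \rho$ denotes the representation of $\Gamma_1 \rtimes \Gamma_x$ on $V \otimes V_\rho$ built from $\pi_x$ and the $T_{\gamma,x}$'s twisted by $\rho$.

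Next I would assemble these fiberwise bijections into a global map $\Irr(\Gamma_1 \rtimes \Gamma) \to (\Irr\,\Gamma_1 \q \Gamma)_2^{\square}$. Concretely, send $\sigma$ to the class of $(x,\rho)$, where $x$ is any point of the associated $\Gamma_1$-orbit and $\rho \in \Irr\,\C[\Gamma_x,\square(x)]$ is the datum produced by Clifford theory at that point. Well-definedness means checking that changing the base point from $x$ to $\gamma x$ replaces $\rho$ by $\rho \circ \phi_{\gamma,x}^{-1}$, which is precisely the $\Gamma$-action defining $\widetilde{X}_2^\square$; this comes down to comparing the intertwiners $T_{\bullet,\gamma x}$ with $T_{\gamma,x}T_{\bullet,x}T_{\gamma,x}^{-1}$ and invoking the defining formula \eqref{eq:twisting} for the connecting homomorphisms, together with the remark that \eqref{eq:twisting} is independent of the choices of $T$. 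Injectivity follows because the orbit $\Gamma x$ is recovered from $\sigma$ as $\Gamma$ applied to the $\Gamma_1$-isotype of $\sigma|_{\Gamma_1}$, and within a fixed orbit the fiberwise Clifford bijection is injective; surjectivity follows because every $(x,\rho)$ is hit by the induced representation $\Ind(\pi_x \otimes \rho)$, which is irreducible by Mackey/Clifford irreducibility criteria.

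The main obstacle, and the point requiring genuine care rather than bookkeeping, is the precise matching of the obstruction cocycle arising in Clifford theory with the cocycle $\square(x)$ defined via \eqref{eq:twisting}, and correspondingly the matching of the $\Gamma$-equivariance of the Clifford data with the connecting homomorphisms $\phi_{\gamma,x}$. One must be scrupulous about which projective representation one takes — the issue is that $T_{\gamma,x}$ is only defined up to scalar, so both the cocycle and the twisted group algebra $\C[\Gamma_x,\square(x)]$ are only well-defined up to the natural equivalence, and one needs the definitions to have been set up (as they are in the excerpt) so that the choices cancel. A secondary subtlety is that $\square(x)(\gamma,\gamma')$ was defined for $\gamma,\gamma' \in \Gamma_x$ whereas $\phi_{g,x}$ is required for all $g \in \Gamma$; one must observe that for $g \notin \Gamma_x$ there is still a canonical $T_{g,x} \in \Hom(\gamma\cdot\pi_x,\pi_{gx})$ up to scalar, so \eqref{eq:twisting} makes sense and the cocycle/equivariance identities extend. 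Once these compatibilities are verified, the bijection and the commutativity of the resulting diagram are immediate, and the lemma is proved.
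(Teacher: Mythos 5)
Your proposal is correct and takes essentially the same approach as the paper: the paper's proof is a one-line appeal to the comparison between the twisted extended quotient construction and classical Clifford theory (citing Ram--Ramagge for an exposition), and you have spelled out precisely that comparison, including the well-definedness check under change of base point and the identification of the obstruction cocycle with $\square(x)$.
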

\begin{proof}  
The proof proceeds by comparing our construction with the classical theory of Clifford; 
for an exposition of Clifford theory, see \cite{RamRam}. 
\end{proof}

The above bijection is in general not canonical, it depends on the choice of the
intertwining operators $T_{\gamma,x}$. 

\begin{lem} \label{lem:Clifford_abelian}
If $\Gamma_1$ is abelian, then we have a natural bijection
\[
\Irr (\Gamma_1 \rtimes \Gamma) \longleftrightarrow (\Irr \, \Gamma_1 \q \Gamma)_2.
\]
\end{lem}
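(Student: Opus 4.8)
The plan is to deduce Lemma~\ref{lem:Clifford_abelian} from Lemma~\ref{lem:Clifford} by showing that when $\Gamma_1$ is abelian the $2$-cocycle $\square(x)$ is cohomologically trivial for every $x \in X = \Irr\,\Gamma_1$, and moreover that the resulting bijection can be made canonical. First I would observe that for abelian $\Gamma_1$ every $x \in \Irr\,\Gamma_1$ is a character $\chi : \Gamma_1 \to \C^\times$, so the representation space $V$ is one-dimensional and the intertwining operator $T_{\gamma,x} \in \Hom_{\Gamma_x}(\gamma\cdot\pi_x,\pi_{\gamma x})$ is simply a nonzero scalar. The key point is then that the stabilizer $\Gamma_x$ \emph{genuinely} fixes $\chi$, meaning $\gamma\cdot\pi_x = \pi_x$ exactly (not merely up to isomorphism) for $\gamma \in \Gamma_x$, because twisting a character by an automorphism either fixes it or sends it to a different character. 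Hence each $T_{\gamma,x}$ with $\gamma \in \Gamma_x$ may be taken to be the identity on the one-dimensional space $V$, and with this normalization $T_{\gamma,x}\circ T_{\gamma',x} = T_{\gamma\gamma',x}$, so $\square(x)(\gamma,\gamma') = 1$ for all $\gamma,\gamma' \in \Gamma_x$. Thus $\C[\Gamma_x,\square(x)] = \C[\Gamma_x]$ and the twisted extended quotient $(\Irr\,\Gamma_1 \q \Gamma)_2^{\square}$ collapses to the ordinary extended quotient of the second kind $(\Irr\,\Gamma_1 \q \Gamma)_2$, as noted in the remark following the definition of the twisted construction.

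Next I would address canonicity, which is the part that requires genuine care. The bijection of Lemma~\ref{lem:Clifford} depends a priori on the choices of $T_{\gamma,x}$ for all $\gamma \in \Gamma$, not just $\gamma \in \Gamma_x$; for $\gamma \notin \Gamma_x$ the operator $T_{\gamma,x}$ relates two genuinely distinct one-dimensional spaces and there is still no preferred scalar. The point is that these remaining choices do not affect the final bijection: changing $T_{\gamma,x}$ by a scalar $\mu$ replaces the connecting homomorphism $\phi_{\gamma,x}$ by a conjugate (by a formula like \eqref{eq:twisting}, one checks the scalar $\mu$ cancels), and conjugate connecting homomorphisms produce the same $\Gamma$-action on $\widetilde X_2^{\square}$ up to an isomorphism of $\Gamma$-sets that is the identity on $X$-coordinates. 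Concretely, on the Clifford-theory side, an irreducible representation of $\Gamma_1 \rtimes \Gamma$ restricted to $\Gamma_1$ is a sum over a $\Gamma$-orbit of characters with multiplicity, and the extra structure needed to reconstruct it from a point of the orbit is exactly an irreducible projective representation of the stabilizer with cocycle $\square(x)$ — which here is an honest irreducible representation of $\Gamma_x$. Since $\Gamma_1$ is central in $\Gamma_1 \rtimes \Gamma_x$ (it is abelian and normal, and $\Gamma_x$ fixes $\chi$), the extension $1 \to \Gamma_1 \to \Gamma_1\rtimes\Gamma_x \to \Gamma_x \to 1$ together with the central character $\chi$ gives, via Mackey/Clifford theory over an abelian normal subgroup, a canonical bijection between $\Irr(\Gamma_1\rtimes\Gamma)$ lying over the orbit of $\chi$ and $\Irr(\Gamma_x)$; this is the content that makes the map well-defined independently of all the $T$'s.

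I would therefore structure the proof as: (i) reduce the cocycle to the trivial class by the one-dimensionality argument above; (ii) invoke Lemma~\ref{lem:Clifford} to get \emph{a} bijection $\Irr(\Gamma_1\rtimes\Gamma) \leftrightarrow (\Irr\,\Gamma_1 \q \Gamma)_2$; (iii) show independence of the choices of $T_{\gamma,x}$ for $\gamma$ outside the stabilizers, using that rescaling $T$ conjugates the connecting homomorphisms and so gives an isomorphic $\Gamma$-equivariant object; and (iv) conclude naturality. I expect step (iii) — verifying that the bijection is genuinely choice-free, i.e.\ that the two sides are related by a canonical isomorphism and not merely by the family of isomorphisms produced by arbitrary intertwiners — to be the main obstacle, since it is where the ``in general not canonical'' phenomenon of Lemma~\ref{lem:Clifford} must be shown to degenerate. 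An alternative, cleaner route to the same conclusion is to bypass Lemma~\ref{lem:Clifford} and argue directly: decompose $\C[\Gamma_1\rtimes\Gamma] = \bigoplus_{\text{orbits }[\chi]} \C[\Gamma_1\rtimes\Gamma] e_{[\chi]}$ using the central idempotents $e_{[\chi]} = \sum_{\chi' \in [\chi]} e_{\chi'}$ of $\C[\Gamma_1]$, identify each block with a matrix algebra over $\C[\Gamma_{\chi}]$ via induction from $\Gamma_1 \rtimes \Gamma_\chi$ (here the block over the \emph{single} character $\chi$ is literally $\C[\Gamma_\chi]$ after twisting by $\chi$, since $\Gamma_1$ acts by the scalar $\chi$), and read off that $\Irr$ of the block is canonically $\Irr(\Gamma_\chi)$, which is precisely the fiber of $(\Irr\,\Gamma_1\q\Gamma)_2$ over $[\chi]$. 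This makes the naturality transparent and is probably how I would write the final version.
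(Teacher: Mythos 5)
Your proof is essentially the paper's: the core move is that in the abelian case all irreducibles are one-dimensional characters, the stabilizer $\Gamma_x$ fixes $\pi_x$ on the nose rather than merely up to isomorphism, so one may set $T_{\gamma,x} = \mathrm{id}$ for $\gamma \in \Gamma_x$, making $\square(x)$ trivial and collapsing the twisted extended quotient to the ordinary one. The paper's proof is a four-line version of exactly this, and also notes — as you do in step (iii) — that $\phi_{g,x}$ is insensitive to rescaling $T_{g,x}$ for $g \notin \Gamma_x$, which is what makes the resulting map canonical. Your step (iii) is more cautious than necessary (rescaling $T_{g,x}$ by $\mu$ leaves $\phi_{g,x}$ \emph{unchanged}, not merely conjugate, since $\mu$ and $\mu^{-1}$ cancel in $T_{g,x} T_{\gamma,x} T_{g,x}^{-1}$), but the conclusion is right.

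One genuine error in the canonicity discussion: you assert that $\Gamma_1$ is central in $\Gamma_1 \rtimes \Gamma_x$, ``it is abelian and normal, and $\Gamma_x$ fixes $\chi$.'' That inference is false. The condition $\gamma \cdot \chi = \chi$ for $\gamma \in \Gamma_x$ says $\chi(\gamma^{-1} g \gamma) = \chi(g)$, not $\gamma^{-1} g \gamma = g$; for instance with $\Gamma_1 = (\Z/p)^2$, $\chi$ trivial and $\Gamma = \Z/p$ acting by a nontrivial unipotent matrix, $\Gamma_1$ is certainly not central. What you actually need — and what is true — is that the semidirect product furnishes a canonical section $\Gamma_x \hookrightarrow \Gamma_1 \rtimes \Gamma_x$, and (precisely because $\Gamma_x$ fixes $\chi$) the map $(g,\gamma) \mapsto \chi(g)$ extends $\chi$ to a genuine character of $\Gamma_1 \rtimes \Gamma_x$. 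That extension, not centrality, is what trivializes the cocycle canonically and makes the block $e_\chi\,\C[\Gamma_1 \rtimes \Gamma_x]$ canonically isomorphic to $\C[\Gamma_x]$. With that correction your ``alternative cleaner route'' via the block decomposition is sound and is indeed a transparent way to see naturality, though the paper does not take it.
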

\begin{proof} 
The irreducible representations of $\Gamma_1$ are $1$-dimensional, 
and we have $\gamma \cdot \pi_x = \pi_x$ for $\gamma \in \Gamma_x$.   
In that case we take each $T_{\gamma,x}$ to be the identity, so that 
$\natural(x)$ is trivial. Then the projective representations of $\Gamma_x$ which 
occur in the construction are all true representations and \eqref{eq:twisting} 
simplifies to $\phi_{g,x}(T_{\gamma,x}) = T_{g \gamma g^{-1},g x}$. Thus we
recover the extended quotient of the second kind in Lemma \ref{lem:Clifford}.
\end{proof}

{\bf 2.} Given a $\Cset$-algebra $R$, a finite group $\Gamma$ and a group
homomorphism
$\Gamma \to \Aut(R)$, we can form the crossed product algebra 
\[R\rtimes\Gamma:=\{\sum_{\gamma\in\Gamma}r_\gamma\gamma\,:\,r_\gamma\in
R\},\]
with multiplication given by the distributive law and the relation
\[\gamma r=\gamma(r)\gamma,\quad\text{for $\gamma\in \Gamma$ and $r\in R$.}\]  
Now $\Gamma$ acts on $X := \Irr\, R$. Assuming that all simple $R$-modules have
countable dimension, so that Schur's lemma is valid, we construct $\square (V)$ 
and $\phi_{\gamma,V}$ as above for group algebras.
Here we have
\[
\widetilde X_2^\square=\{(V,\tau)\,:\,V\in\Irr\,R,\;\tau\in
\Irr \, \C [\Gamma_V, \square(V)] \}.
\]

\begin{lem} \label{lem:Clifford_algebras} 
There is a bijection
\[
\Irr(R \rtimes \Gamma) \longleftrightarrow (\Irr \, R \q \Gamma)_2^{\square}.
\]
If all simple $R$-modules are one-dimensional, then it becomes a natural bijection
\[
\Irr(R \rtimes \Gamma) \longleftrightarrow (\Irr \, R \q \Gamma)_2.
\]
\end{lem}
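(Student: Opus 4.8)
The plan is to reduce this statement to the group-algebra case treated in Lemma \ref{lem:Clifford}, by exhibiting a natural correspondence between simple $R \rtimes \Gamma$-modules and the data in $\widetilde X_2^\square / \Gamma$, following the same Clifford-theoretic pattern. First I would set up the restriction functor: given a simple $R \rtimes \Gamma$-module $M$, its restriction $M|_R$ decomposes as a finite direct sum of simple $R$-modules (finiteness follows from $M$ being finitely generated over $R \rtimes \Gamma$, hence over $R$ after conjugating by the finite set of $\gamma$'s), and $\Gamma$ permutes the isotypic components transitively because $M$ is simple. Fixing one simple constituent $V \in \Irr\, R$ with stabiliser $\Gamma_V$, the $V$-isotypic component $M_V$ is a module over a twisted group algebra: for each $\gamma \in \Gamma_V$ one chooses the intertwiner $T_{\gamma,V} \in \Hom_R(\gamma \cdot V, V)$ (unique up to scalars by Schur, which is available since simple $R$-modules have countable dimension by hypothesis), and the obstruction to these $T_{\gamma,V}$ composing on the nose is exactly the $2$-cocycle $\square(V)$ defined above. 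This makes $M_V$ into a $\C[\Gamma_V, \square(V)]$-module, and one checks it is simple there; call its class $\tau$. The pair $(V,\tau)$ is well-defined up to the $\Gamma$-action on $\widetilde X_2^\square$ precisely because a different choice of constituent $V' = \gamma V$ is connected to $V$ by the isomorphism $\phi_{\gamma,V}$ of \eqref{eq:twisting}.

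Conversely, from $(V,\tau) \in \widetilde X_2^\square$ I would build an $R \rtimes \Gamma$-module by the standard induction-type construction: take the $\Gamma_V$-fixed data and induce up, i.e. form $(R\rtimes\Gamma) \otimes_{(R \rtimes \Gamma)_V} (V \otimes_\tau \cdots)$ where $(R\rtimes\Gamma)_V$ is the subalgebra supported on $\Gamma_V$ acting on the twisted tensor product $V \boxtimes \tau$ — concretely, $V \otimes W$ where $W$ realises $\tau$, with the $N_{\gamma,V}$ acting diagonally via $T_{\gamma,V}$ on $V$ and via $\tau$ on $W$, and $R$ acting on $V$. One shows this induced module is simple and that the two assignments are mutually inverse, giving the bijection. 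The second, ``natural'' statement is then immediate: if every simple $R$-module $V$ is one-dimensional, then $\gamma \cdot V = V$ canonically for $\gamma \in \Gamma_V$, one takes every $T_{\gamma,V}$ to be the identity, so $\square(V)$ is trivial and $\C[\Gamma_V,\square(V)] = \C[\Gamma_V]$; there is then no choice of intertwiner to make, and $\widetilde X_2^\square$ becomes literally $\widetilde X_2$, so $(\Irr\, R \q \Gamma)_2^\square = (\Irr\, R \q \Gamma)_2$. This is the exact analogue of the passage from Lemma \ref{lem:Clifford} to Lemma \ref{lem:Clifford_abelian}.

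The main obstacle is verifying that the twisted group algebra $\C[\Gamma_V,\square(V)]$ — rather than $\C[\Gamma_V]$ — really governs the fibre, and in particular that the cocycle $\square(V)$ defined via the $T_{\gamma,V}$ in the crossed-product-algebra setting agrees with the one appearing abstractly in $\widetilde X_2^\square$; this requires tracking carefully how the relations $T_{\gamma,V} T_{\gamma',V} = c\, T_{\gamma\gamma',V}$ interact with the crossed product relation $\gamma r = \gamma(r)\gamma$, and checking that the connecting homomorphisms $\phi_{\gamma,V}$ of \eqref{eq:twisting} are exactly what is needed for $\Gamma$-equivariance of the correspondence. A secondary technical point is the decomposition of $M|_R$ into finitely many simple summands when $R$ is not assumed Noetherian or finite-dimensional; here one uses that $M$ is cyclic over $R\rtimes\Gamma$, hence generated over $R$ by the finite set $\{\gamma \cdot m : \gamma \in \Gamma\}$ for a generator $m$, which forces finite length of $M_V$ over $\C[\Gamma_V,\square(V)]$ via a dimension count against $\End_R(M|_R)$. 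Apart from these points the argument is a routine transcription of Clifford theory, and I would either cite \cite{RamRam} for the bookkeeping or relegate it to a short verification parallel to the proof of Lemma \ref{lem:Clifford}.
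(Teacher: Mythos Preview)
Your proposal is correct and follows essentially the same approach as the paper: both defer to Clifford theory as formulated in \cite[Theorem~A.6]{RamRam}, and both handle the natural case exactly as in Lemma~\ref{lem:Clifford_abelian} by taking $T_{\gamma,V}=\mathrm{id}$ when all simples are one-dimensional. The paper's proof is in fact just a two-line citation of these two sources, so your more detailed outline simply unpacks what the paper leaves implicit.
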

\begin{proof}  The proof proceeds by comparing our construction with the 
theory of Clifford as stated in \cite[Theorem~A.6]{RamRam}.
The naturality part can be shown in the same way as Lemma \ref{lem:Clifford_abelian}. 
\end{proof}

\begin{notation} \label{not:rtimes} 
{\rm For $(V,\tau)$ as above, $V \otimes V_\tau$ is a simple $R \rtimes \Gamma_V$-module,
in a way which depends on the choice of intertwining operators $T_{\gamma,V}$. The
simple $R \rtimes \Gamma$-module associated to $(V,\tau)$ by the bijection of Lemma
\ref{lem:Clifford_algebras} is
\begin{equation}\label{eq:VrtimesTau}
V \rtimes \tau := \mathrm{Ind}_{R \rtimes \Gamma_V}^{R \rtimes V} (V \otimes V^*_\tau) .
\end{equation}
Similarly, we shall denote by $\tau_1\rtimes\tau$ the element of 
$\Irr (\Gamma_1 \rtimes \Gamma)$ 
which corresponds to $(\tau_1,\tau)$ by the bijection of Lemma~\ref{lem:Clifford}.}
\end{notation}

\section{Weyl groups of disconnected groups}
\label{sec:Wdc}

Let $M$ be a reductive complex algebraic group. Then $M$ may have a finite number 
of connected components,  $M^0$ is the identity component of $M$, and $\cW^{M^0}$ 
is the Weyl group of $M^0$:
\[
\cW^{M^0}: = \Nor_{M^0}(T)/T
\]
where $T$ is a maximal torus of $M^0$.   We will need the analogue of the Weyl 
group for the possibly disconnected group $M$. 

\begin{lem} \label{lem:disconnected}
Let $M,\, M^0,\, T$ be as defined above. Then we have 
\[
\Nor_M (T)/T \cong \cW^{M^0} \rtimes \pi_0(M).
\]
\end{lem}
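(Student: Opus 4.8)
The plan is to exhibit a natural isomorphism $\Nor_M(T)/T \cong \cW^{M^0} \rtimes \pi_0(M)$ by first producing a short exact sequence and then splitting it. I would begin by observing that the inclusion $\Nor_{M^0}(T) \hookrightarrow \Nor_M(T)$ induces an injection $\cW^{M^0} = \Nor_{M^0}(T)/T \hookrightarrow \Nor_M(T)/T$, and that this subgroup is normal: indeed $\Nor_{M^0}(T) = \Nor_M(T) \cap M^0$ is normal in $\Nor_M(T)$ because $M^0$ is normal in $M$. So there is a short exact sequence
\begin{equation*}
1 \to \cW^{M^0} \to \Nor_M(T)/T \to \Nor_M(T)/\Nor_{M^0}(T) \to 1 .
\end{equation*}
The task then splits into two parts: (i) identify the quotient $\Nor_M(T)/\Nor_{M^0}(T)$ with $\pi_0(M) = M/M^0$, and (ii) construct a splitting of this sequence.

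For part (i), I would use the conjugacy of maximal tori in the connected group $M^0$. The natural map $\Nor_M(T)/\Nor_{M^0}(T) \to M/M^0 = \pi_0(M)$ is injective since $\Nor_M(T) \cap M^0 = \Nor_{M^0}(T)$. For surjectivity, take $m \in M$; then $mTm^{-1}$ is a maximal torus of $M^0$ (conjugation by $m$ preserves $M^0$ and sends tori to tori), so by conjugacy of maximal tori there is $g \in M^0$ with $g(mTm^{-1})g^{-1} = T$, hence $gm \in \Nor_M(T)$ represents the same class as $m$ in $\pi_0(M)$. This proves $\Nor_M(T)/\Nor_{M^0}(T) \cong \pi_0(M)$.

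For part (ii), the splitting, I would fix a Borel subgroup $B$ of $M^0$ containing $T$ and consider the stabilizer of the pair $(B,T)$ in $M$ under conjugation, call it $\Nor_M(B,T)$. The key classical fact is that $\Nor_{M^0}(B,T) = T$: in a connected reductive group, the only element of $\Nor_{M^0}(T)$ fixing $B$ is the identity coset, since the Weyl group acts simply transitively on Borels containing $T$. Therefore the composite $\Nor_M(B,T) \to \Nor_M(T)/T$ is injective. To see it surjects onto a complement of $\cW^{M^0}$, note that given any $m \in \Nor_M(T)$, the Borel $mBm^{-1}$ still contains $T$, so there is a unique $w \in \cW^{M^0}$ with $w(mBm^{-1})w^{-1} = B$, i.e. a representative $n_w \in \Nor_{M^0}(T)$ with $n_w m \in \Nor_M(B,T)$; this shows $\Nor_M(T) = \Nor_{M^0}(T) \cdot \Nor_M(B,T)$, and combined with $\Nor_{M^0}(T) \cap \Nor_M(B,T) = T$ we get $\Nor_M(T)/T = \cW^{M^0} \rtimes \bigl(\Nor_M(B,T)/T\bigr)$. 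Finally $\Nor_M(B,T)/T \cong \pi_0(M)$ by the argument of part (i) applied with the pair $(B,T)$ in place of $T$: every class in $\pi_0(M)$ has a representative normalizing $T$, and then a further adjustment by an element of $\cW^{M^0}$ (which lies in $M^0$, hence does not change the $\pi_0$-class) makes it normalize $(B,T)$.

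The main obstacle is part (ii): producing the complement in a way that is genuinely well-defined requires the fact that in a connected reductive group $N_{M^0}(T) \cap N_{M^0}(B) = T$ together with transitivity of $\cW^{M^0}$ on the Borels containing $T$ — this is what makes the decomposition $\Nor_M(T) = \Nor_{M^0}(T) \cdot \Nor_M(B,T)$ both a product and one with trivial overlap modulo $T$. One should also check that the resulting action of $\pi_0(M) \cong \Nor_M(B,T)/T$ on $\cW^{M^0}$ by conjugation is indeed the one implicit in the semidirect product notation, which is immediate once the complement is in hand. No delicate cohomological input is needed — unlike the $2$-cocycle computations elsewhere in the paper, this splitting is on the nose because a pinning-type rigidification $(B,T)$ is available.
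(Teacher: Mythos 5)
Your proof is correct and follows essentially the same route as the paper: both show normality of $\cW^{M^0}$, pick a Borel $B \supset T$ in $M^0$, use simple transitivity of $\cW^{M^0}$ on Borels through $T$ together with $\Nor_{M^0}(B,T) = T$ to obtain the internal semidirect decomposition $\Nor_M(T)/T = \cW^{M^0} \rtimes (\Nor_M(B,T)/T)$, and then identify $\Nor_M(B,T)/T$ with $\pi_0(M)$. The only cosmetic difference is that you first frame a short exact sequence and identify the quotient with $\pi_0(M)$ by conjugacy of maximal tori before constructing the splitting, whereas the paper goes straight for the complement $\Nor_M(B,T)/T$ and proves its map to $\pi_0(M)$ is a bijection directly; the underlying ingredients are identical.
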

\begin{proof}
The group $\cW^{M^0}$ is a normal subgroup of $ \Nor_M(T)/T$. Indeed,
let $n\in\Nor_{M^0}(T)$ and let $n'\in\Nor_M(T)$, then $n'nn^{\prime-1}$
belongs to $M^0$ (since the latter is normal in $M$) and normalizes $T$, that is,
$n'nn^{\prime-1}\in\Nor_{M^0}(T)$. On the other hand,
$n'(nT)n^{\prime-1}=n'nn^{\prime-1}(n'Tn^{\prime-1})=n'nn^{\prime-1}T$.

Let $B$ be a Borel subgroup of $M^0$ containing $T$.   
Let $w\in \Nor_M(T)/T$. Then $wBw^{-1}$ is a Borel subgroup of $M^0$
(since, by definition, the Borel subgroups of an algebraic group are 
the maximal closed connected solvable subgroups). Moreover, $wBw^{-1}$  
contains $T$. 
In a connected reductive algebraic group, the intersection of two Borel 
subgroups always contains a maximal torus and the two Borel subgroups are 
conjugate by a element of the normalizer of that torus. Hence $B$ and
$wBw^{-1}$ are conjugate by an element $w_1$ of $\cW^{M^0}$.
It follows that $w_1^{-1}w$ normalises $B$. Hence
\[w_1^{-1}w\in \Nor_M(T)/T \cap \Nor_{M}(B)=\Nor_{M}(T,B)/T,\] 
that is, \[
\Nor_M(T)/T = \cW^{M^0}\cdot(\Nor_M(T,B)/T).\] 
Finally, we have
\[\cW^{M^0}\cap(\Nor_M(T,B)/T)=\Nor_{M^0}(T,B)/T=\{1\},\] 
since $\Nor_{M^0}(B)=B$ and $B\cap \Nor_{M^0}(T)=T$. This proves that
\[
\Nor_M (T) \cong \Nor_{M^\circ}(T) \rtimes \Nor_M (B,T) . 
\]
Now consider the following map:
\begin{align}\label{MM}
\Nor_{M}(T,B)/T\to M/M^0\quad\quad mT\mapsto mM^0.
\end{align}
It is injective. Indeed, let $m,m'\in\Nor_{M}(T,B)$ such that
$mM^0=m'M^0$. Then $m^{-1}m'\in M^0\cap\Nor_{M}(T,B)=\Nor_{M^0}(T,B)=T$
(as we have seen above). Hence $mT=m'T$.

On the other hand, let $m$ be an element in $M$. Then $m^{-1}Bm$ is a
Borel subgroup of $M^0$, hence there exists $m_1\in M^0$ such that
$m^{-1}Bm=m_1^{-1}Bm_1$. It follows that $m_1m^{-1}\in\Nor_M(B)$. Also
$m_1m^{-1}Tmm_1^{-1}$ is a torus of $M^0$ which is contained in 
$m_1m^{-1}Bmm_1^{-1}=B$. Hence $T$ and $m_1m^{-1}Tmm_1^{-1}$ are conjugate
in $B$: there is $b\in B$ such that $m_1m^{-1}Tmm_1^{-1}=b^{-1}Tb$. Then 
$n:=bm_1m^{-1}\in\Nor_M(T,B)$. It gives $m=n^{-1}bm_1$. Since $bm_1\in
M^0$, we obtain $mM^0=n^{-1}M^0$. Hence the map  (\ref{MM}) is surjective.
\end{proof}

Let $G$ be a connected complex reductive group and let $T$ be a maximal torus in $G$.
The Weyl group of $G$ is denoted $\cW^G$. 

\begin{lem} \label{lem:centrals} 
Let $A$ be a subgroup of $T$ and write $M = \Z_G (A)$. Then the isotropy subgroup of $A$ in 
$\cW^G$ is
\[
\cW^G_A = \Nor_M (T) / T \cong \cW^{M^0} \rtimes \pi_0(M) .
\]
In case that the group $M$ is connected, $\cW^G_A$ is the Weyl group of $M$.
\end{lem}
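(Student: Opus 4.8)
The plan is to reduce the assertion to Lemma \ref{lem:disconnected} applied to the group $M=\Z_G(A)$, after identifying the isotropy group $\cW^G_A$ with $\Nor_M(T)/T$. Recall that $\cW^G=\Nor_G(T)/T$ acts on $T$ by conjugation (well defined since $T$ is abelian) and hence on subgroups of $T$, and that $\cW^G_A$ is by definition the subgroup fixing every element of $A$. For $n\in\Nor_G(T)$ and $a\in A\subseteq T$ the element $nan^{-1}$ depends only on the coset $nT$, again because $T$ is abelian; so the condition ``$nT$ fixes $A$ pointwise'' does not depend on the chosen representative and is equivalent to $n\in\Z_G(A)=M$. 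Since $A\subseteq T$ is abelian we have $T\subseteq M$, so $\Nor_M(T)=M\cap\Nor_G(T)$ with $T$ normal in it, and the previous sentence gives exactly
\[
\cW^G_A=\bigl(M\cap\Nor_G(T)\bigr)/T=\Nor_M(T)/T
\]
as subgroups of $\cW^G$.

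Next I would check that $M=\Z_G(A)$ is a (possibly disconnected) reductive complex algebraic group having $T$ as a maximal torus of $M^0$, so that Lemma \ref{lem:disconnected} applies verbatim. That $T$ is a maximal torus of $M^0$ is immediate: $T$ is connected and contained in $M$, hence in $M^0$, and a torus of $M^0\subseteq G$ strictly containing $T$ would contradict the maximality of $T$ in $G$. Reductivity is the classical fact that the centralizer in a connected reductive group of a subgroup of a maximal torus is reductive: one replaces $A$ by its Zariski closure $\overline A$, a diagonalizable subgroup of $T$ with $\Z_G(A)=\Z_G(\overline A)$, and recalls that $\Z_G(\overline A)^0$ is generated by $T$ together with the root subgroups $U_\alpha$ with $\alpha|_{\overline A}$ trivial; I would simply cite this. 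Lemma \ref{lem:disconnected} then yields
\[
\cW^G_A=\Nor_M(T)/T\cong\cW^{M^0}\rtimes\pi_0(M),
\]
which is the asserted isomorphism. Finally, if $M$ is connected then $\pi_0(M)$ is trivial and $M^0=M$, so the display reads $\cW^G_A=\Nor_M(T)/T=\cW^M$, the Weyl group of $M$; this settles the last sentence.

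I do not expect a genuine obstacle: the whole content is the bookkeeping identification $\cW^G_A=\Nor_M(T)/T$ together with the invocation of Lemma \ref{lem:disconnected}. The only points needing a moment of care are that the pointwise-fixing condition is independent of the choice of coset representative (handled by abelianness of $T$) and that $\Z_G(A)$ is reductive for an arbitrary, not necessarily closed, subgroup $A$ (handled by passing to $\overline A$); both are standard, so the step most worth stating carefully is simply the first equality above.
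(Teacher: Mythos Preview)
Your proof is correct and follows essentially the same approach as the paper: identify $\cW^G_A=\Nor_M(T)/T$ and then invoke Lemma~\ref{lem:disconnected}. The only difference is in how the first equality is obtained: the paper deduces it from the Springer--Steinberg structural description of $\Z_G(A)$ (generated by $T$, the root subgroups $U_\alpha$ with $\alpha|_A=1$, and the Weyl group representatives fixing $A$), whereas you give the direct elementary argument via ``$nT$ fixes $A$ pointwise iff $n\in\Z_G(A)$''. Your route is slightly cleaner for this particular equality, though the paper's citation simultaneously supplies the reductivity of $M$ and the description of $M^0$ that you handle separately.
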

\begin{proof} 
Let $R(G,T)$ denote the root system of $G$. According to \cite[\S~4.1]{SpringerSteinberg},
the group $M = \Cent_G(A)$ is the reductive subgroup of $G$ generated 
by $T$ and those root groups $U_\alpha$ for which $\alpha \in R(G,T)$ has 
trivial restriction to $A$ together with those Weyl group representatives 
$n_w \in \Nor_G(T) \; (w \in \cW^G)$ for which $w(t) = t$ for all $t \in A$.
This shows that $\cW^G_A = \Nor_M (T) / T$, which by Lemma \ref{lem:disconnected}
is isomorphic to $\cW^{M^0} \rtimes \pi_0(M)$.

Also by \cite[\S~4.1]{SpringerSteinberg}, the identity component of $M$ is 
generated by $T$ and those root groups $U_\alpha$ for which $\alpha$ has trivial 
restriction to $A$. Hence the Weyl group $\mathcal W^{M^\circ}$ is the normal 
subgroup of $\cW^G_A$ generated by those reflections $s_\alpha$ and 
\[
\cW^G_A / \cW^{M^\circ} \cong M / M^\circ .
\]
In particular, if $M$ is connected then $\cW^G_A$ is the Weyl group of $M$.
\end{proof}

Consequently, for $t \in T$ such that $M = \Z_G (t)$ we have 
\begin{align}
(T\q \cW^G)_2  & =  \{(t,\sigma) : t \in T, \sigma \in
\Irr(\cW_t^G)\}/\cW^G \label{EXT1} , \\
\Irr\, \cW^G_t  & =  (\Irr \, \cW^{M^0}\q
\pi_0(M))_2^{\square} \label{EXT2} .
\end{align}

We fix a Borel subgroup $B_0$ of $M^\circ$ containing $T$ and let $\Delta (B_0,T)$ be the
set of roots of $(M^\circ,T)$ that are simple with respect to $B_0$. We may and will assume
that this agrees with the previously chosen simple reflections in $\mathcal W^{M^\circ}$.
In every root subgroup $U_\alpha$ with $\alpha \in \Delta (B_0,T)$ we pick a nontrivial 
element $u_\alpha$. The data $(M^\circ,T,(u_\alpha)_{\alpha \in \Delta (B_0,T)})$ are
called a \emph{pinning} of $M^\circ$. This notion is useful in the following well-known
result:

\begin{lem}\label{lem:pinning}
The short exact sequence
\[
1 \to M^\circ / \Z (M^\circ) \to M / \Z (M^\circ) \to \pi_0 (M) \to 1 
\]
is split. A splitting can be obtained by sending $C \in \pi_0 (M)$ to the unique
element of $C / \Z (M^\circ) \subset M / \Z (M^\circ)$ that preserves the chosen pinning.
\end{lem}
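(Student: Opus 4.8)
The plan is to prove the splitting of the exact sequence
\[
1 \to M^\circ / \Z (M^\circ) \to M / \Z (M^\circ) \to \pi_0 (M) \to 1
\]
by exhibiting the splitting map explicitly via the action on pinnings and invoking the rigidity of pinnings. First I would recall the standard fact (essentially Chevalley's theorem on the automorphism group of a connected reductive group) that the group $\Aut(M^\circ)$ of algebraic automorphisms of $M^\circ$ is the semidirect product of the inner automorphisms $\mathrm{Inn}(M^\circ) \cong M^\circ / \Z(M^\circ)$ with the subgroup of automorphisms that preserve a fixed pinning $(M^\circ, T, B_0, (u_\alpha)_{\alpha \in \Delta(B_0,T)})$; equivalently, the stabilizer of the pinning in $\Aut(M^\circ)$ maps isomorphically onto the group of diagram automorphisms $\Aut(M^\circ) / \mathrm{Inn}(M^\circ)$. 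The key rigidity statement I need is that an automorphism of $M^\circ$ fixing $T$, $B_0$ and each $u_\alpha$ is the identity: this follows because such an automorphism fixes $T$ pointwise-on-characters data and each simple root group pointwise at a nonzero vector, hence fixes each $U_\alpha$ pointwise, and $T$ together with the $U_{\pm\alpha}$ generate $M^\circ$.

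Next I would set up the conjugation action. Since $M^\circ$ is normal in $M$, conjugation gives a homomorphism $M \to \Aut(M^\circ)$ whose kernel is $\Z_M(M^\circ) \supseteq \Z(M^\circ)$, and which sends $M^\circ$ onto $\mathrm{Inn}(M^\circ)$. For a connected component $C \in \pi_0(M)$, pick any $m \in C$; then $\mathrm{Ad}_m \in \Aut(M^\circ)$, and by the Chevalley decomposition there is a unique $n \in M^\circ$ such that $\mathrm{Ad}_{n^{-1} m} = \mathrm{Ad}_{n}^{-1} \circ \mathrm{Ad}_m$ stabilizes the pinning. I claim the coset $n^{-1} m \, \Z(M^\circ)$ is well defined independently of the choice of $m \in C$ and of $n$: if $m' = m x$ with $x \in M^\circ$ is another representative, then the corresponding $n'$ satisfies $\mathrm{Ad}_{(n')^{-1} m x}$ stabilizes the pinning, and comparing with $\mathrm{Ad}_{n^{-1}m}$ one finds $\mathrm{Ad}_{n^{-1} m (n')^{-1} m x \cdot (n^{-1}m)^{-1}}$ — more directly, both $n^{-1}m$ and $(n')^{-1} m'$ lie in the set $P$ of elements of $M$ whose conjugation action stabilizes the pinning, and two elements of $P \cap (M^\circ m)$ differ by an element of $M^\circ$ that stabilizes the pinning, hence by an element of $\Z(M^\circ)$ by the rigidity statement above. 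So $C \mapsto$ (the unique class in $C/\Z(M^\circ)$ lying in $P/\Z(M^\circ)$) is a well-defined map $\pi_0(M) \to M/\Z(M^\circ)$.

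Then I would check this map is a group homomorphism and a section. That it is a section is immediate, since the chosen representative of $C$ lies in $C$, so composing with the projection $M/\Z(M^\circ) \to \pi_0(M)$ recovers $C$. For the homomorphism property: if $p_C, p_D \in P$ are the chosen representatives of $C, D$, then $\mathrm{Ad}_{p_C}$ and $\mathrm{Ad}_{p_D}$ both stabilize the pinning, hence so does $\mathrm{Ad}_{p_C p_D}$; since $p_C p_D \in CD$, uniqueness forces $p_{CD} \Z(M^\circ) = p_C p_D \Z(M^\circ)$, which is exactly multiplicativity of the section in $M/\Z(M^\circ)$. I expect the main obstacle to be the careful bookkeeping around $\Z_M(M^\circ)$ versus $\Z(M^\circ)$: the conjugation action of $M$ on $M^\circ$ may have kernel strictly larger than $\Z(M^\circ)$, so the phrase "preserves the chosen pinning" must be interpreted as acting trivially on it modulo inner automorphisms of $M^\circ$ coming from $\Z(M^\circ)$ — i.e. one works in the quotient $M/\Z(M^\circ)$ throughout, and the uniqueness claim is that $P/\Z(M^\circ)$ meets each component in exactly one point, which is precisely the rigidity of pinnings for $\Aut(M^\circ) = \mathrm{Inn}(M^\circ) \rtimes \Aut(M^\circ, \text{pinning})$. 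Once that identification is made cleanly, everything else is formal.
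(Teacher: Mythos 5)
Your proposal is correct and essentially the same as the paper's proof: both hinge on the fact that $M^\circ/\Z(M^\circ)$ acts simply transitively on the set of pinnings of $M^\circ$, which you phrase equivalently as the Chevalley decomposition $\Aut(M^\circ) = \mathrm{Inn}(M^\circ)\rtimes \Aut(M^\circ,\text{pinning})$. The paper states the transitivity plus the computation of the stabilizer (it is $\Z(M^\circ)$, because the simple roots are independent characters of $T$) and declares the recipe valid; you spell out the consequent well-definedness, section property, and multiplicativity, and you also flag the harmless subtlety that $\ker(M\to\Aut(M^\circ))$ may be $\Z_M(M^\circ)\supsetneq\Z(M^\circ)$ — which does not affect uniqueness, since two pinning-preserving representatives of the same component differ by an element of $M^\circ$ stabilizing the pinning, hence by an element of $\Z(M^\circ)$.
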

\begin{proof}
The connected reductive group $M^\circ$ acts transitively on the set of pairs $(B',T')$
with $B'$ a Borel subgroup containing a maximal torus $T'$. Since the different
simple roots are independent functions on $T$, $M^\circ$ also acts transitively on
the set of pinnings. The stabilizer of a given pinning is $\Z (M^\circ)$, so 
$M^\circ / \Z (M^\circ)$ acts simply transitively on the set of pinnings for $M^\circ$.
This shows that the given recipe is valid and produces a splitting.
\end{proof}

\section{An extended Springer correspondence} 
\label{sec:celldec}
Let $M^\circ$ be a connected reductive complex group.
We take $x \in M^\circ$ unipotent and we abbreviate 
\begin{align}
A_x: = \pi_0 (\Cent_{M^0}(x)).
\end{align}
Let $x \in M^\circ$ be unipotent, $\mathcal B^x = \mathcal B^x_{M^\circ}$ the variety of 
Borel subgroups of $M^\circ$ containing $x$. All the irreducible components of 
$\mathcal B^x$ have the same dimension $d(x)$ over $\Rset$, see \cite[Corollary 3.3.24]{CG}.
Let $H_{d(x)} (\mathcal B^x,\C)$ be its top homology, let $\rho$ be an irreducible 
representation of $A_x$ and write
\begin{equation} \label{eq:tauxrho}
\tau (x,\rho) = \mathrm{Hom}_{A_x} \big( \rho, H_{d(x)}(\mathcal B^x ,\C) \big) .
\end{equation}
We call $\rho \in \Irr (A_x)$ geometric if $\tau (x,\rho) \neq 0$.
The Springer correspondence yields a bijection
\begin{equation} \label{eqn:Springercor}
(x,\rho) \mapsto \tau(x,\rho)
\end{equation}
between the set of $M^0$-conjugacy classes of pairs $(x,\rho)$ formed by a
unipotent element $x \in M^0$ and an irreducible geometric representation 
$\rho$ of $A_x$, and the equivalence classes of irreducible representations 
of the Weyl group $\cW^{M^0}$.

\begin{rem} \label{rem:Springer}
The Springer correspondence which we employ here
sends the trivial unipotent class to the trivial $\cW^{M^\circ}$-representation
and the regular unipotent class to the sign representation.
It coincides with the correspondence constructed by Lusztig by means of intersection
cohomology. The difference with Springer's construction via a reductive group
over a field of positive characteristic consists of tensoring with
the sign representation of $\cW^{M^0}$, see \cite{Hot}.
\end{rem}

Choose a set of simple reflections for $\mathcal W^{M^\circ}$ and let $\Gamma$ be a group 
of automorphisms of the Coxeter diagram of $W$. Then $\Gamma$ acts on $\mathcal W^{M^\circ}$
by group automorphisms, so we can form the semidirect product $\mathcal W^{M^\circ} \rtimes \Gamma$. 
Furthermore $\Gamma$ acts on $\Irr (\mathcal W^{M^\circ})$, by $\gamma \cdot \tau = 
\tau \circ \gamma^{-1}$. The stabilizer of $\tau \in \Irr (\mathcal W^{M^\circ})$ is denoted 
$\Gamma_\tau$. As described in Section \ref{sec:extquot}, Clifford theory for 
$\mathcal W^{M^\circ} \rtimes \Gamma$ produces a 2-cocycle 
$\natural(\tau) : \Gamma_\tau \times \Gamma_\tau \to \C^\times$.

By Lemma \ref{lem:pinning} The action of $\gamma \in \Gamma$ on the Coxeter diagram of 
$\mathcal W^{M^\circ}$ lifts uniquely to an action of $\gamma$ on $M^\circ$ which preserves 
the pinning chosen in Section \ref{sec:Wdc}. In this way we construct the semidirect product 
$M := M^\circ \rtimes \Gamma$.
By Lemma \ref{lem:centrals} we may identify $\cW^M$ with $\cW^{M^\circ} \rtimes \Gamma$.
We want to generalize the Springer correspondence to this kind of group. First we need 
to prove a technical lemma, which in a sense extension of Lemma \ref{lem:pinning}.

\begin{lem}\label{lem:S.3}
Let $\rho \in \Irr (\pi_0 (\Z_{M^\circ}(x)))$ and write 
\[
\Z_M (x,\rho) =
\{ m \in \Z_M (x) | \\ \rho \circ \mathrm{Ad}_m^{-1} \cong \rho \}.
\]
The following short exact sequence splits:
\[
1 \to \pi_0 \big( \Z_{M^\circ}(x,\rho) / \Z(M^\circ) \big) \to 
\pi_0 \big( \Z_M (x,\rho) / \Z(M^\circ) \big) \to \Gamma_{[x,\rho]_{M^\circ}} \to 1 .
\] 
\end{lem}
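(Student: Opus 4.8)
The plan is to adapt the argument of Lemma \ref{lem:pinning}, but working equivariantly with respect to the data $(x,\rho)$. Recall that in Lemma \ref{lem:pinning} the splitting of $1 \to M^\circ/\Z(M^\circ) \to M/\Z(M^\circ) \to \pi_0(M) \to 1$ was obtained because $M^\circ/\Z(M^\circ)$ acts simply transitively on the set of pinnings of $M^\circ$, so each component $C \in \pi_0(M)$ contains a unique element modulo $\Z(M^\circ)$ fixing a chosen pinning. Here the role of "the set of pinnings" should be played by something on which $\Z_{M^\circ}(x)/\Z(M^\circ)$ (or rather $\Z_{M^\circ}(x,\rho)/\Z(M^\circ)$) acts nicely, and the target $\Gamma_{[x,\rho]_{M^\circ}}$ is the stabilizer in $\Gamma$ of the $M^\circ$-conjugacy class of the pair $(x,\rho)$.

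First I would unwind the groups involved. By definition $\Z_M(x,\rho)$ consists of those $m \in M$ centralizing $x$ such that conjugation by $m$ fixes the isomorphism class of $\rho \in \Irr(\pi_0(\Z_{M^\circ}(x)))$; note $m \mapsto$ (image in $\pi_0(M) = \Gamma$) sends $\Z_M(x,\rho)$ onto exactly $\Gamma_{[x,\rho]_{M^\circ}}$, because a component $\gamma \in \Gamma$ stabilizes the $M^\circ$-class of $(x,\rho)$ iff some lift $m = m'\gamma$ (with $m' \in M^\circ$) centralizes $x$ and fixes the class of $\rho$ — this uses that all lifts of $\gamma$ in that component which centralize $x$ differ by elements of $\Z_{M^\circ}(x)$. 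So the sequence in the statement is just the $\pi_0(-/\Z(M^\circ))$-sequence attached to $1 \to \Z_{M^\circ}(x,\rho) \to \Z_M(x,\rho) \to \Gamma_{[x,\rho]_{M^\circ}} \to 1$; one must check exactness of the $\pi_0$-version, which follows because $\Z_{M^\circ}(x,\rho)$ meets every component of $\Z_{M^\circ}(x)$ it needs to and $\Z(M^\circ)$ is connected and central (so dividing by it does not create or destroy components).

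The heart of the matter is constructing the splitting. I would fix once and for all a $\gamma$-stable pinning of $M^\circ$ (possible by the construction preceding the lemma, since $\Gamma$ acts on $M^\circ$ preserving a pinning) and, given $\gamma \in \Gamma_{[x,\rho]_{M^\circ}}$, choose $m' \in M^\circ$ with $m'\gamma \in \Z_M(x,\rho)$. Then $(m'\gamma)$ conjugates $x$ to itself; applying Lemma \ref{lem:pinning}'s recipe one can further adjust $m'$ by an element of $\Z_{M^\circ}(x)$ so that the resulting lift preserves the chosen pinning — but one has to be careful that $\Z_{M^\circ}(x)$ is generally not reductive, so it does not act transitively on pinnings of $M^\circ$. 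The correct move is instead: Lemma \ref{lem:pinning} already gives a splitting $\sigma: \Gamma \to M/\Z(M^\circ)$ with image the pinning-preserving elements; restrict $\sigma$ to $\Gamma_{[x,\rho]_{M^\circ}}$ and show $\sigma(\gamma)$ actually lands in $\Z_M(x,\rho)/\Z(M^\circ)$, i.e. that the canonical pinning-preserving lift of $\gamma$ centralizes $x$ and fixes $\rho$ up to isomorphism. Since $\gamma$ stabilizes the $M^\circ$-class of $(x,\rho)$, we have $\sigma(\gamma)\cdot(x,\rho) = m'\cdot(x,\rho)$ for some $m' \in M^\circ$; then $m'^{-1}\sigma(\gamma)$ is a pinning-preserving (after composing carefully) element of $M^\circ/\Z(M^\circ)$ centralizing... — this is where the argument needs real care and is the main obstacle: one must replace $x$ by a suitable representative in its $M^\circ$-orbit, compatibly with the pinning and with $\rho$, so that the pinning-preserving lift genuinely centralizes it. I expect this is handled by choosing $x$ in a "standard position" relative to the pinning (e.g. using a Jacobson–Morozov $\fsl_2$-triple adapted to the pinning, or a $\Gamma$-stable choice of representative), after which the pinning-preserving lift of any $\gamma$ fixing the $M^\circ$-class of $(x,\rho)$ automatically fixes $x$ on the nose and acts trivially on $\pi_0(\Z_{M^\circ}(x))$ in a way compatible with $\rho$.

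Finally, once $\sigma$ restricts to a map $\Gamma_{[x,\rho]_{M^\circ}} \to \Z_M(x,\rho)/\Z(M^\circ)$, it is a group homomorphism (restriction of one) and a section of the projection to $\Gamma_{[x,\rho]_{M^\circ}}$ (since it was a section upstairs), giving the desired splitting; the well-definedness and independence of choices all reduce to the corresponding facts in Lemma \ref{lem:pinning}. I would therefore organize the write-up as: (1) identify the sequence as the $\pi_0(-/\Z(M^\circ))$-image of the centralizer sequence and check exactness; (2) recall the canonical pinning-preserving section $\sigma$ from Lemma \ref{lem:pinning}; (3) choose $x$ adapted to the pinning so that $\sigma(\gamma)$ centralizes $x$ and preserves the class of $\rho$ for $\gamma \in \Gamma_{[x,\rho]_{M^\circ}}$ — the technical core; (4) conclude that the restriction of $\sigma$ is the required splitting.
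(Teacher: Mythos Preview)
You have located the crux correctly: one needs, for each $\gamma \in \Gamma_{[x,\rho]_{M^\circ}}$, a lift in $\Z_M(x,\rho)/\Z(M^\circ)$ that depends multiplicatively on $\gamma$. But your proposed mechanism --- restricting the pinning-preserving section $\sigma : \Gamma \to M/\Z(M^\circ)$ of Lemma~\ref{lem:pinning} --- does not work. The element $\sigma(\gamma)$ is rigidly determined by the pinning of $M^\circ$ and has no reason to centralize $x$, nor can $x$ always be moved inside its $M^\circ$-orbit to make it so. A concrete obstruction: take $M^\circ = \SL_3(\C)$ with $\Gamma = \Z/2\Z$ acting by the outer diagram automorphism, and let $x$ be subregular. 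At the Lie-algebra level the pinning-preserving involution sends $a E_{\alpha_1} + b E_{\alpha_2}$ to $b E_{\alpha_1} + a E_{\alpha_2}$, so its only nilpotent fixed points in this two-dimensional space are regular ($a=b\neq 0$) or zero. There is no subregular representative fixed by $\sigma(\gamma)$, so $\sigma(\gamma) \notin \Z_M(x)/\Z(M^\circ)$ for any choice of $x$ in that class. Your step~(3) is therefore not a technicality to be filled in but a genuine gap.

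The paper does something different: it does \emph{not} restrict the section from Lemma~\ref{lem:pinning}. Instead it passes to the Bala--Carter Levi $L \supset T$ of $M^\circ$ in which $x$ is distinguished, together with the auxiliary group $L' = \Z_{M^\circ}(\cD(L))\,T$. For $\gamma \in \Gamma_{[x]_{M^\circ}}$ one first finds a uniquely determined Weyl element $w_2 w_1 \in \cW^{M^\circ}$ such that $w_2 w_1 \gamma$ stabilizes the pair of Borels $(B_L,B'_L)$ in $L,L'$; then one applies Lemma~\ref{lem:pinning} to $\cD(L)$ (not to $M^\circ$) to lift $w_1\gamma$, and uses the classification of \emph{distinguished} unipotent classes by Bala--Carter diagrams to choose $x$ fixed by that lift. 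This produces a homomorphism $s : \Gamma_{[x]_{M^\circ}} \to \big(\Z_M(x)\cap \Nor_M(T)\big)/\big(\Z(M^\circ)\,T\big)$, and the connectedness of $T$ lets it descend to the desired splitting. In the $\SL_3$ example above, $L$ is a Levi of type $A_1$; the element $w_1$ is the nontrivial Weyl element carrying $\gamma(L)$ back to $L$, and inside $\cD(L)\cong\SL_2(\C)$ the (now regular) unipotent $x$ \emph{can} be pinned down. The passage to the Bala--Carter Levi, and the use of the fact that distinguished classes are detected by diagram data, is the idea missing from your outline.
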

\begin{proof}
First we ignore $\rho$. According to the classification of unipotent orbits in complex
reductive groups \cite[Theorem 5.9.6]{Carter} we may assume that $x$ is distinguished unipotent
in a Levi subgroup $L \subset M^\circ$ that contains $T$. Notice that the derived subgroup 
$\mathcal D(L)$ contains only the part of $T$ generated by the coroots of $(L,T)$. The roots of
\[
L' := \Z_{M^\circ}(\mathcal D (L)) (T \cap \mathcal D (L)) = \Z_{M^\circ}(\mathcal D (L)) T .
\]
are precisely those that are orthogonal to the coroots of $(L,T)$.
We choose Borel subgroups $B_L \subset L$ and $B'_L \subset L'$ such that $x \in B_L$ and
$T \subset B_L \cap B'_L$.

Let $[x]_{M^\circ}$ be the $M^\circ$-conjugacy class of $x$ and 
$\Gamma_{[x]_{M^\circ}}$ its stabilizer in $\Gamma$. Any $\gamma \in 
\Gamma_{[x]_{M^\circ}}$ must also stabilize the $M^\circ$-conjugacy class of $L$,
and $T = \gamma (T) \subset \gamma (L)$, so there exists a $w_1 \in \mathcal W^{M^\circ}$ with 
$w_1 \gamma (L) = L$. Adjusting $w_1$ by an element of $W(L,T) \subset \mathcal W^{M^\circ}$, 
we can achieve that moreover 
$w_1 \gamma (B_L) = B_L$. Then $w_1 \gamma (L') = L'$, so we can find a unique 
$w_2 \in W (L',T) \subset \mathcal W^{M^\circ}$ with $w_2 w_1 \gamma (B'_L) = B'_L$. 
Notice that the centralizer of $\Phi (B_L,T) \cup \Phi (B'_L,T)$ in $\mathcal W^{M^\circ}$ 
is trivial, because it is generated by
reflections and no root in $\Phi (M^\circ,T)$ is orthogonal to this set of roots.
Therefore the above conditions completely determine $w_2 w_1 \in \mathcal W^{M^\circ}$.

The element $w_1 \gamma \in \mathcal W^{M^\circ} \rtimes \Gamma$ acts on $\Delta (B_L,T)$ 
by a diagram automorphism. So upon choosing $u_\alpha \in U_\alpha \setminus \{1\}$ for 
$\alpha \in \Delta (B_L,T)$, Lemma \ref{lem:pinning} shows that $w_1 \gamma$ can be 
represented by a unique element 
\[
\overline{w_1 \gamma} \in \mathrm{Aut} \big( \mathcal D (L),T,
(u_\alpha )_{\alpha \in \Delta (B_L,T)} \big) .
\]
The distinguished unipotent class of $x \in L$ is determined by its Bala--Carter diagram.
The classification of such diagrams \cite[\S 5.9]{Carter} shows that there exists an element
$\bar x$ in the same class as $x$, such that $\mathrm{Ad}_{\overline{w_1 \gamma}}(\bar x) = 
\bar x$. We may just as well assume that we had $\bar x$ instead of $x$ from the start,
and that $\overline{w_1 \gamma} \in \Z_M (x)$. Clearly we can find a representative 
$\overline{w_2}$ for $w_2$ in $\Z_M (x)$, so we obtain
\[
\overline{w_2} \, \overline{w_1 \gamma} \in \Z_M (x) \cap \Nor_M (T) \quad \text{and} \quad
w_2 w_1 \gamma \in \displaystyle{\frac{\Z_M (x) \cap \Nor_M (T)}{\Z(M^\circ) \, T}} . 
\]
Since $w_2 w_1 \in \mathcal W^{M^\circ}$ is unique,
\begin{equation}\label{eq:splittingX}
s : \Gamma_{[x]_{M^\circ}} \to \displaystyle{\frac{\Z_M (x) \cap 
\Nor_M (T)}{\Z(M^\circ) \, T}} ,\; \gamma \mapsto w_2 w_1 \gamma 
\end{equation}
is a group homomorphism. 

We still have to analyse the effect of $\Gamma_{[x]_{M^\circ}}$ on $\rho \in \Irr (A_x)$. 
Obviously composing with $\mathrm{Ad}_m$ for $m \in \Z_{M^\circ}(x)$ does not change 
the equivalence class of any representation of $A_x = \pi_0 (\Z_{M^\circ}(x))$. 
Hence $\gamma \in \Gamma_{[x]_{M^\circ}}$ stabilizes $\rho$ if and only if any lift 
of $\gamma$ in $\Z_M (x)$ does. This applies in particular to 
$\overline{w_2} \, \overline{w_1 \gamma}$, and therefore
\[
s ( \Gamma_{[x,\rho]_{M^\circ}}) \subset 
\big(\Z_M (x,\rho) \cap \Nor_M (T)\big) \big/ \big( \Z(M^\circ) \, T \big) .
\]
Since the torus $T$ is connected, $s$ determines a group homomorphism from
$\Gamma_{[x,\rho]_{M^\circ}}$ to $\pi_0 \big( \Z_M (x,\rho) / \Z(M^\circ) \big)$, 
which is the required splitting.
\end{proof}

A further step towards a Springer correspondence for $\cW^M$ is:

\begin{prop}\label{prop:S.2}
The class of $\natural(\tau)$ in $H^2 (\Gamma_\tau , \C^\times)$ is trivial for all 
$\tau \in \Irr (\mathcal W^{M^\circ})$. There is a bijection between
\[
\big( \Irr (\mathcal W^{M^\circ}) /\!/ \Gamma \big)_2 \qquad  \text{and} \qquad
\Irr (\mathcal W^{M^\circ} \rtimes \Gamma) = \Irr (\mathcal W^M ) .
\]
\end{prop}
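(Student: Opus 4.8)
The plan is to deduce the asserted bijection from the triviality of the cocycles and then to prove the latter via the (extended) Springer picture together with Lemma~\ref{lem:S.3}. Granting that $\natural(\tau)$ is a coboundary for every $\tau\in\Irr(\mathcal W^{M^\circ})$, the twisted extended quotient $(\Irr(\mathcal W^{M^\circ})\q\Gamma)_2^{\square}$ coincides with the ordinary extended quotient of the second kind, so Lemma~\ref{lem:Clifford} (applied with $\Gamma_1=\mathcal W^{M^\circ}$), combined with the identification $\mathcal W^M=\mathcal W^{M^\circ}\rtimes\Gamma$ of Lemma~\ref{lem:centrals}, yields the second statement. Hence the whole content is that $\natural(\tau)$ is trivial in $H^2(\Gamma_\tau,\C^\times)$; by Clifford theory (Section~\ref{sec:extquot}) this is equivalent to saying that $\tau$ extends to a representation of $\mathcal W^{M^\circ}\rtimes\Gamma_\tau$.

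First I would translate the problem through the Springer correspondence: write $\tau=\tau(x,\rho)$ via \eqref{eqn:Springercor}, with $x\in M^\circ$ unipotent and $\rho\in\Irr(A_x)$ geometric. Since $\Gamma$ acts on $M^\circ$ preserving the chosen pinning, it acts compatibly on unipotent classes, on the groups $A_x$, on local systems and on the Springer sheaf, so \eqref{eqn:Springercor} is $\Gamma$-equivariant and $\Gamma_\tau=\Gamma_{[x,\rho]_{M^\circ}}$. I would then work with the geometric model $\tau(x,\rho)=\Hom_{A_x}\big(\rho,H_{d(x)}(\mathcal B^x,\C)\big)$, in which $\mathcal W^{M^\circ}$ acts on $H_{d(x)}(\mathcal B^x,\C)$ by the Springer action, commuting with the natural (geometric) action of $A_x$; moreover this $A_x$-action, and likewise $\rho$, factors through the quotient of $A_x$ by the image of $\Z(M^\circ)$, because $\Z(M^\circ)$ acts trivially on $\mathcal B^x$.

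Next I would exploit the geometric action of $\Z_M(x)$ on $\mathcal B^x$. Conjugation gives an action of $\Z_M(x,\rho)$ on $H_{d(x)}(\mathcal B^x,\C)$ that factors through $\pi_0\big(\Z_M(x,\rho)/\Z(M^\circ)\big)$ (the identity component and $\Z(M^\circ)$ act trivially on homology), extends the $A_x$-action, and normalises the Springer action. By Lemma~\ref{lem:S.3} together with $\Z_{M^\circ}(x,\rho)=\Z_{M^\circ}(x)$, this group is $\pi_0(\Z_{M^\circ}(x)/\Z(M^\circ))\rtimes\Gamma_\tau$, and by the construction of the splitting its $\Gamma_\tau$-part is represented by elements of $\Z_M(x)\cap\Nor_M(T)$; hence $\mathcal W^{M^\circ}$ together with that $\Gamma_\tau$-part generates a genuine action of $\mathcal W^{M^\circ}\rtimes\Gamma_\tau$ on $H_{d(x)}(\mathcal B^x,\C)$ extending the Springer module. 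Because $\gamma\in\Gamma_\tau$ fixes $[\rho]$, the $\rho$-isotypic subspace $H_{d(x)}(\mathcal B^x,\C)[\rho]\cong V_\rho\otimes\tau$ is preserved, and it is a genuine module over $\big(A_x\times\mathcal W^{M^\circ}\big)\rtimes\Gamma_\tau$ whose restriction to $A_x\times\mathcal W^{M^\circ}$ is the irreducible representation $\rho\boxtimes\tau$. Consequently $\rho\boxtimes\tau$ extends to $\big(A_x\times\mathcal W^{M^\circ}\big)\rtimes\Gamma_\tau$, so its Clifford $2$-cocycle — which is the product $\natural_\rho\cdot\natural(\tau)$, where $\natural_\rho$ is the Clifford cocycle of $\pi_0(\Z_M(x,\rho)/\Z(M^\circ))$ at $\rho$ — is a coboundary; equivalently $\natural(\tau)=\natural_\rho^{-1}$ in $H^2(\Gamma_\tau,\C^\times)$.

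The remaining point is the main obstacle: showing that $\natural_\rho$ is trivial, i.e. that the $\Gamma_\tau$-stable representation $\rho$ extends to $\pi_0(\Z_M(x,\rho)/\Z(M^\circ))=\pi_0(\Z_{M^\circ}(x)/\Z(M^\circ))\rtimes\Gamma_\tau$. For $\Gamma_\tau$ cyclic this is automatic, since $H^2$ of a cyclic group vanishes. In general $\Gamma$ is a group of automorphisms of the Coxeter diagram of $\mathcal W^{M^\circ}$, so $\Gamma_\tau$ is built from symmetric groups permuting isomorphic Dynkin factors and from the (cyclic, or $S_3$) automorphism groups of the connected components; combined with the explicit list of component groups of unipotent centralizers this reduces the extension problem to the cyclic case. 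Equivalently, one may invoke the classical fact that every irreducible representation of a Weyl group extends to its semidirect product with any group of diagram automorphisms fixing it. Either way $\natural_\rho$, and hence $\natural(\tau)$, is a coboundary, which completes the proof.
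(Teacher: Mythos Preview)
Your overall strategy coincides with the paper's: translate via Springer to $(x,\rho)$, identify $\Gamma_\tau=\Gamma_{[x,\rho]_{M^\circ}}$, and use the geometric action of $\Z_M(x)$ on $H_{d(x)}(\mathcal B^x,\C)$ together with the splitting of Lemma~\ref{lem:S.3}. The divergence is in how the cocycle is killed. The paper regards $H_*(\mathrm{Ad}_{s(\gamma)})$ as a self-map of $\Hom_{A_x}(\rho,H_{d(x)}(\mathcal B^x,\C))$ intertwining $\tau$ with $\tau\circ\gamma^{-1}$, and then the single multiplicativity relation $H_*(\mathrm{Ad}_{s(\gamma)})\circ H_*(\mathrm{Ad}_{s(\gamma')})=H_*(\mathrm{Ad}_{s(\gamma\gamma')})$ trivializes $\natural(\tau)$ at once; the auxiliary cocycle $\natural_\rho$ never appears. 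Your more explicit route through the $\rho$-isotypic subspace $V_\rho\otimes\tau$ is a legitimate reading of the same geometry, but it forces you into the additional problem of showing that $\natural_\rho$ is trivial.

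That additional step is where your argument has a genuine gap. The ``classical fact'' you appeal to at the end --- that every $\Gamma$-fixed irreducible representation of a Weyl group extends to $\mathcal W^{M^\circ}\rtimes\Gamma$ --- is exactly the first assertion of the proposition, so invoking it is circular (and in any case it speaks to $\natural(\tau)$, not to $\natural_\rho$). The alternative case-by-case reduction you sketch is not carried out: $\Gamma_\tau$ can contain large symmetric groups permuting isomorphic Dynkin factors and hence have nontrivial Schur multiplier, so it is not clear how the extension problem for $\rho\in\Irr(A_x)$ reduces to the cyclic case. If you want to keep the isotypic-space formulation, you must supply a real argument for the triviality of $\natural_\rho$; otherwise, follow the paper and use the operators $H_*(\mathrm{Ad}_{s(\gamma)})$ directly on the realization $\Hom_{A_x}(\rho,H_{d(x)}(\mathcal B^x,\C))$ of $\tau$, so that the multiplicativity coming from the group-homomorphism $s$ already gives the required family of intertwiners without the $\natural_\rho$ detour.
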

\begin{proof}
There are various ways to construct the Springer correspondence for 
$\mathcal W^{M^\circ}$, for the current proof we use the method with Borel--Moore 
homology. Let $\mathcal \Z_{M^\circ}$ be the Steinberg variety of $M^\circ$ and 
$H_{\top} (\mathcal \Z_{M^\circ})$ its homology in the top degree 
\[
2 \dim_\C \mathcal \Z_{M^\circ} = 4 \dim_\C \mathcal B_{M^\circ} = 
4 (\dim_\C M^\circ - \dim_\C B_0) ,
\]
with rational coefficients. We define a natural algebra isomorphism 
\begin{equation}\label{eq:S.5}
\Q [\mathcal W^{M^\circ}] \to H_{\top}(\mathcal \Z_{M^\circ})  
\end{equation}
as the composition of \cite[Theorem 3.4.1]{CG} and a twist by the sign representation 
of $\Q [\mathcal W^{M^\circ}]$. By \cite[Section 3.5]{CG} the action of $\mathcal W^{M^\circ}$ 
on $H_* (\mathcal B^x ,\C)$ (as defined by Lusztig) corresponds to the convolution 
product in Borel--Moore homology. 

Since $M^\circ$ is normal in $M$, the groups $\Gamma ,M$ and $M / \Z(M)$ act on the 
Steinberg variety $\mathcal \Z_{M^\circ}$ via conjugation. The induced action of the 
connected group $M^\circ$ on $H_{\top}(\mathcal \Z_{M^\circ})$ is trivial, and it easily 
seen from \cite[Section 3.4]{CG} that the action of $\Gamma$ on $H(\mathcal \Z_{M^\circ})$ 
makes \eqref{eq:S.5} $\Gamma$-equivariant.

The groups $\Gamma ,M$ and $M / \Z(M)$ also act on the pairs $(x,\rho)$ 
and on the varieties of Borel subgroups, by
\begin{align*}
& \mathrm{Ad}_m (x,\rho) = (m x m^{-1}, \rho \circ \mathrm{Ad}_m^{-1}) , \\
& \mathrm{Ad}_m : \mathcal B^x \to \mathcal B^{m x m^{-1}} ,\; B \mapsto m B m^{-1} .
\end{align*}
Given $m \in M$, this provides a linear bijection $H_* (\mathrm{Ad}_m)$ : 
\[
\mathrm{Hom}_{A_x}(\rho, H_* (\mathcal B^x ,\C)) \to
\mathrm{Hom}_{A_{mxm^{-1}}}(\rho \circ \mathrm{Ad}_m^{-1}, H_* (\mathcal B^{mxm^{-1}} ,\C)) .
\]
The convolution product in Borel--Moore homology is compatible with these 
$M$-actions so, as in \cite[Lemma 3.5.2]{CG}, the following diagram commutes 
for all $h \in H_{\top}(\mathcal \Z_{M^\circ})$:
\begin{equation}\label{eq:S.6}
\begin{array}{ccc}
H_* (\mathcal B^x ,\C) & \xrightarrow{\; h \;} & H_* (\mathcal B^x ,\C) \\
\downarrow \scriptstyle{H_* (\mathrm{Ad}_m)} & & 
\downarrow \scriptstyle{H_* (\mathrm{Ad}_m)} \\
H_* (\mathcal B^{mxm^{-1}} ,\C) & \xrightarrow{m \cdot h} & 
H_* (\mathcal B^{mxm^{-1}} ,\C) .
\end{array}
\end{equation}
In case $m \in M^\circ \gamma$ and $m \cdot h$ corresponds to $w \in \mathcal W^{M^\circ}$, 
the element $h \in H(\mathcal \Z_{M^\circ})$ corresponds to $\gamma^{-1}(w)$, 
so \eqref{eq:S.6} becomes
\begin{equation}\label{eq:S.7}
H_* (\mathrm{Ad}_m) \circ \tau (x,\rho) (\gamma^{-1}(w)) = 
\tau (mxm^{-1}, \rho \circ \mathrm{Ad}_m^{-1})(w) \circ H_* (\mathrm{Ad}_m) .
\end{equation}
Denoting the $M^\circ$-conjugacy class of $(x,\rho)$ by $[x,\rho]_{M^\circ}$, we can write
\begin{align}\label{eq:S.9}
\Gamma_{\tau (x,\rho)} & = \{ \gamma \in \Gamma \mid \tau (x,\rho) 
\circ \gamma^{-1} \cong \tau (x,\rho) \} \\
\nonumber & = \{ \gamma \in \Gamma \mid [\mathrm{Ad}_\gamma 
(x,\rho)]_{M^\circ} = [x,\rho]_{M^\circ} \} =: \Gamma_{[x,\rho]_{M^\circ}} .
\end{align}
This group fits in an exact sequence
\begin{equation}\label{eq:S.4}
1 \to \pi_0 \big( \Z_{M^\circ} (x,\rho) / \Z(M^\circ) \big) \to \pi_0 \big( \Z_M (x,\rho) / 
\Z(M^\circ) \big) \to \Gamma_{[x,\rho]_{M^\circ}} \to 1 ,
\end{equation}
which by Lemma \ref{lem:S.3} admits a splitting 
\[
s : \Gamma_{[x,\rho]_{M^\circ}} \to \pi_0 \big( \Z_M (x,\rho) / \Z(M^\circ) \big) . 
\]
By homotopy invariance in Borel--Moore homology $H_* (\mathrm{Ad}_z) = 
\mathrm{id}_{H_* (\mathcal B^x ,\C)}$ for any $z \in \Z_{M^\circ}(x,\rho)^\circ 
\Z(M^\circ)$, so $H_* (\mathrm{Ad}_m)$ is well-defined for \\
$m \in \pi_0 \big( \Z_M (x,\rho) / \Z(M^\circ) \big)$. In particular we obtain for every 
$\gamma \in \Gamma_{\tau (x,\rho)} = \Gamma_{[x,\rho]_{M^\circ}}$ a linear bijection
\[
H_* (\mathrm{Ad}_{s(\gamma )}) : \mathrm{Hom}_{A_x}(\rho, H_{d(x)}
(\mathcal B_x ,\C)) \to \mathrm{Hom}_{A_x}(\rho, H_{d(x)} (\mathcal B_x ,\C)) ,
\]
which by \eqref{eq:S.7} intertwines the $\mathcal W^{M^\circ}$-representations 
$\tau (x,\rho)$ and $\tau (x,\rho) \circ \gamma^{-1}$. By construction
\begin{equation}\label{eq:sGammaMult}
H_* (\mathrm{Ad}_{s(\gamma )}) \circ H_* (\mathrm{Ad}_{s(\gamma' )}) =
H_* (\mathrm{Ad}_{s(\gamma \gamma')}) .
\end{equation}
This establishes the triviality of the 2-cocycle $\natural(\tau) = \natural (\tau (x,\rho))$.

Consider any $g \in \Gamma \setminus \Gamma_x$. Then $g \tau$ corresponds to
\[
\mathrm{Ad}_g (x, \rho) = (g x g^{-1}, \rho \circ \mathrm{Ad}_g^{-1} ) .
\]
For $\gamma \in \Gamma_x$ we define an intertwining operator in
\[
\mathrm{End}_{\cW^{M^\circ}} \big( \mathrm{Hom}_{A_{g x g^{-1}}}
(\rho \circ \mathrm{Ad}_g^{-1}, H_{d(x)} (\mathcal B_{g x g^{-1}},\C) ) \big)
\]
associated to $g \gamma g^{-1} \in \Gamma_{g x g^{-1}}$ as
\begin{equation}\label{eq:connectingHom}
H_{d(x)}(\mathrm{Ad}_{g s(\gamma) g^{-1}}) =
H_{d(x)}(\mathrm{Ad}_g) H_{d(x)}(\mathrm{Ad}_{s(\gamma)}) H_{d(x)}(g^{-1}) . 
\end{equation}
We do the same for any other point in the $\Gamma$-orbit of $(x,\rho)$. 
Then \eqref{eq:sGammaMult} shows that the resulting intertwining operators do not depend 
on the choices of the elements $g$.
 
We follow the same recipe for any other $\Gamma$-orbit of Springer parameters $(x',\rho')$.
As connecting homomorphism $\phi_{g,(x',\rho')}$ we take conjugation 
by $H_{d(x')}(\mathrm{Ad}_g )$. From this construction and Lemma
\ref{lem:Clifford_algebras} we obtain a bijection between 
$\Irr (\cW^{M^\circ} \rtimes \pi_0 (M))$ and the extended quotient of the second kind 
$\big( \Irr (\mathcal W^{M^\circ}) /\!/ \Gamma \big)_2$.
\end{proof}

We note that the bijection from Proposition \ref{prop:S.2} is in general not canonical,
because the splitting from Lemma \ref{lem:S.3} is not. But with some additional effort 
we can extract a natural description of $\Irr (\cW^M)$ from Proposition \ref{prop:S.2}. 

We say that an irreducible representation $\rho_1$ of 
$\Cent_M (x)$ is geometric if every irreducible $\Cent_{M^\circ}(x)$-subrepresentation of
$\rho_1$ is geometric in the previously defined sense. Notice that this condition forces 
$\rho_1$ to factor through the component group $\pi_0 (\Cent_M (x))$. 

We note that $\pi_0 (\Cent_M (x))$ acts naturally on $H_{d(x)}(\cB^x)$ and on
$\C [\Gamma]$, via the isomorphism
\begin{equation}\label{eq:isoCentGamma}
\Cent_{M}(x) / \Cent_{M^\circ}(x) \cong \Gamma_{[x]_{M^\circ}} .
\end{equation}

\begin{thm}\label{thm:SpringerExtended}
There is a natural bijection from
\[
\big\{ (x,\rho_1) \mid x \in M^\circ \text{ unipotent} , \rho_1 \in 
\Irr \big( \pi_0 (\Cent_M (x)) \big) \text{ geometric} \big\} / M 
\]
to $\Irr (\mathcal W^M )$, which sends $(x,\rho_1)$ to
\[
\Hom_{\pi_0 (\Cent_M (x))} \big( \rho_1, H_{d(x)}(\cB^x) \otimes \C [\Gamma] \big) .
\]
\end{thm}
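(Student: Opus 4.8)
The plan is to combine Proposition~\ref{prop:S.2} with Lemma~\ref{lem:Clifford} in order to turn the non-canonical bijection $\big(\Irr(\cW^{M^\circ})\q\Gamma\big)_2 \cong \Irr(\cW^M)$ into the asserted canonical parametrization by geometric pairs $(x,\rho_1)$. The key point is that Proposition~\ref{prop:S.2} already supplies, for each $\Gamma$-orbit of Springer parameters $(x,\rho)$, an honest (not merely projective) action of $\Gamma_{\tau(x,\rho)} = \Gamma_{[x,\rho]_{M^\circ}}$ on $\Hom_{A_x}\big(\rho,H_{d(x)}(\cB^x)\big)$ commuting with the $\cW^{M^\circ}$-action, namely $\gamma\mapsto H_{d(x)}(\Ad_{s(\gamma)})$, where $s$ is the splitting of Lemma~\ref{lem:S.3}. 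Via Lemma~\ref{lem:Clifford}, the irreducible $\cW^M = \cW^{M^\circ}\rtimes\Gamma$-representations lying over $\tau(x,\rho)$ are then indexed by $\Irr\big(\Gamma_{[x,\rho]_{M^\circ}}\big)$, and concretely the one attached to $\sigma\in\Irr\big(\Gamma_{[x,\rho]_{M^\circ}}\big)$ is
\[
\Ind_{\cW^{M^\circ}\rtimes\Gamma_{[x,\rho]_{M^\circ}}}^{\cW^M}
\Big( \Hom_{A_x}\big(\rho,H_{d(x)}(\cB^x)\big)\otimes\sigma \Big) .
\]
First I would verify that this induction formula is exactly the statement of Lemma~\ref{lem:Clifford} / Notation~\ref{not:rtimes} applied to $\Gamma_1 = \cW^{M^\circ}$ and the trivial $2$-cocycle.

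The second step is to repackage the data $\big((x,\rho),\sigma\big)$ — an $M^\circ$-orbit of a Springer pair together with an irreducible representation $\sigma$ of the stabilizer $\Gamma_{[x,\rho]_{M^\circ}}$ — as a single geometric pair $(x,\rho_1)$ with $x\in M^\circ$ unipotent and $\rho_1\in\Irr\big(\pi_0(\Cent_M(x))\big)$ geometric, up to $M$-conjugacy. This is Clifford theory again, now for the group extension
\[
1 \to \pi_0(\Cent_{M^\circ}(x)) \to \pi_0(\Cent_M(x)) \to \Gamma_{[x]_{M^\circ}} \to 1 ,
\]
whose relevant $2$-cocycle is trivial because it splits via Lemma~\ref{lem:S.3} (taking $\rho$ to range over all of $\Irr(A_x)$, i.e. the case already contained in \eqref{eq:S.4}). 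So irreducible geometric $\pi_0(\Cent_M(x))$-representations $\rho_1$ correspond bijectively to pairs $(\rho,\sigma)$ with $\rho\in\Irr(A_x)$ geometric and $\sigma\in\Irr\big(\Gamma_{[x,\rho]_{M^\circ}}\big)$, where $\Gamma_{[x,\rho]_{M^\circ}}$ is the stabilizer of $\rho$ in $\Gamma_{[x]_{M^\circ}} \cong \Cent_M(x)/\Cent_{M^\circ}(x)$ of \eqref{eq:isoCentGamma}; and $M$-conjugacy on the $(x,\rho_1)$ side matches $M$-conjugacy of $(x,\rho)$ together with the residual $\Gamma$-action on $\sigma$. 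Composing the two Clifford bijections gives a bijection from $\big\{(x,\rho_1)\ \text{geometric}\big\}/M$ to $\Irr(\cW^M)$.

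The third step is to identify the resulting representation with the explicit space $\Hom_{\pi_0(\Cent_M(x))}\big(\rho_1, H_{d(x)}(\cB^x)\otimes\C[\Gamma]\big)$ claimed in the statement, and to check that this identification is manifestly independent of all choices. Here one uses that $\pi_0(\Cent_M(x))$ acts on $H_{d(x)}(\cB^x)$ (through its image $\Gamma_{[x]_{M^\circ}}$ acting via the $H_{d(x)}(\Ad_{s(\gamma)})$, with $\Cent_{M^\circ}(x)$ acting by $A_x$) and on $\C[\Gamma]$ by left translation through the same quotient $\Gamma_{[x]_{M^\circ}}\hookrightarrow\Gamma$, so that $H_{d(x)}(\cB^x)\otimes\C[\Gamma]$ carries commuting actions of $\cW^{M^\circ}$ (on the first factor) and of $\pi_0(\Cent_M(x))$; taking $\Hom_{\pi_0(\Cent_M(x))}(\rho_1,-)$ leaves a $\cW^{M^\circ}$-module, and the $\C[\Gamma]$-factor is precisely what implements the $\Ind_{\cW^{M^\circ}\rtimes\Gamma_{[x,\rho]_{M^\circ}}}^{\cW^M}$ appearing above (Frobenius reciprocity / Mackey for the finite group $\Gamma$). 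One then checks the $\cW^M$-module structure agrees and that changing the representative $x$ inside its $M$-orbit or changing $\rho_1$ within its isomorphism class produces a canonically isomorphic $\Hom$-space; the naturality assertions of Proposition~\ref{prop:S.2} (independence of the auxiliary elements $g$, via \eqref{eq:sGammaMult}) are exactly what is needed here.

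I expect the main obstacle to be the bookkeeping in the third step: verifying that the tensoring with $\C[\Gamma]$ correctly encodes the induction from $\cW^{M^\circ}\rtimes\Gamma_{[x,\rho]_{M^\circ}}$ to $\cW^M$ for \emph{all} orbits simultaneously, and — the genuinely delicate part — that the resulting bijection is canonical, i.e. truly independent of the splitting $s$ chosen in Lemma~\ref{lem:S.3}. The non-canonical bijection of Proposition~\ref{prop:S.2} depends on $s$, so one must argue that the composite, after re-expressing everything in terms of the intrinsic group $\pi_0(\Cent_M(x))$ and its natural action on $H_{d(x)}(\cB^x)$, no longer sees that choice; this is plausible because the action of $\pi_0(\Cent_M(x))$ on the homology is intrinsic (it is the conjugation action on $\cB^x$), but making the argument airtight requires tracking how $s$ enters and cancels. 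A secondary, purely technical, point is to confirm that the condition ``every irreducible $\Cent_{M^\circ}(x)$-subrepresentation of $\rho_1$ is geometric'' is equivalent to $\Hom_{\pi_0(\Cent_M(x))}\big(\rho_1, H_{d(x)}(\cB^x)\otimes\C[\Gamma]\big)\neq 0$, which is what makes the indexing set match $\Irr(\cW^M)$ exactly.
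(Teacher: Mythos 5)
Your proposal matches the paper's proof essentially step for step: Clifford theory for $\cW^M$ via Proposition~\ref{prop:S.2}, then Clifford theory for $\pi_0(\Cent_M(x))$ via the splitting of Lemma~\ref{lem:S.3}, and finally Frobenius reciprocity together with \eqref{eq:isoCentGamma} to rewrite the induced module as the stated $\Hom$-space, with naturality coming from the intrinsic $\Z_M(x)$-action on $H_{d(x)}(\cB^x)$. The only cosmetic difference is that the paper phrases the second Clifford step for the quotients by $\Cent(M^\circ)$ (where Lemma~\ref{lem:S.3} actually provides the splitting), which is immaterial since geometric representations factor through that quotient — a point you implicitly rely on anyway.
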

\begin{proof}
Let us take another look at the geometric representations of 
$A_x =\Cent_{M^\circ}(x)$.
By construction they factor through $\pi_0 (\Cent_{M^\circ}(x) / \Cent (M^\circ))$.
From \eqref{eq:splittingX} we get a group isomorphism
\begin{equation}\label{eq:quotientFromSplitting}
\pi_0 (\Cent_{M}(x) / \Cent (M^\circ)) \cong \pi_0 (\Cent_{M^\circ}(x) / 
\Cent (M^\circ)) \rtimes s \big( \Gamma_{[x]_{M^\circ}} \big) .
\end{equation}
Suppose that $\rho \in \Irr (A_x)$ is geometric. Then the operators
$H_{d(x)}(\mathrm{Ad}_{s(\gamma)})$ intertwine $\rho$ with the 
$\pi_0 (\Cent_{M^\circ}(x) / \Cent (M^\circ))$-representation $s(\gamma) \cdot \rho$
and they satisfy the multiplicativity relation \eqref{eq:sGammaMult}.
Now it follows from Lemma \ref{lem:Clifford} that every irreducible geometric 
representation of $\pi_0 (\Cent_M (x))$ can be written in a unique way as 
$\rho \rtimes \sigma$, with $\rho \in \Irr (A_x)$ geometric and 
\[
\sigma \in \Irr s(\Gamma_{[x,\rho]_{M^\circ}}) = \Irr (\Gamma_{[x,\rho]_{M^\circ}}).
\] 
This enables us to rewrite $\widetilde{\Irr (\cW^{M^\circ})}$ as a union of pairs
$(x,\rho_1 = \rho \rtimes \sigma)$, with $x$ in a finite union of chosen 
$\Gamma$-orbits of unipotent elements. Clearly $M$ acts on the larger space
\[
\big\{ (x,\rho_1) \mid x \in M^\circ \text{ unipotent} , \rho_1 \in 
\Irr \big( \pi_0 (\Cent_M (x)) \big) \text{ geometric} \big\} 
\]
by conjugation of the $x$-parameter and the action induced by $H_* (\mathrm{Ad}_m)$ on
the $\rho_1$-parameter. By \eqref{eq:connectingHom} and the construction of $s(\gamma)$ 
in Lemma \ref{lem:S.3}, this extends the action of $\Gamma$ on 
$\widetilde{\Irr (\cW^{M^\circ})}$.
That provides the bijection from $\big( \Irr (\mathcal W^{M^\circ}) /\!/ \Gamma \big)_2$ 
to set of the $M$-association classes of pairs $(x,\rho_1)$. Combining this with 
Proposition \ref{prop:S.2}, we obtain a bijection between $\Irr (\cW^M)$ and the latter 
set. If we work out the definitions and use \eqref{eq:VrtimesTau}, we see that it sends 
$(x,\rho_1 = \rho \rtimes \sigma)$ to
\[ 
\tau (x,\rho) \rtimes \sigma = 
\mathrm{Ind}_{\cW^{M^\circ} \rtimes \Gamma_{[x,\rho]_{M^\circ}}}^{\cW^{M^\circ} \rtimes \Gamma}
\big( \tau (x,\rho) \otimes \sigma ) .
\]
Since every irreducible complex representation of a finite group is isomorphic to its
contragredient, we can rewrite this as
\begin{align*}
& \mathrm{Ind}_{\cW^{M^\circ} \rtimes \Gamma_{[x,\rho]_{M^\circ}}}^{\cW^{M^\circ} \rtimes \Gamma} 
\big( \Hom_{A_x} (\rho, H_{d(x)}(\cB^x)) \otimes \sigma^* \big) \cong \\
& \mathrm{Ind}_{\cW^{M^\circ} \rtimes \Gamma_{[x,\rho]_{M^\circ}}}^{\cW^{M^\circ} \rtimes \Gamma} 
\big( \Hom_{\Gamma_{[x,\rho]_{M^\circ}}} \big( \sigma, \Hom_{A_x} (\rho, H_{d(x)}(\cB^x)) 
\otimes \C [\Gamma_{[x,\rho]_{M^\circ}}] \big) \big).
\end{align*}
In view of Lemma \ref{lem:S.3}, the previous line is isomorphic to
\begin{align*}
& \mathrm{Ind}_{\cW^{M^\circ} \rtimes \Gamma_{[x,\rho]_{M^\circ}}}^{\cW^{M^\circ} \rtimes \Gamma} 
\big( \Hom_{\Cent_M (x,\rho)} \big( \rho \otimes \sigma, H_{d(x)}(\cB^x) \otimes 
\C [\Gamma_{[x,\rho]_{M^\circ}}] \big) \big) \cong \\
& \mathrm{Ind}_{\cW^{M^\circ} \rtimes \Gamma_{[x]_{M^\circ}}}^{\cW^{M^\circ} \rtimes \Gamma} 
\big( \Hom_{\Cent_M (x,\rho)} \big( \rho \otimes \sigma, 
H_{d(x)}(\cB^x) \otimes \C [\Gamma_{[x]_{M^\circ}}] \big) \big).
\end{align*}
With Frobenius reciprocity and  \eqref{eq:isoCentGamma} we simplify the above expression to
\begin{align*}
& \mathrm{Ind}_{\cW^{M^\circ} \rtimes \Gamma_{[x]_{M^\circ}}}^{\cW^{M^\circ} \rtimes \Gamma}
\big( \Hom_{\Cent_M (x)} \big( \rho \rtimes \sigma, 
H_{d(x)}(\cB^x) \otimes \C [\Gamma_{[x]_{M^\circ}}] \big) \big) \cong \\
& \Hom_{\pi_0 (\Cent_M (x))} \big( \rho \rtimes \sigma, 
H_{d(x)}(\cB^x) \otimes \C [\Gamma] \big) .
\end{align*}
The last line is natural in $(x,\rho_1 = \rho \rtimes \sigma)$ because the 
$Z_M (x)$-representation $H_{d(x)}(\mathcal B^x)$ depends in a natural way on $x$, as we 
observed at the start of the proof of Proposition \ref{prop:S.2}.
\end{proof}

There is natural partial order on the unipotent classes in $M$:
\[
\cO < \cO' \quad \text{when} \quad \overline{\cO} \subsetneq \overline{\cO'} .
\]
Let $\cO_x \subset M$ be the class containing $x$.
We transfer this to partial order on our extended Springer data by defining
\begin{equation}\label{eq:S.32}
(x,\rho_1) < (x',\rho'_1) \quad \text{when} \quad 
\overline{\cO_x} \subsetneq \overline{\cO_{x'}} .
\end{equation}
We will use it to formulate a property of the composition series of some 
$\cW^M$-representations that will appear later on.

\begin{lem}\label{lem:S.6}
Let $x \in M$ be unipotent and let $\rho \rtimes \sigma$ 
be a geometric irreducible representation of $\pi_0 (\Z_M (x))$.
There exist multiplicities \\ $m_{x,\rho \rtimes \sigma ,x',\rho' \rtimes \sigma'} 
\in {\mathbb Z}_{\geq 0}$ such that
\begin{multline*}
\mathrm{Ind}_{W \rtimes \Gamma_{[x,\rho]_{M^\circ}}}^{W \rtimes \Gamma} \big(
\Hom_{A_x} \big( \rho,H_* (\mathcal B^x,\C) \big) \otimes \sigma \big) \cong \\
\tau (x,\rho) \rtimes \sigma \oplus 
\bigoplus_{(x',\rho' \rtimes \sigma') > (x,\rho \rtimes \sigma)} 
m_{x,\rho \rtimes \sigma ,x',\rho' \rtimes \sigma'}\, \tau (x',\rho') \rtimes \sigma' .
\end{multline*}
\end{lem}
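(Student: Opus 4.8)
The plan is to reduce the statement to the corresponding decomposition property for the connected group $M^\circ$, namely the well-known fact that, in the Grothendieck group of $\mathcal W^{M^\circ}$, one has
\[
H_* (\mathcal B^x ,\C) \cong \bigoplus_{\rho' \in \Irr (A_x)} \tau (x,\rho') \otimes (\text{lower terms}),
\]
or more precisely that $\Hom_{A_x}(\rho, H_*(\mathcal B^x,\C))$ contains $\tau(x,\rho)$ with multiplicity one and all its other constituents are of the form $\tau(x',\rho')$ with $\overline{\cO_{x'}} \supsetneq \overline{\cO_x}$ in $M^\circ$. This is a standard consequence of the construction of the Springer correspondence via Borel--Moore homology and the theory of the Steinberg variety (see \cite[\S 3.5--3.6]{CG}); I would cite it rather than reprove it. The point of the lemma is to carry this upper-triangularity through the induction functor $\Ind_{\cW^{M^\circ} \rtimes \Gamma_{[x,\rho]_{M^\circ}}}^{\cW^{M^\circ} \rtimes \Gamma}$ and through Clifford theory for the extension by $\pi_0(M)$.

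First I would observe that the operators $H_*(\mathrm{Ad}_{s(\gamma)})$ constructed in the proof of Proposition \ref{prop:S.2} are defined on all of $H_* (\mathcal B^x,\C)$, not just on the top homology, and that by \eqref{eq:S.7} they intertwine the full $\mathcal W^{M^\circ}$-module structure on $H_*(\mathcal B^x,\C)$ with its $\gamma^{-1}$-twist; moreover they preserve the filtration of $H_*(\mathcal B^x,\C)$ coming from the partial order on unipotent classes, since $\mathrm{Ad}_{s(\gamma)}$ sends $\mathcal B^x$ to $\mathcal B^{s(\gamma) x s(\gamma)^{-1}}$ and $s(\gamma) \in \Z_M(x)$ fixes $x$. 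Hence the associated graded pieces carry compatible actions of $s(\Gamma_{[x,\rho]_{M^\circ}}) \cong \Gamma_{[x,\rho]_{M^\circ}}$, and each constituent $\tau(x',\rho')$ appearing in $\Hom_{A_x}(\rho, H_*(\mathcal B^x,\C))$ with $(x',\rho') > (x,\rho)$ (or equal to it) inherits a projective — hence, by triviality of the relevant 2-cocycle, genuine — action of its stabilizer group in $\Gamma$. Taking $\Hom_{A_x}(\rho,-)$ is exact, so it transforms the unipotent filtration of $H_*(\mathcal B^x,\C)$ into a filtration of $\Hom_{A_x}(\rho, H_*(\mathcal B^x,\C))$ whose graded pieces are (multiples of) the $\tau(x',\rho')$ for $(x',\rho') \geq (x,\rho)$, with $\tau(x,\rho)$ occurring exactly once at the bottom.

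Next I would tensor with $\sigma$ and apply $\Ind_{\cW^{M^\circ} \rtimes \Gamma_{[x,\rho]_{M^\circ}}}^{\cW^{M^\circ} \rtimes \Gamma}$. Induction is exact, so it turns the above filtration into a filtration of the left-hand side whose graded pieces are of the form $\Ind_{\cW^{M^\circ} \rtimes \Gamma_{[x,\rho]_{M^\circ}}}^{\cW^{M^\circ} \rtimes \Gamma}(\tau(x',\rho') \otimes \sigma)$, one copy for each $(x',\rho') > (x,\rho)$ appearing, plus a single bottom piece $\Ind(\tau(x,\rho) \otimes \sigma) = \tau(x,\rho) \rtimes \sigma$ by the definition \eqref{eq:VrtimesTau} and Notation \ref{not:rtimes}. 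It remains to expand each $\Ind(\tau(x',\rho') \otimes \sigma)$ into irreducibles of $\cW^M$: by the Clifford-theoretic description of $\Irr(\cW^M)$ in Proposition \ref{prop:S.2} and Theorem \ref{thm:SpringerExtended}, each such induced module is a nonnegative combination of the $\tau(x',\rho'') \rtimes \sigma''$ with $[x']_{M^\circ}$ in the $\Gamma$-orbit of the given one (so $\cO_{x'}$, as a class in $M$, is unchanged) and suitable $(\rho'',\sigma'')$; in particular the unipotent class $\cO_{x'} \subset M$ of every constituent still strictly dominates $\cO_x$. Collecting all constituents and defining $m_{x,\rho \rtimes \sigma,x',\rho'\rtimes\sigma'}$ to be the total multiplicity with which $\tau(x',\rho')\rtimes\sigma'$ occurs gives the claimed decomposition; the semisimplicity of $\C[\cW^M]$ upgrades the filtration to a direct sum.

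The main obstacle is bookkeeping rather than conceptual: one must be careful that the action of the stabilizer $\Gamma_{[x',\rho']_{M^\circ}}$ on each higher Springer constituent $\tau(x',\rho')$ arising inside $\Hom_{A_x}(\rho,H_*(\mathcal B^x,\C))$ is compatible — via the connecting homomorphisms $\phi_{g,(x',\rho')}$ fixed in Proposition \ref{prop:S.2} — with the one used to parametrize $\Irr(\cW^M)$, so that when we re-expand the induced modules in terms of the $\tau(x',\rho'')\rtimes\sigma''$ the multiplicities really are nonnegative integers and no cancellation occurs. This is handled by noting that $s(\gamma)$ for $\gamma \in \Gamma_{[x']_{M^\circ}}$ lies in $\Z_M(x')$ and hence the operator $H_*(\mathrm{Ad}_{s(\gamma)})$ on $H_*(\mathcal B^{x'},\C)$ is exactly the one entering the construction in Proposition \ref{prop:S.2}, applied now in all homological degrees rather than only in the top degree; degree-by-degree compatibility then follows from \eqref{eq:S.7}. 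Once this compatibility is in place, all remaining steps are formal properties of exact functors and of semisimple group algebras.
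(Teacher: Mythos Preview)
Your proposal is correct and follows essentially the same route as the paper: invoke the Borho--MacPherson upper-triangularity for $\Hom_{A_x}(\rho,H_*(\mathcal B^x,\C))$ as a $\mathcal W^{M^\circ}$-module, observe that the operators $H_*(\mathrm{Ad}_{s(\gamma)})$ from Proposition~\ref{prop:S.2} work in all homological degrees, tensor with $\sigma$, induce, and decompose. The filtration language is unnecessary (everything is semisimple, so the paper works directly with a direct sum), and your final compatibility worry about matching the $s$-splitting for $x$ with that for $x'$ is a red herring---the lemma only asserts a decomposition into irreducibles of $\mathcal W^M$, and by Theorem~\ref{thm:SpringerExtended} every such irreducible is some $\tau(x',\rho')\rtimes\sigma'$ with $x'$-class determined by the $\mathcal W^{M^\circ}$-restriction, so no action-matching is needed.
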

\begin{proof}
Consider the vector space $\Hom_{A_x} \big( \rho,H_* (\mathcal B^x,\C) \big)$
with the $\cW^{M^\circ}$-action coming from \eqref{eq:S.5}. 
The proof of Proposition \ref{prop:S.2} remains valid for these representations. 
By \cite[Theorem 4.4]{BM} (attributed to Borho and MacPherson) there exist 
multiplicities $m_{x,\rho,x',\rho'} \in {\mathbb Z}_{\geq 0}$ such that
\begin{equation}
\Hom_{A_x} \big( \rho,H_* (\mathcal B^x,\C) \big) \cong
\tau (x,\rho) \oplus \bigoplus_{(x',\rho') > (x,\rho)} m_{x,\rho ,x',\rho'}\, \tau (x',\rho') .
\end{equation}
By \eqref{eq:S.9} and \eqref{eq:S.7} $\Gamma_{[x,\rho]_{M^\circ}}$ also
stabilizes the $\tau (x',\rho')$ with $m_{x,\rho,x',\rho'} > 0$, and by 
Proposition \ref{prop:S.2} the associated 2-cocycles are trivial. It follows that 
\begin{multline}\label{eq:S.31}
\mathrm{Ind}_{W \rtimes \Gamma_{[x,\rho]_{M^\circ}}}^{W \rtimes \Gamma} \big(
\Hom_{A_x} \big( \rho,H_* (\mathcal B^x,\C) \big) \otimes \sigma \big) \cong \\
\tau (x,\rho) \rtimes \sigma \oplus 
\bigoplus_{(x',\rho') > (x,\rho)} m_{x,\rho ,x',\rho'} 
\mathrm{Ind}_{W \rtimes \Gamma_{[x,\rho]_{M^\circ}}}^{W \rtimes \Gamma} 
\big( \tau (x',\rho') \otimes \sigma \big) .
\end{multline}
Decomposing the right hand side into irreducible representations then gives the
statement of the lemma.
\end{proof}

\section{Langlands parameters for the principal series} 
\label{sec:Lp}
Let $\mathbf{W}_F$ denote the Weil group of $F$, let $\mathbf{I}_F$ be the inertia
subgroup of $\mathbf{W}_F$. 
Let $\mathbf{W}_F^{\der}$ denote the closure of the commutator subgroup of $\mathbf{W}_F$, 
and write $\mathbf{W}_F^{\ab} = \mathbf{W}_F/\mathbf{W}^{\der}_F$. 
The group of units in $\mathfrak{o}_F$ will be denoted $\fo_F^\times$.

We recall the Artin reciprocity map $\mathbf{a}_F : \mathbf{W}_F \to F^{\times}$ 
which has the following properties (local class field theory):
\begin{enumerate}
\item The map $\mathbf{a}_F$ induces a topological isomorphism 
$\mathbf{W}^{\ab}_F \simeq F^{\times}$.
\item An element $x \in \mathbf{W}_F$ is a geometric Frobenius if and only if 
$\mathbf{a}_F(x)$ is a prime element $\varpi_F$ of $F$.
\item We have $\mathbf{a}_F(\mathbf{I}_F) = \fo_F^\times$.
\end{enumerate}
We now consider the  principal series of $\cG$. We recall that  
$\mathcal{G}$ denotes a connected reductive split $p$-adic group with 
maximal split torus $\mathcal{T}$, and that
$G ,\;T$ denote the Langlands dual groups of $\mathcal{G} ,\; \mathcal{T}$. 
Next, we consider conjugacy classes in $G$ of continuous morphisms
\[
\Phi\colon \mathbf{W}_F\times \SL_2 (\Cset) \to G
\] 
which are rational on $\SL_2 (\Cset)$ and such that $\Phi(\mathbf{W}_F)$ 
consists of semisimple elements in $G$. 

The (conjectural) local Langlands correspondence is supposed to
be compatible with respect to inclusions of Levi subgroups. Therefore every
Langlands parameter $\Phi$ for a principal series representation should have
$\Phi (\mathbf{W}_F)$ contained in a maximal torus of $G$. As $\Phi$ is only
determined up to $G$-conjugacy, it should suffice to consider Langlands
parameters with $\Phi (\mathbf{W}_F) \subset T$. 

In particular, for such parameters
$\Phi \big|_{\mathbf{W}_F}$ factors through $\mathbf{W}_F^{ab} \cong F^\times$.
We view the domain of $\Phi$ to be $F^{\times} \times \SL_2 (\Cset)$: 
\[
\Phi\colon F^{\times} \times \SL_2 (\Cset) \to G.
\]
In this section we will build such a continuous morphism $\Phi$ from $\fs$ and data 
coming from the extended quotient of second kind. In Section \ref{sec:Borel} we show
how such a Langlands parameter $\Phi$ can be enhanced with a parameter $\rho$.

Throughout this article, a Frobenius element $\Frob_F$ has been chosen and fixed.  
This determines a uniformizer $\varpi_F$ via the equation $\mathbf{a}_F(\Frob_F) = \varpi_F$.  
That in turn gives rise to a group isomorphism $\fo_F^\times \times \mathbb Z \to F^\times$,
which sends $1 \in \mathbb Z$ to $\varpi_F$.
Let $\cT_0$ denote the maximal compact subgroup of $\cT$. As the latter is $F$-split,
\begin{equation}\label{eq:cT0}
\cT \cong F^\times \otimes_{\mathbb Z} X_* (\cT) \cong (\fo_F^\times \times \mathbb Z)
\otimes_{\mathbb Z} X_* (\cT) = \cT_0 \times X_* (\cT) .
\end{equation}
Because $\cW$ does not act on $F^\times$, these isomorphisms are $\cW$-equivariant if
we endow the right hand side with the diagonal $\cW$-action.
Thus \eqref{eq:cT0} determines a $\cW$-equivariant isomorphism of character groups 
\begin{equation}\label{split}
\Irr (\cT) \cong \Irr (\cT_0) \times \Irr (X_* (\cT)) = \Irr (\cT_0) \times X_{\unr}(\cT) .
\end{equation}
The way $\Irr (\cT_0)$ is embedded depends on the choice of $\varpi_F$. However, the
isomorphisms 
\begin{align}
& \Irr (\cT_0) \cong \Hom (\fo_F^\times ,T) , \label{eq:IrrT0} \\
& X_{\unr}(\cT) \cong \Hom (\Zset ,T) = T . \label{eq:XunrT}
\end{align}
are canonical.

\begin{lem}\label{lem:cBernstein} 
Let $\chi$ be a character of $\cT$, and let 
$[\cT,\chi]_{\cG}$ be the inertial class of the pair $(\cT,\chi)$. Let 
\begin{align}\label{artin}
\fs = [\cT,\chi]_{\cG}.
\end{align}
Then $\fs$ determines, and is determined by, the $\cW$-orbit of a smooth morphism
\[
c^\fs \colon \fo_F^\times \to T.
\]
\end{lem}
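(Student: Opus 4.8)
The plan is to unravel the definitions on both sides and exhibit the correspondence explicitly. An inertial class $\fs = [\cT,\chi]_\cG$ is, by definition, the $\cG$-conjugacy class of the pair $(\cT,\chi)$ up to unramified twist; since $\cT$ is fixed and $\Nor_\cG(\cT)/\cT = \cW$, this amounts to the $\cW$-orbit of $\chi \in \Irr(\cT)$ modulo $X_{\unr}(\cT)$. So the real content is: the $\cW$-orbit of $\chi$ modulo unramified characters is the same data as the $\cW$-orbit of a smooth homomorphism $c^\fs\colon \fo_F^\times \to T$.

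First I would use the canonical, $\cW$-equivariant decomposition \eqref{split}, namely $\Irr(\cT) \cong \Irr(\cT_0) \times X_{\unr}(\cT)$, to see that modding out by $X_{\unr}(\cT)$ on $\Irr(\cT)$ leaves exactly $\Irr(\cT_0)$, again $\cW$-equivariantly. Thus $\fs$ is the same as a $\cW$-orbit in $\Irr(\cT_0)$. Then I would invoke the canonical isomorphism \eqref{eq:IrrT0}, $\Irr(\cT_0) \cong \Hom(\fo_F^\times, T)$: a character of the compact torus $\cT_0$ corresponds, via the identification $\cT_0 = \fo_F^\times \otimes_\Zset X_*(\cT)$ and duality $X_*(\cT) = X^*(T)$, to a homomorphism $\fo_F^\times \to \Hom(X^*(T),\Cset^\times) = T$. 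Smoothness of the character (continuity plus the discreteness of $\Cset^\times$, so that the kernel is open) translates into smoothness of $c^\fs$ in the sense that it factors through a finite quotient of $\fo_F^\times$. Under this chain of canonical identifications the diagonal $\cW$-action on $\Irr(\cT_0)$ matches the $\cW$-action on $\Hom(\fo_F^\times,T)$ coming from the action of $\cW = \Nor_G(T)/T$ on $T$, so a $\cW$-orbit in $\Irr(\cT_0)$ is precisely a $\cW$-orbit of morphisms $c^\fs\colon \fo_F^\times \to T$. Chaining these bijections gives the statement, and each step is reversible, which yields the "determines and is determined by" claim.

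The main point requiring a little care is the choice-dependence: the splitting \eqref{eq:cT0} of $\cT$ as $\cT_0 \times X_*(\cT)$, hence the embedding of $\Irr(\cT_0)$ into $\Irr(\cT)$, depends on the choice of uniformizer $\varpi_F$ (equivalently of $\Frob_F$), but the uniformizer has been fixed throughout the article, so this is harmless; and in any case the projection $\Irr(\cT) \to \Irr(\cT_0)$ and the isomorphism $\Irr(\cT_0) \cong \Hom(\fo_F^\times,T)$ are canonical. I would also note that one must check the two descriptions of $\fs$ agree: an inertial class is an unramified-twist orbit of a $\cW$-orbit, but since the unramified twists form the subgroup $X_{\unr}(\cT)$ which is a direct factor respected by $\cW$, passing to the quotient commutes with taking $\cW$-orbits, so there is no subtlety there. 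Thus the only genuine task is bookkeeping with the canonical isomorphisms \eqref{split}, \eqref{eq:IrrT0}, \eqref{eq:XunrT} and verifying $\cW$-equivariance at each stage; there is no serious obstacle.
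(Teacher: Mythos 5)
Your proof is correct and follows essentially the same route as the paper's: both rely on the canonical $\cW$-equivariant decomposition \eqref{split} together with the canonical isomorphism \eqref{eq:IrrT0}, so that passing to unramified-twist classes isolates exactly the $\Irr(\cT_0)$-factor, which is then identified with $\Hom(\fo_F^\times,T)$. The only cosmetic difference is that the paper packages the identification as $\Irr(\cT)\cong\Hom(F^\times,T)$ followed by restriction to $\fo_F^\times$, whereas you first split off $X_{\unr}(\cT)$; your extra remark that the projection $\Irr(\cT)\to\Irr(\cT_0)$ and the identification \eqref{eq:IrrT0} are canonical (so the dependence on $\varpi_F$ is immaterial) is a welcome clarification but not a new idea.
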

\begin{proof}
There is a natural isomorphism
\begin{multline*}
\Irr (\cT) = \Hom (F^\times \otimes_{\Zset} X_* (\cT),\C^\times) \\
\cong \Hom (F^\times ,\C^\times \otimes_\Zset X^* (\cT)) = \Hom (F^\times ,T) . 
\end{multline*}
Let $\hat \chi \in \Hom (F^\times ,T)$ be the image of $\chi$ under these isomorphisms. 
By \eqref{eq:IrrT0} the restriction of $\hat \chi$ to $\fo_F^\times$ is not disturbed by 
unramified twists, so we take that as $c^\fs$. Conversely, by \eqref{split} $c^\fs$ 
determines $\chi$ up to unramified twists. Two elements of $\Irr (\cT)$ are 
$\cG$-conjugate if and only if they
are $\cW$-conjugate so, in view of \eqref{split}, the $\cW$-orbit
of the $c^\fs$ contains the same amount of information as $\fs$.
\end{proof}

Let $H = \Cent_G (\text{im} \, c^\fs)$ and let $M = \Z_H (t)$ for some $t \in T$.
Recall that a unipotent element $x\in M^0$ is said to be
\emph{distinguished} if the connected center $\Z_{M^0}^0$ of $M^0$ is 
a maximal torus of $\Cent_{M^0}(x)$. Let $x\in M^0$ unipotent. 
If $x$ is not distinguished, then there is a Levi subgroup $L$ of $M^0$ 
containing $x$ and such that $x\in L$ is distinguished. 

Let $X\in \text{Lie }M^0$ such that $\exp(X) = x$.
A cocharacter $h \colon \Cset^\times\to M^0$ is said to be \emph{associated to} $x$ if 
\[
\Ad(h(t))X=t^2 X \quad\text{for each $t\in\Cset^\times$},
\]
and if the image of $h$ lies in the derived group of some Levi subgroup
$L$ for which $x\in L$ is distinguished (see \cite[Rem.~5.5]{J}
or \cite[Rem.2.12]{FR}). 

A cocharacter associated to a unipotent element $x\in M^0$ is not unique.
However, any two cocharacters associated to a given $x\in M^0$ are
conjugate under elements of $\Cent_{M^0}(x)^0$ (see for instance 
\cite[Lem.~5.3]{J}).

\smallskip

We work with the Jacobson--Morozov theorem \cite[p. 183]{CG}.  Let
$\matje{1}{1}{0}{1}$ be the standard unipotent matrix in $\SL_2(\Cset)$ and let 
$x$ be a unipotent element in $M^0$. There exist rational homomorphisms 
\begin{equation} \label{eqn:gamt}
\gamma \colon \SL_2 (\Cset) \to M^0 \quad \text{with} \quad \gamma \matje{1}{1}{0}{1} = x ,
\end{equation}
see \cite[\S 3.7.4]{CG}. Any two such homomorphisms  $\gamma$ are conjugate by
elements of $\Z_{M^\circ}(x)$. 

For $\alpha \in \Cset^{\times}$ we define the following matrix in $\SL_2 (\Cset)$:
\[
Y_{\alpha} = \matje{\alpha}{0}{0}{\alpha^{-1}} .
\]
Then each $\gamma$ as above determines a cocharacter 
$h\colon \Cset^\times\to M^0$ by setting 
\begin{equation}\label{hh}
h(\alpha):=\gamma(Y_{\alpha})\quad\text{for } \alpha\in\Cset^\times .
\end{equation}
Each cocharacter $h$ obtained in this way is associated to $x$, see \cite[Rem.~5.5]{J}
or \cite[Rem.2.12]{FR}. 
Hence each two such cocharacters are conjugate under $\Cent_{M^0}(x)^0$. 

We set $\Phi (\varpi_F) = t \in T$.
Define the Langlands parameter $\Phi$ as follows:
\begin{equation}\label{eqn:Phi}
\Phi \colon F^{\times} \times \SL_2 (\Cset) \to G, \qquad  
(u\varpi_F^n,Y) \mapsto c^\fs (u) \cdot t^n\cdot \gamma(Y) 
\end{equation}
for all  $u \in \fo_F^\times, \; n \in \Zset,\; Y \in \SL_2 (\Cset)$.

Note that the definition of $\Phi$ uses the appropriate data: 
the semisimple element $t \in T$, the map $c^\fs$, and the 
homomorphism $\gamma$ (which depends on the Springer parameter $x$).  

Since $x$ determines $\gamma$ up to $M^\circ$-conjugation, $c^\fs,x$ and $t$ 
determine $\Phi$ up to conjugation by their common centralizer in $G$. 
Notice also that one can recover $c^\fs, x$ and $t$ from $\Phi$ and that
\begin{equation}\label{eq:hPhi}
h (\alpha) = \Phi (1, Y_\alpha) . 
\end{equation}

\section{Varieties of Borel subgroups}
\label{sec:Borel}

We clarify some issues with different varieties of Borel subgroups and different
kinds of parameters arising from them. 
Let $G$ be a connected reductive complex group and let 
\[
\Phi \colon \mathbf W_F \times \SL_2 (\C) \to G 
\]
be as in \eqref{eqn:Phi}. We write 
\begin{align*}
& H = \Z_G (\Phi (\mathbf I_F)) = \Z_G (\im c^\fs), \\
& M = \Z_G (\Phi (\mathbf W_F)) = \Z_H (t). 
\end{align*}
Although both $H$ and $M$ are in general disconnected, $\Phi (\mathbf W_F)$ is always 
contained in $H^\circ$ because it lies in the maximal torus $T$ of $G$ and 
$H^\circ$. Hence $\Phi (\mathbf I_F) \subset \Z(H^\circ)$.

By construction $t$ commutes with $\Phi (\SL_2 (\C)) \subset M$. 
For any $q^{1/2} \in \C^\times$ the element 
\begin{equation}\label{eq:S.12}
t_q := t \Phi \big( Y_{q^{1/2}} \big)
\end{equation} 
satisfies the familiar relation $t_q x t_q^{-1} = x^q$. Indeed
\begin{equation}\label{eq:tqx}
\begin{split}
t_q x t_q^{-1} & = t \Phi (Y_{q^{1/2}}) \Phi \matje{1}{1}{0}{1} \Phi (Y_{q^{1/2}}^{-1}) t^{-1} \\
& = t \Phi \big( Y_{q^{1/2}} \matje{1}{1}{0}{1} Y_{q^{1/2}}^{-1} \big) t^{-1} \\
& = t \Phi \matje{1}{q}{0}{1} t^{-1} = x^q .
\end{split}
\end{equation}
Recall that $B_2$ denotes the upper triangular Borel subgroup of $\SL_2 (\C)$. In the 
flag variety of $M^\circ$ we have the subvarieties $\mathcal B^x_{M^\circ}$ and 
$\mathcal B^{\Phi (B_2)}_{M^\circ}$ of Borel subgroups containing $x$ and $\Phi (B_2)$, 
respectively. Similarly the flag variety of $H^\circ$ has subvarieties 
$\mathcal B^{t,x}_{H^\circ} ,\; \mathcal B^{t_q,x}_{H^\circ}$ and 
\[
\mathcal B^{t, \Phi (B_2)}_{H^\circ} = \mathcal B^{t_q, \Phi (B_2)}_{H^\circ} .
\]
Notice that $\Phi (\mathbf I_F)$ lies in every Borel subgroup of $H^\circ$, because it
is contained in $\Z(H^\circ)$. We abbreviate $\Z_H (\Phi) = 
\Z_H (\Phi (\mathbf W_F \times \SL_2 (\C)))$ and similarly for other groups.

\begin{prop}\label{prop:S.1}
\begin{enumerate}
\item The inclusion maps
\[
\begin{array}{ccccccc}
 & & \Z_{M^\circ}(\Phi) & \to & \Z_{M^\circ}(\Phi (B_2)) & 
\to & \Z_{M^\circ}(x) , \\
\Z_H (t_q,x) & \leftarrow & \Z_H (\Phi) & \to & \Z_H (t, \Phi (B_2)) & 
\to & \Z_H (t,x) , 
\end{array}
\]
are homotopy equivalences. In particular they induce isomorphisms between the 
respective component groups.
\item The inclusions $\mathcal B^{\Phi (B_2)}_{M^\circ} \to \mathcal B^x_{M^\circ}$ and 
$\mathcal B^{t_q,x}_{H^\circ} \leftarrow \mathcal B^{t,\Phi (B_2)}_{H^\circ} 
\to \mathcal B^{t,x}_{H^\circ}$ are homotopy equivalences.
\end{enumerate}
\end{prop}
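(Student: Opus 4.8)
The plan is to deduce everything from the classical fact (Jacobson--Morozov theory combined with \cite[\S3.7]{CG}) that, inside a connected reductive group, the centralizer of a homomorphism $\SL_2(\C)\to M^\circ$ is homotopy equivalent to the centralizer of the nilpotent it produces, and more precisely that the two containments $\Z_{M^\circ}(\gamma)\hookrightarrow\Z_{M^\circ}(\Phi(B_2))\hookrightarrow\Z_{M^\circ}(x)$ are homotopy equivalences. The first inclusion is a homotopy equivalence because $\Z_{M^\circ}(\Phi(B_2))$ retracts onto $\Z_{M^\circ}(\gamma)$: the extra freedom is the unipotent radical direction, which is contractible. The second is the standard statement that $\mathcal B^x_{M^\circ}$ deformation-retracts onto $\mathcal B^{\Phi(B_2)}_{M^\circ}$ (Spaltenstein, see also \cite[Cor.~3.3.24 and \S3.7]{CG}) together with the corresponding statement for centralizers; equivalently one uses that a unipotent $x$ lies in a unique $B\in\mathcal B$ exactly when $B\supset\Phi(B_2)$ after conjugating $\gamma$. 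This settles the top row of (1) and, applied in $H^\circ$ in place of $M^\circ$ (with $x$ still unipotent in $H^\circ$), also gives the homotopy equivalences $\Z_{H^\circ}(t,\Phi(B_2))\to\Z_{H^\circ}(t,x)$ once one intersects with the fixed-point set of the semisimple $t$; the point is that $t$ commutes with $\Phi(\SL_2(\C))$, so it acts on the retraction $\mathcal B^x_{H^\circ}\rightsquigarrow\mathcal B^{\Phi(B_2)}_{H^\circ}$ and one passes to $t$-fixed points, which preserves homotopy equivalences for these actions because the retraction can be chosen algebraically and $t$-equivariantly. This proves part (2); note $\mathcal B^{t,\Phi(B_2)}_{H^\circ}=\mathcal B^{t_q,\Phi(B_2)}_{H^\circ}$ because $t_q=t\,\Phi(Y_{q^{1/2}})$ and $\Phi(Y_{q^{1/2}})\in\Phi(B_2)$, so a Borel contains $t$ and $\Phi(B_2)$ iff it contains $t_q$ and $\Phi(B_2)$.

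For part (1), the middle row, I would handle the inclusions one at a time. The containments $\Z_H(\Phi)\to\Z_H(t,\Phi(B_2))\to\Z_H(t,x)$ are exactly the $H^\circ$-in-place-of-$M^\circ$ version of the top row intersected with $\Z_H(t)=M$, so they are homotopy equivalences by the argument just described (using that $\Phi(\mathbf I_F)\subset\Z(H^\circ)$ acts trivially and may be ignored, and that $\Phi(\mathbf W_F)$ is generated topologically by $\Phi(\mathbf I_F)$ and $t$). The remaining arrow $\Z_H(\Phi)\to\Z_H(t_q,x)$ I would treat by observing that $t_q$ and $x$ together topologically generate the image of a homomorphism from $\mathbf W_F\times\SL_2(\C)$ — more concretely, $\Z_H(t_q,x)=\Z_H(t_q)\cap\Z_H(x)$, and since $t_q=t\,\gamma(Y_{q^{1/2}})$ is the image of a Frobenius-type element, its centralizer is the same as the centralizer of the cocharacter $\alpha\mapsto\Phi(Y_\alpha)$ twisted by $t$; intersecting with $\Z_H(x)$ and using $t_q x t_q^{-1}=x^q$ one gets that $\Z_H(t_q,x)$ retracts onto $\Z_H(t_q,\gamma)=\Z_H(t,\gamma)=\Z_H(\Phi)$, the middle equality because replacing $\gamma(Y_{q^{1/2}})$ by $t$ inside the product $t_q$ does not change the common centralizer of the pair $(t_q,\gamma)$. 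So all four maps in the middle row, and the three in the top row, are homotopy equivalences, hence induce isomorphisms of component groups.

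The step I expect to be the main obstacle is making the equivariance of the Spaltenstein-type retraction precise: one needs a deformation retraction of $\mathcal B^x_{H^\circ}$ onto $\mathcal B^{\Phi(B_2)}_{H^\circ}$ (and the analogous one for centralizers) that commutes with the action of the semisimple element $t$ (equivalently, is compatible with the grading coming from the cocharacter $h$), so that passing to $t$-fixed points yields $\mathcal B^{t,x}_{H^\circ}\rightsquigarrow\mathcal B^{t,\Phi(B_2)}_{H^\circ}$. I would extract this from \cite[\S3.7]{CG}, where the retraction is built from the one-parameter subgroup $h$ associated to $x$ and is manifestly equivariant for anything commuting with $h$; since $t$ commutes with $\gamma$ (as $t$ commutes with $\Phi(\SL_2(\C))$), $t$ commutes with $h$ and the retraction is $t$-equivariant. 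Alternatively one can invoke \cite[Prop.~2.3.2 and Prop.~2.4]{R} or the general principle that for a torus acting on a complex variety with a $\C^\times$-equivariant retraction onto the fixed locus of a commuting $\C^\times$, taking torus-fixed points preserves the homotopy type. Once that equivariance is in hand, the rest is bookkeeping with the identifications $t_q=t\,\Phi(Y_{q^{1/2}})$ and the fact that $\Phi(\mathbf I_F)$ is central in $H^\circ$.
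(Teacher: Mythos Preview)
Your overall strategy is sound and close in spirit to the paper's, but there is one genuine gap and one place where your route diverges from the paper's in a way worth flagging.

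\textbf{The gap: disconnectedness of $H$.} Throughout your argument you write $\Z_H(\cdots)$ but reason as if $H$ were connected. For instance, you say the inclusions $\Z_H(\Phi)\to\Z_H(t,\Phi(B_2))\to\Z_H(t,x)$ are ``the $H^\circ$-in-place-of-$M^\circ$ version of the top row intersected with $\Z_H(t)=M$''. But intersecting a homotopy equivalence inside $H^\circ$ with $M=\Z_H(t)$ does not a priori give you the centralizer in $H$: you only get $\Z_{H^\circ}(t,x)$, not $\Z_H(t,x)$. The missing components are governed by $\pi_0(H)$, and you need to show that the quotients
\[
\Z_H(\Phi)/\Z_{H^\circ}(\Phi),\quad \Z_H(t_q,x)/\Z_{H^\circ}(t_q,x),\quad \Z_H(t,x)/\Z_{H^\circ}(t,x)
\]
all coincide. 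The paper does this by a separate argument: it invokes the refined Jacobson--Morozov theorem of Kazhdan--Lusztig \cite[\S2.3.h]{KL}, which says that any two $\Phi$ producing the same $(t_q,x)$ are $\Z_{H^\circ}(t_q,x)$-conjugate. From this one reads off that an element $h\in H$ normalizes the $H^\circ$-orbit of $\Phi$ iff it normalizes that of $(t_q,x)$, giving the required equality of component-group quotients. Your retraction argument, even if made fully equivariant, does not see these extra components and cannot replace this step.

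\textbf{Difference in method for part (2).} You propose to build a $t$-equivariant deformation retraction of $\mathcal B^x_{H^\circ}$ onto $\mathcal B^{\Phi(B_2)}_{H^\circ}$ (via the Bialynicki--Birula-type flow for $h$) and then pass to $t$-fixed points. This is plausible and can be made to work, but the paper takes a more direct route that avoids the equivariance issue entirely: it uses the Levi decomposition $\Z_{H^\circ}(x)=\Z_{H^\circ}(\Phi(\SL_2(\C)))\cdot U_x$, observes that $t_q$ (and likewise $t$) preserves it, and then shows by an explicit Lie-algebra computation that for each $B\in\mathcal B^{t_q,\Phi(B_2)}_{H^\circ}$ the set $\{u\in\Z_{U_x}(t_q)\mid u\cdot B\in\mathcal B^{t_q,\Phi(B_2)}_{H^\circ}\}$ is contractible. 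Combined with the transitivity statement $\Z_{H^\circ}(t_q,x)\cdot\mathcal B^{t_q,\Phi(B_2)}_{H^\circ}=\mathcal B^{t_q,x}_{H^\circ}$ from \cite[\S2.3.h]{KL}, this gives the homotopy equivalence without ever constructing a global equivariant retraction. Your approach would work too once the equivariance is nailed down, but the paper's avoids that obstacle and simultaneously supplies the conjugacy input needed for the disconnected-$H$ step above.
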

\begin{proof} 
It suffices to consider the statements for $H$ and $t_q$, 
since the others can be proven in the same way.\\
(1) Our proof uses some elementary observations from \cite[\S 4.3]{R}. 
There is a Levi decomposition
\[
\Z_{H^\circ} (x) = \Z_{H^\circ} (\Phi (\SL_2 (\C))) U_x 
\]
with $\Z_{H^\circ} (\Phi (\SL_2 (\C))) = \Z_{H^\circ} (\Phi (B_2))$ reductive and $U_x$ unipotent. 
Since $t_q \in \Nor_{H^\circ} (\Phi (\SL_2 (\C)))$ and $\Z_H (x^q) = \Z_H (x)$, conjugation by $t_q$ 
preserves this decomposition. Therefore
\begin{equation}\label{eq:S.1}
\Z_{H^\circ} (t_q,x) = \Z_{H^\circ} (\Phi) \Z_{U_x}(t_q) = 
\Z_{H^\circ} (t_q, \Phi (B_2)) \Z_{U_x}(t_q). 
\end{equation}
We note that 
\[
\Z_{U_x}(t_q) \cap \Z_{H^\circ} (t_q, \Phi (B_2)) \subset U_x \cap \Z_{H^\circ} (\Phi (B_2)) = 1
\]
and that $\Z_{U_x}(t_q) \subset U_x$ is contractible, because it is a unipotent complex group. 
It follows that
\begin{equation}\label{eq:S.10}
\Z_{H^\circ} (\Phi) = \Z_{H^\circ} (t_q, \Phi (B_2)) \to \Z_{H^\circ} (t_q,x) 
\end{equation}
is a homotopy equivalence. If we want to replace $H^\circ$ by $H$, we find
\[
\Z_H (\Phi) / \Z_{H^\circ}(\Phi) = 
\{ h H^\circ \in \pi_0 (H) \mid h \Phi h^{-1} \in \mathrm{Ad}(H^\circ) \Phi \} ,
\]
and similarly with $(t_q, \Phi (B_2))$ or $(t_q,x)$ instead of $\Phi$. 

Let us have a closer look
at the $H^\circ$-conjugacy classes of these objects. Given any $\Phi$, we obviously know what 
$t_q$ and $x$ are. Conversely, suppose that $t_q$ and $x$ are given. We apply a refinement of
the Jacobson--Morozov theorem due to Kazhdan and Lusztig. According to \cite[\S 2.3]{KL} there
exist homomorphisms $\Phi : \mathbf W_F \times \SL_2 (\C) \to G$ as above, which return $t_q$ 
and $x$ in the prescribed way. Moreover all such homomorphisms are conjugate under 
$\Z_{H^\circ} (t_q,x)$, see \cite[\S 2.3.h]{KL} or Section 19. So from $(t_q,x)$ we can 
reconstruct the Ad$(H^\circ)$-orbit of $\Phi$, and this 
gives bijections between $H^\circ$-conjugacy classes of $\Phi ,\; (t_q, \Phi (B_2))$ and $(t_q,x)$. 
Since these bijections clearly are $\pi_0 (H)$-equivariant, we deduce 
\begin{equation}\label{eq:S.11}
\Z_H (\Phi) / \Z_{H^\circ}(\Phi) = \Z_H (t_q, \Phi (B_2)) / \Z_{H^\circ}(t_q, \Phi (B_2)) = 
\Z_H (t_q,x) / \Z_{H^\circ}(t_q,x) .
\end{equation}
Equations \eqref{eq:S.10} and \eqref{eq:S.11} imply that
\[
\Z_H (\Phi) = \Z_H (t_q, \Phi (B_2)) \to \Z_H (t_q,x) 
\]
is also a homotopy equivalence. \\
(2) By the aforementioned result \cite[\S 2.3.h]{KL} 
\begin{equation}\label{eq:S.2}
\Z_{H^\circ} (t_q,x) \cdot \mathcal B^{t_q, \Phi (B_2)}_{H^\circ} = 
\mathcal B^{t_q,x}_{H^\circ} .
\end{equation}
On the other hand, by \eqref{eq:S.1}
\begin{equation}\label{eq:S.3}
\Z_{H^\circ} (t_q,x) \cdot \mathcal B^{t_q, \Phi (B_2)}_{H^\circ} = 
\Z_{U_x}(t_q) \Z_H (t_q, \Phi (B_2)) \cdot \mathcal B^{t_q, \Phi (B_2)}_{H^\circ} = 
\Z_{U_x}(t_q) \cdot \mathcal B^{t_q, \Phi (B_2)}_{H^\circ} .
\end{equation}
For any $B \in \mathcal B^{t_q, \Phi (B_2)}_{H^\circ}$ and $u \in \Z_{U_x}(t_q)$ it is clear that
\[
u \cdot B \in \mathcal B^{t_q, \Phi (B_2)}_{H^\circ} \; \Longleftrightarrow \;
\Phi (B_2) \subset u B u^{-1} \; \Longleftrightarrow \;
u^{-1} \Phi (B_2) u \subset B .
\]
Furthermore, since $\Phi (B_2) \subset B$ is generated by $x$ and 
$\{ \Phi \matje{\alpha}{0}{0}{\alpha^{-1}} \mid \alpha \in \C^\times \}$, 
the right hand side is equivalent to 
\[
u^{-1} \Phi \matje{\alpha}{0}{0}{\alpha^{-1}} u \in B \quad \forall \alpha \in \C^\times .
\]
In Lie algebra terms this can be reformulated as 
\[
\mathrm{Ad}_{u^{-1}} (d \Phi \matje{\alpha}{0}{0}{-\alpha}) \in \mathrm{Lie} \, B
\quad \forall \alpha \in \C . 
\]
Because $u$ is unipotent, this happens if and only if
\[
\mathrm{Ad}_{u^\lambda} (d \Phi \matje{\alpha}{0}{0}{-\alpha}) \in \mathrm{Lie} \, B
\quad \forall \lambda,\alpha \in \C .  
\]
By the reverse chain of arguments the last statement is equivalent with
\[
u^\lambda \cdot B \in \mathcal B^{t_q, \Phi (B_2)}_{H^\circ} \quad \forall \lambda \in \C .
\]
Thus $\{ u \in \Z_{U_x}(t_q) \mid u \cdot B \in \mathcal B^{t_q, \Phi (B_2)}_{H^\circ} \}$ 
is contractible for all $B \in \mathcal B^{t_q, \Phi (B_2)}_{H^\circ}$, and we already knew 
that $\Z_{U_x}(t_q)$ is contractible. Together with \eqref{eq:S.2} and \eqref{eq:S.3} these 
imply that $\mathcal B^{t_q, \Phi (B_2)}_{H^\circ} \to \mathcal B^{t_q,x}_{H^\circ}$ 
is a homotopy equivalence.
\end{proof}

For the affine Springer correspondence we will need more precise information on the
relation between the varieties for $G$, for $H$ and for $M^\circ$.

\begin{prop}\label{prop:UP}
\begin{enumerate}
\item The variety $\mathcal B^{t,x}_{H^\circ}$ is isomorphic to $[\mathcal W^{H^\circ} : 
\mathcal W^{M^\circ}]$ copies of $\mathcal B^x_{M^\circ}$, and 
$\mathcal B^{t, \Phi (B_2)}_{H^\circ}$ is isomorphic to the same number of copies of 
$\mathcal B^{\Phi (B_2)}_{M^\circ}$.
\item The group $\Z_{H^\circ}(t,x) / \Z_{M^\circ}(x)$ permutes these two
sets of copies freely.
\item The variety $\mathcal B_G^{\Phi (\mathbf W_F \times B_2)}$ is isomorphic to
$[\mathcal W^G : \mathcal W^{H^\circ}]$ copies of $\mathcal B^{t, \Phi (B_2)}_{H^\circ}$. 
The group $\Z_G (\Phi) / \Z_{H^\circ}(\Phi)$ permutes these copies freely.
\end{enumerate}
\end{prop}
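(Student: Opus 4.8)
The plan is to reduce each of the three statements to a structural fact about fibrations of flag varieties over partial flag varieties, using the centralizers computed in the previous propositions. For part (1), recall that $M^\circ = \Z_{H^\circ}(t)$, so $t$ lies in the centre of $M^\circ$ but in general $t$ is a ramification point: the Borel subgroups of $H^\circ$ containing $t$ are exactly those lying over the parabolic $P = \Z_{H^\circ}(\text{the one-parameter subgroup through }t)$-type data attached to $t$. More precisely, the variety $\mathcal B^t_{H^\circ}$ of Borel subgroups of $H^\circ$ containing $t$ fibres over the (finite) set $(\mathcal W^{H^\circ}$-orbit data$)$ of parabolic subgroups of $H^\circ$ having $M^\circ$ as a Levi factor, and each fibre is a copy of the flag variety $\mathcal B_{M^\circ}$. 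Counting: the number of such parabolics equals $[\mathcal W^{H^\circ}:\mathcal W^{M^\circ}]$, because $\mathcal W^{M^\circ} = \Nor_{\mathcal W^{H^\circ}}(\text{the relevant facet})$ is exactly the stabiliser. Intersecting with the condition ``also contains $x$'' (resp.\ $\Phi(B_2)$) one passes from $\mathcal B_{M^\circ}$ to $\mathcal B^x_{M^\circ}$ (resp.\ $\mathcal B^{\Phi(B_2)}_{M^\circ}$) in each fibre, since $x, \Phi(B_2) \subset M^\circ$. So $\mathcal B^{t,x}_{H^\circ} \cong [\mathcal W^{H^\circ}:\mathcal W^{M^\circ}]$ copies of $\mathcal B^x_{M^\circ}$, and likewise with $\Phi(B_2)$.

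For part (2), one checks that $\Z_{H^\circ}(t,x)$ acts on this disjoint union of copies and that the subgroup $\Z_{M^\circ}(x)$ is precisely the stabiliser of any one copy. Indeed $\Z_{M^\circ}(x) = \Z_{H^\circ}(t) \cap \Z_{H^\circ}(x)$ preserves $M^\circ$ and hence preserves the copy indexed by the ``standard'' parabolic; conversely an element of $\Z_{H^\circ}(t,x)$ fixing a copy normalises the corresponding Levi, and since it already centralises $t$ it lies in that Levi, i.e.\ in $\Z_{M^\circ}(x)$ (using that $M^\circ$ is the identity component of $\Z_{H^\circ}(t)$ and $\Z_{M^\circ}(x)$ is connected enough for this identification — more carefully, one works with $\Z_{M^\circ}(x)^\circ$ and then notes the component group acts trivially on the set of copies by connectedness, so the action of $\Z_{H^\circ}(t,x)/\Z_{M^\circ}(x)$ is well-defined). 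Freeness of the induced action then follows because the stabiliser of each copy is exactly $\Z_{M^\circ}(x)$, which we have quotiented out. Part (3) is the same argument one level up: $H^\circ = \Z_{G}(\im c^\fs)$ with $\im c^\fs \subset Z(H^\circ)$, so the variety $\mathcal B_G^{\Phi(\mathbf I_F)}$ of Borel subgroups of $G$ containing $\Phi(\mathbf I_F)$ fibres over the set of parabolics of $G$ having $H^\circ$ as a Levi — a set of cardinality $[\mathcal W^G:\mathcal W^{H^\circ}]$ — with fibre $\mathcal B_{H^\circ}$; intersecting with the condition of containing all of $\Phi(\mathbf W_F \times B_2)$ replaces the fibre by $\mathcal B^{t,\Phi(B_2)}_{H^\circ}$ (since $\Phi(\mathbf W_F \times B_2) \subset H^\circ$), and $\Z_G(\Phi)/\Z_{H^\circ}(\Phi)$ permutes the copies, freely for the same reason as in part (2).

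I would carry the steps out in this order: first establish the fibration $\mathcal B^t_{H^\circ} \to \{\text{parabolics with Levi }M^\circ\}$ and the count $[\mathcal W^{H^\circ}:\mathcal W^{M^\circ}]$; then specialise the fibre by imposing $x$ or $\Phi(B_2)$; then identify the stabiliser of a copy with $\Z_{M^\circ}(x)$ and deduce freeness; finally repeat verbatim for $G \supset H^\circ$. The standard reference point is that for a semisimple element $s$ in a connected reductive group $K$, the subgroup of Borels containing $s$ is a union of $\mathcal B_{\Z_K(s)^\circ}$-bundles indexed by the parabolics with Levi $\Z_K(s)^\circ$; this is classical (e.g.\ follows from \cite[\S 4.1]{SpringerSteinberg} together with the conjugacy of Borels of a Levi), and $t$ resp.\ $\im c^\fs$ play the role of $s$.

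The main obstacle, and the only place demanding genuine care, is the component-group bookkeeping in part (2): one must be sure that $\Z_{H^\circ}(t,x)$, and not merely its identity component, has stabiliser of each copy equal to $\Z_{M^\circ}(x)$, i.e.\ that no extra components of $\Z_{H^\circ}(t,x)$ fix a copy without lying in $M^\circ$. This is handled by observing that any element of $\Z_{H^\circ}(t)$ normalising a parabolic $P$ with Levi $M^\circ$ and centralising $t$ must lie in $P$, and then in the Levi $M^\circ$ itself because it centralises the central torus of $M^\circ$ through which $t$ is ``generic''; combined with Proposition~\ref{prop:S.1}(1), which already pins down the relevant component groups, this closes the gap. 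Once that is in hand, parts (1) and (3) are essentially formal, and part (2)'s freeness is immediate from the identification of the stabiliser.
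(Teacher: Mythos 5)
There is a genuine gap in part (1): the fibration over parabolics you build the argument on does not exist, and the count of parabolics with Levi $M^\circ$ is wrong. Take $H^\circ = \SL_3(\C)$ and $t = \mathrm{diag}(a,a,b)$ with $a \neq b$, so $M^\circ$ is the $(2,1)$-block Levi and $[\cW^{H^\circ}:\cW^{M^\circ}] = 3$. There are only \emph{two} parabolics of $\SL_3$ containing $T$ whose Levi component equals $M^\circ$ (the stabilisers of $\langle e_1,e_2\rangle$ and of $\langle e_3\rangle$), while $\mathcal B^t_{H^\circ}$ has three components. Moreover, for the third component (the flags $L_1 \subset \langle e_1,e_2\rangle$, $L_2 = L_1 \oplus \langle e_3\rangle$) the group generated by any of its Borels together with $M^\circ$ is all of $\SL_3$, so those Borels do not lie in any proper parabolic with Levi $M^\circ$; the map you want to fibre along simply is not defined there. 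Your counting identity conflates the $\cW^{H^\circ}$-orbit of a single such parabolic — which does have $[\cW^{H^\circ}:\cW^{M^\circ}]$ elements, the stabiliser being $\cW^{M^\circ}$ — with the set of parabolics whose Levi is exactly $M^\circ$; these differ because $wPw^{-1}$ has Levi $wM^\circ w^{-1}$, not $M^\circ$.

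The decomposition that actually works, and is what the paper uses (following \cite[p.~471]{CG}), is not a fibration over parabolics but a partition into $M^\circ$-orbits: the intersection map $B \mapsto B \cap M^\circ$ sends $\mathcal B^A_{H^\circ}$ to $\Flag\, M^\circ$, and the $M^\circ$-orbits $\mathcal B_1, \dots, \mathcal B_m$ coincide with the connected (and irreducible) components, on each of which $B \mapsto B \cap M^\circ$ is an $M^\circ$-equivariant bijection onto $\Flag\, M^\circ$. The count $m = [\cW^{H^\circ}:\cW^{M^\circ}]$ then comes from comparing the number of Borels through $T$ on both sides, not from counting parabolics. With that repaired, (2) also needs a different mechanism, since there is no ``standard parabolic'' whose stabiliser one can compute: since each $\mathcal B_i$ is $M^\circ$-equivariantly $\Flag\, M^\circ$, if $h \in M$ preserves $\mathcal B_i$ and sends $B$ to $B'$ there is $m \in M^\circ$ with $m^{-1}Bm = B' = hBh^{-1}$, so $mh$ normalises $B$; as $mh \in H^\circ$ this forces $mh \in B$, hence $mh \in B \cap \Z_{H^\circ}(t)$, which is connected (it equals $T\,U^t$ for $U$ the unipotent radical of $B$) and therefore inside $M^\circ$, whence $h \in M^\circ$. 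Part (3) is then the same argument one level up once (1) and (2) are repaired. Your overall strategy — decompose the fixed-point variety into copies of the smaller flag variety and compute the stabiliser of a copy — is the right one, but the parabolic fibration on which you tried to rest it is a false friend.
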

\begin{proof}
(1) Let $A$ be a subgroup of $T$ such that $M^\circ = \Z_{H^\circ} (A)^\circ$ and let 
$\mathcal B_{H^\circ}^A$ denote the variety of all Borel subgroups of $H^\circ$ 
which contain $A$. With an adaptation of \cite[p.471]{CG} we will prove that, for any 
$B \in \mathcal B_{H^\circ}^A ,\; B \cap M^0$ is a Borel subgroup of $M^0$. 

Since $B \cap M^\circ \subset B$ is solvable, it suffices to show that its Lie algebra
is a Borel subalgebra of Lie $M^\circ$. Write Lie $T = \mathfrak t$ and let 
\[
\text{Lie } H^\circ = \mathfrak n \oplus \mathfrak t \oplus \mathfrak n_- 
\]
be the triangular decomposition, where Lie $B = \mathfrak n \oplus \mathfrak t$.
Since $A \subset B$, it preserves this decomposition and
\begin{align*}
& \text{Lie } M^\circ = (\text{Lie } H )^A = 
\mathfrak n^A \oplus \mathfrak t \oplus \mathfrak n_-^A ,\\
& \text{Lie } B \cap M^\circ = \text{Lie } B^A = \mathfrak n^A \oplus \mathfrak t .
\end{align*}
The latter is indeed a Borel subalgebra of Lie $M^\circ$.
Thus there is a canonical map 
\begin{equation} \label{eqn:(7)} 
\mathcal B_{H^\circ}^A \to \Flag \, M^0, \quad B \mapsto B \cap M^0 .
\end{equation}
The group $M$ acts by conjugation on $\mathcal B_{H^\circ}^A$ and \eqref{eqn:(7)} 
clearly is $M$-equivariant. By \cite[p. 471]{CG} the $M^\circ$-orbits form a partition
\begin{equation}\label{eq:componentsBA}
\mathcal B_{H^\circ}^A = \mathcal B_1 \sqcup \mathcal B_2 \sqcup \cdots \sqcup \mathcal B_m .
\end{equation}
At the same time these orbits are the connected components of $\mathcal B_{H^\circ}^A$ 
and the irreducible components of the projective variety $\mathcal B_{H^\circ}^A$. 
The argument from \cite[p. 471]{CG} also shows that \eqref{eqn:(7)}, restricted to any one 
of these orbits, is a bijection from the $M^0$-orbit onto Flag $M^0$. 

The number of components $m$ can be determined as in the proof of
\cite[Corollary 3.12.a]{Ste1965}. The collection of Borel subgroups of $M^\circ$ that 
contain the maximal torus $T$ is in bijection with the Weyl group $\mathcal W^{M^\circ}$. 
Retracting via \eqref{eqn:(7)}, we find that every component $\mathcal B_i$ has precisely 
$|\mathcal W^{M^\circ}|$ elements that contain $T$. On the other hand, since $A \subset T ,\;
\mathcal B_{H^\circ}^A$ has $|\mathcal W^{H^\circ}|$ elements that contain $T$, so
\[
m = [\mathcal W^{H^\circ} : \mathcal W^{M^\circ}] .
\]
To obtain our desired isomorphisms of varieties, we let $A$ be the group generated by
$t$ and we restrict $\mathcal B_i \to \text{Flag } M^\circ$ to Borel subgroups
that contain $t,x$ (respectively $t,\Phi (B_2)$). \\ 
(2) By Proposition \ref{prop:S.1} 
\[
\Z_{H^\circ}(t,x) / \Z_{M^\circ}(x) \cong \Z_{H^\circ}(t, \Phi (B_2)) / \Z_{M^\circ}(\Phi (B_2)) .
\]
Since the former is a subgroup of $M / M^\circ$ and the copies under
consideration are in $M$-equivariant bijection with the components \eqref{eq:componentsBA},
it suffices to show that $M / M^\circ$ permutes these components freely.
Pick $B,B'$ in the same component $\mathcal B_i$ and assume that $B' = h B h^{-1}$ 
for some $h \in M$. Since $\mathcal B_i$ is $M^\circ$-equivariantly isomorphic 
to the flag variety of $M^\circ$ we can find $m \in M^\circ$ such that $B' = m^{-1} B m$.
Then $m h$ normalizes $B$, so $m h \in B$. As $B$ is connected, this implies
$m h \in M^\circ$ and $h \in M^\circ$. \\
(3) Apply the proofs of parts 1 and 2 with $A = \Phi (\mathbf I_F) ,\; G$ in the role of 
$H^\circ ,\; H^\circ$ in the role of $M^\circ$ and $t \Phi (B_2)$ in the role of $x$.
\end{proof}

\section{Comparison of different parameters}
\label{sec:comppar}

In the following sections we will make use of several different but related 
kinds of parameters. 
\vspace{2mm}

\noindent \textbf{Kazhdan--Lusztig--Reeder parameters (KLR parameters)}\\  
For a Langlands parameter as in \eqref{eqn:Phi}, the variety of Borel subgroups \\
$\mathcal B_G^{\Phi (\mathbf W_F \times B_2)}$ is nonempty, and the centralizer
$\Z_G (\Phi)$ of the image of $\Phi$ acts on it. Hence the group of components
$\pi_0 (\Z_G (\Phi))$ acts on the homology $H_* \big( \mathcal B_G^{\Phi (\mathbf W_F 
\times B_2)} ,\C \big)$. We call an irreducible representation $\rho$ of 
$\pi_0 (\Z_G (\Phi))$ geometric if it appears in $H_* \big( \mathcal B_G^{\Phi 
(\mathbf W_F \times B_2)} ,\C \big)$. We define a Kazhdan--Lusztig--Reeder parameter 
for $G$ to be a such pair $(\Phi,\rho)$. The group $G$ acts on these parameters by 
\begin{equation}\label{eq:defKLRparameter}
g \cdot (\Phi,\rho) = (g \Phi g^{-1}, \rho \circ \mathrm{Ad}_g^{-1}) 
\end{equation}
and we denote the corresponding equivalence class by $[\Phi,\rho ]_G$.
\vspace{2mm}

\noindent \textbf{Affine Springer parameters}\\ 
As before, suppose that $t \in G$ is semisimple and that $x \in \Z_G (t)$ is unipotent. 
Then $\Z_G (t,x)$ acts on $\mathcal B_G^{t,x}$ and $\pi_0 (\Z_G (t,x))$ acts on the
homology of this variety. In this setting we say that $\rho_1 \in \Irr \big( 
\pi_0 (\Z_G (t,x)) \big)$ is geometric if it appears in $H_{\top}( \mathcal B_G^{t,x} ,\C )$,
where $\top$ refers to highest degree in which the homology is nonzero, the real
dimension of $\mathcal B_G^{t,x}$.
We call such triples $(t,x,\rho_1)$ affine Springer parameters for $G$,
because they appear naturally in the representation theory of the affine Weyl group
associated to $G$. The group $G$ acts on such parameters by conjugation, and we 
denote the conjugacy classes by $[t,x,\rho_1]_G$.
\vspace{2mm}

\noindent \textbf{Kazhdan--Lusztig triples}\\ 
Next we consider a unipotent element $x \in G$ and a semisimple element $t_q \in G$ 
such that $t_q x t_q^{-1} = x^q$. As above, $\Z_G (t_q,x)$ acts on the variety
$\mathcal B_G^{t_q,x}$ and we call $\rho_q \in \Irr \big( \pi_0 (\Z_G (t_q,x)) \big)$
geometric if it appears in $H_* \big( \mathcal B_G^{t_q,x} ,\C \big)$. We refer to
triples $(t_q,x,\rho_q)$ of this kind as Kazhdan--Lusztig triples for $G$.
Again they are endowed with an obvious $G$-action and we denote the equivalence classes
by $[t_q,x,\rho_q]_G$.
\vspace{2mm}

We note that in all cases the representations of the component groups stem from
the action of $G$ on a variety of Borel subgroups. The centre of $G$ acts trivially
on such a variety, so in all three above cases an irreducible representation of the
appropriate component group can only be if all elements coming from $\Cent (G)$ 
act trivially.

In \cite{KL,R} there are some indications that these three kinds of parameters are
essentially equivalent. Proposition \ref{prop:S.1} allows us to make this precise
in the necessary generality.

\begin{lem}\label{lem:compareParameters}
Let $\fs$ be a Bernstein component in the principal series, associate 
$c^\fs \colon \fo_F^\times \to T$ 
to it as in Lemma \ref{lem:cBernstein} and write $H = \Z_G (c^\fs(\fo_F^\times))$.
There are natural bijections between $H^\circ$-equivalence classes of:
\begin{itemize}
\item Kazhdan--Lusztig--Reeder parameters for $G$ with 
$\Phi \big|_{\fo_F^\times} = c^\fs$ and\\ 
$\Phi (\varpi_F) \in H^\circ$;
\item affine Springer parameters for $H^\circ$;\
\item Kazhdan--Lusztig triples for $H^\circ$.
\end{itemize}
\end{lem}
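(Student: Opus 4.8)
The plan is to produce the two bijections in turn, both as consequences of the homotopy equivalences established in Proposition \ref{prop:S.1} together with the structural results of Section \ref{sec:Borel}, and then to chain them.

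\textbf{Step 1: from Kazhdan--Lusztig--Reeder parameters to affine Springer parameters.} Given $\Phi$ with $\Phi|_{\fo_F^\times}=c^\fs$ and $\Phi(\varpi_F)=t\in H^\circ$, put $x=\Phi\matje{1}{1}{0}{1}$, so that $(\Phi,\rho)\mapsto(t,x,\rho_1)$, where we must transport $\rho\in\Irr\pi_0(\Z_G(\Phi))$ to an irreducible representation of $\pi_0(\Z_{H^\circ}(t,x))$. First I would note that $\Z_G(\Phi)=\Z_H(\Phi)$, since $\Phi(\mathbf W_F)\subset T$ centralizes $\im c^\fs$; and by Proposition \ref{prop:UP}(3) the variety $\mathcal B_G^{\Phi(\mathbf W_F\times B_2)}$ is $[\mathcal W^G:\mathcal W^{H^\circ}]$ free copies of $\mathcal B_{H^\circ}^{t,\Phi(B_2)}$, permuted freely by $\Z_G(\Phi)/\Z_{H^\circ}(\Phi)$. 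Hence an irreducible representation of $\pi_0(\Z_G(\Phi))$ is geometric iff it is induced from a geometric representation of $\pi_0(\Z_{H^\circ}(\Phi))$, and $[\Phi,\rho]_G$-classes with $\Phi|_{\fo_F^\times}=c^\fs,\ \Phi(\varpi_F)\in H^\circ$ correspond bijectively to $\Z_{H^\circ}(\Phi)$-classes of geometric pairs for $H^\circ$. Then Proposition \ref{prop:S.1}(1) gives $\Z_{H^\circ}(\Phi)\hookrightarrow\Z_{H^\circ}(t,x)$ a homotopy equivalence, hence $\pi_0(\Z_{H^\circ}(\Phi))\cong\pi_0(\Z_{H^\circ}(t,x))$, and Proposition \ref{prop:S.1}(2) (together with $\mathcal B_{H^\circ}^{t,\Phi(B_2)}\to\mathcal B_{H^\circ}^{t,x}$ being a homotopy equivalence) identifies the geometric representations on both sides. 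This yields a natural bijection between the first and second lists; naturality is clear because every identification used is canonical up to the $H^\circ$-conjugacy one quotients by. One should also check that the Jacobson--Morozov construction of $\Phi$ from $(t,x)$ in \eqref{eqn:Phi} (using Kazhdan--Lusztig \cite[\S2.3]{KL}) gives a genuine inverse, and that $x$ automatically lies in $M^\circ=\Z_H(t)^\circ$ since $\gamma(\SL_2(\C))\subset M$ is connected; this is exactly the point already used in Section \ref{sec:Lp}.

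\textbf{Step 2: from affine Springer parameters to Kazhdan--Lusztig triples.} Here the bridge is the element $t_q=t\,\Phi(Y_{q^{1/2}})$ of \eqref{eq:S.12}, which satisfies $t_qxt_q^{-1}=x^q$ by \eqref{eq:tqx}, after fixing a square root $q^{1/2}$ (the residue-field cardinality $q$ is the relevant parameter). The assignment $(t,x,\rho_1)\mapsto(t_q,x,\rho_q)$ uses that $\Z_{H^\circ}(\Phi)\hookrightarrow\Z_{H^\circ}(t_q,x)$ is again a homotopy equivalence by Proposition \ref{prop:S.1}(1), so $\pi_0(\Z_{H^\circ}(t,x))\cong\pi_0(\Z_{H^\circ}(\Phi))\cong\pi_0(\Z_{H^\circ}(t_q,x))$, and the homotopy equivalence $\mathcal B_{H^\circ}^{t,\Phi(B_2)}=\mathcal B_{H^\circ}^{t_q,\Phi(B_2)}\to\mathcal B_{H^\circ}^{t_q,x}$ from Proposition \ref{prop:S.1}(2) matches up the geometric representations. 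The one subtlety is that $(t_q,x)$ does not determine $\Phi$, hence not $t$, uniquely on the nose: different Langlands parameters returning the same $(t_q,x)$ are $\Z_{H^\circ}(t_q,x)$-conjugate (again \cite[\S2.3.h]{KL}), which is precisely what is needed for the map on $H^\circ$-equivalence classes to be well defined and injective. Composing Steps 1 and 2 gives the full chain of natural bijections.

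\textbf{Main obstacle.} The routine part is the bookkeeping with homotopy equivalences and free permutation actions; the delicate part is \emph{naturality}, i.e. verifying that each bijection is canonical once one fixes $c^\fs$ (equivalently the $\cW$-orbit of $c^\fs$, via Lemma \ref{lem:cBernstein}) and passes to $H^\circ$-equivalence classes --- in particular that the identifications of component groups coming from the various homotopy equivalences in Proposition \ref{prop:S.1} are mutually compatible and independent of the auxiliary choices (the homomorphism $\gamma$ of \eqref{eqn:gamt}, the square root $q^{1/2}$, the representative $\Phi$). I expect this to be the place where one must invoke \cite[\S2.3]{KL} most carefully, since it is exactly the statement that the space of $\Phi$'s with prescribed $(t_q,x)$ is a single $\Z_{H^\circ}(t_q,x)$-orbit that makes all the choices immaterial at the level of equivalence classes.
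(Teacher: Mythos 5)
Your overall strategy—chaining the homotopy equivalences of Proposition \ref{prop:S.1} with the copy-counting of Proposition \ref{prop:UP} and the Kazhdan--Lusztig refinement of Jacobson--Morozov—is the same as the paper's, apart from a cosmetic difference: the paper first reduces to $H^\circ=G$ (using the observation that $\Phi(\SL_2(\C))\subset H^\circ$), whereas you carry the $G$-versus-$H^\circ$ comparison through Step 1 via Proposition \ref{prop:UP}.3. That part is fine.

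However, there is a genuine gap in both Step 1 and Step 2, at the point where you write that the homotopy equivalences ``identify the geometric representations on both sides'' and ``match up the geometric representations.'' The three notions of ``geometric'' are not formulated in the same homological degree: for a KLR parameter, $\rho$ must appear in $H_*\big(\mathcal B_G^{\Phi(\mathbf W_F\times B_2)},\C\big)$ (total homology), and likewise for a Kazhdan--Lusztig triple; but for an affine Springer parameter, $\rho_1$ must appear in $H_\top\big(\mathcal B_G^{t,x},\C\big)$ (\emph{top} degree only). A homotopy equivalence identifies total homologies but says nothing about which irreducible constituents of the component-group action occur in the top degree of the target variety, which is a strictly smaller graded piece. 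So after using Proposition \ref{prop:S.1}.2 you have only matched representations appearing somewhere in $H_*$; to conclude that these coincide with the geometric ones in the affine Springer sense, you need the nontrivial fact that any $\rho_1$ appearing in $H_*\big(\mathcal B_{H^\circ}^{t,x},\C\big)$ already appears in the top-degree piece. The paper supplies this by invoking Propositions \ref{prop:S.1} and \ref{prop:UP} together with a result of Shoji on Green functions; without that input your bijections are only well-defined on a priori different sets of enhancements, and Steps 1 and 2 do not close up.
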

\begin{proof}
Since $\SL_2 (\C)$ is connected and commutes with $\fo_F^\times$, its image under $\Phi$ must
be contained in the connected component of $H$. Therefore KLR-parameters with these 
properties are in canonical bijection with KLR parameters for $H^\circ$ and it suffices to 
consider the case $H^\circ = G$.

As in \eqref{eqn:Phi} and \eqref{eq:S.12}, any KLR parameter gives rise to the 
ingredients $t,x$ and $t_q$ for the other two kinds of parameters. As we discussed
after \eqref{eqn:Phi}, the pair $(t,x)$ is enough to recover the conjugacy class
of $\Phi$. A refined version of the Jacobson--Morozov theorem says that the same
goes for the pair $(t_q,x)$, see \cite[\S 2.4]{KL} or \cite[Section 4.2]{R}.

To complete $\Phi, (t,x)$ or $(t_q,x)$ to a parameter of the appropriate kind,
we must add an irreducible representation $\rho ,\rho_1$ or $\rho_q$.
For the affine Springer parameters it does not matter whether we 
consider the total homology or only the homology in top degree. Indeed, it follows 
from Propositions \ref{prop:S.1} and \ref{prop:UP} and \cite[bottom of page~296 and 
Remark 6.5]{Shoji} that any irreducible representation $\rho_1$ which appears in 
$H_* \big( \mathcal B_G^{t,x} ,\C \big)$, already appears 
in the top homology of this variety.

This and Proposition \ref{prop:S.1} show that there is a natural correspondence 
between the possible ingredients $\rho,\rho_1$ and $\rho_q$. 
\end{proof}

\section{The affine Springer correspondence}
\label{sec:affSpringer}

An interesting instance of Section \ref{sec:celldec} arises when $M$ is the 
centralizer of a semisimple element $t$ in a connected reductive 
complex group $G$. As before we assume that $t$ lies in a maximal torus $T$ of 
$G$ and we write $\mathcal W^G = W(G,T)$. By Lemma \ref{lem:centrals}
\begin{equation}\label{eq:S.8}
\mathcal W^M := \Nor_M (T) / \Z_M (T) \cong \cW^{M^\circ} \rtimes \pi_0 (M) 
\end{equation}
is the stabilizer of $t$ in $\mathcal W^G$, so the role of $\Gamma$ is played by the 
component group $\pi_0 (M)$. In contrast to the setup in Section \ref{sec:celldec},
it is possible that some elements of $\pi_0 (M) \setminus \{1\}$ fix $W$ pointwise.
This poses no problems however, as such elements never act trivially on $T$. 
For later use we record the following consequence of \eqref{eq:S.9}:
\begin{equation}\label{eq:23.7}
\pi_0 (M)_{\tau (x,\rho)} \cong \big( \Z_M (x) / \Z_{M^\circ}(x) \big)_\rho .
\end{equation}
Recall from Section \ref{sec:extquot} that 
\begin{align*}
& \widetilde{T}_2: = \{(t,\sigma) \,:\, t \in T, \sigma \in \Irr(\cW_t^G)\}, \\
&(T\q \cW^G)_2: = \widetilde{T}_2/\cW^G.
\end{align*}
We note that the rational characters of the complex torus $T$ span the regular 
functions on the complex variety $T$:
\[
\mathcal{O}(T) = \C [X^*(T)].
\]
From \eqref{EXT1}, \eqref{EXT2}, Lemma \ref{lem:Clifford_abelian} and 
Proposition \ref{prop:S.2} we infer the 
following rough form of the extended Springer correspondence for the affine 
Weyl group $X^*(T) \rtimes \cW^G$. 

\begin{thm} 
There are bijections
\[
(T\q \cW^G )_2 \simeq \Irr \, (X^*(T) \rtimes \cW^G ) \simeq 
\{(t,\tau(x, \varrho) \rtimes \psi) \}/\cW^G
\]
with $t \in T, \tau(x, \varrho) \in
\Irr \,\cW^{M^0}, \psi \in \Irr(\pi_0(M)_{\tau (x,\varrho)})$.
\end{thm}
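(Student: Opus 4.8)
The plan is to assemble the theorem from pieces already established in the excerpt, in two stages: first the left-hand bijection $(T\q\cW^G)_2\simeq\Irr(X^*(T)\rtimes\cW^G)$, then the right-hand one. For the first stage, I would start from the observation $\cO(T)=\C[X^*(T)]$, so that $X^*(T)\rtimes\cW^G$ is (Morita equivalent to, in fact equal as an algebra to) the crossed product $\cO(T)\rtimes\cW^G$. Then Lemma~\ref{lem:Clifford_algebras}, applied with $R=\cO(T)$ and $\Gamma=\cW^G$, gives $\Irr(\cO(T)\rtimes\cW^G)\simeq(\Irr\,\cO(T)\q\cW^G)_2^{\square}$; but $\Irr\,\cO(T)=T$ and all simple $\cO(T)$-modules are one-dimensional (they are evaluation at points of $T$), so the naturality clause of Lemma~\ref{lem:Clifford_algebras} applies and the cocycle $\square$ is trivial. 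Hence $\Irr(X^*(T)\rtimes\cW^G)\simeq(T\q\cW^G)_2$, which by \eqref{EXT1} is exactly $\{(t,\sigma):t\in T,\ \sigma\in\Irr(\cW^G_t)\}/\cW^G$. This step is essentially bookkeeping, invoking Lemma~\ref{lem:Clifford_abelian}/\ref{lem:Clifford_algebras} and the identification of $\cW^G_t$ via Lemma~\ref{lem:centrals}.

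For the second stage I would unwind $\Irr(\cW^G_t)$ for a fixed $t$ with $M=\Z_G(t)$. By Lemma~\ref{lem:centrals}, $\cW^G_t=\Nor_M(T)/T\cong\cW^{M^\circ}\rtimes\pi_0(M)$, so $\Gamma=\pi_0(M)$ plays the role of the diagram-automorphism group $\Gamma$ from Section~\ref{sec:celldec} (as spelled out at the start of Section~\ref{sec:affSpringer}; the caveat that some nontrivial elements of $\pi_0(M)$ may act trivially on $\cW^{M^\circ}$ is harmless because they still act nontrivially on $T$, hence the semidirect-product description stays correct). Now Proposition~\ref{prop:S.2} gives a bijection $\Irr(\cW^{M})\simeq(\Irr(\cW^{M^\circ})\q\pi_0(M))_2$ with \emph{trivial} attached $2$-cocycles, and under the Springer correspondence \eqref{eqn:Springercor} the set $\Irr(\cW^{M^\circ})$ is indexed by pairs $(x,\varrho)$ with $x\in M^\circ$ unipotent and $\varrho\in\Irr(A_x)$ geometric, modulo $M^\circ$-conjugacy. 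Feeding this indexing into the extended quotient of the second kind, an element of $(\Irr(\cW^{M^\circ})\q\pi_0(M))_2$ is a $\pi_0(M)$-orbit of pairs $\big(\tau(x,\varrho),\psi\big)$ with $\psi\in\Irr\big(\pi_0(M)_{\tau(x,\varrho)}\big)$, and by \eqref{eq:23.7} the stabilizer $\pi_0(M)_{\tau(x,\varrho)}$ is $(\Z_M(x)/\Z_{M^\circ}(x))_\varrho$. Via Notation~\ref{not:rtimes} this irreducible $\cW^M$-representation is precisely $\tau(x,\varrho)\rtimes\psi$.

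Finally I would glue the two stages: running over all $t\in T$ and taking the quotient by $\cW^G$, the data $(t,\sigma)$ with $\sigma\in\Irr(\cW^G_t)$ become exactly the data $(t,\tau(x,\varrho)\rtimes\psi)$ with $x\in M^\circ=\Z_G(t)^\circ$ unipotent, $\varrho\in\Irr(A_x)$ geometric, and $\psi\in\Irr(\pi_0(M)_{\tau(x,\varrho)})$, all modulo the diagonal $\cW^G$-action — which is the third set in the statement. The compatibility of the $\cW^G$-action on $\widetilde T_2$ with the decomposition of each fibre $\Irr(\cW^G_t)$ is guaranteed because conjugation by $w\in\cW^G$ carrying $t$ to $t'$ carries $M=\Z_G(t)$ isomorphically to $\Z_G(t')$ and intertwines all the Springer-theoretic data, as in the $\Gamma$-equivariance arguments of Proposition~\ref{prop:S.2}.

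\textbf{Main obstacle.} The genuinely delicate point is the triviality of the $2$-cocycles: on the affine (crossed-product) side this is Lemma~\ref{lem:Clifford_abelian}/\ref{lem:Clifford_algebras}, immediate because $\cO(T)$ has one-dimensional simples; on the finite side it is Proposition~\ref{prop:S.2}, whose proof rests on the Borel--Moore homology realization and the splitting of Lemma~\ref{lem:S.3}. Since both are already proved in the excerpt, the only real work in this theorem is checking that these two extended-quotient pictures are the \emph{same} one — i.e.\ that the $\pi_0(M)$-action on $\Irr(\cW^{M^\circ})$ induced from the crossed product $\cO(T)\rtimes\cW^G$ (restricted over $t$) agrees with the one coming from the Springer correspondence — and tracking the stabilizer identification \eqref{eq:23.7} so that $\sigma$ really does take the form $\tau(x,\varrho)\rtimes\psi$. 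Everything else is a diagram chase through \eqref{EXT1}, \eqref{EXT2}, Lemma~\ref{lem:Clifford_abelian}, and Proposition~\ref{prop:S.2}.
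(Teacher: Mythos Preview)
Your proposal is correct and follows essentially the same approach as the paper: the paper's proof is literally the one-line assertion that the theorem follows from \eqref{EXT1}, \eqref{EXT2}, Lemma~\ref{lem:Clifford_abelian} and Proposition~\ref{prop:S.2}, and you have accurately unpacked exactly these ingredients (using the algebra version Lemma~\ref{lem:Clifford_algebras} with one-dimensional simples for the first bijection, which is the appropriate formulation here). Your additional remarks on the stabilizer identification \eqref{eq:23.7} and the compatibility of the $\cW^G$-action are helpful elaborations but go slightly beyond what the paper records.
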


Now we recall the geometric realization of irreducible representations of 
$X^* (T) \rtimes \mathcal W^G$ by Kato \cite{Kat}. 
For a unipotent element $x \in M^\circ$ let $\mathcal B^{t,x}_G$ be the variety of Borel 
subgroups of $G$ containing $t$ and $x$. Fix a Borel subgroup $B$ of $G$ containing $T$ 
and let $\theta_{G,B} : \mathcal B^{t,x}_G \to T$ be the morphism defined by
\begin{equation}\label{eq:thetaGB}
\theta_{G,B}(B') = g^{-1} t g \text{ if } B' = g B g^{-1} \text{ and } t \in g T g^{-1}.
\end{equation}
The image of $\theta_{G,B}$ is $\mathcal W^G t$, the map is constant on the irreducible
components of $\mathcal B^{t,x}_G$ and it gives rise to an action of $X^* (T)$ on the 
homology of $\mathcal B^{t,x}_G$. Furthermore $\Q [ \mathcal W^G ] \cong H (\mathcal Z_G)$ 
acts on $H_{d(x)}(\mathcal B^{t,x}_G,\C)$ via the convolution product in Borel--Moore 
homology, as described in \eqref{eq:S.5}. Both actions commute with the action of 
$\Z_G (t,x)$ induced by conjugation of Borel subgroups. By homotopy invariance, the latter
action factors through $\pi_0 (\Z_G (t,x))$.

Let $\rho_1 \in \Irr \big( \pi_0 (\Z_G (t,x)) \big)$. By \cite[Theorem 4.1]{Kat} the 
$X^* (T) \rtimes \mathcal W^G$-module 
\begin{equation}\label{eq:KatoMod}
\tau (t,x,\rho_1) := 
\mathrm{Hom}_{\pi_0 (\Z_G (t,x))} \big( \rho_1, H_{d(x)}(\mathcal B^{t,x}_G,\C) \big)  
\end{equation}
is either irreducible or zero. Moreover every irreducible representation of 
$X^* (T) \rtimes \mathcal W^G$ is obtained is in this way, and the data $(t,x,\rho_1)$
are unique up to $G$-conjugacy. This generalizes the Springer correspondence for
finite Weyl groups, which can be recovered by considering the representations on 
which $X^* (T)$ acts trivially.

Propositions \ref{prop:S.2} and \ref{prop:UP} shine some new light on this:

\begin{thm}\label{thm:S.3}
\begin{enumerate}
\item There are bijections between the following sets:
\begin{itemize} 
\item $\mathrm{Irr}(X^* (T) \rtimes \mathcal W^G) = 
\mathrm{Irr}(\mathcal O (T) \rtimes \mathcal W^G)$;
\item $(T // \mathcal W^G )_2  = \big\{ (t, \tilde \tau ) \mid t \in T , 
\tilde \tau \in \mathrm{Irr}(\mathcal W^M) \big\} / \mathcal W^G$;
\item $\big\{ (t, \tau, \sigma) \mid t \in T , \tau \in \mathrm{Irr}(\mathcal W^{M^\circ}), 
\sigma \in \mathrm{Irr}(\pi_0 (M)_\tau) \big\} / \mathcal W^G$;
\item $\big\{ (t,x,\rho,\sigma) \mid t \in T , x \in M^\circ \, \mathrm{unipotent} , 
\rho \in \mathrm{Irr} \big( \pi_0 (\Z_{M^\circ}(x)) \big) \\ \mathrm{geometric}, 
\sigma \in \mathrm{Irr} \big( \pi_0 (M)_{\tau (x,\rho)} \big) \big\} / G$;
\item $\big\{ (t,x,\rho_1) \mid t \in T, x \in M^\circ \, \mathrm{unipotent}, 
\rho_1 \in \mathrm{Irr} \big( \pi_0 (\Z_G (t,x)) \big) \\ \mathrm{geometric} \big\} / G$.
\end{itemize}
Here a representation of $\pi_0 (\Z_{M^\circ}(x))$ (or $\pi_0 (\Z_G (t,x))$) is 
called geometric if it appears in $H_{d(x)}(\mathcal B^x_{M^\circ} ,\C)$ (respectively 
$H_{d(x)}(\mathcal B^{t,x}_G,\C)$). Apart from the third and fourth sets, these bijections
are natural.
\item The $X^*(T) \rtimes \mathcal W^G$-representation corresponding to $(t,x,\rho_1)$ 
via these bijections is Kato's module \eqref{eq:KatoMod}.
\end{enumerate}
\end{thm}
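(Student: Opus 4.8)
The plan is to assemble the five bijections by chaining together results already established in the excerpt, and then to identify the resulting composite with Kato's module. For part (1): the equality of the first two sets, $\Irr(X^*(T) \rtimes \cW^G) = \Irr(\cO(T) \rtimes \cW^G)$ with $(T \q \cW^G)_2$, is exactly the content of Lemma~\ref{lem:Clifford_algebras} applied to $R = \cO(T) = \C[X^*(T)]$ (a commutative algebra, all of whose simple modules are one-dimensional, so the naturality clause applies) together with the description \eqref{EXT1} of $(T \q \cW^G)_2$ as $\{(t,\sigma) : t \in T, \sigma \in \Irr(\cW^G_t)\}/\cW^G$. Here I use that the $\cW^G$-orbit of $t$ has isotropy group $\cW^G_t = \Nor_M(T)/T \cong \cW^{M^\circ} \rtimes \pi_0(M) = \cW^M$ by Lemma~\ref{lem:centrals}, with $M = \Z_G(t)$; this rewrites the second set in the $\tilde\tau \in \Irr(\cW^M)$ form. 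The passage from the second set to the third is Lemma~\ref{lem:Clifford} (Clifford theory for $\cW^{M^\circ} \rtimes \pi_0(M)$) applied fiberwise over each $t$, using \eqref{EXT2}, together with Proposition~\ref{prop:S.2}, which asserts that the relevant $2$-cocycles $\natural(\tau)$ all vanish — so the twisted extended quotient collapses to the ordinary extended quotient of the second kind, parametrized by pairs $(\tau, \sigma)$ with $\sigma \in \Irr(\pi_0(M)_\tau)$.

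Next, the bijection between the third set and the fourth is obtained by applying the classical Springer correspondence \eqref{eqn:Springercor} for $\cW^{M^\circ}$ inside each fiber: it replaces $\tau \in \Irr(\cW^{M^\circ})$ by a pair $(x,\rho)$ with $x \in M^\circ$ unipotent and $\rho \in \Irr(\pi_0(\Z_{M^\circ}(x)))$ geometric, and by \eqref{eq:23.7} the isotropy group $\pi_0(M)_{\tau(x,\rho)}$ coincides with $\big(\Z_M(x)/\Z_{M^\circ}(x)\big)_\rho$, so the $\sigma$-parameter transports correctly. Finally, the bijection between the fourth and fifth sets is Lemma~\ref{lem:compareParameters} — or, more precisely, the core geometric input behind it: Proposition~\ref{prop:S.1} and Proposition~\ref{prop:UP}, which compare the varieties $\mathcal B^x_{M^\circ}$ and $\mathcal B^{t,x}_{H^\circ}$ (here with $H^\circ = \Z_G(t)^\circ = M^\circ$, so $H^\circ$ plays no extra role and we compare $\mathcal B^x_{M^\circ}$ directly with $\mathcal B^{t,x}_G$). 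Part~(1) of Proposition~\ref{prop:UP} says $\mathcal B^{t,x}_G$ is $[\cW^G : \cW^{M^\circ}]$ disjoint copies of $\mathcal B^x_{M^\circ}$, permuted freely by $\Z_G(t,x)/\Z_{M^\circ}(x)$ by part~(2); an induction/restriction (Clifford-theoretic) argument over this free action then matches pairs $(x, \rho, \sigma)$ with geometric $\rho_1 \in \Irr(\pi_0(\Z_G(t,x)))$. The claim about which bijections are natural follows by tracking which steps invoke the (non-canonical) splitting of Lemma~\ref{lem:S.3}: only the Springer-correspondence step and its twisted variant require it, so the first, second and fifth sets are naturally in bijection while the third and fourth are not.

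For part (2): I would compute the $X^*(T) \rtimes \cW^G$-module attached to $(t,x,\rho_1)$ by unwinding the chain of bijections just described, exactly as in the proof of Theorem~\ref{thm:SpringerExtended}. The key point is that at each stage the module is built from the top Borel--Moore homology of a variety of Borel subgroups via a $\Hom$ over the appropriate component group, and the convolution action of $\Q[\cW^G] \cong H(\mathcal Z_G)$ together with the $X^*(T)$-action coming from $\theta_{G,B}$ of \eqref{eq:thetaGB} are compatible with the identifications in Propositions~\ref{prop:S.1} and~\ref{prop:UP}. Passing from the copies of $\mathcal B^x_{M^\circ}$ back up to $\mathcal B^{t,x}_G$ corresponds under the dictionary to inducing from $\cW^{M^\circ}$-modules to $\cW^G$-modules and tensoring with the $X^*(T)$-action localized at the $\cW^G$-orbit of $t$; carrying out this bookkeeping identifies the output with
\[
\Hom_{\pi_0(\Z_G(t,x))}\big(\rho_1, H_{d(x)}(\mathcal B^{t,x}_G, \C)\big) = \tau(t,x,\rho_1),
\]
which is Kato's module \eqref{eq:KatoMod}. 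The equivalence with Kato's original parametrization and the irreducibility then follow from \cite[Theorem~4.1]{Kat}.

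The main obstacle I anticipate is part (2): verifying that the abstract Clifford-theoretic and Springer-theoretic identifications in part (1) are actually realized at the level of the specific homological modules, i.e. that inducing the $\mathcal W^{M^\circ}$-module $\tau(x,\rho)$ up to $\mathcal W^G$ and equipping it with the $X^*(T)$-action via $\theta_{G,B}$ reproduces $H_{d(x)}(\mathcal B^{t,x}_G,\C)$ cut by $\rho_1$. This requires a careful compatibility check between the convolution product on $H(\mathcal Z_G)$, the geometric decomposition of $\mathcal B^{t,x}_G$ into copies of $\mathcal B^x_{M^\circ}$, and the free $\pi_0$-action — essentially an equivariant refinement of the arguments in \cite[\S3.4--3.5]{CG} and \cite[Theorem~4.1]{Kat}, combined with the homotopy equivalences of Proposition~\ref{prop:S.1}. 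The proof of Theorem~\ref{thm:SpringerExtended} provides the template; the new ingredient here is threading the $X^*(T)$-action through that argument, which is where I would spend the most care.
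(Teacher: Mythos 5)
Your overall architecture matches the paper's: Lemma~\ref{lem:Clifford_algebras} for sets $1\leftrightarrow 2$, Proposition~\ref{prop:S.2} for $2\leftrightarrow 3$, the Springer correspondence for $3\leftrightarrow 4$, and Propositions~\ref{prop:S.1}/\ref{prop:UP} plus Clifford theory for $4\leftrightarrow 5$. However, there are a few points to flag.

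First, a minor misidentification in the $4\leftrightarrow 5$ step: you write ``here with $H^\circ = \Z_G(t)^\circ = M^\circ$'', but to apply Proposition~\ref{prop:UP}.1 so that it yields $\mathcal B^{t,x}_G$ as $[\cW^G:\cW^{M^\circ}]$ copies of $\mathcal B^x_{M^\circ}$, you must take $H^\circ = G$ (the connected group $G$ itself, obtained from $\Phi(\mathbf I_F)=1$), not $M^\circ$. Your conclusion is right but the parenthetical is wrong. More substantively, the paper's actual $4\leftrightarrow 5$ argument hinges on three ingredients you only gesture at: the semidirect product decomposition $\pi_0(\Z_G(t,x)) \cong \pi_0(\Z_{M^\circ}(x)) \rtimes \pi_0(M)_{[x]_{M^\circ}}$ as in~\eqref{eq:N.1}, the identification $\pi_0(M)_{\tau(x,\rho)} = \pi_0(M)_{[x,\rho]_{M^\circ}}$ from~\eqref{eq:S.9} (which transports your $\sigma$-parameter), and the fact from~\cite[\S 3.1]{R} that $\Z_G(t,x)/\Z_{M^\circ}(x)$ is \emph{abelian}, which is what makes $\mathrm{Ind}_{\pi_0(M)_{[x,\rho]_{M^\circ}}}^{\pi_0(M)_{[x]_{M^\circ}}}(\sigma)$ appear with multiplicity one in the regular representation and hence yields the clean formula~\eqref{eq:S.34}. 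Without the abelianness observation the Clifford-theoretic match of $(x,\rho,\sigma)$ with $\rho_1$ is not automatic.

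Second, your naturality reasoning is an elimination argument (``only steps $2\!\to\!3$ and $3\!\to\!4$ invoke the non-canonical splitting''), but that does not by itself prove the composite $2\!\to\!5$ is canonical --- a composite of two non-canonical maps can perfectly well depend on the choices even if the first and last sets admit some natural correspondence. The paper instead invokes Theorem~\ref{thm:SpringerExtended} directly, which supplies a canonical bijection from $(x,\rho_1)$-pairs to $\Irr(\cW^M)$ independent of the splitting; that is the actual reason the $2\leftrightarrow 5$ bijection is natural. You should cite it explicitly rather than argue by exclusion.

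Third, and most significantly, your treatment of part~(2) is missing the decisive technical input. You anticipate this yourself (``the main obstacle I anticipate''), and your plan to re-derive the compatibility by an equivariant refinement of~\cite[\S 3.4--3.5]{CG} would amount to re-proving a result that the paper simply cites: \cite[Proposition~6.2]{Kat}, which gives the isomorphism
\[
H_*(\mathcal B^{t,x}_G,\C) \cong \mathrm{Ind}_{X^*(T)\rtimes\cW^{M^\circ}}^{X^*(T)\rtimes\cW^G}\big(H_*(\mathcal B^x_{M^\circ},\C)\big)
\]
as $\Z_G(t,x)\times \big(X^*(T)\rtimes\cW^G\big)$-representations (recorded as~\eqref{eq:S.35} in the paper). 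Once that is in hand, part~(2) is a short bookkeeping exercise comparing with the induced module~\eqref{eq:S.28} produced by the bijections in part~(1). Without it your argument is not complete; with it the ``main obstacle'' disappears. So the right fix is simply to replace your proposed equivariant re-derivation by a citation to Kato's Proposition~6.2.
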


We remark that in the fourth and fifth sets it would be more natural to allow $t$ to be any
semisimple element of $G$. In fact that would give the affine Springer parameters from
Lemma \ref{lem:compareParameters}.
Clearly $G$ acts on the set of such more general parameters
$(t,x,\rho,\sigma)$ or $(t,x,\rho_1)$, which gives equivalence relations $/G$. The two above 
$/G$ refer to the restrictions of these equivalence relations to parameters with $t \in T$.

\begin{proof} 
(1) Recall that the isotropy group of $t$ in $\mathcal W^G$ is 
\[
\mathcal W^G_t = \mathcal W^M = \mathcal W^{M^\circ} \rtimes \pi_0 (M) .
\]
Hence the bijection between the first two sets is an instance of Clifford theory, see 
Lemma \ref{lem:Clifford_algebras}. The second and third sets are 
in bijection by Proposition \ref{prop:S.2}.
The Springer correspondence for $\mathcal W^{M^\circ}$ provides the bijection with the
fourth collection. To establish a bijection with the fifth collection, we first 
observe that
\begin{equation}\label{eq:N.1}
\begin{split}
\pi_0 (\Z_G (t,x)) = \pi_0 (\Z_M (x)) \cong 
\pi_0 \big( \Z_{M^\circ} (x) \rtimes \pi_0 (M)_{[x]_{M^\circ}} \big) \\
= \pi_0 (\Z_{M^\circ}(x)) \rtimes \pi_0 (M)_{[x]_{M^\circ}} .
\end{split}
\end{equation}
Furthermore $\pi_0 (M)_{\tau (x,\rho)} = \pi_0 (M)_{[x,\rho]_{M^\circ}}$ by \eqref{eq:S.9}. 
From that and Proposition \ref{prop:S.2} it follows that every irreducible representation 
of \eqref{eq:N.1} is of the form $\rho \rtimes \sigma$ (see Notation \ref{not:rtimes}), 
with $\rho$ and $\sigma$ as in the fourth set. By Proposition \ref{prop:UP}
\begin{equation}\label{eq:S.26}
H_* (\mathcal B^{t,x}_G, \C) \cong H_* (\mathcal B^x_{M^\circ} ,\C) \otimes 
\C [\Z_G (t,x) / \Z_{M^\circ}(x)] \otimes \C^{ [\mathcal W^G : \mathcal W^G_t]}
\end{equation}
as $\Z_G (t,x)$-representations. By \cite[\S 3.1]{R} 
\[
\Z_G (t,x) / \Z_{M^\circ}(x) \cong \pi_0 (M)_{[x]_{M^\circ}}
\] 
is abelian. Hence 
$\mathrm{Ind}_{\pi_0 (M)_{[x,\rho]_{M^\circ}}}^{\pi_0 (M)_{[x]_{M^\circ}}} (\sigma)$ 
appears exactly once in the regular representation of this group and
\begin{multline}\label{eq:S.34}
\mathrm{Hom}_{\pi_0 (\Z_G(t,x))} \big( \rho \rtimes \sigma ,H_{d(x)} 
(\mathcal B^{t,x}_G, \C) \big) \cong \\
\mathrm{Hom}_{\pi_0 (\Z_{M^\circ}(x))} \big( \rho ,H_{d(x)} (\mathcal B^x_{M^\circ}, 
\C) \big) \rtimes \sigma \otimes \C^{[\mathcal W^G : \mathcal W^G_t]} .
\end{multline}
In particular we see that $\rho$ is geometric if and only if $\rho \rtimes \sigma$ is 
geometric, which establishes the final bijection. Now the resulting bijection between
the second and fifth sets is natural by Theorem \ref{thm:SpringerExtended}.\\
(2) The $X^* (T) \rtimes \mathcal W^G$-representation constructed from 
$(t,x,\rho \rtimes \sigma)$ by means of our bijections is
\begin{equation}\label{eq:S.28}
\mathrm{Ind}_{X^*(T) \rtimes \mathcal W^G_t}^{X^*(T) \rtimes \mathcal W^G} 
\big( \mathrm{Hom}_{\pi_0 (\Z_{M^\circ}(x))} 
\big( \rho ,H_{d(x)} (\mathcal B^x_{M^\circ}, \C) \big) \rtimes \sigma \big) .
\end{equation}
On the other hand, by \cite[Proposition 6.2]{Kat}
\begin{multline} \label{eq:S.35}
H_* (\mathcal B^{t,x}_G,\C) \cong \mathrm{Ind}_{X^*(T) \rtimes 
\mathcal W^{M^\circ}}^{X^*(T) \rtimes \mathcal W^G} (H_* (\mathcal B^x_{M^\circ} ,\C)) \\
\cong \mathrm{Ind}_{X^*(T) \rtimes \mathcal W^G_t}^{X^*(T) \rtimes \mathcal W^G} 
\big( H_* (\mathcal B^x_{M^\circ} ,\C) \otimes \C [\Z_G (t,x) / \Z_{M^\circ}(x)] \big)
\end{multline}
as $\Z_G (t,x) \times X^* (T) \rtimes \mathcal W^G$-representations. Together with
the proof of part 1 this shows that $\tau (t,x,\rho \rtimes \sigma)$
is isomorphic to \eqref{eq:S.28}. 
\end{proof}

We can extract a little more from the above proof. Recall that $\cO_x$ 
denotes the conjugacy class of $x$ in $M$. Let us agree that the 
affine Springer parameters with a fixed $t \in T$ are partially ordered by
\[
(t,x,\rho_1) < (t,x',\rho'_1) \quad \text{when} \quad
\overline{\cO_x} \subsetneq \overline{\cO_{x'}} .
\]
\begin{lem}\label{lem:S.7}
There exist multiplicities $m_{t,x,\rho_1,x',\rho'_1} \in {\mathbb Z}_{\geq 0}$ such that
\begin{multline*}
\mathrm{Hom}_{\pi_0 (\Z_G(t,x))} \big( \rho_1 ,
H_* (\mathcal B^{t,x}_G, \C) \big) \cong \\
\tau (t,x,\rho_1) \oplus \bigoplus_{(t,x',\rho'_1) > (t,x,\rho_1)}
m_{t,x,\rho_1,x',\rho'_1} \,\tau (t,x',\rho'_1) .
\end{multline*}
\end{lem}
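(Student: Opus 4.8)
The plan is to reduce, via Kato's geometric realisation and the extended Springer correspondence of Theorem~\ref{thm:S.3}, to the decomposition already proved in Lemma~\ref{lem:S.6}. Write $\rho_1 = \rho \rtimes \sigma$ with $\rho \in \Irr (A_x)$ geometric and $\sigma \in \Irr (\pi_0 (M)_{\tau (x,\rho)})$, as produced by the proof of Theorem~\ref{thm:S.3}(1). The first step is to establish the full-homology analogue of \eqref{eq:S.28}: an isomorphism of $X^* (T) \rtimes \cW^G$-modules
\[
\Hom_{\pi_0 (\Z_G (t,x))} \big( \rho_1, H_* (\mathcal B^{t,x}_G,\C) \big) \;\cong\;
\Ind_{X^* (T) \rtimes \cW^G_t}^{X^* (T) \rtimes \cW^G} \big( \Hom_{A_x} (\rho, H_* (\mathcal B^x_{M^\circ},\C)) \rtimes \sigma \big) .
\]
This is obtained by running the computation in the proof of Theorem~\ref{thm:S.3}(2) with $H_*$ in place of $H_{d(x)}$: its inputs are Kato's isomorphism \eqref{eq:S.35}, the decomposition \eqref{eq:S.26}, and the fact that $\Z_G (t,x) / \Z_{M^\circ}(x)$ is abelian, all of which are stated for the full Borel--Moore homology, so $\Hom_{\pi_0(\Z_G(t,x))}(\rho_1,-)$ may be pulled inside the induction and then evaluated by Frobenius reciprocity exactly as for \eqref{eq:S.34}. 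Restricting the same isomorphism to top degree recovers \eqref{eq:S.28}, and hence the identity $\tau (t,x',\rho'_1) \cong \Ind_{X^* (T) \rtimes \cW^G_t}^{X^* (T) \rtimes \cW^G} \big( \tau (x',\rho') \rtimes \sigma' \big)$ for every affine Springer parameter $(t,x',\rho'_1 = \rho' \rtimes \sigma')$.

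The second step is to observe that the $\cW^M = \cW^{M^\circ} \rtimes \pi_0 (M)$-module $\Hom_{A_x} (\rho, H_* (\mathcal B^x_{M^\circ},\C)) \rtimes \sigma$ inside the induction is, unwinding Notation~\ref{not:rtimes}, nothing but $\Ind_{\cW^{M^\circ} \rtimes \pi_0 (M)_{\tau (x,\rho)}}^{\cW^M} \big( \Hom_{A_x} (\rho, H_* (\mathcal B^x_{M^\circ},\C)) \otimes \sigma \big)$ --- using $\sigma \cong \sigma^*$ and $\pi_0 (M)_{\tau (x,\rho)} = \Gamma_{[x,\rho]_{M^\circ}}$ from \eqref{eq:S.9} --- which is precisely the left-hand side of Lemma~\ref{lem:S.6}. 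That lemma therefore decomposes it, as a $\cW^M$-module, into
\[
\tau (x,\rho) \rtimes \sigma \;\oplus\; \bigoplus_{(x',\rho'\rtimes\sigma') > (x,\rho\rtimes\sigma)} m_{x,\rho\rtimes\sigma,x',\rho'\rtimes\sigma'}\; \tau (x',\rho') \rtimes \sigma' .
\]

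The final step is to apply $\Ind_{X^* (T) \rtimes \cW^G_t}^{X^* (T) \rtimes \cW^G}$ to this direct sum. Since induction is additive and exact, the left-hand side becomes $\Hom_{\pi_0 (\Z_G (t,x))} ( \rho_1, H_* (\mathcal B^{t,x}_G,\C))$ by the first step, each summand becomes $\tau (t,x',\rho'_1)$ with $\rho'_1 = \rho' \rtimes \sigma'$ by the identity from the first step, and the leading term becomes $\tau (t,x,\rho_1)$. Setting $m_{t,x,\rho_1,x',\rho'_1} := m_{x,\rho\rtimes\sigma,x',\rho'\rtimes\sigma'} \in \Zset_{\geq 0}$ then yields the claim, the index set being the same because the partial order on affine Springer parameters fixed before the statement and the order \eqref{eq:S.32} on extended Springer data are both given by the single condition $\overline{\cO_x} \subsetneq \overline{\cO_{x'}}$ on $M$-conjugacy classes. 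I expect the first step to be the only place requiring genuine care, specifically the verification that the displayed isomorphism there holds as $X^* (T) \rtimes \cW^G$-modules rather than merely as $\Z_G (t,x)$-modules; but since \eqref{eq:S.35} already records the module structure over $\Z_G (t,x) \times X^* (T) \rtimes \cW^G$, this is exactly the bookkeeping carried out in the proof of Theorem~\ref{thm:S.3}(2).
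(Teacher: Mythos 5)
Your argument is correct and follows the paper's proof essentially verbatim: the paper likewise derives the full-homology analogue of \eqref{eq:S.34} from \eqref{eq:S.35} and \eqref{eq:S.26}, identifies the inner term with the left-hand side of Lemma~\ref{lem:S.6} via a double induction (the paper's \eqref{eq:S.36}, which you reach by unwinding Notation~\ref{not:rtimes}), and then decomposes it using Lemma~\ref{lem:S.6} together with exactness/additivity of induction (phrased in the paper as an equivalence of categories between $X^*(T)\rtimes\cW^G_t$-modules with $\cO(T)^{\cW^G_t}$-character $t$ and $X^*(T)\rtimes\cW^G$-modules with $\cO(T)^{\cW^G}$-character $\cW^G t$). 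Your closing remark that the two partial orders coincide because both are defined by $\overline{\cO_x}\subsetneq\overline{\cO_{x'}}$ in $M$ is exactly the justification the paper leaves implicit.
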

\begin{proof}
It follows from \eqref{eq:S.35}, \eqref{eq:S.26} and \eqref{eq:S.34} that
\begin{multline}\label{eq:S.36}
\mathrm{Hom}_{\pi_0 (\Z_G(t,x))} \big( \rho \rtimes \sigma ,
H_* (\mathcal B^{t,x}_G, \C) \big) \cong \\
\mathrm{Ind}_{X^*(T) \rtimes \mathcal W^G_t}^{X^*(T) \rtimes \mathcal W^G} 
\mathrm{Ind}_{\cW^{M^\circ} \rtimes \pi_0 (M)_{[x,\rho]_{M^\circ}}}^{\cW^G_t}
\big( \mathrm{Hom}_{\pi_0 (\Z_{M^\circ}(x))} 
\big( \rho ,H_{d(x)} (\mathcal B^x_{M^\circ}, \C) \big) \otimes \sigma \big) .
\end{multline}
The functor $\mathrm{Ind}_{X^*(T) \rtimes \mathcal W^G_t}^{X^*(T) \rtimes \mathcal W^G}$
provides an equivalence between the categories 
\begin{itemize}
\item $X^*(T) \rtimes \mathcal W^G_t$-representations with $\cO (T)^{\cW^G_t}$-character $t$;
\item $X^*(T) \rtimes \mathcal W^G$-representations with $\cO (T)^{\cW^G}$-character $\cW^G t$.
\end{itemize}
Therefore we may apply Lemma \ref{lem:S.6} to the right hand side of \eqref{eq:S.36},
which produces the required formula.
\end{proof}

Let us have a look at the representations with an affine Springer parameter of
the form $(t,x=1,\rho_1 = \mathrm{triv})$. Equivalently, the fourth parameter in 
Theorem \ref{thm:S.3} is $(t,x=1,\rho = \mathrm{triv},\sigma = \mathrm{triv})$.
The $\mathcal W^{M^\circ}$-representation with Springer parameter $(x=1,\rho = 
\mathrm{triv})$ is the trivial representation, so $(x=1,\rho = \mathrm{triv},
\sigma = \mathrm{triv})$ corresponds to the trivial representation of 
$\mathcal W^G_t$. With \eqref{eq:S.28} we conclude that the $X^* (T) \rtimes 
\mathcal W^G$-representation with affine Springer parameter $(t,1,\mathrm{triv})$ is
\begin{equation} \label{eq:trivWaff}
\tau (t,1,\triv) = \mathrm{Ind}_{X^*(T) \rtimes \mathcal W^G_t}^{X^*(T) 
\rtimes \mathcal W^G} \big( \mathrm{triv}_{\mathcal W^G_t} \big) .
\end{equation}
Notice that this is the only irreducible $X^* (T) \rtimes \mathcal W^G$-representation 
with an $X^*(T)$-weight $t$ and nonzero $\mathcal W^G$-fixed vectors.

\section{Geometric representations of affine Hecke algebras}
\label{sec:repAHA}

Let $G$ be a connected reductive complex group, $B$ a Borel subgroup and $T$ 
a maximal torus of $G$ contained in $B$. Let $\mathcal H (G)$ be the affine Hecke 
algebra with the same based root datum as $(G,B,T)$ and with a parameter 
$q \in \C^\times$ which is not a root of unity.

As we will have to deal with disconnected reductive groups, we include
some additional automorphisms in the picture. In every root subgroup $U_\alpha$ with 
$\alpha \in \Delta (B,T)$ we pick a nontrivial element $u_\alpha$. Let $\Gamma$ be a 
finite group of automorphisms of $(G,T,(u_\alpha)_{\alpha \in \Delta (B,T)})$. Since $G$ need 
not be semisimple, it is possible that some elements of $\Gamma$ fix the entire root system 
of $(G,T)$. Notice that $\Gamma$ acts on the Weyl group $\mathcal W^G = W(G,T)$ and on
$X^* (T)$ because it stabilizes $T$. 
Furthermore $\Gamma$ acts on the standard basis of $\cH (G)$ by
\[
\gamma (T_w) = T_{\gamma (w)}, \text{ where } \gamma \in \Gamma, w \in X^* (T) \rtimes \cW^G .
\]
Since $\Gamma$ stabilizes $B$, it determines an algebra automorphism of $\cH (G)$.
We form the crossed product algebra $\mathcal H (G) \rtimes \Gamma$ with respect to this
$\Gamma$-action. It follows from the Bernstein presentation of $\cH (G)$ \cite[\S 3]{LuGrad} 
that $Z (\cH (G) \rtimes \Gamma) \cong \mathcal O (T / \cW^G \rtimes \Gamma)$.

We define a Kazhdan--Lusztig triple 
for $\mathcal H (G) \rtimes \Gamma$ to be a triple $(t_q,x,\rho)$ such that:
\begin{itemize}
\item $t_q \in G$ is semisimple, $x \in G$ is unipotent and $t_q x t_q^{-1} = x^q$;
\item $\rho$ is an irreducible representation of the component group \\
$\pi_0 (\Z_{G \rtimes \Gamma} (t_q,x))$, such that every irreducible subrepresentation
of the restriction of $\rho$ to $\pi_0 (\Z_G (t_q,x))$ appears in $H_* (\mathcal B^{t_q,x},\C)$.
\end{itemize}
The group $G \rtimes \Gamma$ acts on such triples by conjugation, and we denote the
conjugacy class of a triples by $[t_q,x,\rho]_{G \rtimes \Gamma}$. 
Now we generalize \cite[Theorem 7.12]{KL} and \cite[Theorem 3.5.4]{R}:

\begin{thm}\label{thm:S.5}
There exists a natural bijection between $\mathrm{Irr}(\mathcal H (G) \rtimes \Gamma)$ 
and $G \rtimes \Gamma$-conjugacy classes of Kazhdan--Lusztig triples. 

The $\cH (G) \rtimes \Gamma$-module $\pi (t_q,x,\rho)$ has central character 
$(\cW^G \rtimes \Gamma) t_q$ and is the unique irreducible quotient of the 
$\mathcal H (G)\rtimes \Gamma$-module 
\[
\mathrm{Hom}_{\pi_0 (\Z_{G \rtimes \Gamma} (t_q,x))} 
\big( \rho, H_* (\mathcal B^{t_q,x},\C) \otimes \C[\Gamma] \big) .
\]
\end{thm}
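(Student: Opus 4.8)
The plan is to bootstrap from the case $\Gamma = 1$, which is \cite[Theorem~7.12]{KL} together with its refinement \cite[Theorem~3.5.4]{R}: there $\mathrm{Irr}(\cH(G))$ is in bijection with $G$-conjugacy classes of triples $(t_q,x,\rho_0)$ with $\rho_0\in\Irr(\pi_0(\Z_G(t_q,x)))$ geometric, and $\pi(t_q,x,\rho_0)$ is the unique irreducible quotient of the standard module $M(t_q,x,\rho_0):=\mathrm{Hom}_{\pi_0(\Z_G(t_q,x))}(\rho_0,H_*(\cB^{t_q,x},\C))$, which has $\cH(G)$-central character $\cW^G t_q$. Since $\Gamma$ acts on $\cH(G)$ by algebra automorphisms preserving the Bernstein presentation, Lemma~\ref{lem:Clifford_algebras} gives a bijection between $\mathrm{Irr}(\cH(G)\rtimes\Gamma)$ and the twisted extended quotient $(\Irr\,\cH(G)\q\Gamma)_2^{\square}$; the whole task is to rewrite this in terms of Kazhdan--Lusztig triples for $\cH(G)\rtimes\Gamma$ and to show that the relevant $2$-cocycles vanish, so that the bijection becomes canonical.

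First I would pin down the $\Gamma$-action on $\mathrm{Irr}(\cH(G))$. Because $\Gamma$ acts compatibly with $G$ on the semisimple and unipotent elements, on the varieties $\cB^{t_q,x}$, on the component groups, and on the convolution action in Borel--Moore homology, one has $\gamma\cdot\pi(t_q,x,\rho_0)=\pi(\gamma t_q,\gamma x,\gamma_*\rho_0)$, so the $\Gamma$-stabilizer of $\pi(t_q,x,\rho_0)$ is $\Gamma_{[t_q,x,\rho_0]_G}$. Next, as in \eqref{eq:N.1}, there is a short exact sequence
\[
1\to\pi_0(\Z_G(t_q,x))\to\pi_0(\Z_{G\rtimes\Gamma}(t_q,x))\to\Gamma_{[t_q,x]_G}\to 1 ,
\]
and I would produce a splitting of it, reducing via Proposition~\ref{prop:S.1} (which identifies the component groups at $(t_q,x)$, at $(t,\Phi(B_2))$ and at $(t,x)$) to a unipotent element in $\Z_G(t)^\circ$ and then invoking the pinning argument of Lemma~\ref{lem:S.3}. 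Given such a splitting, $\pi_0(\Z_{G\rtimes\Gamma}(t_q,x))\cong\pi_0(\Z_G(t_q,x))\rtimes\Gamma_{[t_q,x]_G}$, and Lemma~\ref{lem:Clifford} identifies its geometric irreducible representations $\rho$ with pairs $(\rho_0,\tau)$, where $\rho_0\in\Irr(\pi_0(\Z_G(t_q,x)))$ is geometric and $\tau$ is an irreducible (a priori projective) representation of $\Gamma_{[t_q,x,\rho_0]_G}$; the equivalence ``$\rho$ geometric $\iff\rho_0$ geometric'' is checked as in \eqref{eq:S.34}. Hence $G\rtimes\Gamma$-classes of Kazhdan--Lusztig triples for $\cH(G)\rtimes\Gamma$ are in bijection with an extended quotient of the second kind built from the $\Gamma$-set of Kazhdan--Lusztig triples for $\cH(G)$.

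The geometric heart of the matter, run in parallel with Proposition~\ref{prop:S.2} and the natural reformulation in Theorem~\ref{thm:SpringerExtended}, is to show that all the $2$-cocycles involved --- the Clifford cocycle $\square$ of $\cH(G)\rtimes\Gamma$ at $\pi(t_q,x,\rho_0)$ and the Clifford cocycle on $\pi_0(\Z_{G\rtimes\Gamma}(t_q,x))$ --- are coboundaries. Using the chosen splitting I would pick, for each $\gamma$ in the stabilizer, a representative in $\Z_{G\rtimes\Gamma}(t_q,x)$ and let it act on $H_*(\cB^{t_q,x},\C)$; by homotopy invariance of Borel--Moore homology this action depends only on the class in the component group and is genuinely multiplicative, and by compatibility of conjugation with the convolution product it intertwines the $\cH(G)$-action with its $\gamma$-twist. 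Thus the standard module $M(t_q,x,\rho_0)$, and therefore its unique irreducible quotient $\pi(t_q,x,\rho_0)$, carries a genuine action of $\Gamma_{[t_q,x,\rho_0]_G}$ commuting with $\cH(G)$, which trivialises the cocycle. Inducing and using Notation~\ref{not:rtimes} I would set $\pi(t_q,x,\rho):=\pi(t_q,x,\rho_0)\rtimes\tau$; a Frobenius-reciprocity computation identical to the one closing the proof of Theorem~\ref{thm:SpringerExtended} rewrites the corresponding induced standard module as $\mathrm{Hom}_{\pi_0(\Z_{G\rtimes\Gamma}(t_q,x))}(\rho,H_*(\cB^{t_q,x},\C)\otimes\C[\Gamma])$, and since induction is exact and preserves the radical (which is $\Gamma_{[t_q,x,\rho_0]_G}$-stable, being characteristic), this module still has $\pi(t_q,x,\rho)$ as its unique irreducible quotient. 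The central character is read off from the $\Gamma=1$ case and $Z(\cH(G)\rtimes\Gamma)\cong\cO(T/\cW^G\rtimes\Gamma)$: restricting to $\cH(G)$ forces it to be $(\cW^G\rtimes\Gamma)t_q$.

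I expect the main obstacle to be the splitting of the displayed short exact sequence, and hence the triviality of the Clifford cocycles: one must be sure the delicate pinning argument of Lemma~\ref{lem:S.3} survives the replacement of the unipotent datum by the pair $(t_q,x)$ with $t_qxt_q^{-1}=x^q$, which is precisely where Proposition~\ref{prop:S.1} does the essential work, reducing $(t_q,x)$ through $(t,\Phi(B_2))$ to $(t,x)$ with matching component groups. Once the cocycles vanish, the remaining points --- the matching of the two Clifford decompositions, the identification of the induced module with the $\mathrm{Hom}$-module of the statement, the uniqueness of the irreducible quotient, and naturality (which, as in Theorem~\ref{thm:SpringerExtended}, is built into the explicit geometric formula for $\pi(t_q,x,\rho)$) --- are formal manipulations modelled on Theorems~\ref{thm:SpringerExtended} and~\ref{thm:S.3}.
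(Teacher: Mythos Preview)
Your overall strategy --- bootstrap from the $\Gamma=1$ case via Clifford theory for $\cH(G)\rtimes\Gamma$, then match the resulting parameters with Kazhdan--Lusztig triples for $G\rtimes\Gamma$ --- is exactly the paper's. The identification of the induced standard module with the $\Hom$-formula via the Frobenius-reciprocity manipulation of Theorem~\ref{thm:SpringerExtended}, and the reading of the central character from $Z(\cH(G)\rtimes\Gamma)\cong\cO(T)^{\cW^G\rtimes\Gamma}$, are also as in the paper.

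There is, however, a genuine gap at precisely the step you flag as the main obstacle. You propose to split
\[
1\to\pi_0(\Z_G(t_q,x))\to\pi_0(\Z_{G\rtimes\Gamma}(t_q,x))\to\Gamma_{[t_q,x]_G}\to 1
\]
by passing to $(t,x)$ via Proposition~\ref{prop:S.1} and then invoking Lemma~\ref{lem:S.3}. This does not work: Lemma~\ref{lem:S.3} splits a sequence whose \emph{kernel} is $\pi_0\big(\Z_{M^\circ}(x)/\Z(M^\circ)\big)$ for a connected $M^\circ$, whereas here the kernel $\pi_0(\Z_G(t,x))=\pi_0\big(\Z_{\Z_G(t)}(x)\big)$ already involves the possibly disconnected group $\Z_G(t)$, not merely its identity component. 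Replacing $t_q$ by $t$ does not help, since $\Z_G(t)$ has no more reason to be connected than $\Z_G(t_q)$. The paper makes exactly this point just after \eqref{eq:S.24}: ``Unfortunately we cannot apply Lemma~\ref{lem:S.3} to find a splitting of \eqref{eq:S.24} as groups, because $\Z_G(t_q)$ need not be connected.''

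The paper's fix is more modest but sufficient. Rather than proving the sequence splits (and hence that the cocycles are trivial), it observes that the Clifford $2$-cocycle $\natural(\pi(t_q,x,\rho_q))$ for $\cH(G)\rtimes\Gamma$ and the $2$-cocycle $\natural(t_q,x,\rho_q)$ arising from \eqref{eq:S.24} are \emph{equal}: both are computed by choosing any set-theoretic section $\Gamma_{[t_q,x,\rho_q]_G}\to\pi_0(\Z_{G\rtimes\Gamma}(t_q,x))$ and letting it act on the single space $H_*(\cB^{t_q,x},\C)$, which simultaneously carries the $\pi_0(\Z_G(t_q,x))$-action and (via convolution) the $\cH(G)$-action; see \eqref{eq:S.25}. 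This equality of cocycles is precisely what is needed to identify the two twisted extended quotients and hence to pass from pairs $(\rho_q,\rho_2)$ to a single $\rho\in\Irr\big(\pi_0(\Z_{G\rtimes\Gamma}(t_q,x))\big)$. Naturality then follows from the intrinsic formula \eqref{eq:HGmod1}, without ever establishing that either cocycle is a coboundary. (Triviality is eventually shown, but only later, in the proof of Theorem~\ref{thm:S.4}, by comparison with Proposition~\ref{prop:S.2}.)
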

\begin{proof}
First we recall the geometric constructions of $\mathcal H (G)$-modules by Kazhdan, Lusztig
and Reeder, taking advantage of Lemma \ref{lem:S.3} to simplify the presentation somewhat. 
As in \cite[\S 1.5]{R}, let
\begin{equation}\label{eq:N.27}
1 \to C \to \tilde G \to G \to 1
\end{equation}
be a finite central extension such that $\tilde G$ is a connected reductive group with
simply connected derived group. The kernel $C$ acts naturally on $\mathcal H (\tilde G)$ and 
\begin{equation}\label{eq:S.17}
\mathcal H (\tilde G )^C \cong \mathcal H (G) .
\end{equation}
The action of $\Gamma$ on the based root datum of $(G,B,T)$ lifts uniquely to an action on
the corresponding based root datum for $\tilde G$, so the $\Gamma$-actions on $G$ and on
$\mathcal H (G)$ lift naturally to actions on $\tilde G$ and $\mathcal H (\tilde G)$.
Let $\mathcal H_{\mathbf q} (\tilde G)$ be the variation on 
$\mathcal H (\tilde G)$ with scalars $\C [\mathbf q ,\mathbf q^{-1}]$ where $\mathbf q$
is a formal variable (instead of scalars $\C$ and $q \in \C^\times$). 
In \cite[Theorem 3.5]{KL} an isomorphism
\begin{equation}\label{eq:S.18}
\mathcal H_{\mathbf q} (\tilde G) \cong K^{\tilde G \times \C^\times}(\mathcal Z_{\tilde G}) 
\end{equation}
is constructed, where the right hand side denotes the $\tilde G \times \C^\times$-equivariant
K-theory of the Steinberg variety $\mathcal Z_{\tilde G}$ of $\tilde G$. Since 
$G \rtimes \Gamma$ acts via conjugation on $\tilde G$ and on $\mathcal Z_{\tilde G}$, 
it also acts on $K^{\tilde G \times \C^\times}(\mathcal Z_{\tilde G})$. However, the 
connected group $G$ acts trivially, so the action factors via $\Gamma$. Now the definition 
of the generators in \cite[Theorem 3.5]{KL} shows that \eqref{eq:S.18} is 
$\Gamma$-equivariant. In particular it specializes to $\Gamma$-equivariant isomorphisms
\begin{equation}\label{eq:S.19}
\mathcal H (\tilde G) \cong \mathcal H_{\mathbf q} (\tilde G) 
\otimes_{\C [\mathbf q,\mathbf q^{-1}]} \C_q \cong K^{\tilde G \times \C^\times}
(\mathcal Z_{\tilde G}) \otimes_{\C [\mathbf q,\mathbf q^{-1}]} \C_q .
\end{equation}
Let $(\tilde t_q,\tilde x) \in (\tilde G)^2$ be a lift of $(t_q,x) \in G^2$ with
$\tilde x$ unipotent. The $\tilde G$-conjugacy class of $\tilde t_q$ defines a central 
character of $\mathcal H (\tilde G)$ and 
\[
\mathcal H (\tilde G) \otimes_{\Z (\mathcal H (\tilde G))} \C_{\tilde t_q} \cong
K^{\tilde G \times \C^\times}(\mathcal Z_{\tilde G}) 
\otimes_{R (\tilde G \times \C^\times)} \C_{\tilde t_q,q} .
\]
According to \cite[Proposition 8.1.5]{CG} there is an isomorphism
\begin{equation}\label{eq:CG}
K^{\tilde G \times \C^\times}(\mathcal Z_{\tilde G}) \otimes_{R (\tilde G \times \C^\times)} 
\C_{\tilde t_q,q} \cong H_* (\mathcal Z_{\tilde G}^{\tilde t_q,q} ,\C) . 
\end{equation}
Moreover \eqref{eq:CG} is $\Gamma$-equivariant, because all the maps involved in the
proof of \cite[Proposition 8.1.5]{CG} are functorial with respect to isomorphisms of
algebraic varieties. To be precise, one should note that throughout \cite[Chapter 8]{CG} it
is assumed that $\tilde G$ is simply connected. However, as we already have \eqref{eq:S.19} 
at our disposal, \cite[\S 8.1]{CG} also applies whenever the derived group of $\tilde G$
is simply connected.

Any Borel subgroup of $\tilde G$ contains $C$, so $\mathcal B^{\tilde t_q,\tilde x} = 
\mathcal B^{\tilde t_q,\tilde x}_{\tilde G}$ and $\mathcal B^{t_q,x} = \mathcal B^{t_q,x}_G$ 
are isomorphic algebraic varieties. From \cite[p. 414]{CG} we see
that the convolution product in Borel--Moore homology leads to an action of 
$H_* (\mathcal Z^{\tilde t_q,q}_{\tilde G} ,\C)$ on $H_* ( \mathcal B^{\tilde t_q,\tilde x}, 
\C)$. Notice that for $\tilde h \in H_* (\mathcal Z^{\tilde t_q,q}_{\tilde G} ,\C)$ and 
$g \in G \rtimes \Gamma$ we have
\[
g \cdot \tilde h \in H_* (\mathcal Z^{g \tilde t_q g^{-1},q}_{\tilde G} ,\C) \cong 
\mathcal H (\tilde G) \otimes_{\Z (\mathcal H (\tilde G))} \C_{g \tilde t_q g^{-1}} .
\]
An obvious generalization of \cite[Lemma 8.1.8]{CG} says that all these 
constructions are compatible with the above actions of $G \rtimes \Gamma$,
in the sense that the following diagram commutes:
\begin{equation}\label{eq:S.20}
\begin{array}{ccc}
H_* ( \mathcal B^{\tilde t_q,\tilde x}, \C)
& \xrightarrow{\; \tilde h \;} & H_* ( \mathcal B^{\tilde t_q,\tilde x}, \C) \\
\downarrow \scriptstyle{H_* (\mathrm{Ad}_g)} & & 
\downarrow \scriptstyle{H_* (\mathrm{Ad}_g)} \\
H_* ( \mathcal B^{g \tilde t_q g^{-1},g \tilde x g^{-1}}, \C) & 
\xrightarrow{\;g \cdot \tilde h\;} & 
H_* ( \mathcal B^{g \tilde t_q g^{-1},g \tilde x g^{-1}}, \C) .
\end{array}
\end{equation}
In particular the component group $\pi_0 (\Z_{\tilde G}(\tilde t_q,\tilde x))$ acts on
$H_* ( \mathcal B^{\tilde t_q,\tilde x}, \C)$ by $\mathcal H (\tilde G)$-intertwiners.
Let $\tilde \rho$ be an irreducible representation of this component group, appearing
in $H_* ( \mathcal B^{\tilde t_q,\tilde x}, \C)$. In other words, $(\tilde t_q,
\tilde x,\tilde \rho)$ is a Kazhdan--Lusztig triple for $\mathcal H (\tilde G)$. 
According to \cite[Theorem 7.12]{KL}
\begin{equation}\label{eq:S.16}
\mathrm{Hom}_{\pi_0 (\Z_{\tilde G}(\tilde t_q,\tilde x))} 
(\tilde \rho , H_* (\mathcal B^{\tilde t_q,\tilde x},\C)) 
\end{equation}
is a $\mathcal H (\tilde G)$-module with a unique irreducible quotient, say 
$V_{\tilde t_q,\tilde x,\tilde \rho}$. 

Following \cite[\S 3.3]{R} we define a group $R_{\tilde t_q,\tilde x}$ by
\begin{equation}\label{eq:S.21}
1 \to \pi_0 (\Z_{\tilde G}(\tilde t_q,\tilde x)) \to \pi_0 (\Z_{\tilde G} (t_q,x)) \to 
R_{\tilde t_q,\tilde x} \to 1 .
\end{equation}
Obviously $\Z_{\tilde G}(\tilde t_q,\tilde x)$ contains $\Z (\tilde G)$, so the sequence
\begin{equation}\label{eq:N.2}
1 \to \pi_0 (\Z_{\tilde G}(\tilde t_q,\tilde x) / \Z (\tilde G) )  \to 
\pi_0 (\Z_{\tilde G} (t_q,x) / \Z (\tilde G)) \to 
R_{\tilde t_q,\tilde x} \to 1
\end{equation}
is also exact. For the middle term we have
\[
\Z_{\tilde G}(t_q, x) / \Z (\tilde G) \cong 
\Z_{G}(t_q,x) / \Z (G)
\]
Since the derived group of $\tilde G$ is simply connected, $\Z_{\tilde G} (t_q)^\circ = 
\Z_{\tilde G} (\tilde t_q)$. In the second term of \eqref{eq:N.2} we get
\[
\Z_{\tilde G}(\tilde t_q,\tilde x) / \Z (\tilde G) \cong
\Z_{\Z_{\tilde G}(t_q)^\circ} (\tilde x) / \Z (\tilde G) \cong
\Z_{\Z_G (t_q)^\circ} (x) / \Z (G) .
\]
Let us abbreviate $M = \Z_G (t_q)$. Then \eqref{eq:N.2} can be written as
\begin{equation*}
1 \to \pi_0 (\Z_{M^\circ}(x) / \Z(G)) \to 
\pi_0 (\Z_M (x) / \Z(G)) \to R_{\tilde t_q,\tilde x} \to 1 .
\end{equation*}
Like in \eqref{eq:N.2} we can derive another short exact sequence
\begin{equation}\label{eq:N.3}
1 \to \pi_0 (\Z_{M^\circ}(x) / \Z (M^\circ)) \to 
\pi_0 (\Z_M (x) / \Z (M^\circ)) \to R_{\tilde t_q,\tilde x} \to 1 .
\end{equation}
It can also be obtained from \eqref{eq:S.21} by the dividing the two appropriate
groups by the inverse image of $\Z (M^\circ)$ in $\tilde G$. 
From Lemma \ref{lem:S.3} (with the trivial representation of 
$\pi_0 (\Z_G (t_q)^\circ (x))$ in the role of $\rho$) 
we know that \eqref{eq:N.3} splits. By Proposition \ref{prop:UP} and
\eqref{eq:componentsBA} $\Z (M^\circ)$ acts trivially on 
$H_* ( \mathcal B^{\tilde t_q,\tilde x}, \C)$.
Hence all the 2-cocycles of subgroups of 
$R_{\tilde t_q,\tilde x}$ appearing associated to \eqref{eq:S.16} are trivial.

Let $\tilde \sigma$ be any irreducible representation of $R_{\tilde t_q,\tilde x,\tilde \rho}$,
the stabilizer of the isomorphism class of $\tilde \rho$ in $R_{\tilde t_q,\tilde x}$.
Clifford theory for \eqref{eq:N.3} produces $\tilde \rho \rtimes \tilde \sigma \in
\Irr \big(\pi_0 (\Z_M (x) / \Z (M^\circ)) \big)$, a representation which lifts to
$\pi_0 (\Z_G (t_q,x))$. Moreover by \cite[Lemma 3.5.1]{R}
it appears in $H_* (\mathcal B^{t_q,x},\C)$, and conversely every irreducible representation
with the latter property is of the form $\tilde \rho \rtimes \tilde \sigma$.

With the above in mind, \cite[Lemma 3.5.2]{R} says that the $\mathcal H (G)$-module
\begin{equation}\label{eq:S.27}
\begin{split}
M(t_q,x,\tilde \rho \rtimes \tilde \sigma) := &\; \mathrm{Hom}_{\pi_0 (\Z_G (t_q,x))} 
\big( \tilde \rho \rtimes \tilde \sigma, H_* (\mathcal B^{t_q,x},\C) \big) \\
= &\; \mathrm{Hom}_{R_{\tilde t_q,\tilde x,\tilde \rho}} \big( \tilde \sigma, 
\mathrm{Hom}_{\pi_0 (\Z_{\tilde G}(\tilde t_q,\tilde x))} (\tilde \rho , 
H_* (\mathcal B^{\tilde t_q,\tilde x},\C))  \big)
\end{split} 
\end{equation}
has a unique irreducible quotient
\begin{equation}\label{eq:S.29}
\pi (t_q,x,\tilde \rho \rtimes \tilde \sigma) = 
\mathrm{Hom}_{R_{\tilde t_q,\tilde x,\tilde \rho}} 
(\tilde \sigma , V_{\tilde t_q,\tilde x,\tilde \rho}) .
\end{equation}
According  \cite[Lemma 3.5.3]{R} this sets up a bijection between 
Irr$(\mathcal H (G))$ and $G$-conjugacy classes of Kazhdan--Lusztig triples for $G$.
Since $H_* (\mathcal B^{\tilde t_q,\tilde x},\C)$ has $Z (\cH (\tilde G))$-character 
$\cW^G \tilde{t}_q$, the $\cH (G)$-modules $H_* (\mathcal B^{t_q,x},\C)$ and 
\eqref{eq:S.29} have $Z (\cH (G))$-character $\cW^G t_q$.

\begin{rem}
The module \eqref{eq:S.27} is well-defined for any $q \in \C^\times$, although for roots of
unity it may have more than one irreducible quotient. For $q=1$ the algebra $\mathcal H (G)$
reduces to $\C [X^* (T) \rtimes \mathcal W^G]$ and \cite[Section 8.2]{CG} shows that Kato's
module \eqref{eq:KatoMod} is a direct summand of $M (t_1,x,\rho_1)$. 
\end{rem}

Next we study what $\Gamma$ does to all these objects. There is natural action of $\Gamma$
on Kazhdan--Lusztig triples for $G$, namely 
\[
\gamma \cdot (t_q,x, \rho_q) =  \big( \gamma t_q \gamma^{-1}, 
\gamma x \gamma^{-1}, \rho_q\circ \mathrm{Ad}_\gamma^{-1} \big) .
\]
From \eqref{eq:S.20} and \eqref{eq:S.27} we deduce that the diagram
\begin{equation}\label{eq:S.22}
\begin{array}{ccc}
\pi (t_q,x,\rho_q ) & \xrightarrow{\; h \;} & \pi (t_q,x,\rho_q) \\
\downarrow \scriptstyle{H_* (\mathrm{Ad}_g)} & & 
\downarrow \scriptstyle{H_* (\mathrm{Ad}_g)} \\
\!\! \pi \big( g t_q g^{-1},g x g^{-1}, \rho_q \circ \mathrm{Ad}_g^{-1} \big) & 
\xrightarrow{\; \gamma (h) \;} & 
\pi \big( g t_q g^{-1},g x g^{-1},\rho_q \circ \mathrm{Ad}_g^{-1} \big)
\end{array}
\end{equation}
commutes for all $g \in G \gamma$ and $h \in \mathcal H (G)$. Hence 
\begin{equation}\label{eq:S.23}
\text{Reeder's parametrization of } \Irr (\mathcal H (G)) \text{ is }\Gamma\text{-equivariant.} 
\end{equation}
Let $\pi \in \Irr (\mathcal H (G))$ and choose a Kazhdan--Lusztig triple such that
$\pi$ is equivalent with $\pi (t_q,x,\rho_q)$. Composition with $\gamma^{-1}$ on $\pi$ 
gives rise to a 2-cocycle $\natural (\pi)$ of $\Gamma_\pi$. Clifford theory tells us that 
every irreducible representation of $\mathcal H (G) \rtimes \Gamma$ is of the form 
$\pi \rtimes \rho_2$ for some $\pi \in \Irr (\mathcal H (G))$, unique up to 
$\Gamma$-equivalence, and a unique $\rho_2 \in \Irr (\C [\Gamma_\pi, \natural (\pi)])$. 
By the above the stabilizer of $\pi$ in $\Gamma$ equals the stabilizer of the $G$-conjugacy 
class $[t_q,x,\rho_q]_G$. Thus we have parametrized $\Irr (\mathcal H (G) \rtimes \Gamma)$ 
in a natural way with $G \rtimes \Gamma$-conjugacy classes of quadruples 
$(t_q,x,\rho_q,\rho_2)$, where $(t_q,x,\rho_q)$ is a Kazhdan--Lusztig triple for $G$ and 
$\rho_2 \in \Irr \big( \C [\Gamma_{[t_q,x,\rho_q]_G}, \natural(\pi(t_q,x,\rho_q))] \big)$.

The short exact sequence
\begin{equation}\label{eq:S.24}
1 \to \pi_0 (\Z_G (t_q,x)) \to \pi_0 (\Z_{G \rtimes \Gamma} (t_q,x)) \to
\Gamma_{[t_q,x]_G} \to 1
\end{equation}
yields an action of $\Gamma_{[t_q,x]_G}$ on Irr$\big( \pi_0 (\Z_G (t_q,x)) \big)$.
Restricting this to the stabilizer of $\rho_q$, we obtain another 2-cocycle 
$\natural(t_q,x,\rho_q)$ of $\Gamma_{[t_q,x,\rho_q]_G}$, which we want to compare to 
$\natural(\pi(t_q,x,\rho_q))$. Let us decompose
\[
H_* (\mathcal B^{t_q,x},\C) \cong \bigoplus\nolimits_{\rho_q} \rho_q \otimes 
M(t_q,x,\rho_q)
\]
as $\pi_0 ((\Z_G (t_q,x)) \times \mathcal H(G)$-modules. We sum over all $\rho_q \in
\Irr \big( \pi_0 (\Z_G (t_q,x)) \big)$ for which the contribution is nonzero, and we know
that for such $\rho_q$ the $\mathcal H (G)$-module $M(t_q,x,\rho_q)$ has a unique irreducible 
quotient $\pi (t_q,x,\rho_q)$. Since $\pi_0 (\Z_{G \rtimes \Gamma} (t_q,x))$ acts 
(via conjugation of Borel subgroups) on $H_* (\mathcal B^{t_q,x},\C)$, any splitting of 
\eqref{eq:S.24} as sets provides a 2-cocycle $\natural$ for the action of 
$\Gamma_{[t_q,x,\rho_q]_G}$ on $\rho_q \otimes M(t_q,x,\rho_q)$.
Unfortunately we cannot apply Lemma \ref{lem:S.3} to find a splitting of \eqref{eq:S.24}
as groups, because $\Z_G (t_q)$ need not be connected. Nevertheless $\natural$ 
can be used to describe the actions of $\Gamma_{[t_q,x,\rho_q]_G}$ on both $\rho_q$ and 
$\pi (t_q,x,\rho_q)$, so 
\begin{equation}\label{eq:S.25}
\natural(t_q,x,\rho_q) = \natural = \natural(\pi(t_q,x,\rho_q)) 
\text{ as 2-cocycles of } \Gamma_{[t_q,x,\rho_q]_G} .
\end{equation}
It follows that every irreducible representation $\rho$ of $\pi_0 (\Z_{G \rtimes \Gamma} 
(t_q,x))$ is of the form $\rho_q \rtimes \rho_2$ for $\rho_q$ and $\rho_2$ as above.
Moreover $\rho$ determines $\rho_q$ up to $\Gamma_{[t_q,x]_G}$-equivalence and $\rho_2$
is unique if $\rho_q$ has been chosen. Finally, if $\rho_q$ appears in 
$H_{\top}(\mathcal B^{t_q,x},\C)$ then every irreducible $\pi_0 (\Z_G (t_q,x))
$-subrepresentation of $\rho$ does, because $\pi_0 (\Z_{G \rtimes \Gamma} (t_q,x))$ acts 
naturally on $H_* (\mathcal B^{t_q,x},\C)$. Therefore we may replace the above 
quadruples $(t_q,x,\rho_q,\rho_2)$ by Kazhdan--Lusztig triples $(t_q,x,\rho)$. 

The module associated to $(t_q,x,\rho_q,\rho_2)$ via the above constructions is the
unique irreducible quotient of the $\cH (G) \rtimes \Gamma$-module
\begin{equation}\label{eq:HGmod2}
\mathrm{Hom}_{\pi_0 (\Z_G (t_q,x))} 
\big( \rho_q, H_* (\mathcal B^{t_q,x},\C) \big) \rtimes \rho_2 .
\end{equation}
The same reasoning as in the proof of Theorem \ref{thm:SpringerExtended} shows that
\eqref{eq:HGmod2} is isomorphic to
\begin{equation}\label{eq:HGmod1}
\mathrm{Hom}_{\pi_0 (\Z_{G \rtimes \Gamma} (t_q,x))} 
\big( \rho, H_* (\mathcal B^{t_q,x},\C) \otimes \C[\Gamma] \big) .
\end{equation}
Since the $\cH (G)$-module $H_* (\mathcal B^{t_q,x},\C)$ depends in a natural way
on $(t_q,x)$, so does the unique irreducible quotient of \eqref{eq:HGmod1}. As 
$H^* (\mathcal B^{t_q,x},\C)$ has $Z(\ch (G))$-character $\cW^G t_q$, \eqref{eq:HGmod1} 
has $Z(\cH (G) \rtimes \Gamma)$-character $(\cW^G \rtimes \Gamma) t_q$.
\end{proof}
 
For use in Section \ref{sec:general} we discuss some analytic properties of 
$\cH (G) \rtimes \Gamma$-modules. Let $\{ \theta_x T_w : x \in X^* (T), w \in \cW^G \}$ be
the Bernstein basis of $\cH (G)$ \cite[\S 3]{LuGrad}. Recall from \cite[Lemma 2.20]{Opd}
that a finite-dimensional $\cH (G)$-module $(\pi,V)$ is tempered if and only if
all eigenvalues of the $\pi (\theta_x)$ with $x$ in the closed positive cone $X^* (T)^+
\subset X^* (T)$ have absolute value $\leq 1$. Similarly,
$(\pi,V)$ is square-integrable if and only if the eigenvalues of the $\pi (\theta_x)$ with 
$x \in X^* (T) \setminus \{0\}$ have absolute value $< 1$ \cite[Lemma 2.22]{Opd}. 
We say that $(\pi,V)$ is essentially square-integrable if its restriction to
$\cH ( G / Z(G))$ is square-integrable.

We will treat these criteria as definitions of temperedness and (essential) 
square-integrability. We define that a finite-dimensional $\cH (G) \rtimes \Gamma$-module 
is tempered or (essentially) square-integrable if and only if its restriction to 
$\cH (G)$ is so. One can check that square-integrability is equivalent to temperedness
plus essential square-integrability.

\begin{prop}\label{prop:tempered}
Let $(t_q,x,\rho)$ be a Kazhdan--Lusztig triple for $\cH (G) \rtimes \Gamma$.
\begin{enumerate}
\item Lift $(t_q,x,\rho)$ to a KLR parameter $(\Phi,\rho)$ as in Lemma 
\ref{lem:compareParameters} and write $t = \Phi (\varpi_F) \in T$. The 
$\cH (G)\rtimes \Gamma$-module $\pi (t_q,x,\rho)$ is tempered if and only if
$t$ is contained in a compact subgroup of $T$.
\item $\pi (t_q,x,\rho)$ is essentially square-integrable if and only if $\{t_q,x\}$ 
is not contained in any Levi subgroup of a proper parabolic subgroup of $G$.
\end{enumerate}
\end{prop}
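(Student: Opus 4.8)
The plan is to reduce both statements to the known criteria for $\cH(G)$-modules recalled just before the proposition, using the fact (built into Theorem~\ref{thm:S.5}) that $\pi(t_q,x,\rho)$ is tempered, resp.\ essentially square-integrable, if and only if it is so as an $\cH(G)$-module, i.e.\ if and only if the $\cH(\tilde G)$-module $V_{\tilde t_q,\tilde x,\tilde\rho}$ of \eqref{eq:S.29} is, for any lift $(\tilde t_q,\tilde x,\tilde\rho)$. So from the outset we may work with $\tilde G$ (connected, simply connected derived group) and invoke \cite[Theorem~7.12]{KL} together with \cite[Theorem~8.2]{KL} and the analysis in \cite[\S3]{R}, where the corresponding equivalences are established. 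The only thing that needs to be done here is to translate the $t_q$-side conditions (compactness of $t$, non-containment in a proper Levi) into the $\Phi$-side data, and this is where the results of Sections~\ref{sec:Lp} and~\ref{sec:Borel} do the work.

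For part (1), first I would recall from \eqref{eq:S.12} that $t_q = t\,\Phi(Y_{q^{1/2}})$, so that the polar decomposition of $t_q$ in the torus $T$ has elliptic part governed by $t$ together with the (real, hyperbolic) contribution of $\Phi(Y_{q^{1/2}})$, which is exactly the cocharacter $h$ via \eqref{eq:hPhi}. The Kazhdan--Lusztig temperedness criterion \cite[Theorem~8.2]{KL} states that $V_{\tilde t_q,\tilde x,\tilde\rho}$ is tempered exactly when the $\C^\times$-part of $\tilde t_q$ (under the $\SL_2$-triple attached to $\tilde x$) accounts for the entire ``real'' direction of $\tilde t_q$, i.e.\ when the part of $\tilde t_q$ commuting with the triple is elliptic. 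Since $\Phi|_{F^\times}$ sends $\varpi_F$ to $t$ and $\Phi(Y_{q^{1/2}})$ is the image of the torus of the $\SL_2$, the element of $T$ that remains after removing the $\SL_2$-contribution is precisely (a conjugate of) $t$; thus this criterion becomes: $t$ lies in a compact subgroup of $T$. I would phrase this via the absolute values of $\pi(\theta_x)$, $x\in X^*(T)^+$, using \cite[Lemma~2.20]{Opd}, matching eigenvalues to values $t(x)$.

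For part (2), I would use the standard fact (see \cite[\S3.5]{R} or the parabolic induction theory of affine Hecke algebras) that an $\cH(G)$-module is essentially square-integrable iff its $Z(\cH(G))$-character does not lie in the image of the center of any $Z(\cH(L))$ for a proper ``Hecke'' Levi $\cH(L)\subset\cH(G)$; on the geometric side \cite[Theorem~8.3]{KL} this says $V_{\tilde t_q,\tilde x,\tilde\rho}$ is essentially square-integrable iff the pair $\{\tilde t_q,\tilde x\}$ is not contained in any Levi of a proper parabolic of $\tilde G$. Since the parabolic--Levi correspondence between $G$ and $\tilde G$ (and between $G$ and $\cH(G)$) is compatible with central extensions by \eqref{eq:S.17} and \eqref{eq:N.27}, and since Levi subgroups containing $t_q$ are the same as Levi subgroups containing both $t$ and $\Phi(\SL_2)$ by the homotopy-equivalence package of Proposition~\ref{prop:S.1}, the condition on $\{\tilde t_q,\tilde x\}$ descends to the stated condition on $\{t_q,x\}$ in $G$. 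Finally I would note that passing from $\cH(G)$ to $\cH(G)\rtimes\Gamma$ changes nothing, since the defining analytic criteria are by definition imposed on the restriction to $\cH(G)$, and the module \eqref{eq:HGmod1} restricted to $\cH(G)$ is a direct sum of $\Gamma$-conjugates of the tempered (resp.\ essentially square-integrable) module $\pi(t_q,x,\rho_q)$, hence has the same property.

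\textbf{Main obstacle.} The routine part is citing \cite{KL} and \cite{Opd}; the real point requiring care is the bookkeeping in part (2): one must check that ``proper parabolic of $G$'' and ``proper parabolic of $\tilde G$'' match up correctly under \eqref{eq:N.27} (they do, since $\tilde G\to G$ is a central isogeny so induces a bijection on parabolic/Levi pairs), and that a Levi of $G$ containing $t_q$ automatically contains $t$ and $x$ separately — this is exactly the content of the Levi decomposition $\Z_{H^\circ}(x)=\Z_{H^\circ}(\Phi(\SL_2))U_x$ and the homotopy equivalences of Proposition~\ref{prop:S.1}, applied with $t_q$ in place of the role played there by $\Phi$. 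I expect the write-up to be short modulo these two compatibility checks.
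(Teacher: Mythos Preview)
Your overall strategy---reduce to $\cH(G)$ by definition, lift to a central extension, and invoke \cite[Theorems~8.2 and~8.3]{KL}---is exactly the paper's, and your handling of the $\Gamma$-crossed product and of the parabolic/Levi bijection under a central isogeny is correct and matches the paper's argument for part~(2).

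There is, however, a genuine gap in part~(1). The results \cite[Theorems~8.2, 8.3]{KL} are stated and proved only for a \emph{simply connected semisimple} group, whereas the $\tilde G$ of \eqref{eq:N.27} is merely reductive with simply connected derived group; the paper says this explicitly (``We would like to apply \cite[Theorem 8.2]{KL}, but it is only proven for simply connected groups''). Your plan of applying \cite[Theorem~8.2]{KL} directly to $V_{\tilde t_q,\tilde x,\tilde\rho}$ is therefore not justified as written. The paper's remedy is to split the temperedness test in two via the finite-index sublattice $X^*(G/G_{\der}) + X^*(T/\Cent(G)) \subset X^*(T)$: the restriction to $\cH(G/G_{\der})$ (a torus algebra) is handled directly from the central character, giving compactness of $t$ in $T/(T\cap G_{\der})$; the restriction to $\cH(G/\Cent(G))$ is then lifted to the \emph{simply connected cover} $\tilde H$ of the adjoint group $G/\Cent(G)$, where \cite[Theorem~8.2]{KL} legitimately applies and yields $|\alpha(t)|=1$ for all roots. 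Only after combining these two pieces does one obtain $|x(t)|=1$ for all $x\in X^*(T)$, i.e.\ compactness of $t$ in $T$. The same reduction to $\tilde H$ is needed for part~(2) before \cite[Theorem~8.3]{KL} can be invoked. Your informal description ``the element of $T$ that remains after removing the $\SL_2$-contribution is precisely $t$'' is the right intuition, but it does not substitute for this two-step reduction; in particular it does not by itself control the central-torus direction of $t$.
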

\begin{proof}
(1) Since $X^* (G / G_\der) +  X^* (T / \Cent (G))$ is a sublattice of finite index in 
$X^* (T)$, a finite dimensional $\cH (G)$-module is tempered if and only its 
restrictions to $\cH (G / G_\der)$ and to $\cH (T / \Cent (G))$ are both tempered. 

As $G / G_\der$ is a torus, $\pi_z := \pi (t_q,x,\rho) |_{\cH (G / G_\der)}$ 
is tempered if and only if all the eigenvalues of the $\pi_z (\theta_x)$ with 
$x \in X^* (G / G_\der)$ have absolute value 1. By Theorem \ref{thm:S.5} the central
character of $\pi (t_q,x,\rho)$ is $(\cW^G \rtimes \Gamma) t_q$, so the eigenvalues 
of $\pi_z (\theta_x)$ belong to $x ((\cW^G \rtimes \Gamma) t_q)$. Because
$\cW^G \rtimes \Gamma$ acts on $T$ by algebraic automorphisms, it preserves the unique
maximal compact subgroup $T_\cpt$. Therefore $\pi_z$ is tempered if and only if
$t_q \in T_\cpt / T_\cpt \cap G_\der$. Since $t_q \in t \Phi (\SL_2 (\C))$ and 
$\Phi (\SL_2 (\C)) \subset G_\der$, this is equivalent to $t$ being compact in 
$T / T \cap G_\der$.

We denote the restriction of $\pi(t_q,x,\rho)$ to $\cH (G / \Cent (G))$ by $\pi_d$.
Since the varieties of Borel subgroups for $G$ and for $G/ \Cent (G)$ are isomorphic and the
relevant irreducible modules are made from the homologies of such varieties, $\pi_d$ is
a direct sum of representations $\pi (t_q,x,\rho_i)$ with $\rho_i \in \Irr \big( \pi_0 
(Z_{G /\Cent (G)}(t_q,x)) \big)$. 

We would like to apply \cite[Theorem 8.2]{KL}, but it is only proven for simply connected
groups. Let $\tilde H$ be the simply connected cover of $G / \Cent (G)$. Since $\SL_2 (\C)$ 
is simply connected, $\Phi|_{\SL_2 (\C)}$ lifts in unique way to a homomorphism 
$\tilde{\Phi} : \SL_2 (\C) \to \tilde{H}$. Lift $t$ to $\tilde{t} \in \tilde{H}$.
Then $\tilde{t}$ centralizes $\tilde{\Phi} (\SL_2 (\C))$, because it centralizes the
Lie algebra of that group. Now $\tilde{t}_q := t \tilde{\Phi}\matje{q^{1/2}}{0}{0}{q^{-1/2}}
\in \tilde{H}$ is a lift of $t_q$.

From \eqref{eq:HGmod2}, \eqref{eq:HGmod1} and \eqref{eq:S.27} we see that $\pi_d$ is a
$\cH (G / \Cent (G))$-summand of a sum of irreducible $\cH (\tilde H)$-modules 
$\pi (\tilde{t}_q,\tilde x,\tilde{\rho}_i)$ with $\tilde{\rho}_i \in \Irr \big( \pi_0 
(Z_{\tilde{H}}(\tilde{t}_q,\tilde{x})) \big)$. All irreducible $\cH (G / \Cent (G))$-submodules
of $\pi (\tilde{t}_q,\tilde x,\tilde{\rho}_i)$ are conjugate by invertible elements of
$\cH (\tilde{H})$, so $\pi (\tilde{t}_q,\tilde x,\tilde{\rho}_i)$ is tempered if and only
if any of its $\cH (G / \Cent (G))$-constituents is tempered. It follows that $\pi_d$ is
tempered if and only if $\pi (\tilde{t}_q,\tilde x,\tilde{\rho}_i)$ is tempered for every
relevant $\tilde{\rho}_i$.

According to \cite[Theorem 8.2]{KL}, the latter condition is equivalent to ``$P = G$'',
which by \cite[\S 7.1]{KL} is the same as ``all eigenvalues of Ad$(\tilde{t})$ on
Lie$(\tilde{H})$ have absolute value 1''. Since $\tilde{H} \to G / \Cent (G)$ is a central
extension, this statement is equivalent to the analgous one for Ad$(t)$ acting on
Lie$(G / \Cent (G))$. We showed that $\pi_d$ is tempered if and only if all the eigenvalues
of Ad$(t)$ on Lie$(G / \Cent (G))$ have absolute value $\leq 1$. The nontrivial eigenvalues
come in pairs $\alpha (t), \alpha (t)^{-1}$ with $\alpha \in \Phi (G,T)$, so we may 
replace the condition $\leq 1$ by $= 1$.

We conclude with a chain of equivalences.
\begin{align*}
& \pi (t_q,x,\rho) \text{ is tempered } \Longleftrightarrow \\
& \pi_z \text{ is tempered and } \pi_d \text{ is tempered } \Longleftrightarrow \\
& t \text{ is compact in } T / T \cap G_\der \text{ and } 
|\alpha (t)| = 1 \; \forall \alpha \in \Phi (G,T) \Longleftrightarrow \\
& |x(t)| = 1 \; \forall x \in X^* (G / G_\der) \text{ and }
|x(t)| = 1 \; \forall x \in X^* (T / Z(G)) \Longleftrightarrow \\
& |x(t)| = 1 \; \forall x \in X^* (T) \Longleftrightarrow \\
& t \text{ belongs to the maximal compact subgroup of } T .
\end{align*}
(2) In the same way as in the proof of part (1), this can be reduced to irreducible
modules of $\cH (\tilde{H})$. Since $\tilde{H}$ is simply connected, essential
square-integrability is now the same as square-integrability, and \cite{KL} applies.
According to \cite[Theorem 8.3]{KL} a $\cH (\tilde{H})$-module $\pi (\tilde{t}_q,
\tilde{x},\tilde{\rho})$ is square-integrable if and only if $\{ \tilde{t}_q, \tilde{x}\}$
is not contained in any Levi subgroup of a proper parabolic subgroup of $\tilde{H}$.
There is a canonical bijection between parabolic (respectively Levi) subgroups
of $\tilde{H}$ and of $G$, so this statement is equivalent to the analogous
one for $\{t_q,x\}$ and $G$.
\end{proof}

\section{Spherical representations}
\label{sec:spherical}

Let $G,B,T$ and $\Gamma$ be as in the previous section.
Let $\mathcal H (\cW^G )$ be the Iwahori--Hecke algebra of the Weyl group 
$\cW^G$, with a parameter $q \in \C^\times$ which is not a root of unity. This
is a deformation of the group algebra $\C [\cW^G]$ and a subalgebra of the affine Hecke 
algebra $\cH (G)$. The multiplication is defined in terms of the basis 
$\{ T_w \mid w \in \cW^G\}$ by
\begin{eqnarray} \label{eq:defHA}
T_x T_y= T_{xy}, \quad \text{if $\ell(xy)=\ell(x)+\ell(y)$, and}\cr
(T_s-q)(T_s+1)=0, \quad \text{if $s$ is a simple reflection.}
\end{eqnarray}
Recall that $\cH (G)$ also has a commutative subalgebra $\cO (T)$, 
such that the multiplication maps
\begin{equation}\label{eq:multmaps}
\cO (T) \otimes \cH (\cW^G) \longrightarrow \cH (G) \longleftarrow 
\cH (\cW^G) \otimes \cO (T)
\end{equation}
are bijective.

The trivial representation of $\cH (\cW^G) \rtimes \Gamma$ is defined as
\begin{equation}
\triv (T_w \gamma) = q^{\ell (w)} \quad w \in \cW^G, \gamma \in \Gamma .
\end{equation}
It is associated to the idempotent
\[
p_\triv \: p_\Gamma := \sum_{w \in \cW^G} T_w P_{\cW^G}(q)^{-1} \, 
\sum_{\gamma \in \Gamma} \gamma |\Gamma |^{-1} \quad \in \cH (\cW^G) \rtimes \Gamma ,
\]
where $P_{\cW^G}$ is the Poincar\'e polynomial
\[
P_{\cW^G}(q) = \sum\nolimits_{w \in \cW^G} q^{\ell (w)} .
\]
Notice that $P_{\cW^G}(q) \neq 0$ because $q$ is not a root of unity.
The trivial representation appears precisely once in the regular representation of
$\cH (\cW^G) \rtimes \Gamma$, just like for finite groups.

An $\cH (G) \rtimes \Gamma$-module $V$ is called spherical if it is generated by
the subspace $p_\triv p_\Gamma V$ \cite[(2.5)]{HeOp}. This admits a nice
interpretation for the unramified principal series representations. Recall that
$\cH (G) \cong \cH (\cG,\mathcal I)$ for an Iwahori subgroup $\mathcal I \subset \cG$. 
Let $\mathcal K \subset \cG$ be a good maximal compact subgroup containing $\mathcal I$. 
Then $p_\triv$ corresponds to averaging over $\mathcal K$ and $p_\triv \cH (\cG,\mathcal I) 
p_\triv \cong \cH (\cG,\mathcal K)$, see \cite[Section 1]{HeOp}. Hence spherical 
$\cH (\cG,\mathcal I)$-modules correspond to smooth
$\cG$-representations that are generated by their $\mathcal K$-fixed vectors, also known as
$\mathcal K$-spherical $\cG$-representations. By the Satake transform
\begin{equation} \label{eq:Satake}
p_\triv \cH (\cG,\mathcal I) p_\triv \cong \cH (\cG,\mathcal K) \cong \cO (T / \cW^G) ,
\end{equation}
so the irreducible spherical modules of $\cH (G) \cong \cH (\cG,\mathcal I)$ are parametrized
by $T / \cW^G$ via their central characters. We want to determine the Kazhdan--Lusztig 
triples (as in Theorem \ref{thm:S.5}) of these representations.

\begin{prop}\label{prop:spherical}
For every central character $(\cW^G \rtimes \Gamma) t \in T / (\cW^G \rtimes \Gamma)$ 
there is a unique irreducible spherical $\cH (G) \rtimes \Gamma$-module,
and it is associated to the Kazhdan--Lusztig triple $(t,x=1,\rho = \triv)$.
\end{prop}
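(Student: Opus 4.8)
\emph{Overall approach.} The plan is to obtain existence and uniqueness by a Morita-type argument with the Satake idempotent, and then to pin down the Kazhdan--Lusztig triple by combining the central-character computation of Theorem \ref{thm:S.5} with a geometric identification of the spherical line in the relevant standard module, reduced at the end to the specialisation $\mathbf q=1$, where equation \eqref{eq:trivWaff} makes everything transparent.

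\emph{Existence and uniqueness.} Put $p=p_\triv p_\Gamma$. Since every element of $\Gamma$ permutes $\cW^G$ preserving lengths, $\Gamma$ fixes $p_\triv$, so $p$ is an idempotent of $\cH(G)\rtimes\Gamma$. From $p_\Gamma(\cH(G)\rtimes\Gamma)p_\Gamma\cong\cH(G)^\Gamma$, the Satake isomorphism \eqref{eq:Satake}, and the fact that $\Gamma$ acts on $p_\triv\cH(G)p_\triv\cong\cO(T/\cW^G)$ through its action on $T$, one gets $p(\cH(G)\rtimes\Gamma)p\cong\cO(T/\cW^G)^\Gamma=\cO(T/(\cW^G\rtimes\Gamma))$, a commutative algebra. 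By the standard theory of the functor $V\mapsto pV$, the irreducible $\cH(G)\rtimes\Gamma$-modules $V$ with $pV\neq 0$ are in bijection with the points of $T/(\cW^G\rtimes\Gamma)$; such a $V$ is generated by $pV$ and is therefore exactly an irreducible spherical module; and since the Satake isomorphism restricts to the identity on $Z(\cH(G)\rtimes\Gamma)\cong\cO(T/(\cW^G\rtimes\Gamma))$, the point attached to $V$ is its central character. This yields a unique irreducible spherical module for each central character.

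\emph{Reduction to sphericity of $\pi(t,1,\triv)$.} Because $x=1$ forces the cocharacter $\gamma$, hence $\Phi|_{\SL_2(\C)}$, to be trivial, we have $t_q=t$, so by Theorem \ref{thm:S.5} the module $\pi(t,1,\triv)$ has central character $(\cW^G\rtimes\Gamma)t$. By the uniqueness above it thus suffices to prove $p\cdot\pi(t,1,\triv)\neq 0$. By the Clifford-theoretic description in the proof of Theorem \ref{thm:S.5}, $\pi(t,1,\triv)$ is induced from $\pi_G(t,1,\triv)$ along $\cH(G)\rtimes\Gamma_{[t,1,\triv]_G}$ with trivial enhancement, and a Mackey/Frobenius computation, using that $p_\triv$ commutes with all of $\Gamma$ and that the canonical $\Gamma_{[t,1,\triv]_G}$-action on the one-dimensional space $p_\triv\pi_G(t,1,\triv)$ is trivial, reduces the claim to $p_\triv\cdot\pi_G(t,1,\triv)\neq 0$. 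Now $\pi_G(t,1,\triv)$ is the unique irreducible quotient of $\mathrm{Hom}_{\pi_0(\Z_G(t))}\big(\triv,H_*(\mathcal B^t_G,\C)\big)$. Using $\cH(G)=\cH(\cW^G)\cdot\cO(T)$ and $p_\triv\cH(\cW^G)=\C p_\triv$, the space $p_\triv H_*(\mathcal B^t_G,\C)$ is one-dimensional; by Proposition \ref{prop:UP} the variety $\mathcal B^t_G$ is a disjoint union of $[\cW^G:\cW^{M^\circ}]$ copies of $\mathcal B_{M^\circ}$ which $\pi_0(\Z_G(t))$ merely permutes, and under the sign-twisted Springer conventions of Remark \ref{rem:Springer} this line is spanned by the sum of the fundamental classes of those copies and so is $\pi_0(\Z_G(t))$-fixed (its character being a $\mathbf q$-independent finite-group character, trivial by its value at $\mathbf q=1$). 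Hence $p_\triv$ of the standard module is one-dimensional and nonzero.

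\emph{The main obstacle.} What remains, and is the delicate point, is to show that this spherical line is not swallowed by the radical, i.e. that the spherical constituent of the standard module is its head rather than a deeper subquotient. I would handle this by deformation to $\mathbf q=1$: by Lemma \ref{lem:S.7} and the Remark after \eqref{eq:S.29} the standard module at $\mathbf q=1$ is $\tau(t,1,\triv)$ together with a direct sum of modules $\tau(t,x',\rho')$ with $\cO_{x'}\supsetneq\cO_1$, none of which has a $\cW^G$-fixed vector by \eqref{eq:trivWaff}; hence at $\mathbf q=1$ the spherical line lies in the irreducible summand $\tau(t,1,\triv)$ and generates it, so the spherical constituent is the head there. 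Since the standard module $\mathrm{Hom}_{\pi_0(\Z_G(t))}(\triv,H_*(\mathcal B^t_G,\C))$ and the idempotent $p_\triv$ are defined over $\C[\mathbf q^{\pm1}]$ localised away from the roots of the Poincar\'e polynomials --- a connected base containing $\mathbf q=1$ --- and the central character $\cW^G t$ is independent of $\mathbf q$, the rank of $p_\triv$ on this flat family is constant, so the spherical constituent remains the head $\pi_G(t,1,\triv)$ for every $\mathbf q$ that is not a root of unity. This gives $p_\triv\cdot\pi_G(t,1,\triv)\neq 0$, hence $p\cdot\pi(t,1,\triv)\neq 0$, completing the identification.
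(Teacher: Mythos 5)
Your Satake/idempotent argument for existence and uniqueness, the observation that $x=1$ forces $t_q=t$ so the central character comes out right, and the Clifford/Mackey reduction to the $\cH(G)$-case are all fine and close in spirit to the paper's. The paper, however, handles the crucial non-vanishing $p_\triv\cdot\pi_G(t,1,\triv)\neq 0$ by a quite different route: after passing to $\tilde G$ with simply connected derived group, it replaces $\tilde t$ by a dominant representative $t^+$, identifies $M_{\tilde t,1,\triv}\cong\mathrm{Ind}_{\cO(\tilde T)}^{\cH(\tilde G)}\C_{t^+}$ via Reeder's Proposition 2.8.2, and then invokes Reeder's Whittaker-function result \cite[(1.5)]{ReeWhittaker} that for dominant $t^+$ the spherical vector \emph{generates} this induced module; it therefore cannot sit inside any proper submodule, so it survives to the head. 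Casselman's Proposition 4.1 then gives the $\pi_0(\Z_G(t))$-invariance.

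Your alternative ``deform to $\mathbf q=1$'' argument has a genuine gap at its last step. Constancy of $\dim p_\triv M(\mathbf q)$ does not by itself locate which composition factor of $M(\mathbf q)$ carries the spherical line: the submodule lattice of a flat family of modules is not itself flat, so the factor containing $p_\triv M$ can jump. Semisimplicity of $M(1)$ puts the spherical line in the summand $\tau(t,1,\triv)$, but this says nothing directly about whether, for $q$ of infinite order, the spherical vector is swallowed by $\mathrm{rad}\,M(\mathbf q)$. To repair the argument you would need to (i) justify $\dim p_\triv M(\mathbf q)=1$ for all such $q$ (your appeal to $\cH(G)=\cH(\cW^G)\cdot\cO(T)$ is a non sequitur unless you already know $M|_{\cH(\cW^G)}$ is the regular module, which is essentially the dominant-weight realization you were trying to avoid), and (ii) invoke the matching of composition multiplicities of $M(\mathbf q)$ across $\mathbf q$ (available via Lusztig's $\cJ$-algebra, as the paper uses elsewhere), together with the fact that the head $\pi(t_q,1,\triv)$ appears with multiplicity one, so that the unique factor with nonzero $p_\triv$ must be that head. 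As written the inference ``rank of $p_\triv$ is constant, so the spherical constituent remains the head'' is not justified, and this is precisely the step where the paper imports Reeder's generation result.
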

\begin{proof}
We will first prove the proposition for $\cH (G)$, and only then consider $\Gamma$.

By the Satake isomorphism \eqref{eq:Satake} there is a unique irreducible spherical
$\cH (G)$-module for every central character $\cW^G t \in T / \cW^G$. The equivalence
classes of Kazhdan--Lusztig triples of the form $(t,x=1,\rho = \triv)$ are also in
canonical bijection with $T / \cW^G$. Therefore it suffices to show that $\pi (t,1,\triv)$
is spherical for all $t \in T$.

The principal series of $\cH (G)$ consists of the modules 
$\mathrm{Ind}_{\cO (T)}^{\cH (G)} \C_t$ for $t \in T$. This module admits a central
character, namely $\cW^G t$. By \eqref{eq:multmaps} every such module is isomorphic to 
$\cH (\cW^G)$ as a $\cH (\cW^G)$-module. In particular it contains the trivial 
$\cH (\cW^G)$-representation once and has a unique irreducible spherical subquotient. 

As in Section \ref{sec:repAHA}, let $\tilde G$ be a finite central extension of $G$ with
simply connected derived group. Let $\tilde T ,\tilde B$ be the corresponding extensions 
of $T,B$. We identify the roots and the Weyl groups of $\tilde G$ and $G$. 
Let $\tilde t \in \tilde T$ be a lift of $t \in T$.
From the general theory of Weyl groups it is known that there is a unique
$t^+ \in \cW^G \tilde T$ such that $|\alpha (t^+)| \geq 1$ for all $\alpha \in 
R (\tilde B, \tilde T) = R(B,T)$.
By \eqref{eq:S.20} 
\[
H_* \big( \mathcal B_{\tilde G}^{\tilde t},\C \big) \cong H_*
\big( \mathcal B_{\tilde G}^{t^+} ,\C \big)
\]
as $\cH (\tilde G)$-modules. These $t^+,\tilde B$ fulfill \cite[Lemma 2.8.1]{R}, so by
\cite[Proposition 2.8.2]{R}
\begin{equation}
M_{\tilde t,\tilde x = 1,\tilde \rho = \triv} = H_* (\mathcal B_{\tilde G}^{t^+},\C)
\cong \mathrm{Ind}_{\cO (\tilde T)}^{\cH (\tilde G)} \C_{t^+} .
\end{equation}
According to \cite[(1.5)]{ReeWhittaker}, which applies to $t^+$, the spherical vector $p_\triv$
generates $M_{\tilde t,1,\triv}$. Therefore it cannot lie in any proper 
$\cH (\tilde G)$-submodule of $M_{\tilde t,1,\triv}$ and represents a nonzero element
of $\pi (\tilde t,1,\triv)$. We also note that the central character of $\pi (\tilde t,1,
\triv)$ is that of $M_{\tilde t,1,\triv} ,\; \cW^G \tilde t = \cW^G t^+$. 

Now we analyse this is an $\cH (G)$-module. The group $R_{\tilde t,1} = 
R_{\tilde t,\tilde x = 1, \tilde \rho = \triv}$ from \eqref{eq:S.21} is just the component 
group $\pi_0 (\Z_G (t))$, so by \eqref{eq:S.29}
\[
\pi (\tilde t,1,\triv) \cong \bigoplus\nolimits_\rho \mathrm{Hom}_{\pi_0 (\Z_G (t))}
(\rho, \pi (\tilde t,1,\triv)) = \bigoplus\nolimits_\rho \pi (t,1,\triv) .
\]
The sum runs over $\Irr \big( \pi_0 (\Z_G (t)) \big)$, all these representations $\rho$
contribute nontrivially by \cite[Lemma 3.5.1]{R}. Recall from Lemma \ref{lem:centrals}
that $\pi_0 (\Z_G (t))$ can be realized as a subgroup of $\cW^G$ and from \eqref{eq:Satake}
that $p_\triv \in \pi (\tilde t,1,\triv)$ can be regarded as a function on $\tilde \cG$
which is bi-invariant under a good maximal compact subgroup $\tilde{\mathcal K}$. This brings
us in the setting of \cite[Proposition 4.1]{Cas}, which says that $\pi_0 (\Z_G (t))$ fixes 
$p_\triv \in \pi (\tilde t,1,\triv)$. Hence $\pi (t,1,\triv)$ contains $p_\triv$ and is a
spherical $\cH (G)$-module. Its central character is the restriction of the central character
of $\pi (\tilde t,1,\triv)$, that is, $\cW^G t \in T / \cW^G$.

Now we include $\Gamma$. Suppose that $V$ is a irreducible spherical
$\cH (G) \rtimes \Gamma$-module. By Clifford theory its restriction to $\cH (G)$ is a
direct sum of irreducible $\cH (G)$-modules, each of which contains $p_\triv$. Hence $V$
is built from irreducible spherical $\cH (G)$-modules. By \eqref{eq:S.23}
\[
\gamma \cdot \pi (t,1,\triv) = \pi (\gamma t,1,\triv) ,
\]
so the stabilizer of $\pi (t,1,\triv) \in \Irr (\cH (G))$ in $\Gamma$ equals the stabilizer
of $\cW^G t \in T / \cW^G$ in $\Gamma$. Any isomorphism of $\cH (G)$-modules
\[
\psi_\gamma : \pi (t,1,\triv) \to \pi (\gamma t,1,\triv) 
\]
must restrict to a bijection between the onedimensional subspaces of spherical vectors
in both modules. We normalize $\psi_\gamma$ by $\psi_\gamma (p_\triv) = p_\triv$. Then
$\gamma \mapsto \psi_\gamma$ is multiplicative, so the 2-cocycle of $\Gamma_{\cW^G t}$
is trivial. With Theorem \ref{thm:S.5} this means that the irreducible $\cH (G) \rtimes 
\Gamma$-modules whose restriction to $\cH (G)$ is spherical are parametrized by 
equivalence classes of triples $(t,1,\triv \rtimes \sigma)$ with $\sigma \in
\Irr (\Gamma_{\cW^G t})$. The corresponding module is
\begin{equation*}
\pi (t,1,\triv \rtimes \sigma) = \pi (t,1,\triv) \rtimes \sigma =
\mathrm{Ind}_{\cH (G) \rtimes \Gamma_{\cW^G t}}^{\cH (G) \rtimes \Gamma}
\big( \pi (t,1,\triv) \otimes \sigma \big) .
\end{equation*}
Clearly $\pi (t,1,\triv \rtimes \sigma)$ contains the spherical vector
$p_\triv p_\Gamma$ if and only if $\sigma$ is the trivial representation. It follows
that the irreducible spherical $\cH (G) \rtimes \Gamma$-modules are parametrized by
equivalence classes of triples $\big( t,1,\triv_{\pi_0 (\Z_{G \rtimes \Gamma}(t))} \big)$,
that is, by $T / (\cW^G \rtimes \Gamma)$.
\end{proof}

\section{From the principal series to affine Hecke algebras}
\label{sec:Roche}

Let $\chi$ be a smooth character of the maximal torus $\cT \subset \cG$. We recall that
\begin{align*}
\fs & = [\cT,\chi]_{\cG},\\
c^\fs & = \hat \chi \big|_{\fo_F^\times},\\
H & = \Cent_G(\im c^\fs) , \\
W^\fs &  = \Z_{\cW^G} (\im c^\fs) .
\end{align*}
Let \{KLR parameters$\}^\fs$ be the collection of Kazhdan--Lusztig--Reeder parameters 
for $G$ such that $\Phi \big|_{\fo_F^\times} = c^\fs$. Notice that the condition forces
$\Phi (\mathbf W_F \times \SL_2 (\C)) \subset H$. This collection is not closed under 
conjugation by elements of $G$, only $H = \Z_G (\im c^\fs)$ acts naturally on it.

Recall that $T^\fs$ and $T^\fs / W^\fs$ are Bernstein's torus and Bernstein's centre
associated to $\fs$. Clearly $T$ acts simply
transitively on $T^\fs$, but we need a little more. Consider the bijections
\begin{equation}\label{eq:N.22}
T^\fs \longrightarrow \{ \text{L-parameters } \Phi \text{ for } \cT \text{ with } 
\Phi |_{\fo_F^\times} = c^\fs \} \xrightarrow{\; \mathrm{ev}_{\varpi_F} \;} T ,
\end{equation}
where the first map is the restriction of the local Langlands correspondence for $\cT$
to $T^\fs$ and the second map sends $\Phi$ to $\Phi (\varpi_F)$. The latter is not
natural because it depends on our choice of $\varpi_F$, but since we use the same
uniformizer everywhere this is not a problem.

As $T^\fs$ is a maximal torus in $H$, every semisimple element of $H^\circ$
is conjugate to one in $T$. By Lemma \ref{lem:centrals} $W^\fs \cong N_H (T) / T$, 
so we can identify $T^\fs / W^\fs$ with the space $c(H)_{\ss}$ of semisimple 
conjugacy classes in $H$ that consist of elements if $H^\circ$.

In general $H$ need not be connected. Recall from Lemma \ref{lem:pinning} that any 
choice of a pinning of $H^\circ$ determines a splitting of the short exact sequence
\begin{equation} \label{eq:splitH}
1 \to H^\circ / \Z(H^\circ) \to H / \Z(H^\circ) \to \pi_0 (H) \to 1 .
\end{equation}
Lemma \ref{lem:centrals} shows that
\begin{equation}\label{eq:WsH}
W^\fs = \mathcal W^G_{\im c^\fs} \cong \mathcal W^{H^\circ} \rtimes \pi_0 (H) .
\end{equation}
We fix a Borel subgroup $B \subset G$ containing $T$, and a pinning of $H^\circ$ with 
$T$ as maximal torus and $B_H = B \cap H^\circ$ as Borel subgroup.
This determines a conjugation action of $\pi_0 (H)$ on $H^\circ$, and hence on objects
associated to $H^\circ$. Like in Section \ref{sec:repAHA}, let $\cH (H^\circ)$ be the
affine Hecke algebra with the same based root datum as $(H^\circ,B)$, and with parameter 
$q$ equal to the cardinailty of the residue field of $F$. By our conventions $\pi_0 (H)$ 
normalizes $B$, so it acts on $\cH (H^\circ)$ by algebra automorphisms. 
Following \cite[Section 8]{Roc} we define
\begin{equation}\label{eq:HeckeH}
\cH (H) = \cH (H^\circ) \rtimes \pi_0 (H) . 
\end{equation}
We denote the Hecke algebra of $\cG$ by $\cH (\cG)$. Recall that its consists of
all locally constant compactly supported functions $\cG \to \C$ and is endowed with
the convolution product. The category $\Rep (\cG)$ of smooth $\cG$-representations
is naturally equivalent to the category of nondegenerate $\cH (\cG)$-modules.
Let $\Rep (\cG)^\fs$ be the block of $\Rep (\cG)$ associated to $\fs$. 

The link between these representations and Section \ref{sec:repAHA}
is provided by results of Roche. In \cite[p. 378--379]{Roc} Roche imposes some 
conditions on the residual characteristic of the field.

\begin{con}\label{con:char}
If the root system $R (H,T)$ is irreducible, 
then the restriction on the residual characteristic $p$ of $F$ is as follows:
\begin{itemize}
\item for type $A_n  \quad p > n+1$
\item for types $B_n, C_n, D_n \quad p \neq 2$
\item for type $F_4 \quad p \neq 2,3$
\item for types $G_2, E_6 \quad p \neq 2,3,5$
\item for types $E_7, E_8 \quad p \neq 2,3,5,7.$
\end{itemize}
If $R (H,T)$ is reducible, one excludes primes attached to each of its 
irreducible factors.
\end{con}
Since $R (H,T)$ is a subset of $R (G,T) \cong R (\cG,\cT)^\vee$, 
these conditions are fulfilled when they hold for $R (\cG,\cT)$.

\begin{thm}\label{thm:Roche} 
Assume that Condition \ref{con:char} holds. 
There exists an equivalence of categories
\[
\mathrm{Rep}(\cG)^\fs \longleftrightarrow \Mod (\cH (H)) 
\]
such that:
\begin{enumerate}
\item The cuspidal support of an irreducible $\cG$-representation corresponds 
to the central character of the associated $\cH (H)$-module via the canonical 
bijection $T^\fs / W^\fs \to c(H)_\ss$.
\item It does not depend on the choice of $\chi$ with $[\cT,\chi]_\cG = \fs$.
\end{enumerate}
\end{thm}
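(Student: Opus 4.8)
The plan is to derive the theorem from Roche's construction of types for principal series Bernstein components in \cite{Roc}, arranged so that the possible disconnectedness of $H = \Cent_G(\im c^\fs)$ is absorbed into the finite group extending an affine Hecke algebra; this is why the algebra occurring is $\cH(H) = \cH(H^\circ) \rtimes \pi_0(H)$ of \eqref{eq:HeckeH}, and why the machinery of Section~\ref{sec:repAHA} was set up for crossed products $\cH(G) \rtimes \Gamma$ in the first place. Concretely, I would first invoke \cite{Roc}: assuming Condition~\ref{con:char}, the character $\chi$ gives rise to an open compact subgroup $\cJ \subset \cG$ and an irreducible representation $\lambda$ of $\cJ$ forming an $\fs$-type, the restriction on the residual characteristic being exactly what makes $\chi$ tame enough for the construction to go through. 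By the theory of types, the functor attached to $(\cJ,\lambda)$ is then an equivalence of $\Rep(\cG)^\fs$ with $\Mod(\cH(\cG,\lambda))$, where $\cH(\cG,\lambda)$ is the $\lambda$-spherical Hecke algebra.

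The next step is to identify $\cH(\cG,\lambda)$ with $\cH(H)$. For this I would quote the structure theorem of \cite[Section~8]{Roc}, which exhibits $\cH(\cG,\lambda)$ as a crossed product of an affine Hecke algebra with equal parameter $q$ by a finite group, and then check three points: (i) the underlying based root datum is that of $(H^\circ,B_H)$ — on the dual side this is the subsystem $R(H^\circ,T) \subset R(G,T) \cong R(\cG,\cT)^\vee$ cut out by triviality on $\im c^\fs$ — so the affine part is $\cH(H^\circ)$; (ii) the finite group is $\pi_0(H)$, via Lemma~\ref{lem:centrals} and the identification $W^\fs = \cW^{H^\circ} \rtimes \pi_0(H)$ of \eqref{eq:WsH}; and (iii) the crossed-product structure matches $\cH(H^\circ) \rtimes \pi_0(H)$ compatibly with the pinning of $H^\circ$ and the resulting splitting of \eqref{eq:splitH}. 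Together these give $\cH(\cG,\lambda) \cong \cH(H)$, hence the equivalence claimed in the theorem.

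For property~(1), an equivalence arising from an $\fs$-type is linear over the Bernstein centre of $\Rep(\cG)^\fs$, which is $\cO(T^\fs/W^\fs)$, and therefore carries it isomorphically onto $Z(\cH(H))$; by Section~\ref{sec:repAHA} the latter equals $\cO(T/(\cW^{H^\circ} \rtimes \pi_0(H))) = \cO(T/W^\fs)$. Under the canonical bijection $T^\fs/W^\fs \to c(H)_\ss$ described just before the statement — which rests on $T^\fs$ being a maximal torus of $H$ and on $W^\fs \cong \Nor_H(T)/T$ from Lemma~\ref{lem:centrals} — this isomorphism of centres is precisely the pullback along that bijection. Since the cuspidal support of an irreducible object of $\Rep(\cG)^\fs$ is the point of $T^\fs/W^\fs$ through which the Bernstein centre acts, and the central character of an irreducible $\cH(H)$-module is the point of $T/W^\fs$ through which $Z(\cH(H))$ acts, property~(1) follows.

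For property~(2), any other $\chi'$ with $[\cT,\chi']_\cG = \fs$ is of the form $\chi' = w(\chi \otimes \nu)$ with $w \in \cW^\cG$ and $\nu$ an unramified character; conjugating $(\cJ,\lambda)$ by a representative of $w$ and twisting by $\nu$ produces a type for $\chi'$, while by Lemma~\ref{lem:cBernstein} the datum $c^\fs$ — hence $H$, hence the target category $\Mod(\cH(H))$ with the auxiliary choices ($B$, the pinning of $H^\circ$) fixed once and for all — depends only on $\fs$, and one checks that the composite equivalences built from $\chi$ and from $\chi'$ agree. I expect the main obstacle to be point~(iii) above: one must verify that Roche's finite extension group is identified with $\pi_0(H)$ \emph{together with} the pinning-based splitting of \eqref{eq:splitH}, so that the crossed-product structure of $\cH(\cG,\lambda)$ coincides with \eqref{eq:HeckeH} on the nose rather than only up to a noncanonical automorphism. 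This is precisely where the possible disconnectedness of $H$, equivalently of $Z(\cG)$, intervenes, and it is what the passage from the connected-centre results of \cite{KL,R} to the present generality requires.
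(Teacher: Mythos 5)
Your proposal follows the same route as the paper: Roche's type $(J,\tau)$ under Condition~\ref{con:char}, the Bushnell--Kutzko equivalence $\Rep(\cG)^\fs \simeq \Mod(\cH(\cG,\tau))$, Roche's isomorphism $\cH(H)\cong\cH(\cG,\tau)$, and then compatibility of Bernstein centres for~(1) and twist/conjugation for~(2). A few calibration remarks. First, your ``main obstacle'' — matching Roche's crossed-product decomposition of $\cH(\cG,\tau)$ with $\cH(H^\circ)\rtimes\pi_0(H)$ together with the pinning-based splitting of \eqref{eq:splitH} — is not something the paper re-derives; it is exactly the content of Roche's support-preserving isomorphism (\cite[Theorem~8.2]{Roc}), which already handles disconnected $H$, so the paper simply cites it. Second, you slightly underestimate the work for~(2): the real substance is checking that conjugation by $w\in W^\fs$ does not change the equivalence, and the paper outsources this to Reeder's verification in \cite[\S~6]{R}, adding the caveat that Reeder assumes $H$ connected but that his argument in \S~6 does not actually use connectedness. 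Your phrase ``one checks that the composite equivalences agree'' is precisely where this citation belongs. Third, the paper also records a preliminary point you omit: Roche works over $p$-adic fields, and one invokes \cite{AdRo} to carry his construction over to local fields of positive characteristic. Finally, a minor imprecision: for principal series the representation $\tau$ in Roche's type is a character (1-dimensional), not a general irreducible representation, which simplifies the Hecke-algebra identification.
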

\begin{proof}
First we note that, although Roche \cite{Roc} works with a $p$-adic field, 
it follows from \cite{AdRo} that his 
arguments apply just as well over local fields of positive characteristic. 
By \cite[Corollary 7.9]{Roc} there exists a type $(J,\tau)$ for 
$\fs = [\cT,\chi]_\cG$, where $\tau$ is a character. Then the $\tau$-spherical 
Hecke algebra $\cH (\cG,\tau)$ of $\cH (\cG)$ (see \cite[\S 2]{BKtyp}) equals 
$e_\tau \cH (\cG) e_\tau$, where $e_\tau \in \cH (J)$ is the central idempotent 
corresponding to $\tau$. According to \cite[Theorem 4.3]{BKtyp} there exists 
an equivalence of categories
\begin{equation}\label{eq:N.7}
\mathrm{Rep} (\cG)^\fs \to \Mod ( \cH (\cG,\tau)) : V \mapsto V^\tau ,
\end{equation}
where $V^\tau = e_\tau V$ is the $\tau$-isotypical subspace of $V |_J$.
From the proof of \cite[Proposition 3.3]{BKtyp} we see that the inverse of
\eqref{eq:N.7} is given by
\begin{equation}\label{eq:N.45}
\Mod ( \cH (\cG,\tau)) \to \mathrm{Rep} (\cG)^\fs :
M \mapsto \cH (\cG) \otimes_{\cH (\cG,\tau)} M .
\end{equation}
Theorem 8.2 of \cite{Roc} says that there exists a support preserving 
algebra isomorphism
\begin{equation}\label{eq:N.6}
\cH (H) \to \cH (\cG,\tau) .
\end{equation}
The combination of \eqref{eq:N.7} and \eqref{eq:N.6} yields the desired 
equivalence of categories. It satisfies property (1) by \cite[Theorem 9.4]{Roc}.

In \cite[\S 9]{Roc} it is shown that $(J,\tau)$ is a cover of the type 
$(\cT_0 ,\chi |_{\cT_0})$, in the sense of \cite[\S 8]{BKtyp}. With 
\cite[Theorem 9.4]{Roc} one sees that the above equivalence of categories
does not change if one twists $\chi$ by an unramified character of $\cT$,
basically because that does not effect $\chi |_{\cT_0}$.

Every other character of $\cT$ determining the same inertial equivalence
class $\fs$ can be obtained from $\chi$ by an unramified twist and conjugation
by an element of $W^\fs$. Reeder \cite[\S 6]{R} checked that the latter 
operation does not change Roche's equivalence of categories. We note that in
\cite{R} it is assumed that $H$ is connected. Fortunately this does not play 
a role in \cite[\S 6]{R}, because all the underlying results from \cite{Roc}
and \cite{MENS} are known irrespective of the connectedness.
\end{proof}

We emphasize that Theorem \ref{thm:Roche} is the only cause of our conditions on
the residual characteristic. If one can prove Theorem \ref{thm:Roche} for a 
particular Bernstein component and a $p$ which is excluded by Condition 
\ref{con:char}, then everything in our paper (except possibly 
Lemma \ref{lem:Roche}) holds for that case. 

For example, for unramified 
characters $\chi$ Theorem \ref{thm:Roche} is already classical, proven 
without any restrictions on $p$ by Borel \cite{Bor}. As Roche remarks in
\cite[4.14]{Roc}, all the main results of \cite{Roc} (and hence Theorem 
\ref{thm:Roche}) are valid without restrictions on $p$ when $\cG = \GL_n (F)$ 
or $\cG = \SL_n (F)$. For $\GL_n (F)$ this is easily seen, for $\SL_n (F)$
one can use \cite{GoRo}.

Theorems \ref{thm:Roche} and \ref{thm:S.5} provide a bijection 
\begin{equation}\label{eq:N.8}
\Irr (\cG)^\fs \to \Irr (\cH (H)) \to \{ \text{KLR-parameters} \}^\fs / H . 
\end{equation}
Unfortunately this bijection is not entirely canonical in general.

\begin{ex}\label{ex:noncanonical}
Consider the unramified principal series representations of $\SL_2 (F)$.
Then the type is the trivial representation of an Iwahori subgroup 
$\mathcal I \subset \SL_2 (F)$ and Theorem \ref{thm:Roche} reduces to \cite{Bor}. 
The functor sends a $\SL_2 (F)$-representation to its space of $\mathcal I$-fixed 
vectors. The Iwahori subgroup is determined by the choice of a maximal compact
subgroup and a Borel subgroup of $\SL_2 (F)$, and these data also determine
the isomorphism $\cH (\SL_2 (F),\mathrm{triv}_{\mathcal I}) \cong \cH (H)$.

However, there are two conjugacy classes of maximal compact subgroups in 
$\SL_2 (F)$. If we pick a maximal compact subgroup in the other class and perform
the same operations, we obtain an alternative map \eqref{eq:N.8}. The difference
is not big, for almost all $\SL_2 (F)$-representations the two maps have the same
image. But look at the parabolically induced representation 
$\pi = I_\cB^{\SL_2 (F)}(\chi_{-1})$, where $\chi_{-1}$ denotes the unique 
unramified character of $\cT$ of order 2. It is well-known that $\pi$ is the direct
sum of two inequivalent irreducible representations, say $\pi_+$ and $\pi_-$.
It turns out that the difference between our two candidates for \eqref{eq:N.8} 
is just interchanging $\pi_+$ and $\pi_-$.
\end{ex}

We will determine in Section \ref{sec:can} how canonical \eqref{eq:N.8} is precisely.

\section{Main result (special case)} 
\label{sec:main} 

In the current section we will study the relations between $\Irr (\cG)^\fs$ 
and $(T^\fs /\!/ W^\fs )_2$, in the case that $H$ is connected. 
This happens for most $\fs$, a sufficient condition is:

\begin{lem} \label{lem:Roche}   
Suppose that $G$ has simply connected derived group and that the residual 
characteristic $p$ satisfies Condition \ref{con:char} for $R(G,T)$.
Then $H$ is connected.
\end{lem}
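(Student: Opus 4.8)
The plan is to reduce the connectedness of $H = \Z_G(\im c^\fs)$ to a statement about centralizers of semisimple elements in a group with simply connected derived group. Recall from Lemma~\ref{lem:cBernstein} that $c^\fs \colon \fo_F^\times \to T$ is a smooth morphism; since $\fo_F^\times$ is a profinite group, its image $\im c^\fs$ lands in a finite subgroup of $T$, so $H = \Z_G(S)$ where $S \subset T$ is a finite (hence, in particular, a subgroup of finite order) subset. Actually I would first argue that $\im c^\fs$ generates a subgroup $A \subset T$, and replacing $\im c^\fs$ by $A$ does not change the centralizer, so $H = \Z_G(A)$ for a subgroup $A$ of the torus $T$.

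\textbf{Key step: Steinberg's connectedness theorem.} By a theorem of Steinberg (see \cite[Theorem~8.1]{SpringerSteinberg} or \cite[\S 2.2]{Hu}), if a connected reductive group $G$ has simply connected derived group, then the centralizer $\Z_G(s)$ of any semisimple element $s \in G$ is connected. I would extend this from a single semisimple element to an arbitrary subgroup $A$ of the maximal torus $T$: since $A$ is abelian and consists of semisimple elements, one can write $\Z_G(A) = \bigcap_{a \in A} \Z_G(a)$, but a cleaner route is to note that $A$ is a subgroup of the torus $T$, pick a cocharacter or use that $\overline{A}$ (Zariski closure) is a diagonalizable subgroup $D$ of $T$, and then $\Z_G(D)$ is connected because $D$ is a diagonalizable subgroup of a connected group with simply connected derived group --- this is the standard generalization, following from the fact that for each $a$, $\Z_G(a)$ is connected and one can induct on the structure, or simply invoke \cite[Theorem~6.4.7]{SpringerSteinberg} which already treats centralizers of tori/diagonalizable subgroups. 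The point is that $\Z_G(D)$ is generated by $T$ together with the root subgroups $U_\alpha$ for those $\alpha$ trivial on $D$, as recalled in the proof of Lemma~\ref{lem:centrals} via \cite[\S 4.1]{SpringerSteinberg}; when $G_\der$ is simply connected, the component group coming from the ``extra Weyl elements'' in that description is trivial.

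\textbf{The main obstacle.} The delicate point is precisely the last sentence: in the description $M = \Z_G(A)$ is generated by $T$, the root subgroups $U_\alpha$ with $\alpha|_A = 1$, \emph{and} those Weyl representatives $n_w$ with $w$ fixing $A$ pointwise. The connected group $M^\circ$ is generated only by $T$ and the relevant $U_\alpha$, so $\pi_0(M)$ measures the failure of every such $w$ to already lie in $\mathcal W^{M^\circ}$. When $G_\der$ is simply connected, Steinberg's theorem guarantees this failure does not occur; I would either cite Steinberg directly for subgroups of a torus, or reduce to the case of a single element by observing that $A$ is topologically (or Zariski-)generated by finitely many elements $a_1, \dots, a_k$ and that $\Z_G(a_1)$ is connected reductive with simply connected derived group (the latter because the derived group of a centralizer of a semisimple element in a simply connected group is again simply connected --- this is itself a theorem of Steinberg), then induct: $\Z_G(A) = \Z_{\Z_G(a_1)}(a_2, \dots, a_k)$, and at each stage we stay within the class of connected reductive groups with simply connected derived group. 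After finitely many steps we conclude $H = \Z_G(A)$ is connected. Finally I would remark that Condition~\ref{con:char} plays no role in the actual argument here --- it is invoked in the statement only because it is the standing hypothesis under which $\fs$ and $c^\fs$ are being discussed via Theorem~\ref{thm:Roche}; the connectedness of $H$ depends solely on $G_\der$ being simply connected.
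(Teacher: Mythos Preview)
Your proposal is correct, and the inductive argument via Steinberg's theorem (centralizer of a semisimple element in a group with simply connected derived group is connected and again has simply connected derived group, \cite[2.17]{Steinbergtorsion}) works cleanly. One small imprecision: you write ``in a simply connected group'' where you need ``in a group with simply connected derived group''; fortunately the cited result is stated in that generality.

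The paper takes a genuinely different route. It uses the structure of $\fo_F^\times$ to decompose $\im c^\fs = A_p \cdot B_{q-1}$ into a finite $p$-group times a cyclic group of order dividing $q-1$. For the $p$-part it invokes Steinberg's result \cite[2.16.b]{Steinbergtorsion} on centralizers of abelian $p$-subgroups when $p$ is not a torsion prime---and this is exactly where Condition~\ref{con:char} enters the paper's argument. To continue, the paper must then cite a result of Roche that $\Cent_G(A_p)$ coincides with the centralizer of a single suitably chosen element (so that \cite[2.17]{Steinbergtorsion} applies to get simply connected derived group), after which the cyclic group $B_{q-1}$ is handled by one more application of Steinberg.

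Your approach is more economical: by centralising one generator at a time you never need the $p$-subgroup version of Steinberg's theorem, and hence never need the hypothesis that $p$ is not a torsion prime. Your final remark that Condition~\ref{con:char} is superfluous for this particular lemma is therefore correct and is a mild sharpening of what the paper proves. The paper's approach, on the other hand, makes visible the arithmetic structure of $\im c^\fs$ coming from local class field theory, which is thematically closer to the rest of the paper but not logically necessary here.
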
 
\begin{proof}   
We consider first the case where $\fs=[\cT,1]_\cG$. 
Then we have  $c^\fs = 1, H=G$ and $W^\fs=\cW$.

We assume now that $c^\fs \ne 1$. Then $\im c^\fs$ is a finite abelian subgroup of 
$T$ which has the following structure: the direct product of a finite abelian $p$-group 
$A_p$ with a cyclic group $B_{q-1}$ whose order divides $q - 1$.  This follows from the  
well-known structure theorem for the group $\fo_F^\times$, see \cite[\S 2.2]{I}:
\[ 
\im c^\fs = A_p \cdot B_{q-1}.
\] 
We have
\[
H = \Cent_{H_A}(B_{q-1}) \quad\text{where}\quad H_A := \Cent_G (A_p).
\]
Since $G$ has simply connected derived group, $A_p$ is a $p$-group and $p$ is not a 
torsion prime for the root system $R(G,T)$, it follows from Steinberg's
connectedness theorem \cite[2.16.b]{Steinbergtorsion} that the group $H_A$
is connected. It was shown in \cite[p. 397]{Roc} that $H_A = \Cent_G (x)$ for
a well-chosen $x \in T$. Then \cite[2.17]{Steinbergtorsion} says that the derived
group of $H_A^\circ = H_A$ is simply connected.

Now  $B_{q-1}$ is cyclic. Applying Steinberg's connectedness theorem to the group $H_A$, 
we get that $H$ itself is connected. 
\end{proof}

\begin{rem}
Notice that $H$ does not necessarily have a simply connected derived group in setting
of Lemma \ref{lem:Roche}. For instance, if $G$ is the exceptional group of type $\rG_2$
and $\chi$ is the tensor square of a ramified quadratic character of $F^\times$, 
then $H = \SO_4 (\Cset)$.
\end{rem}

In the remainder of this section we will assume that $H$ is connected, Then Lemma
\ref{lem:centrals} shows that $W^\fs$ is the Weyl group of $H$. 

\begin{thm} \label{thm:ps}  
Let $\cG$ be a  split reductive $p$-adic group and let $\fs = [\mathcal T, \chi]_\cG$
be a point in the Bernstein spectrum of the principal series of $\cG$. Assume that $H$
is connected and that Condition \ref{con:char} holds. Then there is a commutative 
diagram of bijections, in which the triangle is canonical:
\[ 
\xymatrix{ &  & (T^\fs /\!/W^\fs)_2 \ar[dr]\ar[dl] & \\  
\Irr(\mathcal{G})^\fs \ar[r] & \Irr (\cH (H)) \ar[rr] & & 
\{\KLR\:\:\mathrm{parameters}\}^\fs / H} 
\] 
In the triangle the right slanted map stems from Kato's affine Springer correspondence
\cite{Kat}. The bottom horizontal map is the bijection established by Reeder \cite{R}
and the left slanted map can be constructed via the asymptotic Hecke algebra of Lusztig.
\end{thm}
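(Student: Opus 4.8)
The plan is to assemble the diagram from pieces that have essentially been built already in the preceding sections, so that the work reduces to checking that those pieces fit together commutatively. First I would set up the bottom horizontal map: by Theorem~\ref{thm:Roche} we have an equivalence $\mathrm{Rep}(\cG)^\fs \simeq \Mod(\cH(H))$ (here $\cH(H)=\cH(H^\circ)$ since $H$ is connected), which restricts to a bijection $\Irr(\cG)^\fs \to \Irr(\cH(H))$; composing with Theorem~\ref{thm:S.5} (with $\Gamma$ trivial, so $\cH(H)\rtimes\Gamma=\cH(H)$) gives a bijection $\Irr(\cH(H)) \to \{\KLR\text{ parameters}\}^\fs/H$. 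By Lemma~\ref{lem:compareParameters}, KLR parameters with $\Phi|_{\fo_F^\times}=c^\fs$ and $\Phi(\varpi_F)\in H^\circ=H$, up to $H$-conjugacy, are in natural bijection with Kazhdan--Lusztig triples for $H$ and with affine Springer parameters for $H$; this is exactly the source of Reeder's parametrization, so the bottom row is the composite bijection of \eqref{eq:N.8}, and it agrees with \cite{R} by construction of Theorem~\ref{thm:S.5}.

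Next I would handle the right slanted arrow. Since $H$ is connected, $W^\fs=\cW^H$ is the Weyl group of $H$ and $T^\fs=T$ is its maximal torus (Lemma~\ref{lem:centrals} and the discussion around \eqref{eq:WsH}), so $(T^\fs\q W^\fs)_2=(T\q\cW^H)_2$. Theorem~\ref{thm:S.3}, applied with $G$ replaced by $H$, already provides a \emph{natural} bijection between $(T\q\cW^H)_2$ and the set of $H$-conjugacy classes of affine Springer parameters $(t,x,\rho_1)$ for $H$, realized concretely via Kato's modules \eqref{eq:KatoMod}; composing with the identification of affine Springer parameters for $H$ with KLR parameters from Lemma~\ref{lem:compareParameters} gives the right slanted map. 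Thus both the bottom map and the right slanted map factor through the same intermediate object, the set of affine Springer parameters (equivalently KL triples, equivalently KLR parameters) for $H$, with all identifications natural.

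The left slanted arrow is then \emph{defined} to be the composite $(T^\fs\q W^\fs)_2 \to \{\KLR\}^\fs/H \to \Irr(\cH(H))$, i.e.\ the right slanted map followed by the inverse of the Reeder bijection; by Theorem~\ref{thm:S.5} it can equally be described through the asymptotic Hecke algebra of Lusztig, since the unique-irreducible-quotient construction there is exactly the passage from the standard module $\mathrm{Hom}_{\pi_0(\Z_H(t_q,x))}(\rho,H_*(\mathcal B^{t_q,x},\C))$ to its head. With this definition the triangle commutes by construction, and it is canonical because both its constituent maps are natural (Theorem~\ref{thm:S.3} and Lemma~\ref{lem:compareParameters}); only the full diagram, which involves the Roche equivalence and hence Example~\ref{ex:noncanonical}, may fail to be canonical. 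Composing the left slanted map with the Roche/Theorem~\ref{thm:S.5} bijection $\Irr(\cG)^\fs\to\Irr(\cH(H))$ of \eqref{eq:N.8} and checking that the resulting square closes up with the triangle is then a formal verification.

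The main obstacle I anticipate is not any single hard lemma but the bookkeeping: one must make sure that the three incarnations of the intermediate parameter set --- KLR parameters $(\Phi,\rho)$, affine Springer parameters $(t,x,\rho_1)$, and Kazhdan--Lusztig triples $(t_q,x,\rho_q)$ --- are being matched via \emph{the same} homotopy equivalences from Proposition~\ref{prop:S.1}, so that ``Kato's module'' on the geometric side of the triangle really is carried to ``Reeder's irreducible quotient'' on the Hecke side. Concretely the delicate point is that Theorem~\ref{thm:S.3}(2) produces the $X^*(T)\rtimes\cW^H$-module $\mathrm{Ind}$ of Kato's $\mathrm{Hom}$-space at $q=1$, whereas Theorem~\ref{thm:S.5} produces the head of the analogous induced module for $\cH(H)$ at the residual $q$; identifying the geometric parameter attached to one with that attached to the other is precisely what Proposition~\ref{prop:S.1} and Lemma~\ref{lem:compareParameters} are for, so once those are invoked the commutativity is immediate. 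I would therefore spend most of the write-up carefully tracing an element $(t,\tilde\tau)\in(T^\fs\q W^\fs)_2$ around both ways and citing the relevant naturality statements, rather than proving anything genuinely new.
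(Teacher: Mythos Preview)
Your assembly of the diagram from Theorems~\ref{thm:Roche}, \ref{thm:S.3}, \ref{thm:S.5} and Lemma~\ref{lem:compareParameters} is correct, and defining the left slanted map by commutativity is exactly what the paper does. The gap is in your claim that this map ``can equally be described through the asymptotic Hecke algebra of Lusztig'' by citing Theorem~\ref{thm:S.5}. Theorem~\ref{thm:S.5} does not mention $\mathcal J(H)$; what the theorem asserts is that Lusztig's \emph{independently defined} bijections
\[
\Irr(\cH(H)) \longleftrightarrow \Irr(\mathcal J(H)) \longleftrightarrow \Irr(X^*(T) \rtimes \cW^H)
\]
via the maps $\phi_q,\phi_1$ of \eqref{eq:N.10} carry $\pi(t_q,x,\rho_q)$ to $\tau(t,x,\rho)$. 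Proposition~\ref{prop:S.1} and Lemma~\ref{lem:compareParameters} only match the \emph{parameter sets}; they say nothing about Lusztig's $a$-function or why his specific construction respects those parameters.

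The nontrivial half is the $q=1$ side. The retraction of $\tilde\pi(\Phi,\rho)$ along $\phi_1$ is the full homology module \eqref{eq:S.38}, which by Lemma~\ref{lem:S.7} contains $\tau(t,x,\rho)$ as one constituent but also contains various $\tau(t,x',\rho')$ with $(x',\rho')>(x,\rho)$. Lusztig's bijection singles out a particular irreducible constituent via the $a$-function, and one must prove it selects $\tau(t,x,\rho)$ rather than one of the others. The paper does this by induction on $\dim\cO_x$: for the smallest orbit there is no ambiguity, and for larger $x$ the competing candidates $\tau(t,x',\rho')$ have already been assigned to other $\tilde\pi$'s by the induction hypothesis together with bijectivity of Lusztig's map. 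This argument (or some substitute for it) is the actual content of the proof beyond the formal bookkeeping, and your proposal does not supply it.
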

\begin{proof}
Theorem \ref{thm:Roche} provides the bijection
$\Irr(\mathcal{G})^\fs \to \Irr (\mathcal H (H))$. 

The right slanted map is the composition of Theorem \ref{thm:S.3}.1 (applied
to $H$) and Lemma \ref{lem:compareParameters} (with the condition $\Phi(\varpi_F)=t$). 
We can take as the horizontal map the parametrization of irreducible 
$\mathcal H (H)$-modules
by Kazhdan, Lusztig and Reeder as described in Section \ref{sec:repAHA}. These are
both canonical bijections, so there is a unique left slanted map which makes the 
diagram commute, and it is also canonical. 
We want to identify it in terms of Hecke algebras.

Fix a KLR parameter $(\Phi,\rho)$ and recall from Theorem \ref{thm:S.3}.2 that the 
corresponding $X^* (T) \rtimes \mathcal W^H$-representation is  
\begin{equation}
\tau (t,x,\rho) = 
\mathrm{Hom}_{\pi_0 (\Z_H (t,x))} \big( \rho, H_{d(x)}(\mathcal B^{t,x}_H,\C) \big) .
\end{equation}
Similarly, by Theorem \ref{thm:S.5} the corresponding $\mathcal H (H)$-module 
is the unique irreducible quotient of the $\mathcal H (H)$-module
\begin{equation}\label{eq:Mtqx}
\mathrm{Hom}_{\pi_0 (\Z_H (t_q,x))} \big( \rho_q, H_*(\mathcal B^{t_q,x}_H,\C) \big) .
\end{equation}
In view of Proposition \ref{prop:S.1} both spaces are unchanged if we replace 
$t$ by $t_q$ and $\rho$ by $\rho_q$, and the vector space \eqref{eq:Mtqx} 
is also naturally isomorphic to
\begin{equation}\label{eq:N.4}
\mathrm{Hom}_{\pi_0 (\Z_H (\Phi))} 
\big( \rho, H_* (\mathcal B_H^{t,\Phi (B_2)} ,\C) \big) .
\end{equation}
Recall the asymptotic Hecke algebra $\mathcal J (H)$ from \cite{LuCellsIII}. 
We remark that, although in \cite{LuCellsIII} the underlying reductive group $H$ is 
supposed to be semisimple, this assumption is shown to be unnecessary in 
\cite{LuCellsIV}. Lusztig constructs canonical bijections
\begin{equation}\label{eq:bijectionsIrr}
\Irr (\mathcal H (H)) \longleftrightarrow \Irr (\mathcal J (H)) \longleftrightarrow
\Irr (X^* (T) \rtimes \cW^H ) 
\end{equation}
which we will analyse with our terminology.
According to \cite[Theorem 4.2]{LuCellsIV} 
$\Irr (\mathcal J (H))$ is naturally parametrized by the set of $H$-conjugacy classes
of Kazhdan--Lusztig triples for $H$. By Lemma \ref{lem:compareParameters} we can
also use KLR parameters, so may call the $\mathcal J (H)$-module with parameters 
$(t_q,x,\rho_q) \; \tilde \pi (\Phi,\rho)$. Its retraction to $\mathcal H (H)$ via
\begin{equation}\label{eq:N.10}
\cH(H)\overset{\phi_q}\longrightarrow  \cJ(H) 
\overset{\phi_1}\longleftarrow X^* (T) \rtimes \cW^H 
\end{equation}
is described in \cite[2.5]{LuCellsIV}. It is essentially the $\rho_q$-isotypical part of 
the $\langle t_q \rangle \times \C^\times$-equivariant K-theory of the variety $\cB^{t_q,x}$. 
With \cite[Theorem 6.2.4]{CG} this can be translated to the terminology of 
Section \ref{sec:repAHA}, and one can see that it is none other than \eqref{eq:Mtqx}. 

Recall that $\mathbf q$ is an indeterminate and let $\mathcal H_v (H) = \cH_{\mathbf q} (H) 
\otimes_{\C [\mathbf q ,\mathbf q^{-1}]} \C_v$ be the affine Hecke algebra with the same 
based root datum as $H$ and with parameter $v \in \C^\times$. Thus 
\[
\mathcal H_q (H) = \mathcal H (H) \quad  \text{and} \quad
\mathcal H_1 (H) = \C [X^* (T) \rtimes \mathcal W^H] .
\]
Like in \eqref{eq:N.27}, let $\tilde H$ be a central finite extension of $H$ whose derived
group is simply connected. By \eqref{eq:S.17} and \eqref{eq:S.19} 
\begin{equation}\label{eq:N.5}
\cH_v (H) \cong \big( K^{\tilde H \times \C^\times} (\mathcal Z_{\tilde H}) 
\otimes_{\C [\mathbf q,\mathbf q^{-1}]} \C_v \big)^{\ker (\tilde H \to H)} .
\end{equation}
The above, in particular \eqref{eq:Mtqx}, describes the retraction $\tilde \pi (\Phi,\rho)
\in \Irr (\mathcal J (H))$ to $\mathcal H_v (H)$ for any $v \in \C^\times$. 

In \cite[Corollary 3.6]{LuCellsIII} the $a$-function 
is used to single out a particular irreducible quotient $\mathcal H_v (H)$-module of 
\eqref{eq:Mtqx}. This applies when $v=1$ or $v$ is not a root of unity.
For $\cH_q (H)$ we saw in \eqref{eq:S.27} that there is only one such quotient,
which by definition is $\pi (t_q,x,\rho_q)$. This is our description of the left hand
side of \eqref{eq:bijectionsIrr}.

For $v=1$ we need a different argument. 
By the above and \eqref{eq:N.4} we obtain the $\mathcal H_1 (H)$-module
\begin{equation}\label{eq:S.38}
\mathrm{Hom}_{\pi_0 (\Z_H (t,x))} \big( \rho, H_* (\mathcal B^{t,x}_H,\C) \big) 
\end{equation}
with the action coming from \eqref{eq:N.5}, \eqref{eq:CG} and the convolution product
in Borel--Moore homology. Let us compare this with Kato's action \cite{Kat}, as 
described in Section \ref{sec:affSpringer}. On the subalgebra $\C [\cW^H]$ both are
defined in terms of Borel--Moore homology, respectively with $K^{\tilde H \times \C^\times}
(\mathcal Z_{\tilde H})$ and with $H (\mathcal Z_{\tilde H})$. 
It follows from \cite[(7.2.12)]{CG} that they agree. An element $\lambda \in X^* (T)$ 
acts via \eqref{eq:N.5} on K-theory as tensoring with a line bundle over 
$\cB_{\tilde H}^{\tilde x}$ canonically associated to $\lambda$, see \cite[p. 395]{CG}
or \cite[Theorem 3.5]{KL}. From the descriptions given in \cite[p. 420]{CG} and
\cite[\S 3]{Kat} we see that on \eqref{eq:S.38} this reduces to the action coming
from \eqref{eq:thetaGB}. In other words, we checked that the $\mathcal H_1 (H)$-module 
\eqref{eq:S.38} contains Kato's module \eqref{eq:KatoMod}, as the homology in
top degree. 

We want to see what the right hand bijection in \eqref{eq:bijectionsIrr} does to 
$\tilde \pi (\Phi,\rho)$. By construction it produces
a certain irreducible quotient of \eqref{eq:S.38}, namely the unique one with minimal 
$a$-weight. Unfortunately this is not so easy to analyse directly. Therefore we consider 
the opposite direction, starting with an irreducible $\mathcal H_1 (H)$-module 
$V$ with $a$-weight $a_V$. According to \cite[Corollary 3.6]{LuCellsIII} the 
$\mathcal J (H)$-module 
\[
\tilde V := \mathcal H_1 (H)^{a_V} \otimes_{\mathcal H_1 (H)} V ,
\]
is irreducible and has $a$-weight $a_V$. See \cite[Lemma 1.9]{LuCellsIII} for the precise
definition of $\tilde V$. 

Now we fix $t \in T$ and we will prove with induction to $\dim \cO_x$ that 
$\widetilde{\tau (t,x,\rho)}$ is none other than $\tilde \pi (\Phi,\rho)$. 
Our main tool is Lemma 
\ref{lem:S.7}, which says that the constituents of \eqref{eq:S.38} are $\tau (t,x,\rho)$
and irreducible representations corresponding to larger affine Springer parameters
(with respect to the partial order defined via the unipotent classes $\cO_x \subset M$).
For $\dim \cO_{x_0} = 0$ we see immediately that only the $\mathcal J (H)$-module
$\tilde \pi (t,x_0,\rho_0)$
can contain $\tau (t,x_0,\rho_0)$, so that must be $\widetilde{\tau (t,x_0,\rho_0)}$.
For $\dim \cO_{x_n} = n$ Lemma \ref{lem:S.7} says that \eqref{eq:S.38} can only contain 
$\tau (t,x_n,\rho_n)$ if $x \in \overline{\cO_{x_n}}$. But when $\dim \cO_x < n$ 
\[
\widetilde{\tau (t,x_n,\rho_n)} \not\cong \tilde \pi (\Phi,\rho) ,
\]
because the right hand side already is $\widetilde{\tau (t,x,\rho)}$, by the induction
hypothesis and the bijectivity of $V \mapsto \tilde V$. So the parameter of 
$\widetilde{\tau (t,x_n,\rho_n)}$ involves an $x$ with $\dim \cO_x = n$.
Then another look at Lemma \ref{lem:S.7} shows that moreover $(x,\rho)$ must be 
$M$-conjugate to $(x_n,\rho_n)$. Hence $\widetilde{\tau (t,x,\rho)}$ is indeed
\eqref{eq:S.38}.

We showed that the bijections \eqref{eq:bijectionsIrr} work out as
\begin{equation}\label{eq:N.17}
\begin{array}{ccccc}
\Irr (\mathcal H (H)) & \leftrightarrow & \Irr (\mathcal J (H)) & \leftrightarrow
& \Irr (X^* (T) \rtimes \cW^H ) \\
\pi (t_q,x,\rho_q) & \leftrightarrow & \tilde \pi (\Phi,\rho) &
\leftrightarrow & \tau (t,x,\rho) ,
\end{array}
\end{equation}
where all the objects in the bottom line are determined by the KLR parameter $(\Phi,\rho)$.
\end{proof}

\section{Main result (Hecke algebra version)}
\label{sec:mainH}

In this section $q \in \C^\times$ is allowed to be any element of infinite order.  
We study how Theorem \ref{thm:ps} can be extended to the algebras and modules from Section 
\ref{sec:repAHA}. So let $\Gamma$ be a group of automorphisms of $G$ that
preserves a chosen pinning, which involves $T$ as maximal torus. 
With the disconnected group $G \rtimes \Gamma$ we associate three kinds of parameters:
\begin{itemize}
\item The extended quotient of the second kind $(T /\!/ \mathcal W^G \rtimes \Gamma )_2$.
\item The space $\Irr (\mathcal H_q (G) \rtimes \Gamma)$ of equivalence classes of 
irreducible representations of the algebra $\mathcal H_q (G) \rtimes \Gamma$.
\item Equivalence classes of unramified Kazhdan--Lusztig--Reeder parameters. 
Let $\Phi : \mathbf W_F \times \SL_2 (\C) \to G$ be a group homomorphism with 
$\Phi (\mathbf I_F) = 1$ and $\Phi (\mathbf W_F) \subset T$. As in Section \ref{sec:Borel}, 
the component group 
\[
\pi_0 (\Z_{G \rtimes \Gamma}(\Phi)) = 
\pi_0 (\Z_{G \rtimes \Gamma}(\Phi (\mathbf W_F \times B_2)))
\]
acts on $H_* (\mathcal B^{\Phi (\mathbf W_F \times B_2)}_G ,\C)$. We take $\rho \in$
$\Irr \big( \pi_0 (\Z_{G \rtimes \Gamma}(\Phi)) \big)$ such that every irreducible
$\pi_0 (\Z_G (\Phi))$-subrepresentation of $\rho$ appears in $H_* (\mathcal B^{\Phi 
(\mathbf W_F \times B_2)}_G ,\C)$. The set \{KLR parameters for $G \rtimes \Gamma \}^{\unr}$ 
of pairs $(\Phi,\rho)$ carries an action of $G \rtimes \Gamma$ by conjugation. We consider 
the collection \{KLR parameters for $G \rtimes \Gamma \}^{\unr} / G \rtimes \Gamma$ 
of conjugacy classes $[\Phi,\rho ]_{G \rtimes \Gamma}$.
\end{itemize}

As in the proof of Theorem \ref{thm:ps}, let $\mathcal J (G)$ be the asymptotic Hecke 
algebra of $G$. The group $\Gamma$ acts on the extended affine Weyl group 
$X^*(T) \rtimes \cW^G$ in a length-preserving way. Hence every $\gamma \in \Gamma$ 
naturally determines an automorphism of $\mathcal J (G)$, as described in 
\cite[\S 1]{LuCellsIV}. 
This enables us to form the crossed product $\mathcal J (G) \rtimes \Gamma$.

\begin{thm}\label{thm:S.4}
There exists a commutative diagram of natural bijections
\[
\xymatrix{ 
& \hspace{-7mm} (T /\!/ \mathcal W^G \rtimes \Gamma )_2 \hspace{-7mm} \ar[dr]\ar[dl] & \\  
\Irr (\mathcal H_q (G) \rtimes \Gamma)    \ar[rr] & & 
\{ \textup{KLR parameters for } G \rtimes \Gamma \}^{\unr} / G \rtimes \Gamma } 
\]
It restricts to bijections between the following subsets:
\begin{itemize}
\item the ordinary quotient $T / (\cW^G \rtimes \Gamma) \subset
(T /\!/ \mathcal W^G \rtimes \Gamma )_2$,
\item the collection of spherical representations in $\Irr (\mathcal H_q (G) \rtimes \Gamma)$,
\item equivalence classes of KLR parameters $(\Phi,\rho)$ for $G \rtimes \Gamma$ with\\
$\Phi (\mathbf I_F \times \SL_2 (\C)) = 1$ and 
$\rho = \triv_{\pi_0 (\Z_{G \rtimes \Gamma}(\Phi))}$.
\end{itemize}
Moreover the left slanted map can be constructed via the (irreducible representations of)
the algebra $\mathcal J (G) \rtimes \Gamma$.
\end{thm}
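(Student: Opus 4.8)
The plan is to reduce Theorem \ref{thm:S.4} to the ``connected'' case already handled in Theorem \ref{thm:ps} by applying Clifford theory with respect to $\Gamma$, and to verify that the relevant $2$-cocycles vanish so that the extended quotients involved are untwisted. More precisely, for a fixed semisimple $t \in T$ write $M = \Z_G(t)$ and $M_\Gamma = \Z_{G \rtimes \Gamma}(t)$. By Lemma \ref{lem:centrals} the isotropy group of $t$ in $\cW^G \rtimes \Gamma$ is $\Nor_{M_\Gamma}(T)/T \cong \cW^{M^\circ} \rtimes \pi_0(M_\Gamma)$, and $\pi_0(M_\Gamma)$ sits in a short exact sequence $1 \to \pi_0(M) \to \pi_0(M_\Gamma) \to (\cW^G \rtimes \Gamma)_t / \cW^G_t \to 1$. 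The first step is to run the argument of Proposition \ref{prop:S.2} and Theorem \ref{thm:SpringerExtended} verbatim but with $\pi_0(M_\Gamma)$ in place of $\pi_0(M)$: the splitting of Lemma \ref{lem:S.3} and the Borel--Moore homology operators $H_*(\mathrm{Ad}_{s(\gamma)})$ are equally available here because nothing in that construction used that $\Gamma$ acts by pinned automorphisms of a \emph{connected} group rather than of $G \rtimes \Gamma$. This gives a natural bijection between $(T /\!/ \cW^G \rtimes \Gamma)_2$ and $\Irr(X^*(T) \rtimes \cW^G \rtimes \Gamma)$, i.e. the right-hand factor of the analogue of Theorem \ref{thm:S.3}, together with the affine-Springer description of the corresponding modules. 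This is the right slanted arrow, composed with the obvious identification of affine Springer parameters for $G \rtimes \Gamma$ with unramified KLR parameters (Lemma \ref{lem:compareParameters}, applied with trivial inertial data).

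For the left slanted arrow and the bottom horizontal arrow, the idea is to use $\Gamma$-Clifford theory on top of Theorem \ref{thm:ps}. By Theorem \ref{thm:S.5} (with the group $\Gamma$ in the role there) $\Irr(\mathcal H_q(G) \rtimes \Gamma)$ is parametrised by $G \rtimes \Gamma$-conjugacy classes of Kazhdan--Lusztig triples $(t_q,x,\rho)$ with $\rho \in \Irr(\pi_0(\Z_{G \rtimes \Gamma}(t_q,x)))$, and likewise the module is the unique irreducible quotient of $\mathrm{Hom}_{\pi_0(\Z_{G \rtimes \Gamma}(t_q,x))}(\rho, H_*(\mathcal B^{t_q,x},\C) \otimes \C[\Gamma])$. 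Comparing with the ``$v=1$'' analysis in the proof of Theorem \ref{thm:ps}, one sees that the asymptotic algebra $\mathcal J(G) \rtimes \Gamma$ provides the bridge: $\Gamma$ acts on $\mathcal J(G)$ length-preservingly, hence on its irreducible modules, and the bijections \eqref{eq:bijectionsIrr} are $\Gamma$-equivariant because all three objects are built functorially from the $\Gamma$-variety $\mathcal B^{t_q,x}$ (this was recorded in \eqref{eq:S.22}--\eqref{eq:S.23} for $\mathcal H_q(G)$, and the same diagram chase applies to $\mathcal J(G)$ and to $X^*(T) \rtimes \cW^G$). Passing to crossed products with $\Gamma$ on all three corners of the triangle of Theorem \ref{thm:ps} and invoking Lemma \ref{lem:Clifford_algebras} for each, we obtain the commutative triangle of the theorem; the left slanted map is then, by construction, realised through $\Irr(\mathcal J(G) \rtimes \Gamma)$.

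The main obstacle is the matching of the $2$-cocycles that arise when one descends each of the three Clifford-theoretic decompositions to $\Gamma$. On the Hecke side, composing an irreducible $\pi(t_q,x,\rho_q)$ with $\gamma^{-1}$ yields a cocycle $\natural(\pi(t_q,x,\rho_q))$ of $\Gamma_{\pi}$; on the KLR side one gets $\natural(t_q,x,\rho_q)$ from the action of $\Gamma_{[t_q,x,\rho_q]_G}$ on $\Irr(\pi_0(\Z_G(t_q,x)))$; and on the geometric/affine-Springer side the cocycle comes from the choices of intertwiners $H_*(\mathrm{Ad}_g)$. The plan is to show all three equal a single cocycle $\natural$ coming from a set-theoretic splitting of $1 \to \pi_0(\Z_G(t_q,x)) \to \pi_0(\Z_{G \rtimes \Gamma}(t_q,x)) \to \Gamma_{[t_q,x]_G} \to 1$, exactly as in the derivation of \eqref{eq:S.25} in the proof of Theorem \ref{thm:S.5}: since $\pi_0(\Z_{G \rtimes \Gamma}(t_q,x))$ acts honestly (not just projectively) on $H_*(\mathcal B^{t_q,x},\C)$, this $\natural$ simultaneously governs the $\Gamma$-action on $\rho_q$, on $\pi(t_q,x,\rho_q)$, and — via the $v=1$ specialisation and $\mathcal J(G)$ — on $\tau(t,x,\rho)$. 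The equality \eqref{eq:S.25} already does this for the Hecke/geometric comparison; what remains is to propagate it through $\mathcal J(G) \rtimes \Gamma$ to the affine Springer side and through Kato's construction to $(T /\!/ \cW^G \rtimes \Gamma)_2$, using that the operators $H_*(\mathrm{Ad}_{s(\gamma)})$ built in Proposition \ref{prop:S.2} satisfy the multiplicativity \eqref{eq:sGammaMult}, so the cocycle they produce is cohomologically trivial and equals $\natural$ on the nose after the natural normalisation. Once the cocycles agree, Lemma \ref{lem:Clifford_algebras} applied to each corner yields the commuting triangle, and the restriction to the three distinguished subsets (ordinary quotient / spherical representations / unramified-at-$\SL_2$ KLR parameters with trivial $\rho$) follows from Proposition \ref{prop:spherical} together with \eqref{eq:trivWaff} exactly as in the connected case, the $\Gamma$-fixed spherical vector $p_\triv p_\Gamma$ detecting precisely the trivial $\sigma \in \Irr(\Gamma_{\cW^G t})$.
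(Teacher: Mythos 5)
Your proposal tracks the paper's own proof closely: establish $\Gamma$-equivariance of the triangle from Theorem \ref{thm:ps}, show the four Clifford $2$-cocycles (on the Hecke, asymptotic, KLR, and extended-quotient corners) can be chosen to agree and are in fact trivial via Proposition \ref{prop:S.2}, then apply the twisted extended-quotient construction $(- \q \Gamma)_2^\natural$ and Lemma \ref{lem:Clifford_algebras} to all corners simultaneously, and finally identify the ordinary quotient, spherical modules and trivial-$\rho$ KLR parameters via Proposition \ref{prop:spherical} and \eqref{eq:trivWaff}. The only cosmetic deviation is that you also sketch a direct derivation of the right slanted arrow by rerunning the extended Springer correspondence for the disconnected isotropy group $\cW^{M^\circ}\rtimes\pi_0(\Z_{G\rtimes\Gamma}(t))$, but this is equivalent to the paper's route of applying Clifford theory to the right slanted arrow for $G$ once one knows the cocycle vanishes.
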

\begin{proof}
The corresponding statement for $G$, proven in Theorem \ref{thm:ps}, 
is the existence of natural bijections
\begin{equation}\label{eq:S.13}
\xymatrix{
\Irr (\mathcal J (G)) \ar[d] \ar[rr] &  & (T /\!/ \mathcal W^G )_2 \ar[d] \ar[dll] \\  
\Irr (\mathcal H_q (G) )    \ar[rr] & & \{ \text{KLR parameters for } G \}^{\unr} / G} 
\end{equation}
Although in Section \ref{sec:main} $q$ was a prime power, we notice that among the
objects in \eqref{eq:S.13} only the algebra $\mathcal H_q (G)$ depends on $q$. Fortunately 
the bottom, slanted and left hand vertical maps in \eqref{eq:S.13} are defined equally 
well for our more general $q \in \C^\times$, as can be seen from the proofs of 
Theorems \ref{thm:S.5} and \ref{thm:ps}. Thus we may use \eqref{eq:S.13} 
as our starting point.

\emph{Step 1. The bijections in \eqref{eq:S.13} are $\Gamma$-equivariant.}\\
The action of $\Gamma$ on $(T // \mathcal W^G )_2$ can be written as
\begin{equation}\label{eq:S.14}
\gamma \cdot [t,\tilde \tau]_{\mathcal W^G} = 
[\gamma (t),\tilde \tau \circ \mathrm{Ad}_\gamma^{-1}]_{\mathcal W^G} .
\end{equation}
In terms of the multiplication in $G \rtimes \Gamma$, the action on KLR parameters is
\begin{equation}\label{eq:S.15}
\gamma \cdot [\Phi ,\rho_1]_G = 
[\gamma \Phi \gamma^{-1}, \rho_1 \circ \mathrm{Ad}_\gamma^{-1} ]_G
\end{equation}
We recall the right hand vertical map in \eqref{eq:S.13} from Theorem \ref{thm:S.3}. 
Write $M = \Z_G (t)$ and $\mathcal W^G_t = W (M^\circ,T) \rtimes \pi_0 (M)$. Then the
$\mathcal W^G_t$-representation $\tilde \tau$ can be written as $\tau (x,\rho_3) \rtimes 
\sigma$ for a unipotent element $x \in M^\circ$, a geometric $\rho_3 \in$ Irr$(\Z_{M^\circ}(x))$ 
and a $\sigma \in$ Irr$(\pi_0 (M)_{\tau (x,\rho_3)})$. The associated KLR parameter is 
$[\Phi,\rho_3 \rtimes \sigma]_G$, where $\Phi \matje{1}{1}{0}{1} = x$ and $\Phi$ maps a 
Frobenius element of $\mathbf W_F$ to $t$.
From \eqref{eq:S.7} we see that $\tau (x,\rho_3) \circ \mathrm{Ad}_\gamma^{-1}$ is equivalent
with $\tau (\gamma x \gamma^{-1},\rho_3 \circ \mathrm{Ad}_\gamma^{-1})$, so 
\[
\tilde \tau \circ \mathrm{Ad}_\gamma^{-1} \text{ is equivalent with } \tau (\gamma x \gamma^{-1},
\rho_3 \circ \mathrm{Ad}_\gamma^{-1}) \rtimes (\sigma \circ \mathrm{Ad}_\gamma^{-1}) .
\]
Hence \eqref{eq:S.14} is sent to the KLR parameter \eqref{eq:S.15}, which means that the right
hand vertical map in \eqref{eq:S.13} is indeed $\Gamma$-equivariant.

In view of Proposition \ref{prop:S.1} and \eqref{eq:S.15}, we already showed in \eqref{eq:S.23} 
that the lower horizontal map in \eqref{eq:S.13} is $\Gamma$-equivariant. By the commutativity 
of the triangle, so is the slanted map. 

As we checked in the proof of Theorem \ref{thm:ps},
the left hand vertical map is retraction along $\phi_q : \cH (G) \to \mathcal J (G)$ followed
by taking the unique irreducible quotient. The algebra homomorphism $\phi_q$ is 
$\Gamma$-equivariant because $\Gamma$ respects the entire setup in \cite[\S 1]{LuCellsIV}. 
Therefore the left hand vertical map is also $\Gamma$-equivariant.

\emph{Step 2. Suppose that $\tilde \pi (\Phi,\rho), [t,\tilde \tau]_{\mathcal W^G},\pi$ 
and $[\Phi,\rho_1]_G$ are four corresponding objects in \eqref{eq:S.13}. 
Then their stabilizers in $\Gamma$ coincide:}
\[
\Gamma_{\tilde \pi (\Phi,\rho)} = \Gamma_{[t,\tilde \tau]_{\mathcal W^G}} 
= \Gamma_\pi = \Gamma_{[\Phi,\rho_1]_G} .
\]
This follows immediately from step 1.

\emph{Step 3. Clifford theory produces 2-cocycles $\natural \big( \tilde \pi (\Phi,\rho) \big) 
,\; \natural \big( [t,\tilde \tau]_{\mathcal W^G} \big), \natural (\pi) $ and 
$\natural \big([\Phi,\rho_1]_G \big)$ of $\Gamma_x$. 
We can choose the same cocycle for all four of them.}\\
For $\natural (\pi)$ and $\natural \big([\Phi,\rho_1]_G \big)$ this was already checked in
\eqref{eq:S.25}, where we use Proposition \ref{prop:S.1} to translate between $\Phi$ and
$(t_q,x)$. 

From \eqref{eq:N.17}, and Theorems \ref{thm:S.3} and \ref{thm:S.5} we see that 
$\tilde \pi (\Phi,\rho), [t,\tilde \tau]_{\mathcal W^G}$ and $\pi$ come from three rather 
similar representations. The difference is that $\tilde \pi (\Phi,\rho)$ is built from the 
entire homology of a variety, whereas the other two are quotients thereof. The 
$\Gamma_\pi$-actions on these three modules are defined in the
same way, so the two cocycles can be chosen equal.

We remark that $\natural \big( [t,\tilde \tau]_{\mathcal W^G} \big)$ is trivial by
Proposition \ref{prop:S.2}, so the other 2-cocycles are also trivial.

\emph{Step 4. Upon applying $X \mapsto (X /\!/ \Gamma)^\natural_2$ to the commutative diagram 
\eqref{eq:S.13} we obtain the corresponding diagram for $G \rtimes \Gamma$.}\\
Here $\natural$ denotes the family of 2-cocycles constructed in steps 2 and 3.
For $\Irr (\mathcal J (G) ,\; (T /\!/ \mathcal W^G )_2$ and $\Irr(\mathcal H_q (G))$ 
we know from Lemmas \ref{lem:Clifford} and \ref{lem:Clifford_algebras}
that this procedure yields the correct parameters. That it works for Kazhdan--Lusztig--Reeder
parameters was checked in the last part of the proof of Theorem \ref{thm:S.5}. By steps 1 and
3 the construction used in \eqref{eq:twisting} yields the same homomorphisms between
the twisted group algebras (called $\phi_{\gamma,x}$ in Section \ref{sec:extquot}) in all four
settings. Hence the maps from \eqref{eq:S.13} 
can be lifted in a natural way to the diagram for $G \rtimes \Gamma$.

The ordinary quotient is embedded in $(T /\!/ \mathcal W^G \rtimes \Gamma )_2$ as the
collection of pairs $\big( t,\triv_{(\cW^G \rtimes \Gamma)_t} \big)$. By an obvious
generalization of \eqref{eq:trivWaff} these correspond to the affine Springer parameters
$(t,x=1,\rho = \triv)$. It is clear from the above construction that they are mapped to
KLR parameters $(\Phi,\triv)$ with $\Phi (\mathbf I_F \times \SL_2 (\C)) = 1$ and 
$\Phi (\varpi_F) = t$. By Proposition \ref{prop:spherical} the latter correspond
to the spherical irreducible $\cH (G) \rtimes \Gamma$-modules. 
\end{proof}

\section{Canonicity}
\label{sec:can}

We return to the notation from Section \ref{sec:Roche}. We would like to combine
Theorems \ref{thm:ps} and \ref{thm:S.4} to a version that applies to $\Irr (\cG)^\fs$
irrespective of the (dis)connectedness of $H = \Z_G (\mathrm{im} c^\fs)$.
We have observed already that everything in Theorem \ref{thm:S.4} is canonical, but
we do not know yet how canonical Theorem \ref{thm:Roche} is. Unfortunately a discussion
of this issue is avoided in the sources \cite{Roc} and \cite{R}.

For this purpose we need some technical results about the extended affine Hecke algebra 
$\cH (H)$. Let us denote the elements of the Bernstein basis of $\cH (H)$ by
$\theta_\lambda T_w$, where $\lambda \in X^* (T)$ and $w \in \cW^H$. 
The algebra $\cH (T)$ is canonically isomorphic to $\cO (T) = \C [X^* (T)]$, 
so it has a basis $\{[\lambda] : \lambda \in X^* (T)\}$. 
The assignment $[\lambda] \mapsto \theta_\lambda$ determines an algebra injection
\[
t_U : \cH (T) \cong \cO (T) \to \cH (H) . 
\]
It is canonical in the sense that it depends only on the based root datum of $(H,T)$,
which was fixed by the choice of a Borel subgroup $B_H = B \cap H$. 
Via $t_U$ we regard $\cO (T)$ as a subalgebra of $\cH (H)$. It is well-known from
\cite[\S 3]{LuGrad} that the centre of $\cH (H)$ is $\cO (T)^{\cW^H}$. Let
$\C (T)$ be the field of rational functions on $T$, the quotient field of 
$\cO (T)$. Then $\cH (H) \otimes_{\Z (\cH (H))} \C (T)^{\cW^H}$
carries a natural algebra structure, and as a vector space it is simply
\[
\cH (H) \otimes_{\cO (T)} \C (T) \cong \C (T) \rtimes \cW^H . 
\]
By \cite[\S 6]{LuGrad} or \cite[Proposition 1.5.1]{Sol} there is an algebra
isomorphism
\begin{equation}\label{eq:N.12}
\cH (H) \otimes_{\Z (\cH (H))} \C (T)^{\cW^H}  \cong \C (T) \rtimes \cW^H ,
\end{equation}
which is the identity on $\cO (T)$. 

\begin{prop}\label{prop:autH}
Let $\phi$ be an automorphism of $\cH (H)$ which is the identity on $\cO (T)$.
\begin{enumerate}
\item It induces an automorphism (also denoted by $\phi$) of $\C (T) \rtimes \cW^H$.
\item There exist $z_w \in \C^\times$ and $\lambda_w \in X^* (T)$ such that
$\phi (w) = z_w \theta_{\lambda_w} w$ for all $w \in \cW^H$.
\item For every reflection $s_\alpha$ with $\alpha \in R(H^\circ ,T)$ we have
$\lambda_{s_\alpha} \in \mathbb Z \alpha$.
\item $z_w = 1$ for $w \in \cW^{H^\circ}$, and $w \mapsto z_w$ is a character of
$\pi_0 (H) \cong \cW^H / \cW^{H^\circ}$.
\end{enumerate}
\end{prop}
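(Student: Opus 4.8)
The plan is to analyze $\phi$ by first extending it to the larger algebra in \eqref{eq:N.12} and then decoding the constraints coming from the Bernstein presentation of $\cH(H)$.

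\textbf{Part (1).} Since $\phi$ fixes $\cO(T)$ pointwise, it in particular fixes the centre $Z(\cH(H)) = \cO(T)^{\cW^H}$ pointwise, and hence also fixes the field $\C(T)^{\cW^H}$ of fractions of the centre. Therefore $\phi$ extends uniquely to an automorphism of $\cH(H) \otimes_{Z(\cH(H))} \C(T)^{\cW^H}$, which via \eqref{eq:N.12} is identified with $\C(T) \rtimes \cW^H$. The extended map is still the identity on $\cO(T)$, hence on $\C(T)$.

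\textbf{Part (2).} Work inside $\C(T) \rtimes \cW^H$. For $w \in \cW^H$ we have $w f w^{-1} = w(f)$ for $f \in \C(T)$, and since $\phi$ fixes $\C(T)$, the element $\phi(w)$ satisfies $\phi(w) f \phi(w)^{-1} = \phi(w f w^{-1}) = \phi(w(f)) = w(f)$ for all $f \in \C(T)$. Thus $w^{-1}\phi(w)$ centralizes $\C(T)$ in $\C(T)\rtimes\cW^H$. Because $\C(T) \rtimes \cW^H$ is a crossed product of a field by a group acting faithfully (the $\cW^H$-action on $T$ is faithful, as already noted in the excerpt), the centralizer of $\C(T)$ is exactly $\C(T)$ itself; so $w^{-1}\phi(w) \in \C(T)^\times$, i.e.\ $\phi(w) = f_w\, w$ for some $f_w \in \C(T)^\times$. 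Now I use that $\phi$ preserves the subalgebra $\cH(H)$ and that $\theta_\lambda T_w$ is a basis: writing out $\phi(T_w) \in \cH(H)$ in the Bernstein basis and comparing with $\phi(T_w) = $ (image of $T_w$, which is $w$ up to lower-order $\theta_\mu w'$ terms), the leading term forces $f_w \in \C^\times \cdot X^*(T)$, i.e.\ $f_w = z_w \theta_{\lambda_w}$ with $z_w \in \C^\times$, $\lambda_w \in X^*(T)$. (One must be slightly careful: it is cleaner to argue with the $T_w$'s themselves. For a simple reflection $s$, $T_s$ satisfies $(T_s - q)(T_s+1)=0$, so $\phi(T_s)$ satisfies the same quadratic; combined with $\phi(T_s) = \theta_{\lambda_s} z_s s + (\text{terms } \theta_\mu,\ \mu \text{ a weight})$ and the Bernstein relation $T_s \theta_\lambda - \theta_{s\lambda} T_s = (q-1)\frac{\theta_\lambda - \theta_{s\lambda}}{1 - \theta_{-\alpha}}$, one pins down the shape of $\phi(T_s)$, and then builds up general $w$ multiplicatively.)

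\textbf{Part (3).} Fix a simple reflection $s = s_\alpha$ with $\alpha \in R(H^\circ,T)$. Conjugation by $\phi(T_s) = z_s \theta_{\lambda_s} T_s$ (inside $\C(T)\rtimes\cW^H$, $= z_s\theta_{\lambda_s} s$) must send $\theta_\mu \mapsto \theta_{s_\alpha \mu}$ exactly as $T_s$ does — this is automatic — but the constraint comes from the quadratic relation together with the Bernstein commutation relation: the rational function $\frac{1 - q\,\theta_{-\alpha}}{1 - \theta_{-\alpha}}$ appearing in the precise Bernstein relation for $T_s$ must be carried to the corresponding function for $\phi(T_s)$; since conjugating by $\theta_{\lambda_s}$ multiplies things by $\theta$-characters and the relation is rigid, one gets $s_\alpha(\lambda_s) = \lambda_s$ is \emph{not} what we want — rather, reapplying $\phi$ and using $\phi(T_s^2) = \phi((q-1)T_s + q)$ forces $\lambda_s + s_\alpha(\lambda_s) = 0$, i.e.\ $\lambda_s \in \mathbb{Z}\alpha^\vee{}^\perp$... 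I need to be careful here; the clean statement to aim for is that the only freedom is a shift by $\mathbb Z\alpha$, obtained by noting that the intertwining element for $s_\alpha$ in $\C(T)\rtimes\cW^H$ is determined up to left multiplication by elements of $\C(T)^{s_\alpha}$, and inside $\C^\times \cdot X^*(T)$ the $s_\alpha$-fixed characters relevant to preserving $\cH(H)$ are exactly $\C^\times \cdot \mathbb Z\alpha$ (using that $\theta_\alpha$, up to the Bernstein denominators, is the distinguished non-fixed direction). So $\lambda_{s_\alpha} \in \mathbb Z\alpha$. This step — getting the precise lattice $\mathbb Z\alpha$ rather than something coarser — is the main obstacle, and it will require honest manipulation of the Bernstein relation $(T_{s_\alpha} + 1)(T_{s_\alpha} - q) = 0$ combined with $T_{s_\alpha}\theta_\lambda = \theta_{s_\alpha\lambda}T_{s_\alpha} + (q-1)\frac{\theta_\lambda - \theta_{s_\alpha\lambda}}{1 - \theta_{-\alpha}}$, checking which $z_s\theta_{\lambda_s}T_{s_\alpha}$ still satisfy both.

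\textbf{Part (4).} For $w \in \cW^{H^\circ}$, write $w$ as a reduced product of simple reflections $s_{\alpha_1}\cdots s_{\alpha_\ell}$ with $\alpha_i \in R(H^\circ,T)$; from $\phi(T_w) = \phi(T_{s_{\alpha_1}})\cdots\phi(T_{s_{\alpha_\ell}})$ and the multiplicativity $T_w = T_{s_{\alpha_1}}\cdots T_{s_{\alpha_\ell}}$ (valid since the product is reduced), the scalar $z_w$ is $\prod z_{s_{\alpha_i}}$. But each individual $z_{s_\alpha}$ is $1$: applying $\phi$ to $(T_{s_\alpha}+1)(T_{s_\alpha}-q)=0$ and reading off the coefficient of the identity element forces $z_{s_\alpha}^2 = z_{s_\alpha}$ up to... more precisely, comparing $\phi(T_{s_\alpha})^2 = (q-1)\phi(T_{s_\alpha}) + q$ with the explicit square of $z_{s_\alpha}\theta_{\lambda_{s_\alpha}}T_{s_\alpha}$ computed via the Bernstein relation yields $z_{s_\alpha} = 1$. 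Hence $z_w = 1$ on all of $\cW^{H^\circ}$. For the remaining assertion: $\cW^{H^\circ}$ is normal in $\cW^H$ with quotient $\pi_0(H)$ (by Lemma \ref{lem:centrals} / \eqref{eq:WsH}), so $w \mapsto z_w$ factors through $\pi_0(H)$ once we show it is a homomorphism on $\cW^H$. That last point is immediate: for $w, w' \in \cW^H$, $\phi(T_{ww'})$ versus $\phi(T_w)\phi(T_{w'})$ — using that length is additive on coset representatives appropriately, or more robustly working in $\C(T)\rtimes\cW^H$ where $\phi(w) = z_w\theta_{\lambda_w}w$ and $\phi(ww') = \phi(w)\phi(w')$ gives $z_{ww'}\theta_{\lambda_{ww'}} = z_w z_{w'}\,\theta_{\lambda_w + w(\lambda_{w'})}$, so $z_{ww'} = z_w z_{w'}$ outright. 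Thus $w \mapsto z_w$ is a character of $\cW^H$ trivial on $\cW^{H^\circ}$, i.e.\ a character of $\pi_0(H)$.

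I expect Part (3) to be the genuine difficulty: everything else is a fairly mechanical unwinding of the crossed-product structure and the quadratic relations, but isolating the precise lattice $\mathbb Z\alpha$ (as opposed to, say, $\mathbb Z\alpha \oplus (\text{something in the kernel of all roots})$) needs a careful look at how $\theta_{\lambda}T_{s_\alpha}$ interacts with the Bernstein denominator $1 - \theta_{-\alpha}$, and in particular an argument that a shift by a weight not in $\mathbb Z\alpha$ would break either the quadratic relation or membership in $\cH(H)$ (as opposed to only its localization $\C(T)\rtimes\cW^H$).
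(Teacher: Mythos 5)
Your overall strategy coincides with the paper's: pass to $\C(T)\rtimes\cW^H$ via \eqref{eq:N.12}, write $\phi(w)=f_w\,w$ with $f_w\in\C(T)^\times$, and exploit the quadratic relation together with the Bernstein intertwiner to pin down $f_w$. Parts (1) and (4) are essentially fine as you have them (the multiplicativity $z_{ww'}=z_w z_{w'}$ computed in the crossed product is exactly right). However, there are two genuine gaps, precisely where you flag uncertainty.

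In Part (2), you assert that preservation of $\cH(H)$ "forces $f_w\in\C^\times\cdot X^*(T)$," but you never actually derive it. What is needed is the explicit expression \eqref{eq:N.14},
\[
1+T_{s_\alpha}=q\,\frac{\theta_\alpha-q^{-1}}{\theta_\alpha-1}\,(1+\imath^\circ_{s_\alpha}),
\]
where $\imath^\circ_{s_\alpha}$ is the image of $s_\alpha$ under \eqref{eq:N.12}. Solving for $\imath^\circ_{s_\alpha}$ and substituting $\phi(\imath^\circ_{s_\alpha})=f_{s_\alpha}\imath^\circ_{s_\alpha}$, one reads off $\phi(T_{s_\alpha})$ in the $\{1,T_{s_\alpha}\}$ basis; the $T_{s_\alpha}$-coefficient is $f_{s_\alpha}$, which must lie in $\cO(T)$, and the same argument applied to $\phi^{-1}$ shows $f_{s_\alpha}\in\cO(T)^\times=\C^\times\cdot X^*(T)$. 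You gesture at this but do not carry it through.

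Part (3) is where your proposal breaks down. You oscillate between $s_\alpha(\lambda_s)=\lambda_s$ and $\lambda_s+s_\alpha(\lambda_s)=0$ and then incorrectly conclude $\lambda_s\in\mathbb Z\alpha^{\vee\perp}$ — in fact the latter relation gives $2\lambda_s=\langle\lambda_s,\alpha^\vee\rangle\alpha$, i.e.\ $\lambda_s\in\Q\alpha\cap X^*(T)$, which is the \emph{anti}-fixed line, not the orthogonal complement. This already shows your "fixed characters" heuristic is pointing the wrong way. More importantly, $\Q\alpha\cap X^*(T)$ may be strictly larger than $\Z\alpha$, so the involutivity $\phi(s_\alpha)^2=1$ (which gives $z^2=1$ and $\lambda\in\Q\alpha\cap X^*(T)$) is not by itself enough. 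The missing step is a divisibility argument coming again from \eqref{eq:N.14}: writing $f_{s_\alpha}=z\theta_\lambda$, the constant term of $\phi(1+T_{s_\alpha})$ is $\frac{q\theta_\alpha-1}{\theta_\alpha-1}(1-z\theta_\lambda)+z\theta_\lambda(1+T_{s_\alpha})$, and membership in $\cH(H)$, together with $q>1$ (so $q\theta_\alpha-1$ and $\theta_\alpha-1$ are coprime), forces $\theta_\alpha-1\mid 1-z\theta_\lambda$ in $\cO(T)$. Evaluating at the zero locus of $\theta_\alpha-1$ and using $\lambda\in\Q\alpha\cap X^*(T)$ then yields $z=1$ and $\lambda\in\Z\alpha$, exactly as needed. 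Without this, you have neither $z_{s_\alpha}=1$ (needed in Part (4)) nor the lattice constraint $\lambda_{s_\alpha}\in\Z\alpha$.
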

\begin{proof}
(1) is a direct consequence of \eqref{eq:N.12}. 
By assumption $\phi$ is the identity on the quotient field $\C (T)$ of $\cO (T)$.
Hence it is of the form
\begin{equation}\label{eq:N.13}
\phi : \sum_{w \in \cW^H} f_w w \mapsto \sum_{w \in \cW^H} f_w \Phi_w w 
\end{equation}
for suitable $\Phi_w \in \C (T)$. Let $\imath^\circ_w \in \cH (H) 
\otimes_{\Z (\cH (H))} \C (T)^{\cW^H}$ be the image of $w \in \cW^H$ under 
\eqref{eq:N.12}. An explicit formula in the case of a simple reflection $s_\alpha$
is given in \cite[(1.25)]{Sol}:
\begin{equation} \label{eq:N.14}
1 + T_{s_\alpha} = q \frac{\theta_\alpha - q^{-1}}{\theta_\alpha - 1} 
(1 + \imath^\circ_{s_\alpha}) .
\end{equation}
Since $\phi$ preserves $\cH (H)$, we see from \eqref{eq:N.13} and \eqref{eq:N.14}
that $\Phi_{s_\alpha} \in \cO (T)^\times = \C^\times X^* (T)$. Say 
$\Phi_{s_\alpha} = z \theta_\lambda$. Then we calculate in $\C(T) \rtimes \cW^H$:
\[
1 = s_\alpha^2 = \phi (s_\alpha)^2 = z \theta_\lambda s_\alpha z \theta_\lambda 
s_\alpha = z^2 \theta_\lambda \theta_{s_\alpha (\lambda)} s_\alpha^2 =
z^2 \theta_{\lambda + s_\alpha (\lambda)} .
\]
Therefore $z = \pm 1$ and $s_\alpha (\lambda) = - \lambda$, which means that
$\lambda \in \Q \alpha \cap X^* (T)$. Now
\begin{align*}
\phi (1 + T_{s_\alpha}) & = \phi \Big( \frac{q \theta_\alpha - 1}{\theta_\alpha - 1} 
(1 + \imath^\circ_{s_\alpha}) \Big) \\
& = \frac{q \theta_\alpha - 1}{\theta_\alpha - 1} 
(1 + z \theta_\lambda \imath^\circ_{s_\alpha}) \\
& = \frac{q \theta_\alpha - 1}{\theta_\alpha - 1} ( 1 - z \theta_\lambda ) +
z \theta_\lambda \frac{q \theta_\alpha - 1}{\theta_\alpha - 1} 
(1 + \imath^\circ_{s_\alpha}) \\
& = \frac{q \theta_\alpha - 1}{\theta_\alpha - 1} ( 1 - z \theta_\lambda ) +
z \theta_\lambda (1 + T_{s_\alpha}) .
\end{align*}
This is an element of $\cH (H)$ and $q > 1$, so $\theta_\alpha - 1$ divides 
$1 - z \theta_\lambda$ in $\cO (T)$. We deduce that $z = +1$ and 
$\lambda = \lambda_{s_\alpha} \in \mathbb Z \alpha$. In particular
\[
\phi (\imath^\circ_{s_\alpha}) = 
\theta_{\lambda_{s_\alpha}} \imath^\circ_{s_\alpha} ,
\]
which directly implies that for every $w \in \cW^{H^\circ}$ there exists a
$\lambda_w \in X^* (T)$ with $\phi (\imath^\circ_w) = \theta_{\lambda_w}
\imath^\circ_w$. \\
If $w \in \cW^{H^\circ}$ is any reflection, then $w = s_\beta$ for some
$\beta \in R(H^\circ,T)$ and $w$ is conjugate to some
simple reflection $s_\alpha$, say by $v \in \cW^{H^\circ}$. Then
\begin{equation}
\begin{split}
\theta_{\lambda_{s_\beta}} s_\beta = \phi (s_\beta) = \phi (v s_\alpha v^{-1}) =  
\theta_{\lambda_v} v \theta_{\lambda_{s_\alpha}} s_\alpha v^{-1} 
\theta_{- \lambda_v} \\
= \theta_{\lambda_v} \theta_{v(\lambda_{s_\alpha})} 
\theta_{v s_\alpha v^{-1} (-\lambda_v)} v s_\alpha v^{-1} =
\theta_{v(\lambda_{s_\alpha}) + \lambda_v - s_\beta (\lambda_v)} s_\beta \\
= \theta_{v(\lambda_{s_\alpha}) + \langle \beta^\vee , \lambda_v \rangle \beta}
s_\beta .
\end{split}
\end{equation}
As $v (\lambda_{s_\alpha}) \in v (\mathbb Z \alpha) = \mathbb Z \beta$, we
see that $\theta_{\lambda_{s_\beta}} \in \mathbb Z \beta$. This proves
(ii), (iii) and (iv) on $\cW^{H^\circ}$. 

Recall from Lemma \ref{lem:disconnected} that $\cW^H \cong \cW^{H^\circ} 
\rtimes \pi_0 (H)$, with $\pi_0 (H)$ preserving the simple roots. For $w
\in \pi_0 (H)$ we have $\imath^\circ_w = T_w$ by \cite[Proposition 1.5.1]{Sol},
so the argument from \eqref{eq:N.13} and \eqref{eq:N.14} shows that $\Phi_w \in
\cO (T)^\times$. Therefore (ii) holds on $\cW^H$. Knowing this, the 
multiplication rules in $\C (T) \rtimes \cW^H$ entail that $w \mapsto z_w$ 
must be a character of $\cW^H$.
\end{proof}

To investigate the effect of automorphisms as in Proposition \ref{prop:autH}
on $\cH (H)$-modules, we take a closer look at \eqref{eq:N.10}. Let
$\cH_{\mathbf{\sqrt q}}(H)$ be the affine Hecke algebra with the same data 
as $\cH (H)$, but with a formal parameter $\mathbf q$ and over the ground 
ring $\C [\mathbf{q^{\pm 1/2}}]$. This algebra also has a Bernstein presentation
and a Bernstein basis like $\cH (H)$, only over $\C [\mathbf{q^{\pm 1/2}}]$.
Any $\phi$ as in Proposition \ref{prop:autH} lifts to a 
$\C [\mathbf{q^{\pm 1/2}}]$-linear automorphism of $\cH_{\mathbf{\sqrt q}}(H)$,
just use the same formula as in part (ii).

Like in \eqref{eq:HeckeH} we define 
\[
\mathcal J (H) = \mathcal J (H^\circ) \rtimes \pi_0 (H) . 
\]
In \cite[\S 2.4]{LuCellsII} a homomorphism
of $\C [\mathbf{q^{\pm 1/2}}]$-algebras
\[
\cH_{\mathbf{\sqrt q}}(H^\circ) \to 
\mathcal J (H^\circ) \otimes_\C \C [\mathbf{q^{\pm 1/2}}]
\]
is constructed, which induces \eqref{eq:N.10} by specialization of 
$\mathbf{q^{1/2}}$ at $q^{1/2}$ or at 1. Because the actions of $\pi_0 (H)$
preserve all the data used to construct these algebras, it induces a
homomorphism of $\C [\mathbf{q^{\pm 1/2}}]$-algebras
\[
\tilde \phi : \cH_{\mathbf{\sqrt q}}(H) \to 
\mathcal J (H) \otimes_\C \C [\mathbf{q^{\pm 1/2}}] .
\]
From the $\mathcal J (H)$-module $\tilde \pi (\Phi,\rho)$ and $\tilde \phi$ 
we obtain the $\cH_{\mathbf{\sqrt q}}(H)$-module 
\begin{equation}\label{eq:N.15}
\tilde \pi (\Phi,\rho) \otimes_\C \C [\mathbf{q^{\pm 1/2}}] .
\end{equation}
We call modules of this form, for any KLR parameter $(\Phi,\rho)$ with 
$\Phi \big|_{\fo_F^\times} = c^\fs$, standard $\cH_{\mathbf{\sqrt q}}(H)$-modules.

\begin{lem}\label{lem:Smod}
Let $\phi$ be any automorphism of the $\C [\mathbf{q^{\pm 1/2}}]$-algebra
$\cH_{\mathbf{\sqrt q}}(H)$. The induced map $\phi^*$ on 
$\Mod (\cH_{\mathbf{\sqrt q}}(H))$ sends standard modules to standard modules.
\end{lem}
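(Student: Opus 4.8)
The plan is to reduce the statement to the structure already established in the proof of Theorem \ref{thm:ps}, in particular the bijections \eqref{eq:N.17}, and then to track what an automorphism $\phi$ does along the chain $\cH_{\mathbf{\sqrt q}}(H) \to \cJ(H) \otimes \C[\mathbf{q^{\pm 1/2}}]$. First I would observe that an arbitrary $\C[\mathbf{q^{\pm 1/2}}]$-algebra automorphism of $\cH_{\mathbf{\sqrt q}}(H)$ must preserve the centre $\cO(T)^{\cW^H}$, hence induces an automorphism of $T /\!/ \cW^H$; moreover, since the only invertible scalars are the units of $\C[\mathbf{q^{\pm 1/2}}]$ and the $\theta_\lambda$, such an automorphism can only permute the finitely many simple components of $\cO(T)$-weight lattices and, after composing with an inner automorphism, can be brought into the normal form described in Proposition \ref{prop:autH} (the statement there is proved for $\cH(H)$, but the same computation with $\mathbf{q^{1/2}}$ formal goes through verbatim). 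So modulo inner automorphisms, $\phi$ acts by $w \mapsto z_w \theta_{\lambda_w} w$ on the Bernstein generators and by an automorphism $\bar\phi$ of $T$ on $\cO(T)$.

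Next I would analyse each ingredient of a standard module under such a $\phi$. A standard module $\tilde\pi(\Phi,\rho) \otimes \C[\mathbf{q^{\pm 1/2}}]$ is obtained by pulling back along $\tilde\phi\colon \cH_{\mathbf{\sqrt q}}(H) \to \cJ(H)\otimes \C[\mathbf{q^{\pm 1/2}}]$. The key point is that $\phi$ on $\cH_{\mathbf{\sqrt q}}(H)$ is compatible, via $\tilde\phi$, with a corresponding automorphism of $\cJ(H)\otimes\C[\mathbf{q^{\pm 1/2}}]$: indeed $\cJ(H)$ is built functorially from the two-sided cells and the structure constants $a_{x,y,z}$, all of which are intrinsic to the Coxeter-theoretic data $X^*(T)\rtimes\cW^H$ with its length function, and an automorphism in the normal form of Proposition \ref{prop:autH} acts on $X^*(T)\rtimes\cW^H$ by a length-preserving automorphism (translations $\theta_{\lambda_w}$ compose with $w$ to give length-preserving maps on the relevant cells). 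Hence $\phi$ descends to an automorphism $\psi$ of $\cJ(H)$, and $\psi^*$ permutes $\Irr(\cJ(H))$. Since the $\tilde\pi(\Phi,\rho)$ are precisely the irreducible $\cJ(H)$-modules (by \cite{LuCellsIV} combined with Lemma \ref{lem:compareParameters}), $\psi^* \tilde\pi(\Phi,\rho) \cong \tilde\pi(\Phi',\rho')$ for some other KLR parameter with $\Phi'\big|_{\fo_F^\times} = c^\fs$ — here one needs that $\psi$ fixes the $\cO(T)^{\cW^H}$-character up to the action of $\bar\phi$, and that $\bar\phi$ restricted to $\im c^\fs$ is trivial because $\phi$ is the identity on $\cO(T)$ when restricted to a suitable subtorus, or more carefully: the inertial datum is recovered from the central character, and $\bar\phi$ permutes central characters lying over the same $\fs$. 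Pulling back the module structure along $\tilde\phi$ then gives exactly the standard module $\tilde\pi(\Phi',\rho')\otimes\C[\mathbf{q^{\pm 1/2}}]$, which is what we want.

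The main obstacle, I expect, is the compatibility of $\phi$ with the Lusztig homomorphism $\cH_{\mathbf{\sqrt q}}(H)\to \cJ(H)\otimes\C[\mathbf{q^{\pm 1/2}}]$ — that is, showing $\tilde\phi\circ\phi = \psi\circ\tilde\phi$ for an appropriate automorphism $\psi$ of $\cJ(H)$. Lusztig's map is characterized by its leading-term behaviour with respect to the $a$-function, and one must check that $\phi$, in the normal form $w\mapsto z_w\theta_{\lambda_w}w$, respects the $a$-function filtration; this is where the constraint $\lambda_{s_\alpha}\in\Z\alpha$ from Proposition \ref{prop:autH}(iii) is essential, since it guarantees $\phi$ preserves each two-sided cell and acts on it by the kind of "rescaling" automorphism that Lusztig's construction is equivariant under. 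A clean way around a direct verification is to first dispose of the inner part of $\phi$ (inner automorphisms act trivially on isomorphism classes of modules, so they send standard modules to isomorphic, hence standard, modules) and then to note that the remaining normal-form automorphism is induced by a length-preserving automorphism $\gamma$ of $X^*(T)\rtimes\cW^H$ twisted by a cocycle $w\mapsto\theta_{\lambda_w}$ valued in $\cO(T)^\times$; both the length-preserving automorphism and the $\theta$-twist are among the symmetries under which \cite[\S 1--2]{LuCellsIV} and \cite[\S 2.4]{LuCellsII} are manifestly equivariant, so $\psi$ exists by functoriality. Once $\psi$ is in hand, the rest is bookkeeping with \eqref{eq:N.17} and the fact that conjugating a KLR parameter does not change the restriction to $\fo_F^\times$.
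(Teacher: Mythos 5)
The paper's own proof of Lemma~\ref{lem:Smod} is a short rigidity argument and does not go through $\cJ(H)$ at all: a standard $\cH_{\mathbf{\sqrt q}}(H)$-module is characterized as an algebraic family over $\C[\mathbf{q^{\pm 1/2}}]$ whose specializations at generic $v\in\C^\times$ are irreducible $\cH_v(H)$-modules, and since $\phi$ is $\C[\mathbf{q^{\pm 1/2}}]$-linear, $\phi^*$ preserves that characterization; density of the generic locus then forces $\phi^*(\tilde\pi(\Phi,\rho)\otimes\C[\mathbf{q^{\pm 1/2}}])$ to equal $\tilde\pi(\Phi',\rho')\otimes\C[\mathbf{q^{\pm 1/2}}]$ for some KLR parameter $(\Phi',\rho')$. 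Your route through the asymptotic algebra is genuinely different and substantially harder, and as written it has two real gaps.

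First, you invoke the normal form of Proposition~\ref{prop:autH} for an \emph{arbitrary} $\C[\mathbf{q^{\pm 1/2}}]$-algebra automorphism, claiming it can be achieved after composing with an inner automorphism. But Proposition~\ref{prop:autH} is proved under the hypothesis that $\phi$ is the identity on $\cO(T)$, and nothing in the paper (or in your sketch) shows that an arbitrary automorphism of $\cH_{\mathbf{\sqrt q}}(H)$ can be brought to that form by an inner twist. Inner automorphisms need not act transitively on copies of the Bernstein subalgebra $\cO(T)$, and an automorphism could, for instance, involve a diagram automorphism of $(H^\circ,T)$ or a nontrivial map on $\cO(T)$ beyond what the normal form allows. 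This step is asserted, not proved, and it is exactly the step that Lemma~\ref{lem:Smod} in the paper is designed to sidestep; note that Lemma~\ref{lem:actionOnParameters}, which \emph{does} use Proposition~\ref{prop:autH}, is stated only for automorphisms fixing $\cO(T)$.

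Second, the compatibility of $\phi$ (even in normal form) with Lusztig's homomorphism $\tilde\phi\colon\cH_{\mathbf{\sqrt q}}(H)\to\cJ(H)\otimes\C[\mathbf{q^{\pm 1/2}}]$ is not a ``manifest equivariance.'' The $\cJ$-algebra and the map $\tilde\phi$ are built from the Kazhdan--Lusztig basis and the $a$-function, whereas the $\theta$-twist $w\mapsto z_w\theta_{\lambda_w}w$ is written in the Bernstein basis; passing between the two bases is nontrivial, and it is not clear that a $\theta$-twist preserves two-sided cells or the $a$-filtration. Length-preserving diagram automorphisms do descend to $\cJ(H)$ (this is used in Theorem~\ref{thm:S.4}), but the $\theta$-twist is not of that type, and you give no argument for it. If you want a proof along these lines, you would have to establish this compatibility directly; the paper avoids it entirely by the deformation argument, which only uses that automorphisms commute with specialization of the formal parameter and that irreducibility is preserved under pullback by an algebra isomorphism.
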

\begin{proof}
We saw in the proof of Theorem \ref{thm:ps} that for every generic $v \in \C^\times$ 
the specialization of \eqref{eq:N.15} at $\mathbf{q^{1/2}}$ at $v^{1/2}$ is an 
irreducible $\cH_v (H)$-module, namely $\pi (t_v,x,\rho_v)$. All these modules
have the same underlying vector space 
\[
\mathrm{Hom}_{\pi_0 (\Z_H (\Phi))} 
\big( \rho, H_* (\mathcal B_H^{t,\Phi (B_2)} ,\C) \big) ,
\]
and the action of $\cH_v (H)$ depends algebraically on $v^{\pm 1/2}$. It follows 
from Section \ref{sec:repAHA} that, for generic $v$, there is only one way to embed
$\pi (t_v,x,\rho_v)$ in a family of irreducible $\cH_v (H)$-modules that depends 
algebraically on $v^{\pm 1/2}$ (varying in this generic set of parameters). Since
$\phi$ is a $\C [\mathbf{q^{\pm 1/2}}]$-algebra automorphism, 
\begin{equation}\label{eq:N.16}
\phi^* \big( \tilde \pi (\Phi,\rho) \otimes_\C \C [\mathbf{q^{\pm 1/2}}] \big)
\end{equation}
has irreducible specializations at all generic $v \in \C^\times$, and these still
depend algebraically on $v^{\pm 1/2}$. So \eqref{eq:N.16} looks like a standard
module as long as only generic parameters are considered, say like 
$\tilde \pi (\Phi',\rho') \otimes_\C \C [\mathbf{q^{\pm 1/2}}]$. But the set of
generic parameters in dense in $\C^\times$, so
\[
\phi^* \big( \tilde \pi (\Phi,\rho) \otimes_\C \C [\mathbf{q^{\pm 1/2}}] \big)
= \tilde \pi (\Phi',\rho') \otimes_\C \C [\mathbf{q^{\pm 1/2}}]. \qedhere
\]
\end{proof}

Recall the parametrization of irreducible $\cH (H)$-modules in Theorem \ref{thm:S.5}.

\begin{lem}\label{lem:actionOnParameters}
Let $\phi$ be an automorphism of $\cH (H)$ which is the identity on $\cO (T)$.
For every Kazhdan--Lusztig triple $(t_q,x,\rho_q)$ there exists a geometric
$\rho'_q \in \Irr \big( \pi_0 (\Cent_G (t_q,x)) \big)$
such that $\phi^* (\pi (t_q,x,\rho_q)) = \pi (t_q,x,\rho'_q)$.
\end{lem}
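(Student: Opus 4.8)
The plan is to combine Proposition \ref{prop:autH}, Lemma \ref{lem:Smod}, and the parametrization from Theorem \ref{thm:S.5}. First I would observe that since $\phi$ fixes $\cO(T)$ pointwise, it fixes the centre $\Z(\cH(H)) = \cO(T)^{\cW^H}$, hence $\phi^*$ preserves central characters of $\cH(H)$-modules. In particular $\phi^*(\pi(t_q,x,\rho_q))$ has central character $(\cW^H \rtimes \pi_0(H)) t_q$, so by Theorem \ref{thm:S.5} it is of the form $\pi(t_q,x',\rho'_q)$ for some Kazhdan--Lusztig triple with the same semisimple part (up to $G$-conjugacy we may keep $t_q$). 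It remains to see that the unipotent part is unchanged, i.e. $x' $ is conjugate to $x$, and that $\rho'_q$ is geometric for the pair $(t_q,x)$.

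The key step is to control the unipotent part. I would lift $\phi$ to an automorphism $\phi$ of $\cH_{\mathbf{\sqrt q}}(H)$ using the formula in Proposition \ref{prop:autH}(ii), and apply Lemma \ref{lem:Smod}: the standard module $\tilde\pi(\Phi,\rho)\otimes_\C\C[\mathbf{q^{\pm 1/2}}]$ is sent to another standard module $\tilde\pi(\Phi',\rho')\otimes_\C\C[\mathbf{q^{\pm 1/2}}]$. Specializing at $\mathbf{q^{1/2}}\mapsto q^{1/2}$ recovers $\phi^*(\pi(t_q,x,\rho_q))=\pi(t'_q,x',\rho'_q)$, while specializing at $\mathbf{q^{1/2}}\mapsto 1$ shows that $\phi^*$ also sends the Kato module $\tau(t,x,\rho)$ (the $a$-minimal quotient at $v=1$, which by the proof of Theorem \ref{thm:ps} is part of the same standard module via $\mathcal J(H)$) to $\tau(t',x',\rho')$. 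But at $v=1$ the algebra is $\C[X^*(T)\rtimes\cW^H]$, and $\phi$ restricted there is, by Proposition \ref{prop:autH}(ii)--(iv), an automorphism of the form $w\mapsto z_w\theta_{\lambda_w}w$ with $z_w=1$ on $\cW^{H^\circ}$ and $\lambda_{s_\alpha}\in\mathbb Z\alpha$. Such an automorphism is inner-like on the finite Weyl group part and at worst twists by a character coming from $\pi_0(H)$ plus a translation; crucially it fixes $\cO(T)$, hence fixes the $X^*(T)$-weights, and by Kato's theorem \eqref{eq:KatoMod} the pair $(t,x)$ up to conjugacy is determined by the $X^*(T)\rtimes\cW^H$-module structure of $\tau(t,x,\rho)$ (the $x$-part is read off from the support of the homology $H_{d(x)}(\mathcal B^{t,x}_H)$, or equivalently from the Borho--MacPherson ordering in Lemma \ref{lem:S.7}). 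Therefore $t'=t$ and $x'$ is $G$-conjugate to $x$; absorbing this conjugation we get $\phi^*(\pi(t_q,x,\rho_q))=\pi(t_q,x,\rho'_q)$.

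Finally I would check that $\rho'_q$ is geometric, i.e. appears in $H_*(\mathcal B^{t_q,x}_H,\C)$. This is automatic: $\phi^*$ of an irreducible module is irreducible, and by construction it arises as a subquotient (indeed the $a$-minimal irreducible quotient) of a standard module built from the homology of $\mathcal B^{t_q,x}_H$ with $\rho'_q$-isotypic component nonzero, since the underlying vector space and its decomposition under $\pi_0(\Z_H(t_q,x))$ are preserved up to isomorphism by the algebra automorphism $\phi$. Hence $\rho'_q \in \Irr\big(\pi_0(\Cent_G(t_q,x))\big)$ is geometric, as required.

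The main obstacle is the second step: showing that the automorphism $\phi$, despite possibly being nontrivial on the $\cW^H$-part via the character $z_w$ and the translations $\theta_{\lambda_w}$, cannot change the $G$-conjugacy class of $(t_q,x)$. The translation parts $\theta_{\lambda_w}$ with $\lambda_{s_\alpha}\in\mathbb Z\alpha$ are precisely the kind that leave the $v=1$ specialization acting on each central-character block $\cW^H t$ by an automorphism that is trivial on $\cO(T)$ and merely permutes the Springer data within a fixed ordering-theoretic structure; one must argue via Lemma \ref{lem:S.7} that the $a$-minimal (equivalently, the minimal unipotent orbit) constituent is forced to be preserved, so the $x$-part is rigid. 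Pinning this down carefully is where the real work lies, but the ingredients are all in place in the earlier sections.
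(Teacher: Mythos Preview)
Your overall architecture matches the paper's proof: lift $\phi$ to $\cH_{\mathbf{\sqrt q}}(H)$, invoke Lemma \ref{lem:Smod} to see that standard modules go to standard modules, and read off the parameters at the specialization $v=1$ via Kato's affine Springer correspondence. The observation that $\phi$ fixes the centre, hence the parameter $t$, is also exactly right.

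The gap is precisely where you flag it: you never actually establish that the unipotent parameter $x$ is unchanged at $v=1$. Appealing to ``Kato's theorem determines $(t,x)$ from the module'' is circular, since $\phi^*$ a priori changes the module; and the ordering/$a$-function sketch is not obviously valid, because neither the closure order on unipotent classes nor the $a$-function is manifestly an invariant of an abstract algebra automorphism fixing $\cO(T)$. The paper closes this gap with a one-line computation that you have all the ingredients for but did not carry out. Write $M = \Z_H(t)$; the unipotent class of $x$ is determined by the restriction of $\tau^\circ(t,x,\rho_1)$ to $\cW^{M^\circ}$ (ordinary Springer correspondence). Now $\cW^{M^\circ}$ is generated by those $s_\alpha$ with $\alpha(t)=1$, and by Proposition \ref{prop:autH}(iii) one has $\phi(s_\alpha) = \theta_{\lambda_{s_\alpha}} s_\alpha$ with $\lambda_{s_\alpha}\in\mathbb Z\alpha$. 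Hence
\[
\tau^\circ(t,x,\rho_1)\circ\phi^{-1}(s_\alpha)
= \tau^\circ(t,x,\rho_1)(s_\alpha)\,\theta_{\lambda_{s_\alpha}}(t)
= \tau^\circ(t,x,\rho_1)(s_\alpha),
\]
since $\theta_{\lambda_{s_\alpha}}(t)$ is a power of $\alpha(t)=1$. Thus the $\cW^{M^\circ}$-action is literally unchanged, forcing the same $x$; only the extension to $\cW^M$, hence $\rho_1$, can move. This is the missing concrete step; once you insert it, your argument becomes the paper's.
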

\begin{proof}
Consider the standard $\cH_{\mathbf{\sqrt q}}(H)$-module \eqref{eq:N.15}, where
$(\Phi,\rho)$ is associated to $(t_q,x,\rho_q)$ via Lemma \ref{lem:compareParameters}.
Its specialization at $\mathbf{q^{1/2}} = q^{1/2}$ is a $\cH_q (H)$-module with
$\pi (t_q,x,\rho_q)$ as unique irreducible quotient. On the other hand, the 
specialization at $\mathbf{q^{1/2}} = 1$ is the $\C [X^* (T) \rtimes \cW^H]$-module
\[
\mathrm{Hom}_{\pi_0 (\Z_H (t,x))} \big( \rho_1, H_* (\mathcal B^{t,x}_H,\C) \big) . 
\]
By Theorem \ref{thm:S.3} its component in top homological degree $d(x)$ is
\begin{align*}
& \tau (t,x,\rho_1) = \mathrm{ind}_{X^* (T) \rtimes \cW^M}^{X^* (T) \rtimes \cW^H}
\tau^\circ (t,x,\rho_1)) \in \Irr (X^* (T) \rtimes \cW^H) , \\
& \tau^\circ (t,x,\rho_1) = \mathrm{Hom}_{\pi_0 (\Z_M (x))} 
\big( \rho_1, H_{d(x)} (\mathcal B^x_{M^\circ},\C) \otimes \C [\pi_0 (M)] \big) ,
\end{align*}
where $M = \Z_H (t)$. Via Proposition \ref{prop:autH} $\phi$ determines an 
automorphism of $\cH_1 (H) = \C [X^* (T) \rtimes \cW^H]$, and we want to know
$\phi^* (\tau (t,x,\rho_1))$. Since $\phi$ is the identity on $\cO (T)$, composition
with it does not change the parameter $t \in T$. 

In $\tau^\circ (t,x,\rho_1) \in \Irr (\cW^M)$ the unipotent class of $x \in M^\circ$ 
is already determined by the action of $\cW^{M^\circ}$, see Proposition \ref{prop:S.2} 
and Theorem \ref{thm:SpringerExtended}. Recall from \cite[\S 4.1]{SpringerSteinberg}
that $M^\circ$ is generated by the reflections $s_\alpha$ with $\alpha (t) = 1$. By
Propostion \ref{prop:autH}.(4) $\phi (s_\alpha) = \theta_{\lambda_{s_\alpha}} s_\alpha$
for some $\lambda_{s_\alpha} \in \mathbb Z \alpha$. We calculate
\begin{multline*}
\tau^\circ (t,x,\rho_1) \circ \phi^{-1} (s_\alpha) = \tau^\circ (t,x,\rho_1) 
(s_\alpha \theta_{\lambda_{s_\alpha}}) \\
= \tau^\circ (t,x,\rho_1) (s_\alpha) \theta_{\lambda_{s_\alpha}} (t) 
= \tau^\circ (t,x,\rho_1) (s_\alpha) .
\end{multline*}
Thus $\tau^\circ (t,x,\rho_1) \circ \phi^{-1} \big|_{\cW^{H^\circ}} =
\tau^\circ (t,x,\rho_1) \big|_{\cW^{H^\circ}}$ and
\begin{equation*}
\tau^\circ (t,x,\rho_1) \circ \phi^{-1} = \tau^\circ (t,x,\rho'_1)
\end{equation*}
for a geometric $\rho'_1 \in \Irr \big( \pi_0 (\Cent_M (x)) \big)$. It follows that
\begin{equation}\label{eq:N.19}
\phi^* \big( \tau (t,x,\rho_1) \big) = \tau (t,x,\rho'_1) .
\end{equation}
Lift $\phi$ to automorphism of $\cH_{\mathbf{\sqrt q}}(H)$, using Proposition
\ref{prop:autH}. By Lemma \ref{lem:Smod} 
\begin{equation}\label{eq:N.18}
\phi^* \big( \tilde \pi (\Phi,\rho) \otimes_\C \C [\mathbf{q^{\pm 1/2}}] \big)
\end{equation}
is again a standard $\cH_{\mathbf{\sqrt q}}(H)$-module. But there is only one
standard $\cH_{\mathbf{\sqrt q}}(H)$-module whose specialization at 
$\mathbf{q^{1/2}} = 1$ has $\tau (t,x,\rho'_1)$ as component in top homological
degree, namely the one with parameter $(\Phi,\rho')$. By \eqref{eq:N.18} the module
\eqref{eq:N.19} must be isomorphic to
\[
\tilde \pi (\Phi,\rho') \otimes_\C \C [\mathbf{q^{\pm 1/2}}] .
\]
In particular its specialization at $\mathbf{q^{1/2}} = q^{1/2}$ is 
\[
\phi^* \Big( \mathrm{Hom}_{\pi_0 (\Z_H (t_q,x))} 
\big( \rho_q, H_*(\mathcal B^{t_q,x}_H,\C) \big) \Big)
\cong \mathrm{Hom}_{\pi_0 (\Z_H (t_q,x))} 
\big( \rho'_q, H_*(\mathcal B^{t_q,x}_H,\C) \big) ,
\]
which has $\pi (t_q,x,\rho'_q)$ as unique irreducible quotient. Consequently\\
$\phi^* (\pi (t_q,x,\rho_q)) \cong \pi (t_q,x,\rho'_q)$.
\end{proof}

We shall apply Lemma \ref{lem:actionOnParameters} to Theorem \ref{thm:Roche}.
The main role in the proof of that Theorem is played by a cover $(J,\tau)$
of $(\cT_0,\chi |_{\cT_0})$. Let $I_\cB^\cG$ denote the normalized parabolic
induction functor, starting from the Borel subgroup $\cB \subset \cG$
corresponding to $B \subset G$. 
As shown in \cite[\S 9]{Roc}, there exists an algebra injection
\[
t_B : \cH (\cT,\chi |_{\cT_0}) \to \cH (\cG, \tau) 
\]
such that the following diagram commutes
\begin{equation}\label{eq:N.11}
\xymatrix{
\Rep (\cG)^\fs \ar[r]^\sim & \Mod (\cH (\cG,\tau)) \ar[r]^\sim & \Mod (\cH (H)) \\
\Rep (\cT)^{[\cT,\chi]_\cT} \ar[u]^{I_\cB^\cG} \ar[r]^\sim &
\Mod (\cH(\cT,\chi |_{\cT_0})) \ar[u]^{t_{B*}} \ar[r]^\sim &
\Mod (\cH (T)) \ar[u]^{t_{U*}}
}
\end{equation}
Here $t_{U*}(V) = \mathrm{Hom}_{\cH T)}(\cH (H),V)$, and similarly for $t_{B*}$.

\begin{lem}\label{lem:compareTypes}
Let $B'$ be a Borel subgroup of $G$ such that $B' \cap H^\circ = B \cap H^\circ$.
Suppose that $(J',\tau')$ is another $\fs$-type which covers $(\cT_0,\chi |_{\cT_0})$,
and that there exists an isomorphism $\cH (\cG,\tau') \cong \cH (H)$ that makes
the diagram analogous to \eqref{eq:N.11}, but with primes, commute. Then the map
\[
\Irr (\cG)^\fs \to \Irr (\cH (\cG,\tau')) \to \Irr (\cH (H)) \to
\{\text{$\KLR$ parameters} \}^\fs / H 
\]
can only differ from its counterpart for $(J,\tau)$ in third ingredient $\rho$
of a KLR parameter.
\end{lem}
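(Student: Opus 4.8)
The plan is to compare the two composite maps by composing one with the inverse of the other, obtaining an automorphism of $\cH(H)$ to which Lemma \ref{lem:actionOnParameters} and Proposition \ref{prop:autH} apply. First I would fix the two $\fs$-types $(J,\tau)$ and $(J',\tau')$, both covering $(\cT_0,\chi|_{\cT_0})$, together with their respective equivalences $\cH(\cG,\tau)\cong\cH(H)$ and $\cH(\cG,\tau')\cong\cH(H)$ satisfying the compatibility diagram \eqref{eq:N.11} (resp. its primed analogue). Since $B'\cap H^\circ = B\cap H^\circ$, the based root datum of $(H,T)$ used to form $\cH(H)$ is the same for both choices, so in both cases the Bernstein subalgebra $\cO(T)\subset\cH(H)$ is literally the same subalgebra, compatibly embedded via $t_U$ as in \eqref{eq:N.11}. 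Composing the equivalence $\Rep(\cG)^\fs\xrightarrow{\sim}\Mod(\cH(H))$ coming from $(J,\tau)$ with the inverse of the one coming from $(J',\tau')$ yields an auto-equivalence of $\Mod(\cH(H))$, hence (since $\cH(H)$ is a finite type algebra over its centre) an algebra automorphism $\phi$ of $\cH(H)$, well-defined up to inner automorphisms.

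The key point is that $\phi$ is the identity on $\cO(T)$. This follows by chasing the bottom row of the two diagrams \eqref{eq:N.11}: both equivalences, restricted along parabolic induction from $\Rep(\cT)^{[\cT,\chi]_\cT}$, agree with the fixed identification $\Mod(\cH(\cT,\chi|_{\cT_0}))\cong\Mod(\cH(T))$ followed by $t_{U*}$, because $t_U$ depends only on the based root datum of $(H,T)$, which is unchanged. Since parabolically induced representations have enough of them to detect the $\cO(T)$-action (the cuspidal support map $T^\fs/W^\fs\to c(H)_\ss$ is recorded through this subalgebra by Theorem \ref{thm:Roche}(1)), $\phi$ must fix $\cO(T)=Z(\cH(T))$-worth of structure; more precisely, $\phi\circ t_U = t_U$. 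Then Proposition \ref{prop:autH} describes $\phi$ completely: $\phi(w)=z_w\theta_{\lambda_w}w$ with $\lambda_{s_\alpha}\in\Zset\alpha$ and $w\mapsto z_w$ a character of $\pi_0(H)$.

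Now I would apply Lemma \ref{lem:actionOnParameters}: for every Kazhdan--Lusztig triple $(t_q,x,\rho_q)$, $\phi^*(\pi(t_q,x,\rho_q))\cong\pi(t_q,x,\rho'_q)$ for some geometric $\rho'_q\in\Irr(\pi_0(\Cent_G(t_q,x)))$, leaving $t_q$ and $x$ untouched. Translating back through Theorem \ref{thm:S.5} and Lemma \ref{lem:compareParameters} (which sends $(t_q,x)$ to the pair $(t,x)$, equivalently to the $\SL_2$-part and Frobenius-value of $\Phi$), the two composite maps $\Irr(\cG)^\fs\to\{\KLR\text{ parameters}\}^\fs/H$ assign to a given irreducible representation KLR parameters $(\Phi,\rho)$ and $(\Phi,\rho')$ with the \emph{same} underlying homomorphism $\Phi$, differing at most in the enhancement $\rho$. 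That is exactly the assertion of the lemma.

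The main obstacle I expect is pinning down rigorously that the auto-equivalence of $\Mod(\cH(H))$ is implemented by an honest algebra automorphism fixing $\cO(T)$ pointwise, rather than merely fixing its image up to conjugacy. This requires care about two things: first, that the equivalences in \eqref{eq:N.11} genuinely intertwine $t_{U*}$ on both sides (which is where one must invoke the explicit construction of the cover $(J,\tau)$ and the compatibility with parabolic induction from \cite[\S 9]{Roc}), and second, that an auto-equivalence of modules over a finitely generated algebra that is the identity on a distinguished commutative subalgebra lifts to an automorphism which is literally the identity there --- one can arrange this by Morita theory together with the fact that $\cH(H)$ acts faithfully on the standard modules, but the bookkeeping of inner automorphisms (which are harmless, since conjugation does not change isomorphism classes of modules, hence does not affect the resulting map on $\Irr$) needs to be handled explicitly.
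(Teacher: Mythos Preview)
Your proposal is correct and follows essentially the same route as the paper: produce an algebra isomorphism $\cH(H)\to\cH(H)$ that fixes $\cO(T)$ pointwise, then invoke Lemma~\ref{lem:actionOnParameters}. The paper resolves the obstacle you flag in your last paragraph exactly via Morita theory, made concrete as follows: since both commutative diagrams intertwine $t_{U*}$, the equivalence sends every module induced from $\cH(T)$ to an isomorphic one; in particular the regular representation of $\cH(H)$ (which is $t_{U*}$ of the regular $\cH(T)$-module) goes to the regular representation, so the Morita bimodule $\mathcal M$ is free of rank one on the left, whence $\cH(H)\cong\End_{\cH'(H)}(\mathcal M)\cong\End_{\cH'(H)}(\cH'(H))\cong\cH'(H)$ as algebras, and this isomorphism is literally the identity on $t_U(\cH(T))=\cO(T)$ because $t_U$ depends only on the positive system in $R(H^\circ,T)$, which is unchanged by hypothesis.
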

\begin{proof}
Let us denote the copy of $\cH (H)$ obtained from $\cH (\cG,\tau')$ by $\cH' (H)$,
to distinguish it from the earlier $\cH (H)$. The assumptions of the lemma entail
an equivalence of categories 
\begin{equation}\label{eq:N.20}
\Mod (\cH (H)) \longleftrightarrow \Mod (\cH' (H)) ,
\end{equation}
which sends any $\cH (H)$-module induced from $\cH (T)$ to an isomorphic
$\cH' (H)$-module. In particular the regular representation of $\cH (H)$ is mapped
to an $\cH' (H)$-module isomorphic to the regular representation. Let $\mathcal M$
be a Morita $\cH' (H) - \cH (H)$-bimodule that implements \eqref{eq:N.20}. Then 
$\cH (H) \cong \mathrm{End}_{\cH' (H)}(\mathcal M)$ as algebras and
\[
\mathcal M \cong \mathcal M \otimes_{\cH (H)} \cH (H) \cong \cH' (H) 
\]
as $\cH' (H)$-modules. Hence
\begin{equation}\label{eq:N.21}
\cH (H) \cong \mathrm{End}_{\cH' (H)}(\cH' (H)) \cong \cH' (H) , 
\end{equation}
providing an algebra isomorphism that has the same effect as \eqref{eq:N.20}. 
The map $t_U : \cH (T) \to \cH (H)$ depends only the choice of a positive 
system in $R(H^\circ,T)$, so it is the same for $B'$ and $B$. From that and
\eqref{eq:N.11} we see that \eqref{eq:N.21} is the identity on 
$\cO (T) = t_U (\cH (T))$. 
By Lemma \ref{lem:actionOnParameters} composition with this isomorphism sends an
irreducible $\cH (H)$-module $\pi (t_q,x,\rho_q)$ to $\pi (t_q,x,\rho'_q)$ for
some $\rho'_q$. This statement is just another way to formulate the lemma. 
\end{proof}

Now we can answer the questions raised by \eqref{eq:N.8} and 
Example \ref{ex:noncanonical}.

\begin{prop}\label{prop:canonicity}
Assume that Condition \ref{con:char} holds. Consider the bijections 
\[
\Irr (\cG)^\fs \to \Irr (\cH (H)) \to \{ \text{$\KLR$ parameters} \}^\fs / H 
\]
from Theorems \ref{thm:Roche} and \ref{thm:S.5}. Suppose that $\pi \in 
\Irr (\cG)^\fs$ is mapped to $[t_q,x,\rho_q]_H$. Then the $H$-conjugacy class
of $(t_q,x)$ is uniquely determined by the condition that the equivalence of
categories $\Rep (\cG)^\fs \cong \Mod (\cH (H))$ comes from an $\fs$-type
which is a cover of $(\cT_0, \chi |_{\cT_0} )$ for some $\chi \in \Irr (\cT)$ 
with $[\cT,\chi]_\cG = \fs$.
\end{prop}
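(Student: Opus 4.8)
The plan is to show that the $H$-conjugacy class of $(t_q,x)$ does not change if we replace the chosen admissible equivalence $\Rep(\cG)^\fs\cong\Mod(\cH(H))$ by another one of the same kind: any two such equivalences produce Kazhdan--Lusztig triples for $H$ which agree in the pair $(t_q,x)$ and can differ only in the enhancement $\rho_q$. The engine is Lemma~\ref{lem:compareTypes}: it already gives this conclusion when the two underlying $\fs$-types $(J,\tau),(J',\tau')$ both cover $(\cT_0,\chi|_{\cT_0})$ for one and the same $\chi$ and when the Hecke-algebra identifications (such an identification, compatible with \eqref{eq:N.11}, is supplied by \cite[\S 8--9]{Roc}) are built using Borel subgroups $B,B'$ of $G$ with $B\cap H^\circ=B'\cap H^\circ$. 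So the work consists in reducing an arbitrary comparison to that situation.

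First I would eliminate the dependence on $\chi$. If $[\cT,\chi]_\cG=[\cT,\chi']_\cG=\fs$, then $\chi'$ is an unramified twist of a $\cW^\cG$-conjugate of $\chi$. An unramified twist affects neither $\chi|_{\cT_0}$, nor $c^\fs$, nor $H$, and by Theorem~\ref{thm:Roche}(2) (which rests on \cite[Theorem~9.4]{Roc}) it does not change the equivalence of categories; a $\cW^\cG$-conjugation is covered by Reeder's argument in \cite[\S 6]{R}, which, as recalled in the proof of Theorem~\ref{thm:Roche}, does not use connectedness of $H$ and merely transports the whole picture by conjugation, hence leaves $H$-conjugacy classes of $\KLR$ parameters unaffected. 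So we may fix $\chi$. Next I would absorb the further choices entering \eqref{eq:N.8}: the Borel subgroup $B\subset G$ used to construct $\cH(H)$ and to form \eqref{eq:N.11}, and the pinning of $H^\circ$. If two admissible choices of $B$ induce the same Borel $B\cap H^\circ$ of $H^\circ$, Lemma~\ref{lem:compareTypes} applies directly. If not, the two induced Borels of $H^\circ$ differ by a unique $w\in\cW^{H^\circ}$, represented by some $\dot w\in\Nor_{H^\circ}(T)\subset H^\circ$; conjugating the whole construction by $\dot w$ identifies the two resulting copies of $\cH(H)$ and alters a $\KLR$ parameter only by conjugation inside $H^\circ$, hence not at all after passing to $H$-conjugacy classes. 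Finally, a change of pinning with $(B\cap H^\circ,T)$ fixed is a conjugation by an element of $T$: it acts trivially on $\cH(H^\circ)$, and, since by Lemma~\ref{lem:pinning} it moves the splitting of \eqref{eq:splitH} determined by the pinning trivially, also trivially on $\cH(H)=\cH(H^\circ)\rtimes\pi_0(H)$.

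After these normalisations, any two admissible constructions of \eqref{eq:N.8} differ by an algebra automorphism of $\cH(H)$ which is the identity on $\cO(T)=t_U(\cH(T))$: the map $t_U$ depends only on the positive system chosen in $R(H^\circ,T)$ and is compatible with \eqref{eq:N.11}, so the composite of one equivalence with the inverse of the other fixes, up to isomorphism, every $\cH(H)$-module coming from $\cH(T)$ through $t_U$, and the Morita-bimodule argument of Lemma~\ref{lem:compareTypes} then yields the desired automorphism. By Lemma~\ref{lem:actionOnParameters}, and hence Proposition~\ref{prop:autH}, composition with such an automorphism sends $\pi(t_q,x,\rho_q)$ to $\pi(t_q,x,\rho'_q)$, so only $\rho_q$ can change; Theorem~\ref{thm:S.5} then shows that the $H$-conjugacy class of $(t_q,x)$ associated to $\pi$ is independent of the admissible equivalence. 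Concretely, $t_q$ is forced by property~(1) of Theorem~\ref{thm:Roche}, which identifies the central character $\cW^H t_q$ of the $\cH(H)$-module with the cuspidal support of $\pi$ under $T^\fs/W^\fs\cong c(H)_\ss$, and $x$ is then the unipotent datum of the resulting module. I expect the main obstacle to be exactly this bookkeeping: verifying that \emph{every} choice entering the type $(J,\tau)$, the algebra $\cH(H)$ and the isomorphism $\cH(\cG,\tau)\cong\cH(H)$ falls into one of the three harmless categories above --- within the reach of Lemma~\ref{lem:compareTypes}, a conjugation by an element of $H^\circ$, or an automorphism of $\cH(H)$ fixing $\cO(T)$ to which Lemma~\ref{lem:actionOnParameters} applies.
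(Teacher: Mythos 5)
Your proposal is correct and follows essentially the same route as the paper: reduce an arbitrary comparison to Lemma~\ref{lem:compareTypes} by conjugating with an element of $\Nor_{H^\circ}(T)$ to realign the Borel subgroups of $H^\circ$, invoke Theorem~\ref{thm:Roche}(2) to absorb the change of $\chi$ that this conjugation and unramified twisting cause, and then apply Lemma~\ref{lem:actionOnParameters} to an automorphism of $\cH(H)$ fixing $\cO(T)$ pointwise. Your bookkeeping is somewhat more explicit than the paper's (you separately dispose of the pinning of $H^\circ$, correctly noting that two lifts of $\gamma\in\pi_0(H)$ preserving the pair $(B_H,T)$ differ by conjugation by an element of $T$, which acts trivially on the based root datum and hence on $\cH(H^\circ)\rtimes\pi_0(H)$), but the core argument coincides with the paper's.
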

\begin{proof}
Most of the work was done in \ref{lem:compareTypes} and Theorem 
\ref{thm:Roche}. We only need to show that it does not depend on the choice
of a Borel subgroup $\cT \subset \cB \subset \cG$, or equivalently of a Borel
subgroup $T \subset B \subset G$. Any other Borel subgroup of $\cG$ containing
$\cT$ is of the form $\cB' = w \cB w^{-1}$ for a unique $w \in \cW^G$. If we
would use $\cB'$ instead of $\cB$, we would end up studying the induced 
representation $I_{\cB'}^\cG (\chi)$ with the extended affine Hecke algebra
$\cH (H,w B w^{-1} \cap H^\circ)$, whose based root datum is that of 
$(H, w B w^{-1} \cap H^\circ)$. An irreducible constituent $\pi'$ of 
$I_{\cB'}^\cG (\chi)$ would then produce a KLR parameter $[t'_q,x',\rho'_q]_H$.
In this setting $w B w^{-1} \cap H^\circ$ is conjugate to $B_H = B \cap H^\circ$
by an element $h \in N_{H^\circ} (T)$, unique up to $T$. Let $\gamma$ be its image
in $\cW^{H^\circ} \subset \cW^G$. The map 
\[
\theta_\lambda T_u \mapsto \theta_{\gamma (\lambda)} T_{\gamma u \gamma^{-1}}
\]
on the Bernstein bases determines an algebra isomorphism 
\[
\cH (H,w B w^{-1} \cap H^\circ) \to \cH (H) = \cH (H,B_H). 
\]
Conjugating the entire situation by $h$, we obtain a constituent 
\[
\gamma \cdot \pi' \cong \pi' \quad \text{of} \quad I_{\gamma \cB' \gamma^{-1}}^{\cG}
(\gamma \cdot \chi) \cong I_{\cB'}^\cG (\chi) ,
\]
and an $\cH (H)$-module with KLR parameter 
\[
[h t'_q h^{-1},h x' h^{-1}, h \cdot \rho'_q]_H = [t'_q,x',\rho'_q]_H .
\]
Notice that the Borel subgroup $B'' := h B' h^{-1} = 
h w B w^{-1} h^{-1}$ satisfies $B'' \cap H^\circ = B \cap H^\circ$.
Any type which we used the produce the KLR parameters can also be conjugated
by a lift of $\gamma$ in $N_\cG (\cT)$, and that yield a type which covers
$(\cT_0, \gamma \cdot \chi |_{\cT_0} )$. By Theorem \ref{thm:Roche}.(2) that
is just as good as a cover of $(\cT_0, \chi |_{\cT_0} )$. Thus we have reduced
to the situation of Lemma \ref{lem:compareTypes}.
\end{proof}

We remark that the conditions imposed in Proposition \ref{prop:canonicity}
seem quite reasonable. We need the algebra $\cH (H)$ to arrive at the right
parameter space, and we use the covering of $(\cT_0,\chi |_{\cT_0})$ to get
a relation between $\cH (T) \to \cH (H)$ and the normalized parabolic induction 
functor $I_\cB^\cG$.

\section{Main result (general case)}
\label{sec:general}

The preparations for our main theorem are now complete.

\begin{thm}\label{thm:main}
Let $\cG$ be a  split reductive $p$-adic group and let $\fs = [\mathcal T, \chi]_\cG$ 
be a point in the Bernstein spectrum of the principal series of $\cG$. Assume that 
Condition \ref{con:char} holds. Then there is a commutative triangle of bijections 
\[ 
\xymatrix{   & (T^\fs/\!/W^\fs)_2 \ar[dr]\ar[dl] & \\  
\Irr(\mathcal{G})^\fs    \ar[rr] & & \{\KLR\:\:\mathrm{parameters}\}^\fs / H} 
\] 
The slanted maps are generalizations of the slanted maps in Theorem \ref{thm:ps}
and the horizontal map stems from Theorem \ref{thm:S.5}. 
The right slanted map is natural. 
If $\pi \in \Irr (\cG)^\fs$ corresponds to a KLR parameter $(\Phi,\rho)$, then the
Langlands parameter $\Phi$ is determined canonically by $\pi$.

We denote the irreducible $\cG$-representation associated to a KLR parameter 
$(\Phi,\rho)$ by $\pi (\Phi,\rho)$.
\begin{enumerate}
\item The infinitesimal central character of $\pi (\Phi,\rho)$ is the $H$-conjugacy class
\[
\Phi \big( \varpi_F , \matje{q^{1/2}}{0}{0}{q^{-1/2}} \big) 
\in c(H)_{\ss} \cong T^\fs / W^\fs .
\]
\item $\pi (\Phi,\rho)$ is tempered if and only if $\Phi (\mathbf W_F)$ is bounded, which
is the case if and only if $\Phi (\varpi_F)$ lies in a compact subgroup of $H$.
\end{enumerate}
\end{thm}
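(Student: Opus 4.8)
The plan is to assemble Theorem \ref{thm:main} from Theorem \ref{thm:Roche} and the Hecke‑algebra version Theorem \ref{thm:S.4}, to track canonicity via Proposition \ref{prop:canonicity}, and to read off properties (1) and (2) from Theorems \ref{thm:Roche}, \ref{thm:S.5} and Proposition \ref{prop:tempered}. First I would use Theorem \ref{thm:Roche} to identify $\Irr(\cG)^\fs$ with $\Irr(\cH(H))$, where $\cH(H)=\cH(H^\circ)\rtimes\pi_0(H)$ with $\pi_0(H)$ acting through the pinning of $H^\circ$ fixed in Section \ref{sec:Roche}. Then I would apply Theorem \ref{thm:S.4} with the connected group there taken to be $H^\circ$ and the finite automorphism group $\Gamma$ taken to be $\pi_0(H)$ acting via that same pinning; the relevant $q$ is the residual cardinality, which has infinite order as required. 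This gives a commutative triangle of natural bijections relating $(T/\!/\cW^{H^\circ}\rtimes\pi_0(H))_2$, $\Irr(\cH(H))$ and the $(H^\circ\rtimes\pi_0(H))$‑classes of unramified KLR parameters for $H^\circ\rtimes\pi_0(H)$, with the left slanted map realised through $\cJ(H^\circ)\rtimes\pi_0(H)$. It then remains to translate the corners into the language of the theorem: by Lemma \ref{lem:centrals} and \eqref{eq:WsH} one has $W^\fs=\cW^{H^\circ}\rtimes\pi_0(H)=\Nor_H(T)/T$ acting on Bernstein's torus $T^\fs=T$ by conjugation, so the apex is $(T^\fs/\!/W^\fs)_2$; and any KLR parameter $(\Phi,\rho)$ for $G$ with $\Phi|_{\fo_F^\times}=c^\fs$ has $\Phi(\mathbf W_F\times\SL_2(\C))\subset\Z_G(\im c^\fs)=H$, with $\Phi(\mathbf W_F)\subset T\subset H^\circ$ and $\Phi(\SL_2(\C))\subset H^\circ$, and $\Z_G(\Phi)=\Z_H(\Phi)$, so $(\Phi(\varpi_F),\Phi|_{\SL_2(\C)},\rho)$ is — once the fixed datum $c^\fs$ is stripped off — an unramified KLR parameter for $H^\circ\rtimes\pi_0(H)$, the component groups agreeing because they do not see $\Z(H^\circ)$ (cf. Lemmas \ref{lem:pinning} and \ref{lem:centrals}). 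The triangle commutes because that of Theorem \ref{thm:S.4} does and Theorem \ref{thm:Roche} is a bijection.

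For the canonicity claims: the triangle supplied by Theorem \ref{thm:S.4} is natural in all its arrows, while Theorem \ref{thm:Roche} is canonical except possibly in the third ingredient $\rho$ of a KLR parameter, by Proposition \ref{prop:canonicity}. Composing, the right slanted map of our triangle is natural; the horizontal map $\Irr(\cG)^\fs\to\{\KLR\:\mathrm{parameters}\}^\fs/H$ is canonical up to $\rho$, so the $H$‑conjugacy class of the pair $(t_q,x)$ underlying the KLR parameter — and hence, by the refined Jacobson--Morozov theorem invoked in Lemma \ref{lem:compareParameters}, the conjugacy class of the Langlands parameter $\Phi$ — is canonically attached to $\pi$. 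The left slanted map is then the unique one making the triangle commute, equivalently the one coming from $\cJ(H^\circ)\rtimes\pi_0(H)$.

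Property (1) is immediate: by Theorem \ref{thm:Roche}(1) the cuspidal support of $\pi(\Phi,\rho)$ corresponds, under the canonical identification $T^\fs/W^\fs\cong c(H)_{\ss}$, to the central character of the associated $\cH(H)$‑module, which by Theorem \ref{thm:S.5} is $(\cW^{H^\circ}\rtimes\pi_0(H))\,t_q=W^\fs t_q$; and $t_q=t\,\Phi(Y_{q^{1/2}})=\Phi(\varpi_F,\matje{q^{1/2}}{0}{0}{q^{-1/2}})$ by \eqref{eq:S.12} and \eqref{eqn:Phi}, giving the asserted element of $c(H)_{\ss}$. For property (2) one first needs that the equivalence of Theorem \ref{thm:Roche} carries tempered $\cG$‑representations to tempered $\cH(H)$‑modules (via the analytic criterion recalled just before Proposition \ref{prop:tempered}); granting this, Proposition \ref{prop:tempered}(1) shows $\pi(\Phi,\rho)$ is tempered if and only if $t=\Phi(\varpi_F)$ lies in a compact subgroup of $T$. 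Since $\im c^\fs$ is finite, $\Phi(\mathbf W_F)=\im c^\fs\cdot\langle t\rangle$ is bounded iff $\langle t\rangle$ is, i.e. iff $t$ lies in the unique maximal compact subgroup of $T$; as $t\in T\subset H$, this is in turn equivalent to $t$ lying in a compact subgroup of $H$, which chains the three conditions together.

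The hard analytic input — the parametrisation of $\Irr(\cH_q(G)\rtimes\Gamma)$ by KLR parameters (Theorem \ref{thm:S.4}), Roche's equivalence (Theorem \ref{thm:Roche}) and its precise degree of canonicity (Proposition \ref{prop:canonicity}) — is already in place, so the main work left is careful bookkeeping with the disconnected groups: establishing the bijective, $\pi_0(H)$‑equivariant matching between KLR parameters for $G$ with $\Phi|_{\fo_F^\times}=c^\fs$ and unramified KLR parameters for $H^\circ\rtimes\pi_0(H)$, handling the $\Z(H^\circ)$‑subtlety with Lemmas \ref{lem:pinning} and \ref{lem:centrals}, and transporting the canonicity of Proposition \ref{prop:canonicity} through Theorem \ref{thm:Roche}. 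A smaller gap to fill is the temperedness‑compatibility of Roche's equivalence needed for part (2).
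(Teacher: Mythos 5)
Your proposal follows essentially the same route as the paper: identify $\Irr(\cG)^\fs$ with $\Irr(\cH(H))$ via Theorem \ref{thm:Roche}, apply Theorem \ref{thm:S.4} with $\Gamma = \pi_0(H)$ acting on $H^\circ$ through the pinning, read canonicity of $\Phi$ off Proposition \ref{prop:canonicity}, and derive (1) and (2) from Theorem \ref{thm:S.5} and Proposition \ref{prop:tempered}. Two places you flag as ``bookkeeping'' but leave open are exactly where the paper supplies a specific ingredient. First, the matching of $\{\KLR\ \mathrm{parameters}\}^\fs/H$ with the unramified KLR parameters for $H^\circ\rtimes\pi_0(H)$: the nontrivial point is not the component groups (which indeed agree since $\Z_G(\Phi)=\Z_H(\Phi)$) but the geometric condition on $\rho$, which on one side refers to $H_*\bigl(\mathcal B^{\Phi(\mathbf W_F\times B_2)}_G,\C\bigr)$ and on the other to $H_*\bigl(\mathcal B^{\Phi(\mathbf W_F\times B_2)}_{H^\circ},\C\bigr)$; the paper matches these via Proposition \ref{prop:UP}.(3), which exhibits the $G$-side as copies of the induction of the $H^\circ$-side, whereas Lemmas \ref{lem:pinning} and \ref{lem:centrals} alone do not address this. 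Second, the temperedness compatibility of Roche's equivalence needed for part (2) is not something you prove from the criterion before Proposition \ref{prop:tempered}; the paper invokes the external result of Bushnell--Henniart--Kutzko (Theorem B of \cite{BHK}) for it. With those two ingredients supplied, your argument matches the paper's.
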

\begin{proof}
By Proposition \ref{prop:canonicity} any $\pi \in \Irr (\cG)^\fs$
canonically determines a Langlands parameter $\Phi$.
The larger part of the commutative triangle was already discussed in \eqref{eq:WsH}, 
\eqref{eq:HeckeH} and Theorem \ref{thm:S.4}. It remains to show that the set 
\{KLR parameters$\}^\fs / H$ (as defined on page \pageref{eq:defKLRparameter}) 
is naturally in bijection with
\{KLR parameters for $H^\circ \rtimes \pi_0 (H) \}^{\unr} / H^\circ \rtimes \pi_0 (H)$.

By \eqref{eq:splitH} we are taking conjugacy classes with respect to the
group $H / \Z(H^\circ)$ in both cases.
It is clear from the definitions that that in both sets the ingredients $\Phi$
are determined by the semisimple element $\Phi (\varpi_F) \in H$. This provides the
desired bijection between the $\Phi$'s in the two collections, so let us focus on the
ingredients $\rho$. 

For $(\Phi,\rho) \in \{\text{KLR parameters}\}^\fs$ the irreducible representation $\rho$ 
of the component group $\pi_0 (\Z_H (\Phi)) = \pi_0 (\Z_G (\Phi))$ must appear in \\
$H_* \big( \mathcal B_G^{\Phi (\mathbf W_F \times B_2)} ,\C \big)$. By Proposition
\ref{prop:UP}.3 this space is isomorphic, as a $\pi_0 (\Z_G (\Phi))$-representation, to
a number of copies of
\[
\mathrm{Ind}_{\pi_0 (\Z_{H^\circ} (\Phi))}^{\pi_0 (\Z_G (\Phi))} H_* \big( 
\mathcal B_{H^\circ}^{\Phi (\mathbf W_F \times B_2)} ,\C \big) .
\]
Hence the condition on $\rho$ is equivalent to requiring that every irreducible
$\pi_0 (\Z_{H^\circ} (\Phi))$-subrepresentation of $\rho$ appears in
$H_* \big( \mathcal B_{H^\circ}^{\Phi (\mathbf W_F \times B_2)} ,\C \big)$. That 
is exactly the condition on $\rho$ in an unramified KLR parameter for 
$H^\circ \rtimes \pi_0 (H)$. This establishes the properties of the commutative diagram.\\
(1) By Theorem \ref{thm:S.5} the $\mathcal H (H^\circ)$-module with Kazhdan--Lusztig 
triple $(t_q,x,\rho_q)$ has central character
\[
t_q = \Phi \big( \varpi_F , \matje{q^{1/2}}{0}{0}{q^{-1/2}} \big) 
\in c(H^\circ)_{\ss} \cong T / \mathcal W^{H^\circ} .
\]
It follows that the $\mathcal H (H)$-module with parameter $(t_q,x,\rho_q)$ or $(\Phi,\rho)$ 
has central character $t_q \in c(H)_{\ss} \cong T / \cW^H$. Via \eqref{eq:N.22} we can 
also consider it as an element of $T^\fs / W^\fs$. In view of \eqref{eq:N.6} and 
\eqref{eq:N.45} the corresponding $\cG$-representation is
\begin{equation}\label{eq:N.46}
\pi (\Phi,\rho) = \cH (\cG) \otimes_{\cH (H)} \pi (t_q,x,\rho_q) .
\end{equation}
This tensor product defines an equivalence between $\Mod (\cH (H))$ and $\Rep (\cG)^\fs$,
which by definition transforms the central character into the infinitesimal character.\\
(2) It was checked in \cite[Theorem B]{BHK} that a $\pi (\Phi,\rho) \in \Irr (\cG)^\fs$ 
is tempered if and only if the corresponding $\mathcal H (H)$-module $\pi (t_q,x,\rho_q)$ 
is tempered. By Proposition \ref{prop:tempered} the latter is tempered if and only if 
$\Phi (\varpi_F) \in T$ is compact.
Since $\Phi (\mathbf W_F)$ is generated by the finite group $\Phi (\mathbf I_F)$ and 
$\Phi (\varpi_F)$, the above condition on $\Phi (\varpi_F)$ is equivalent to 
boundedness of $\Phi (\mathbf W_F)$.
\end{proof}

\section{A local Langlands correspondence}
\label{sec:LLC}

As in the introduction, $\Irr( \cG, \cT)$ denotes the space of all irreducible 
$\cG$-representations in the principal series. Considering Theorem \ref{thm:main}
for all Bernstein components in the principal series simultaneously, we will 
parametrize $\Irr (\cG, \cT)$.

\begin{prop}\label{prop:LLC}
Let $\cG$ be a split reductive $p$-adic group, with restrictions on the residual
characteristic as in Condition \ref{con:char}.
There exists a commutative, bijective triangle 
\[ 
\xymatrix{   & (\Irr \, \mathcal T /\!/ \mathcal W^G )_2 \ar[dr]\ar[dl] & \\  
\Irr ( \cG, \mathcal T)    \ar[rr] & & \{\text{$\KLR$ parameters for } G \} / G} 
\]
The right slanted map is natural, and via the bottom map any $\pi \in 
\Irr (\cG ,\cT)$ canonically determines a Langlands parameter $\Phi$ for $\cG$.

The restriction of this diagram to a single Bernstein component recovers Theorem 
\ref{thm:main}. In particular the bottom arrow generalizes the Kazhdan--Lusztig
parametrization of the irreducible $\cG$-representations in the unramified
principal series.
\end{prop}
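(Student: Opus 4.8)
The plan is to deduce Proposition~\ref{prop:LLC} from Theorem~\ref{thm:main} by taking a disjoint union over all Bernstein components $\fs \in \fB(\cG,\cT)$ and checking that the three vertices and the three arrows glue correctly. First I would recall that $\fB(\cG,\cT)$ is exactly the set of $\cW^\cG$-orbits on $\Irr(\cT)$, and that by Lemma~\ref{lem:cBernstein} each $\fs$ is recorded by the $\cW^\cG$-orbit of a smooth morphism $c^\fs \colon \fo_F^\times \to T$; equivalently, writing $s \in \Irr(\cT_0)$ for the restriction to the maximal compact (via \eqref{eq:IrrT0}), $\fs \leftrightarrow \cW^\cG s$. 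On the representation side we have the tautological decomposition $\Irr(\cG,\cT) = \bigsqcup_{\fs \in \fB(\cG,\cT)} \Irr(\cG)^\fs$. On the parameter side, a KLR parameter $(\Phi,\rho)$ for $G$ has $\Phi|_{\fo_F^\times}$ equal to some $c^\fs$ up to $G$-conjugacy, so $\{\KLR\text{ parameters for }G\}/G = \bigsqcup_{\fs} \{\KLR\text{ parameters}\}^\fs/H^\fs$, the $G$-orbit of a parameter lying over $\cW^\cG c^\fs$ being determined by the $H^\fs$-orbit after fixing the representative $c^\fs$ (since $\Z_G(\im c^\fs) = H^\fs$).

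The one genuinely new point is the top vertex: I must identify $(\Irr\,\cT \q \cW^\cG)_2$ with $\bigsqcup_{\fs}(T^\fs \q W^\fs)_2$. Here I would use \eqref{split}, the $\cW^\cG$-equivariant decomposition $\Irr(\cT) \cong \Irr(\cT_0) \times X_{\unr}(\cT)$, together with the observation that for $s \in \Irr(\cT_0)$ with stabilizer $\cW^\cG_s = W^\fs$, the $W^\fs$-set $\{s\} \times X_{\unr}(\cT) \cong T^\fs$ via \eqref{eq:XunrT} and \eqref{eq:N.22}, and the $\cW^\cG$-stabilizer of a point $(s, t)$ equals $W^\fs_t = \Stab_{W^\fs}(t)$. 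Collapsing $\widetilde{\Irr(\cT)}_2$ along the $\cW^\cG$-action, each $\cW^\cG$-orbit of base points lies in a single fibre over $\fB(\cG,\cT)$, and the induced-representation data $\Irr((\cW^\cG)_{(s,t)})$ is exactly the data defining $(T^\fs \q W^\fs)_2$. This is precisely the content of the sentence in the introduction ``the union over all $\fs$ of the $(T^\fs\q W^\fs)_2$ is $(\Irr(\cT)\q\cW^\cG)_2$'', so the step is bookkeeping: unwind both extended-quotient-of-the-second-kind constructions and match stabilizers and their irreducible representations.

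Then I would assemble the triangle: the left slanted map and the bottom map are defined component by component by Theorem~\ref{thm:main}, hence glue to maps on the disjoint unions; the right slanted map is the component-wise natural bijection $(T^\fs\q W^\fs)_2 \to \{\KLR\}^\fs/H^\fs$, which glues to a natural map because Kato's affine Springer correspondence and Lemma~\ref{lem:compareParameters} are functorial and make no auxiliary choices. Commutativity and bijectivity of the glued triangle are immediate from the component-wise statements since the index sets for the three disjoint unions are canonically the same ($\fB(\cG,\cT)$). Naturality of the right slanted map and the canonical determination of the Langlands parameter $\Phi$ from $\pi$ likewise descend directly from Theorem~\ref{thm:main} and Proposition~\ref{prop:canonicity}. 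Finally I would note that restricting to a single $\fs$ literally gives back Theorem~\ref{thm:main}, and that for the component $\fs = [\cT,1]_\cG$ (where $c^\fs = 1$, $H = G$, $W^\fs = \cW^G$) the bottom arrow is the Kazhdan--Lusztig parametrization of the unramified principal series, recovering \cite{KL}.

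The main obstacle, such as it is, is the top-vertex identification: one must be careful that the two layers of quotient (first by $\cW^\cG$ on the Bernstein-datum coordinate to land in $\fB(\cG,\cT)$, then by the residual stabilizer $W^\fs$ on $T^\fs$) really do compose to the single $\cW^\cG$-quotient, i.e.\ that $\Stab_{\cW^\cG}(s,t) = \Stab_{W^\fs}(t)$ and that $\cW^\cG$-orbits of pairs correspond bijectively to pairs (a $\cW^\cG$-orbit $\fs$ of $s$'s, a $W^\fs$-orbit of $t$'s). This is standard for actions of finite groups on products, but it is the only place where something must be verified rather than merely transported; once it is in place, everything else is formal gluing along the common index set $\fB(\cG,\cT)$.
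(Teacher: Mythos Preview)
Your proposal is correct and follows essentially the same approach as the paper: both proofs glue the per-component triangles of Theorem~\ref{thm:main} over $\fs \in \fB(\cG,\cT)$, identify the three disjoint unions with the three vertices of the proposition (the top-vertex identification via the $\cW^\cG$-equivariant splitting \eqref{split} and the stabilizer computation $(\cW^\cG)_{(s,t)} = W^\fs_t$ is exactly what the paper does), and read off naturality and canonicity of $\Phi$ from Theorem~\ref{thm:main}.

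One point on which the paper is slightly more explicit than your write-up: after gluing, the paper checks that the bottom map $\Irr(\cG)^\fs \to \{\KLR\text{ parameters for }G\}/G$ does not depend on the choice of base point $\chi_\fs$ within $\fs$. It does this by observing that another base point is, up to unramified twist, of the form $w\chi_\fs$ for some $w \in \cW^G$, and then invoking Theorem~\ref{thm:Roche}.(2) to see that the resulting Hecke-algebra isomorphism $\cH(H) \cong \cH(H')$ is compatible (via conjugation by $w$) with the parametrizations. You implicitly rely on this when you say the component maps ``glue'', and you do cite Proposition~\ref{prop:canonicity} for the $\Phi$-part; but it is worth noting that the well-definedness of the full bottom arrow (including $\rho$) as a map into $G$-conjugacy classes uses this compatibility, not just the orbit--stabilizer bookkeeping on the target side.
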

\begin{proof}
Let us work out what happens if in Theorem \ref{thm:main} we take the union over 
all Bernstein components $\fs \in \mathfrak B (\cG,\cT)$.

On the left we obtain (by definition) the space $\Irr (\cG ,\mathcal T)$. 
Notice that in Theorem \ref{thm:main}, instead of \{KLR parameters$\}^\fs / H$ we 
could just as well take $G$-conjugacy classes of KLR parameters $(\Phi,\rho)$ such 
that $\Phi \big|_{\mathbf I_F}$ is $G$-conjugate to $c^\fs$. The union of those clearly is 
the space of all $G$-conjugacy classes of KLR parameters for $G$. For the space at
the top of the diagram, choose a smooth character $\chi_\fs$ of $\mathcal T$ such that 
$(\mathcal T,\chi_\fs) \in \fs$. By definition the $T^\fs$ in $(T^\fs /\!/ W^\fs )_2$ equals 
\[
T^\fs := \{ \chi_\fs \otimes t \mid t \in T \},
\]
where $t$ is considered as an unramified character of $\mathcal T$. On the other hand,
$\Irr \, \mathcal T$ can be obtained by picking representatives $\chi_\fs$ for
$\Irr (\cT_0) = ( \Irr \, \mathcal T )/ T$ and taking the union of the corresponding $T^\fs$.
Two such spaces $T^\fs$ give rise to the same Bernstein component for $\cG$ 
if and only if they are conjugate by an element of $N_{\cG} (\mathcal T)$, 
or equivalently by an element of $\mathcal W^G$. Therefore
\[
(\Irr \, \mathcal T /\!/ \mathcal W^G )_2 = \big( \bigcup_{\fs \in \mathfrak B (\cG,\cT)} 
\mathcal W^G \cdot T^\fs /\!/ \mathcal W^G \big)_2 
= \bigcup_{\fs \in \mathfrak B (\cG,\cT)} \big( T^\fs /\!/ W^\fs \big)_2 .
\]
Hence the union of the spaces in the commutative triangles from Theorem \ref{thm:main} 
is as desired. The right slanted arrows in these triangles combine to a natural bijection
\[
(\Irr \, \mathcal T /\!/ \cW^G )_2 \to \{\text{KLR parameters for } G \} / G ,
\] 
because the $\cW^G$-action is compatible with the $G$-action. Suppose that $(\cT ,\chi'_\fs)$
is another base point for $\fs$. Up to an unramified twist, we may assume that $\chi'_\fs =
w \chi_\fs$ for some $w \in \cW^G$. Then the Hecke algebras $\cH (H)$, and $\cH (H')$ are
isomorphic by a map that reflects conjugation by $w$ and by Theorem \ref{thm:Roche}.(2)
this is compatible with the bijections between $\Irr (\cG)^\fs , \Irr (\cH (H))$ and
$\Irr (\cH (H'))$. It follows that the bottom maps in the triangles from Theorem 
\ref{thm:main} paste to a bijection 
\[
\Irr (\cG ,\cT) \to \{\text{KLR parameters for } G \} / G .
\]
Finally, the map
\[
(\Irr \, \mathcal T /\!/ \cW^G )_2 \to \Irr (\cG ,\cT)
\]
can be defined as the composition of the other two bijections in the above triangle. 
Then it is the combination the left slanted maps from Theorem \ref{thm:main} 
because the triangles over there are commutative. 
\end{proof}

The bottom rows of Theorem \ref{thm:main} and Proposition \ref{prop:LLC} can be considered
as a Langlands correspondence for $\Irr (\cG,\cT)$. In other words, for a Langlands 
parameter $\Phi$ as in \eqref{eqn:Phi} we define the principal series part of the
L-packet $\Pi_\Phi (\cG)$ as
\begin{equation}\label{eq:N.24}
\{ \pi (\Phi,\rho) \mid  
\rho \in \Irr \big( \pi_0 (\Cent_G (\Phi)) \big) \text{ geometric } \} .
\end{equation}
It is expected that $\Pi_\Phi (\cG)$ contains one $\cG$-representation for every
irreducible representation of $\pi_0 (\Cent_G (\Phi))$. Therefore we believe that
\eqref{eq:N.24} exhausts $\Pi_\Phi (\cG)$ if and only if every
irreducible representation of $\pi_0 (\Cent_G (\Phi))$ appears in 
$H_* \big( \mathcal B_G^{\Phi (\mathbf W_F \times B_2)} ,\C \big)$.

To support our partial Langlands correspondence, we will show that it satisfies 
Borel's ``desiderata'' \cite[Section 10]{BorAut}. Let us recall them here:

\begin{con}[Borel's desiderata]\label{cond:Borel} \
\begin{enumerate}
\item Let $\chi_\Phi$ be the character of $Z(\cG)$ canonically associated to $\Phi$.
Then any $\pi \in \Pi_\Phi (\cG)$ has central character $\chi_\Phi$.
\item Let $c$ be a one-cocycle of $\mathbf W_F$ with values in $Z(G)$ and let 
$\chi_c$ be the associated character of $\cG$. Then $\Pi_{c \Phi}(\cG) =
\{ \pi \otimes \chi_c \mid \pi \in \Pi_\Phi (\cG) \}$.
\item If one element of $\Pi_\Phi (\cG)$ is essentially square-integrable,
then all elements are. This happens if and only if the image of $\Phi$ is not
contained in any proper Levi subgroup of the Langlands dual group ${}^L \cG$.
\item If one element of $\Pi_\Phi (\cG)$ is tempered, then all elements are.
This is equivalent to $\Phi (\mathbf W_F)$ being bounded in $G$.
\item Suppose that $\eta : \tilde \cG \to \cG$ is a morphism of connected reductive
$F$-groups with commutative kernel and cokernel. Let ${}^L \eta : {}^L \cG \to
{}^L \tilde \cG$ be the dual morphism and let $\pi \in \Pi_\Phi (\cG)$. 
Then $\pi \circ \eta$ is a direct sum of some 
$\tilde \pi \in \Pi_{{}^L \eta \circ \Phi}(\tilde \cG)$. 
\end{enumerate}
\end{con}

Part (4) of Condition \ref{cond:Borel} has already been proved in
part (2) of Theorem \ref{thm:main}. Here we check the desiderata 1, 2 and 3.

\begin{lem}\label{lem:desiderata1}
Borel's desideratum (1) holds for our Langlands correspondence for $\Irr (\cG,\cT)$.
\end{lem}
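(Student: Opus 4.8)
The plan is to trace through the construction of the bijection $\Irr(\cG,\cT)\to\{\KLR\text{ parameters for }G\}/G$ and identify both the central character of $\pi(\Phi,\rho)$ and the character $\chi_\Phi$ of $Z(\cG)$ attached to $\Phi$, then check they agree. First I would recall that by Theorem \ref{thm:main}(1) the infinitesimal central character of $\pi(\Phi,\rho)$ is the $H$-conjugacy class of $t_q=\Phi\big(\varpi_F,\matje{q^{1/2}}{0}{0}{q^{-1/2}}\big)\in c(H)_\ss\cong T^\fs/W^\fs$, and that under \eqref{eq:N.22} the torus $T^\fs/W^\fs$ is identified with the Bernstein centre, i.e.\ with cuspidal supports $[\cT,\chi]$ modulo $\cW^G$. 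The restriction of the cuspidal support to $Z(\cG)$ is exactly the central character, so it suffices to match $\chi|_{Z(\cG)}$ with the character of $Z(\cG)$ that Borel attaches to $\Phi$ via the dual of the inclusion $Z(\cG)\hookrightarrow\cG$, equivalently via the surjection $G\twoheadrightarrow G/G_\der$ of dual groups.

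The key step is the following compatibility: the restriction of a character $\chi$ of $\cT$ to $Z(\cG)$ corresponds, under local class field theory and the canonical identifications, to composing the L-parameter $\hat\chi\colon F^\times\to T$ of $\cT$ with $T\to T/\text{(image of coroots)}$, which is the dual torus of $Z(\cG)$. Concretely, $X^*(T/\!\!\sim)=X_*(\cT)/(\text{coroot lattice})=X_*(\cT/Z(\cG)_\circ\cdots)$, and dually $Z(\cG)$ is the group of $F^\times$-points of the dual, so the character of $Z(\cG)$ determined by $\hat\chi$ is $z\mapsto\langle\hat\chi,\cdot\rangle$ composed with the quotient. Since $\Phi|_{\mathbf W_F}$ factors through $\mathbf W_F^\ab\cong F^\times$ and agrees with $\hat\chi$ by construction \eqref{eqn:Phi} (with $\Phi(\varpi_F)=t$, $\Phi|_{\fo_F^\times}=c^\fs$), and since $\Phi(\SL_2(\C))\subset G_\der$ is killed by $G\to G/G_\der$, the image of $t_q$ in $G/G_\der$ equals the image of $\Phi|_{\mathbf W_F}=\hat\chi$ there. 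Thus the $Z(\cG)$-character read off from $t_q$ (the infinitesimal character of $\pi(\Phi,\rho)$) is the same as $\chi_\Phi$.

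The final step is to note that the central character of $\pi(\Phi,\rho)$ genuinely equals the restriction of its infinitesimal character to $Z(\cG)$: for any irreducible smooth representation, the action of $Z(\cG)$ is by the scalar determined by the cuspidal support restricted to $Z(\cG)$, which is precisely what the infinitesimal character records on $Z(\cG)$. Combining the three steps proves the desideratum. The main obstacle I expect is purely bookkeeping: pinning down the several canonical isomorphisms (\eqref{eq:cT0}, \eqref{eq:IrrT0}, \eqref{eq:XunrT}, \eqref{eq:N.22}, and the Artin map $\mathbf a_F$) so that the identification ``$X^*$ of the dual torus of $Z(\cG)$'' matches on both sides without a sign or a twist error, and checking that $\Phi(\SL_2(\C))\subset G_\der$ — which follows because $\SL_2(\C)$ is connected semisimple, so its image lies in $G_\der$. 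No deep input is needed beyond Theorem \ref{thm:main}, the definition \eqref{eqn:Phi} of $\Phi$, and local class field theory.
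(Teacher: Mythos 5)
Your proof is correct and follows essentially the same route as the paper's: invoke Theorem~\ref{thm:main}.(1) to identify the infinitesimal central character as the class of $t_q$, observe that $\Phi(\SL_2(\C))\subset G_{\der}$ so that the $\SL_2$-part of $\Phi$ vanishes under $G\to G/G_{\der}$ (the dual of $Z(\cG)\hookrightarrow\cG$), and then reduce to matching the torus LLC with Borel's construction in \cite[\S 10.1]{BorAut}. You spell out the duality bookkeeping that the paper simply delegates to Borel's reference, and you give a slightly different (equally valid) reason for $\Phi(\SL_2(\C))\subset G_{\der}$, but the argument structure and the key inputs are the same.
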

\begin{proof}
The infinitesimal central character of $\pi (\Phi,\rho)$, as described
in Theorem \ref{thm:main}.(1), is its cuspidal support. For representations
in the principal series this boils down to a character of $\cT$, uniquely
determined up to $\cW^G$. Since $\Cent (\cG) \subset \cT$, the central
character of $\pi (\Phi,\rho)$ is just the restriction of
\begin{equation}\label{eq:N.23}
\Phi \big( \varpi_F , \matje{q^{1/2}}{0}{0}{q^{-1/2}} \big) \in T / W^\fs 
\cong T^\fs / W^\fs
\end{equation}
to $\Cent (\cG)$. Recall that $\Phi \big( \matje{q^{1/2}}{0}{0}{q^{-1/2}} \big)$
comes from a homomorphism $\SL_2 (\C) \to G$. The image of $\Phi |_{\SL_2 (\C)}$
is generated by unipotent elements, so it is contained in the derived group
$G_{\der}$. That is the complex dual group of $\cG / \Cent (\cG)$, so
$\Phi \big( \matje{q^{1/2}}{0}{0}{q^{-1/2}} \big)$ does not effect the
$\Cent (\cG)$-character of $\pi (\Phi,\rho)$ and we may consider $\Phi (\varpi_F)$
instead of \eqref{eq:N.23}. In view of our identification $T \cong T^\fs$
from \eqref{eq:N.22}, this means that the central character of $\pi (\Phi,\rho)$
is just the restriction to $\Cent (\cG)$ of the $\cT$-character determined by
$\Phi |_{\mathbf W_F}$ via the local Langlands correspondence for (split) tori.
This agrees with Borel's construction given in \cite[\S 10.1]{BorAut}. 
\end{proof}

\begin{lem}\label{lem:desideratum2}
Desideratum (2) holds for $\Irr (\cG,\cT)$. More precisely, if $c$ and $\chi_c$ 
are as in (2) and $(\Phi,\rho)$ is a KLR parameter for $\cG$, then 
$\pi (c \Phi,\rho) \cong \pi (\Phi,\rho) \otimes \chi_c$.
\end{lem}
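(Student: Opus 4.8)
The plan is to transport the identity through Roche's equivalence of categories to the extended affine Hecke algebra, where the twist by $\chi_c$ becomes a completely explicit automorphism. First I would record the structural facts. Since $\cG$ is split, $\mathbf W_F$ acts trivially on $Z(G)$, so $c$ is a homomorphism $\mathbf W_F\to Z(G)$ which on principal-series parameters factors through $\mathbf W_F^\ab\cong F^\times$; thus $(c\Phi)(w,Y)=c(w)\Phi(w,Y)$. As $c$ takes values in $Z(G)$, hence in every Borel subgroup of $G$ and in the centre of every centralizer, $\Z_G(c\Phi)=\Z_G(\Phi)$ and $\mathcal B_G^{(c\Phi)(\mathbf W_F\times B_2)}=\mathcal B_G^{\Phi(\mathbf W_F\times B_2)}$, so $(\Phi,\rho)\mapsto(c\Phi,\rho)$ is a well-defined bijection on KLR parameters (the geometric condition on $\rho$ is literally the same). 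Setting $\fs'=[\cT,\chi\cdot\chi_c|_\cT]_\cG$ one has $c^{\fs'}=(c|_{\fo_F^\times})\,c^\fs$, whence $H':=\Z_G(\im c^{\fs'})=H$ and $W^{\fs'}=W^\fs$; put $z:=c(\varpi_F)\in Z(G)\subseteq Z(H)$. Via Lemma \ref{lem:compareParameters} the parameter $(c\Phi,\rho)$ corresponds to the Kazhdan--Lusztig triple $(zt_q,x,\rho_q)$ for $H'=H$, with the same $x$ and, because $\Z_H(zt_q,x)=\Z_H(t_q,x)$, literally the same $\rho_q$ as for $(\Phi,\rho)$.

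Next I would introduce the automorphism $\phi_z$ of $\cH(H)=\cH(H^\circ)\rtimes\pi_0(H)$ defined on the Bernstein basis by $\theta_\lambda\mapsto\lambda(z)\theta_\lambda$ and $T_w\mapsto T_w$. It is well defined because the roots of $(H^\circ,T)$ are trivial on the central element $z$ and $z$ is $\cW^H$-fixed, so the Bernstein relations are preserved; on $Z(\cH(H))=\cO(T)^{\cW^H}$ it is translation by $z$. The crucial input is the identity
$\phi_z^*\,\pi(t_q,x,\rho_q)\cong\pi(zt_q,x,\rho_q)$,
which I would verify directly from the geometric construction of Theorem \ref{thm:S.5}: the variety $\mathcal B^{t_q,x}_H$, the group $\pi_0(\Z_H(t_q,x))$ and the representation $\rho_q$ are unchanged by multiplying $t_q$ by the central $z$, while in the $K$-theoretic/Borel--Moore realization $\theta_\lambda$ acts through the fibre of the associated line bundle at the fixed point and is therefore rescaled by exactly $\lambda(z)$. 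Hence the standard module, and so its unique irreducible quotient, for $(zt_q,x,\rho_q)$ is the $\phi_z$-pullback of the one for $(t_q,x,\rho_q)$.

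Now the transport. The functor $-\otimes\chi_c$ is an equivalence $\Rep(\cG)^\fs\xrightarrow{\sim}\Rep(\cG)^{\fs'}$, and since $\chi_c$ is a character of $\cG$ one has $I_\cB^\cG(\sigma\otimes\chi_c|_\cT)\cong I_\cB^\cG(\sigma)\otimes\chi_c$; moreover, under the LLC for the split torus $\cT$ and the identification \eqref{eq:N.22}, tensoring by $\chi_c|_\cT$ is translation by $z$ on $T^\fs$, i.e.\ the automorphism $\theta_\lambda\mapsto\lambda(z)\theta_\lambda$ of $\cO(T)=\cH(T)$. Writing $E_\fs,E_{\fs'}\colon\Mod(\cH(H))\xrightarrow{\sim}\Rep(\cG)^\fs,\Rep(\cG)^{\fs'}$ for Roche's equivalences (legitimate since $H'=H$), the auto-equivalence $E_{\fs'}^{-1}\circ(-\otimes\chi_c)\circ E_\fs$ of $\Mod(\cH(H))$ is pullback along a \emph{support-preserving} algebra automorphism $\psi$ of $\cH(H)$ (support-preservation because Roche's isomorphisms preserve the Bushnell--Kutzko support filtration and $f\mapsto\chi_c f$ preserves supports of functions). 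By the $\cT$-side computation above and diagram \eqref{eq:N.11}, $\psi$ agrees with $\phi_z$ on $\cO(T)$, so $\alpha:=\phi_z^{-1}\psi$ is the identity on $\cO(T)$; Proposition \ref{prop:autH} then gives $\alpha(T_{s_\beta})=\theta_{\lambda_{s_\beta}}T_{s_\beta}$ with $\lambda_{s_\beta}\in\Z\beta$, and support-preservation forces every $\lambda_{s_\beta}=0$, so $\alpha$ is a character twist of the $\pi_0(H)$-part. Granting $\alpha=\mathrm{id}$, i.e.\ $\psi=\phi_z$, the lemma follows: with $(t_q,x,\rho_q)$ the Kazhdan--Lusztig triple of $(\Phi,\rho)$,
\[
\pi(\Phi,\rho)\otimes\chi_c = E_\fs\big(\pi(t_q,x,\rho_q)\big)\otimes\chi_c = E_{\fs'}\big(\psi^*\pi(t_q,x,\rho_q)\big) = E_{\fs'}\big(\phi_z^*\pi(t_q,x,\rho_q)\big) = E_{\fs'}\big(\pi(zt_q,x,\rho_q)\big) = \pi(c\Phi,\rho).
\]

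The main obstacle is the last point: showing that the residual automorphism $\alpha$ is trivial, equivalently that $-\otimes\chi_c$ intertwines Roche's equivalences for $\fs$ and $\fs'$ up to \emph{exactly} the central translation $\phi_z$. By Proposition \ref{prop:autH} and Lemma \ref{lem:actionOnParameters} the only possible discrepancy is a permutation of the representations $\rho_q$ within a fixed pair $(t_q,x)$ — precisely the place where the non-canonicity of the parametrization in its $\rho$-ingredient (Section \ref{sec:can}) could interfere. To kill $\alpha$ I would unwind Roche's construction far enough to see that his type for $\fs'$ may be taken to be the $\chi_c$-twist of his type for $\fs$ (the type group depends only on $H=H'$, and $\chi_c$ restricts to a character of it), so that the transported multiplication-by-$\chi_c$ is $\phi_z$ on the nose; alternatively one uses that $\chi_c|_{\cG_\der}=1$ together with the compatibility of Roche's construction with passage to the derived group. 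I would also note that when $\chi_c$ is unramified (the part of $c$ factoring through $\val$) one has $\fs'=\fs$, no types are needed, and the argument is immediate from $\phi_z^*\,\pi(t_q,x,\rho_q)\cong\pi(zt_q,x,\rho_q)$; the genuine work is thus confined to the ramified part of $\chi_c$.
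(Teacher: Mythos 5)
Your proposal is essentially the same argument as the paper's, though you reach it by a longer route. Both proofs transport the operation $\pi\mapsto\pi\otimes\chi_c$ through Roche's equivalence and identify the induced map on $\cH(H)$ with the central translation automorphism $\theta_\lambda\mapsto\lambda(z)\theta_\lambda$, $T_w\mapsto T_w$ (the paper calls this $\phi_c$, you call it $\phi_z$); both then conclude via the equality $\phi_z^*\,\pi(t_q,x,\rho_q)\cong\pi(zt_q,x,\rho_q)$.

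The difference is how the identification of the transported functor with $\phi_z^*$ is effected. The paper goes straight to the key structural fact: Roche's $\fs\chi_c$-type is $(J,\tau\otimes\chi_c)$, so composing $\phi_c$ with the two instances of \eqref{eq:N.6} is the map $f\mapsto\chi_c f$ on spherical Hecke algebras, which corresponds on the representation side to $\otimes\chi_c$. You instead first try to pin down the transported automorphism $\psi$ abstractly via Proposition \ref{prop:autH} and a support-preservation argument, establish $\alpha:=\phi_z^{-1}\psi$ is trivial on $\cO(T)$ and on $\cW^{H^\circ}$, and are left with a possible character of $\pi_0(H)$ which you admit you cannot rule out by these means. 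At that point you correctly observe that the only way to finish is to ``unwind Roche's construction far enough to see that his type for $\fs'$ may be taken to be the $\chi_c$-twist of his type for $\fs$'' — but that is precisely the paper's (only) argument. So the detour through Proposition \ref{prop:autH} buys you nothing: it bounds $\alpha$ without killing it, and the ingredient that does kill it makes the bounding step redundant. Your proof is correct once this acknowledged gap is closed, and the proposed way to close it coincides with the paper's proof.

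One minor virtue of your presentation is that it makes the geometric content of the key identity $\phi_z^*\,\pi(t_q,x,\rho_q)\cong\pi(zt_q,x,\rho_q)$ explicit (the variety, the component group, and $\rho_q$ are unchanged by the central translation, and $\theta_\lambda$ rescales by $\lambda(z)$ in the $K$-theoretic realization), where the paper merely asserts \eqref{eq:N.26}. That is a useful clarification, though the rest of the extra scaffolding should be dropped in favour of the direct type-twist argument.
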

\begin{proof}
Since $c$ takes values in $\Cent (G) \subset T$, we can multiply any Langlands
parameter for $\cT$ with $c$ and obtain another Langlands parameter for $\cT$.
If we transfer this map to $\Irr (\cT)$ via the local Langlands correspondence
we get $\chi \mapsto \chi \otimes \chi_c$, see \cite[\S 10.2]{BorAut}.
The composition 
\begin{equation}\label{eq:N.25}
T \to T^\fs \xrightarrow{\; \otimes \chi_c \;} T^{\fs \chi_c} \to T , 
\end{equation}
where both outer maps come from \eqref{eq:N.22}, is of the form $t \mapsto 
c_T t$ for a unique $c_T \in \Cent (G) \subset T$.

Because the image of $c$ is contained in $\Cent (G)$, the Langlands parameter
$c \Phi$ has the same centralizer in $G$ as $\Phi$. Hence $(c \Phi,\rho)$ is
a well-defined KLR parameter and the groups $H$ and $H_c$, associated 
respectively to $\Phi |_{\mathbf W_F}$ and to $c \Phi |_{\mathbf W_F}$,
coincide. Then \eqref{eq:N.25} gives rise to an isomorphism
\begin{equation}\label{eq:N.50}
\phi_c : \cH (H_c) \to \cH (H) \quad \text{with} \quad 
\phi_c (T_w \theta_\lambda) = \lambda (c_T) T_w \theta_\lambda
\end{equation}
for $w \in \cW^H$ and $\lambda \in X^* (T)$. The induced map on irreducible
representations is
\begin{equation}\label{eq:N.26}
\begin{aligned}
\phi_c^* : \Irr (\cH (H)) & \to \Irr (\cH (H_c)) , \\
\pi (t_q,x,\rho_q) & 
\mapsto \pi (t_q,x,\rho_q) \otimes c_T = \pi (t_q c_T,x,\rho_q) .
\end{aligned}
\end{equation}
Since $\chi_c$ is a character of $\Cent (\cG)$, the $\fs \chi_c$-type
$(J_c, \tau_c)$ from \cite{Roc} equals to $(J,\tau \otimes \chi_c)$.
Therefore the composition of $\phi_c$ with the two instances of \eqref{eq:N.6} is 
\[
\cH (\cG,\tau \otimes \chi_c) \to \cH (\cG,\tau) : f \mapsto \chi_c f .
\]
(Here $\chi_c f$ denotes pointwise multiplication of functions $\cG \to \C$, 
not a convolution product.) It follows that the composition of $\phi_c^*$ with 
the two instances of Theorem \ref{thm:Roche} is just
\[
\Irr (\cG)^\fs \xrightarrow{\; \otimes \chi_c \;} \Irr (\cG)^{\fs \chi_c} .
\]
This and \eqref{eq:N.26} show that $\pi (c \Phi,\rho) \cong \pi (\Phi,\rho)
\otimes \chi_c$.
\end{proof}

Recall that $\cG$ only has irreducible square-integrable representations if $\Z (\cG)$ 
is compact. A $\cG$-representation is called essentially square-integrable if its 
restriction to the derived group of $\cG$ is square-integrable. This is more general 
than square-integrable modulo centre, because for that notion $\Z(\cG)$ needs to act by 
a unitary character. Also recall that (essential) square-integrability for 
$\cH (H)$-modules was defined just before Proposition \ref{prop:tempered}.

To show that Roche's equivalence of categories Theorem \ref{thm:Roche} preserves
essential square-integrability, we will first characterize in another way.
Let $\Irr (\cG)_\temp$ be the set of tempered representations in $\Irr (\cG)$ or,
equivalently the support of the Plancherel measure in $\Irr (\cG)$ or the dual
of the reduced $C^*$-algebra of $\cG$. The space of tempered irreducible
$\cH (H)$-modules $\Irr (\cH (H))_\temp$ (denoted $\hat{\mathfrak C}$ in \cite{Opd}) 
admits the analogous descriptions.

\begin{lem}\label{lem:essL2}
\begin{enumerate}
\item A tempered irreducible $\cG$-representation is essentially square-integrable if and
only if it is contained in a connected component of $\Irr (\cG)_\temp$ of minimal dimension, 
namely $\dim (\Cent (G))$.
\item A tempered irreducible $\cH (H)$-module is essentially square-integrable if and
only if it is contained in a connected component of \\
$\Irr (\cH (H))_\temp$ of minimal dimension, namely $\dim (H / H_\der)$. 
\end{enumerate}
\end{lem}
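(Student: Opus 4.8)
The statement is an analytic characterization of essential square-integrability on both sides of Roche's equivalence. The two parts are proved by the same mechanism, so I would treat part (2) first (the Hecke algebra side) and then transport it to part (1) via Theorem \ref{thm:Roche} and a compatibility of tempered duals. For part (2), recall that by the definitions given just before Proposition \ref{prop:tempered}, a finite-dimensional $\cH (H)$-module $(\pi,V)$ is tempered iff all eigenvalues of $\pi (\theta_x)$, $x \in X^* (T)^+$, have absolute value $\leq 1$, and essentially square-integrable iff its restriction to $\cH (H / \Z(H))$ has all eigenvalues of $\pi (\theta_x)$, $x \in X^* (T/\Z(H)) \setminus \{0\}$, of absolute value $< 1$. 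The tempered dual $\Irr (\cH (H))_\temp$ is the disjoint union, over the ``residual'' or lower-dimensional tempered components, of smooth families parametrized by compact tori; this is the Plancherel-theoretic picture of Opdam (the object called $\hat{\mathfrak C}$ in \cite{Opd}) and, after restriction to $\cH (H^\circ)$, of Delorme--Opdam. The key point is that a tempered module $\pi (t_q,x,\rho)$ sits in a family obtained by letting the ``unramified'' part of the central character vary over a subtorus $T_u$ of the compact part of $T$; the dimension of that family equals the dimension of the subtorus of $T_\cpt$ on which one can still move $t$ while keeping $\{t_q,x\}$ inside no proper parabolic — i.e. while keeping the module essentially square-integrable on the relevant Levi. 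By Proposition \ref{prop:tempered}(2), $\pi (t_q,x,\rho)$ is essentially square-integrable exactly when $\{t_q,x\}$ lies in no Levi of a proper parabolic of $H$, and in that case the only freedom left in the central character is to twist by the whole connected centre of $H$, giving a component of dimension $\dim Z(H^\circ) = \dim (H/H_\der)$; if $\{t_q,x\}$ does lie in a proper Levi $L \subsetneq H$, the module is induced (tempered-parabolically) from an essentially square-integrable module of $\cH(L)$ and hence lies in a family of strictly larger dimension, namely $\dim(L/L_\der) > \dim(H/H_\der)$. So ``minimal component dimension $= \dim (H/H_\der)$'' is equivalent to ``essentially square-integrable''.

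Concretely, the steps for (2) are: first, recall the parametrization of $\Irr(\cH(H))_\temp$ by tempered KLR parameters via Theorem \ref{thm:S.5} and Proposition \ref{prop:tempered}(1), so that a tempered $\pi(t_q,x,\rho)$ corresponds to $\Phi$ with $\Phi(\varpi_F) = t$ compact; second, invoke the standard structure of the tempered dual of an (extended) affine Hecke algebra with positive parameters — it is a finite disjoint union of quotients of compact tori of various dimensions, and a point lies in a component of dimension $d$ iff its ``deformation space'' inside $T_\cpt$ has dimension $d$; third, identify that deformation space with $X_*(Z(L^\circ)) \otimes_\Zset \Rset / (\text{lattice})$ where $L = \Z_H(t_q)$-flavoured Levi through which the parameter factors, using Proposition \ref{prop:tempered}(2) and the induction from Levi parabolic subalgebras; fourth, observe $\dim Z(L^\circ)$ is minimized precisely when $L = H$, giving the value $\dim Z(H^\circ) = \dim(H/H_\der)$, and that this minimum is attained exactly on the essentially square-integrable modules. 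For the identification of connected components and their dimensions I would cite Opdam's Plancherel theorem \cite{Opd} (and for the disconnected group $\cH(H) = \cH(H^\circ)\rtimes \pi_0(H)$, note that crossing with the finite group $\pi_0(H)$ does not change dimensions of components, only possibly breaks a component into several of the same dimension).

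For part (1), the bridge is Theorem \ref{thm:Roche}: it gives an equivalence $\Rep(\cG)^\fs \simeq \Mod(\cH(H))$, and by \cite[Theorem B]{BHK} (already used in the proof of Theorem \ref{thm:main}(2)) it matches tempered objects with tempered objects; moreover, this equivalence is compatible with unramified twists and with the Plancherel/harmonic-analytic structure, so it carries connected components of $\Irr(\cG)_\temp$ lying in a fixed $\fs$ bijectively onto connected components of $\Irr(\cH(H))_\temp$, preserving their dimensions. Since $\dim(H/H_\der)$ equals $\dim(G/G_\der)$ when $H$ is a ``twist'' of a centralizer inside $G$ — more precisely, $Z(H^\circ)^\circ \supseteq Z(G)^\circ$ and one must check the minimal tempered component of $\Irr(\cG)^\fs_\temp$ has dimension $\dim Z(\cG) = \dim Z(G)$ — I would note that for the block $\fs$ the essentially square-integrable representations are exactly those whose cuspidal support is ``as non-degenerate as possible'', which on the dual side is the condition $\{t_q,x\}$ not in a proper Levi of $H$, and that the corresponding tempered component is a twist by the full unramified character group $X_\unr(\cG) = Z(G)^\circ$, of dimension $\dim Z(\cG) = \dim(\Cent(G))$. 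Running the same Levi-induction dichotomy directly on the $\cG$-side — an essentially-square-integrable-modulo-a-Levi representation is a tempered parabolic induction, hence lies in a larger-dimensional tempered component — then gives (1); alternatively (1) follows formally from (2) plus the dimension-preserving correspondence of tempered components, once one checks $\dim(H/H_\der) = \dim(\Cent(G))$ is forced for any $\fs$ admitting a discrete series (this is where one uses that $H$ contains the maximal torus $T$ and $Z(H^\circ)^\circ$ can be cut down to $Z(G)^\circ$ exactly in the discrete case).

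\textbf{Main obstacle.} The routine part is the eigenvalue bookkeeping with $\theta_x$; the genuinely delicate point is the claim that the tempered dual of $\cH(H)$ (and of $\cG$) decomposes into connected components whose dimension is precisely the dimension of the centre of the Levi through which the tempered parameter ``minimally'' factors, and that this dimension is an invariant preserved by Roche's equivalence. For $\cH(H)$ this is Opdam's description of $\hat{\mathfrak C}$, but one must be careful that the extended (disconnected) algebra $\cH(H^\circ)\rtimes\pi_0(H)$ behaves well — in particular that $\pi_0(H)$ permutes the tempered components of $\cH(H^\circ)$ without merging components of different dimensions — and for $\cG$ one must ensure that the abstract categorical equivalence of Theorem \ref{thm:Roche} actually respects the topology of the tempered dual and hence component dimensions, which is exactly what \cite{BHK} supplies. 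Assembling these inputs cleanly, rather than any single computation, is the crux.
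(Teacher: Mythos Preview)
Your ``alternative'' at the end --- running the Levi-induction dichotomy directly on the $\cG$-side --- is in fact the paper's proof of (1), via Harish-Chandra's description of $\Irr(\cG)_\temp$ in \cite[Proposition III.4.1]{Wal}: an essentially square-integrable tempered $\pi$ has unitary central character, and its connected component consists of its unitary unramified twists, of dimension $\dim(\Cent(G))$; a non-essentially-square-integrable tempered $\pi$ arises from a proper Levi $\cM$ and lives in a component of dimension $\dim(\Cent(\cM)) > \dim(\Cent(G))$. Part (2) is then proved \emph{by the same argument}, replacing Waldspurger by the Plancherel theorem for $\cH(H^\circ)$ from \cite[Corollary 5.7]{DeOp}, and passing to $\cH(H) = \cH(H^\circ) \rtimes \pi_0(H)$ by Clifford theory. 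The two parts are proved independently and in parallel; neither is deduced from the other.

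Your preferred route --- prove (2) first and transport to (1) via Theorem \ref{thm:Roche} and \cite{BHK} --- does not close. The lemma is stated for all of $\Irr(\cG)_\temp$, not only principal-series blocks, so Roche's equivalence only reaches part of it. More seriously, within a principal-series block $\fs$ the minimal component dimension you obtain by transport is $\dim(H/H_\der)$, which in general exceeds $\dim(\Cent(G))$; equality holds precisely when $H = G$. To finish you would need to know that a block with $H \neq G$ contains no essentially square-integrable $\cG$-representation --- but in the paper this is exactly what is deduced \emph{from} Lemma \ref{lem:essL2} in the proof of Proposition \ref{prop:desideratum3} (combining parts (1) and (2) with the homeomorphism of tempered duals). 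Building that conclusion into the proof of (1) makes the subsequent argument circular. Your treatment of (2) via KLR parameters and Proposition \ref{prop:tempered} would work but is heavier than needed; the Plancherel description of $\Irr(\cH(H^\circ))_\temp$ already gives the component dimensions as $\dim Z(L^\circ)$ for the relevant Levi, without the geometric parametrization.
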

\begin{proof}
(1) This follows from Harish-Chandra's the description of $\Irr (\cG)_\temp$ in terms 
of essentially square-integrable representations of Levi subgroups of $\cG$ 
\cite[Proposition III.4.1]{Wal}. Let us make it concrete. 

If $\pi \in \Irr (\cG)_\temp$ 
is essentially square-integrable then its central character is unitary, so it is
``carr\'e int\'egrable'' in the sense of \cite{Wal}. Its connected component in
$\Irr (\cG)_\temp$ consists of the twists of $\pi$ by unitary unramified characters
of $\cG$, so it has dimension $\dim (\Cent (G)) = \dim (\Cent (\cG))$. 

Suppose now that $\pi \in \Irr (\cG)_\temp$ is not essentially square-integrable.
By \cite[Proposition III.4.1]{Wal} there exist a proper parabolic subgroup 
$\cP \subset \cG$ with Levi factor $\cM$ and an essentially square-integrable 
representation $\omega \in \Irr (\cM)_\temp$ such
that $\pi$ is a quotient of $I_\cP^\cG (\omega)$. Moreover $(M,\omega)$ is unique
up to $\cG$-conjugation. The connected component of $\pi$ in $\Irr (\cG)_\temp$
consists of all the subquotients of $I_\cP^\cG (\omega \otimes \chi_\cM)$, where
$\chi_\cM$ runs through the unitary unramified characters of $\cM$. The space of
such characters has dimension $\dim (\Cent (\cM))$, which is larger then 
$\dim (\Cent (\cG))$ because $\cP \subsetneq \cG$. \\
(2) Recall that $\cH (H) = \cH (H^\circ) \rtimes \pi_0 (H)$ and that essential 
square-integrability of $\cH(H)$-modules depends only on their restriction to 
$\cH (H^\circ)$. For the affine Hecke algebra $\cH (H^\circ)$ the claim can be 
proven in the same way as part (1), using the Plancherel theorem and the ensuing 
description of $\Irr_\temp (\cH (H^\circ))$
\cite[Corollary 5.7]{DeOp}. From there it can be generalized to $\cH (H)$ with 
the comparison between $\Irr (\cH (H))$ and $\Irr (\cH (H^\circ))$ provided
by Clifford theory for crossed products with finite groups \cite[Appendix]{RamRam}.
\end{proof}

\begin{prop}\label{prop:desideratum3}
Let $(\Phi,\rho)$ be a KLR-parameter for $G$. Then $\pi (\Phi,\rho) \in \Irr (\cG)$ is
essentially square-integrable if and only of the image of $\Phi$ is not contained in
any Levi subgroup of a proper parabolic subgroup of $G$.

In other words, Borel's desideratum (3) holds for our Langlands correspondence 
for $\Irr (\cG,\cT)$.
\end{prop}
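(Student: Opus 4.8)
The plan is to reduce the statement about $\cG$-representations to the corresponding statement about $\cH(H)$-modules, which was essentially settled in Proposition \ref{prop:tempered}(2), and then to translate the Levi condition across the equivalences we have built. First I would invoke Lemma \ref{lem:essL2}: essential square-integrability of $\pi(\Phi,\rho) \in \Irr(\cG)$ is detected by membership in a component of $\Irr(\cG)_\temp$ of minimal dimension $\dim(\Cent(G))$, and likewise essential square-integrability of the corresponding $\cH(H)$-module $\pi(t_q,x,\rho_q)$ is detected by membership in a component of $\Irr(\cH(H))_\temp$ of minimal dimension $\dim(H/H_\der)$. Since $H$ is the complex dual group of a quotient of $\cT$ and $\Cent(G)$, $\Cent(G)^\circ$ is a maximal torus of $\Z(H^\circ)$, so $\dim(\Cent(G)) = \dim(\Z(H^\circ)) = \dim(H/H_\der)$; thus the two minimal dimensions agree. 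By \cite[Theorem B]{BHK} (already used in Theorem \ref{thm:main}(2)) Roche's equivalence $\Rep(\cG)^\fs \cong \Mod(\cH(H))$ preserves temperedness, and being support-preserving on central characters it identifies connected components of the tempered duals, hence preserves the dimension of the component. Therefore $\pi(\Phi,\rho)$ is essentially square-integrable if and only if $\pi(t_q,x,\rho_q)$ is.

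Next I would apply Proposition \ref{prop:tempered}(2): the $\cH(H^\circ)$-module, hence the $\cH(H)$-module, $\pi(t_q,x,\rho_q)$ is essentially square-integrable if and only if $\{t_q,x\}$ is not contained in any Levi subgroup of a proper parabolic subgroup of $H^\circ$. The remaining task is to show that this last condition is equivalent to ``the image of $\Phi$ is not contained in any Levi subgroup of a proper parabolic subgroup of $G$.'' Here I would argue in two steps. Since $\SL_2(\C)$ is connected and commutes with $\Phi(\mathbf I_F) \subset \Z(H^\circ)$, the image of $\Phi$ lies in $H^\circ$; and a parabolic $P$ of $G$ contains $\im\Phi$ if and only if its Levi $L$ contains $\im\Phi$ if and only if $L$ contains $\Phi(\mathbf I_F)$ (equivalently $H \subset L$, since $H = \Z_G(\Phi(\mathbf I_F))$) and $L \cap H^\circ$ contains $\{t_q, x\}$ — using that $\Phi$ is generated by $\Phi(\mathbf I_F)$, $\Phi(\varpi_F) = t$, $x$, and $Y_{q^{1/2}}$, and that $t_q = t\,\Phi(Y_{q^{1/2}})$ while $t \in T \subset L$ whenever $L$ is standard. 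Conversely, parabolics of $H^\circ$ containing a fixed maximal torus $T$ are exactly the intersections $P_G \cap H^\circ$ for standard parabolics $P_G$ of $G$ containing $T$ and containing $H$ (cf.\ the root-group description in Lemma \ref{lem:centrals} and \cite[\S 4.1]{SpringerSteinberg}), so proper parabolics of $H^\circ$ correspond to proper parabolics of $G$ that still contain $\im\Phi|_{\mathbf I_F}$. Combining these two correspondences gives the desired equivalence.

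The main obstacle I anticipate is the bookkeeping in this last paragraph: one must be careful that ``Levi subgroup of a proper parabolic of $H^\circ$'' and ``Levi subgroup of a proper parabolic of $G$ containing $H$'' really match up, and that passing between $\{t_q,x\}$ and the full image of $\Phi$ does not lose information. The cleanest way is probably to fix a maximal torus $T$ with $t \in T$, work only with standard parabolics, and use that a reductive subgroup of $G$ generated by $T$, some root groups, and semisimple elements normalizing $T$ (as in the structure of $H$ and of centralizers) is contained in a proper parabolic of $G$ if and only if there is a one-parameter subgroup $\lambda \in X_*(T)$ with all the relevant roots nonnegative on $\lambda$ — a condition that transparently only depends on the set of roots appearing, hence is the same whether read inside $G$ or inside $H^\circ$ once one restricts to roots of $H^\circ$. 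With that observation the equivalence is immediate, and Borel's desideratum (3) follows, the ``if one element is essentially square-integrable then all are'' clause being automatic since the condition on $\Phi$ does not involve $\rho$.
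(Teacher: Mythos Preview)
Your proof has a genuine gap at the very first step. You assert that $\dim(\Cent(G)) = \dim(H/H_\der)$ holds \emph{a priori}, with the justification that ``$\Cent(G)^\circ$ is a maximal torus of $\Z(H^\circ)$''. This is false in general: since $R(H^\circ,T)\subset R(G,T)$ we only get $\Cent(G)^\circ\subset \Z(H^\circ)^\circ$, and the inclusion is typically strict. For a concrete example take $G=\SL_2(\C)$ and choose $c^\fs$ so that $H^\circ=T$; then $\dim(\Cent(G))=0$ but $\dim(H/H_\der)=\dim T=1$. Once this equality fails, your claimed equivalence ``$\pi(\Phi,\rho)$ essentially square-integrable $\Longleftrightarrow$ $\pi(t_q,x,\rho_q)$ essentially square-integrable'' collapses: the BHK homeomorphism matches the tempered dual of the single block $\Irr(\cG)^\fs_\temp$ with $\Irr(\cH(H))_\temp$, but ``minimal-dimensional component'' in Lemma \ref{lem:essL2}(1) is relative to \emph{all} of $\Irr(\cG)_\temp$, so minimality on the two sides need not correspond.

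The paper's proof exploits exactly this discrepancy. One first twists by an unramified character to reduce to unitary central character (you omit this, but Lemma \ref{lem:essL2} only applies to tempered representations, and essentially square-integrable does not imply tempered without that reduction). Then, assuming $\pi(\Phi,\rho)$ is essentially square-integrable, its component in $\Irr(\cG)_\temp$ has dimension $\dim(\Cent(G))$; transferring via BHK gives a component of $\Irr(\cH(H))_\temp$ of that same dimension, and since the minimal dimension there is $\dim(H/H_\der)\geq \dim(\Cent(G))$, equality is \emph{forced}, whence $H=G$. After that, Proposition \ref{prop:tempered}(2) applies directly to $G$, and your elaborate matching of Levi subgroups of $H^\circ$ with Levi subgroups of $G$ becomes unnecessary. (That matching is itself suspect: $H^\circ$ is the centralizer of a finite subgroup of $T$ and need not be a Levi subgroup of $G$, so the correspondence you sketch between parabolics of $H^\circ$ and parabolics of $G$ containing $H$ does not hold in general.) The converse direction in the paper likewise begins by observing that the hypothesis on $\Phi$ already forces $H^\circ=G$, again sidestepping any comparison of parabolics.
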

\begin{proof}
Let $(\Phi,\rho)$ be a KLR-parameter for $G$ and let $\chi_\Phi$ is the central character 
of $\pi (\Phi,\rho) \in \Irr (\cG)^\fs$. There is a unique unramified character 
$\chi_c : \cG \to \R_{>0}$ such that 
\[
\chi_c (z) = |\chi_\Phi (z)|^{-1} \text{ for all }z \in \Cent (G).
\]
Let $c : \bW \to \Cent (G)$ be the associated homomorphism. By Lemma \ref{lem:desideratum2} 
\[
\pi (c \Phi,\rho) = \chi_c \otimes \pi (\Phi,\rho) \in \Irr (\cG)^\fs ,
\]
and by construction this representation has a unitary central character.
Since $\chi_c$ is trivial on $\cG_\der , \pi (c \Phi,\rho)$ is essentially square-integrable
if and only if $\pi (\Phi,\rho)$ is so. As $c (\bW_F) \subset \Cent (G)$, Borel's conditions 
on the Levi subgroups for $\Phi$ and $c \Phi$ are equivalent. Therefore it suffices to prove the 
proposition in case $\pi (\Phi,\rho)$ has unitary central character. We assume this from now on.

Suppose that $\pi (\Phi,\rho)$ is essentially square-integrable. For every matrix 
coefficient $f$ of $\pi (\Phi,\rho), |f|$ is a square-integrable function on 
$\cG / \Cent (\cG)$. In particular $|f|$ is tempered, so $\pi (\Phi,\rho)$ is also tempered. 
By Lemma \ref{lem:essL2} $\pi (\Phi,\rho)$ 
is contained in a component of $\Irr (\cG)_\temp$ of minimal dimension $\dim (\Cent (G))$.

According to \cite[Theorem B]{BHK} the equivalence categories $\Rep (\cG)^\fs \to
\Mod (\cH (H))$ from Theorem \ref{thm:Roche} induces a homeomorphism 
$\Irr (\cG)^\fs_\temp \to \Irr (\cH (H))_\temp$. 
Therefore $\pi (t_q,x,\rho_q)$ lies in a $\dim (\Cent (G))$-dimensional component of 
$\Irr (\cH (H))_\red$, and this is a component of minimal dimension. Now 
Lemma \ref{lem:essL2}.2 says that $\pi (t_q,x,\rho_q) \in \Irr (\cH (G))$ is essentially 
square-integrable and that $\dim (\Cent (G)) = \dim (H / H_\der)$. Since 
$H = \Cent_G (\Phi (\bI_F))$, this implies that $\dim H = \dim G$, which by the connectedness
of $G$ means that $H = G$.

Now Proposition \ref{prop:tempered}.2 tells us that $\{t_q,x\}$ is not contained in any
Levi subgroup of a proper parabolic subgroup of $G$. Obviously the same goes for the
image of $\Phi$.

For the opposite direction, suppose that the image of $\Phi$ is not contained in any proper 
Levi subgroup of $G$. The Levi subgroup $H^\circ \subset G$ contains the image of $\Phi$, 
so $H^\circ = H = G$. If $\{t_q,x\}$ were contained in a proper Levi subgroup $L \subset G$, 
we could extend them to a Langlands parameter $\Phi'$ with image in $L$. By Lemma 
\ref{lem:compareParameters} $\Phi'$ would be conjugate to $\Phi$, which would imply that the
image of $\Phi$ is contained in a conjugate of $L$. That would violate our assumption, so
$\{t_q,x\}$ cannot be contained in any proper Levi subgroup of $G$.

Proposition \ref{prop:tempered}.2 says that $\pi (t_q,x,\rho_q) \in \Irr (\cH (G))$ is essentially 
square-integrable. Recall that we assumed that $\pi (\Phi,\rho) \in \Irr (\cG)$ has unitary
central character. With the description of its infinitesimal central character given in
Theorem \ref{thm:main} we see that 
\[
|\lambda (t_q)| = 1 \text{ for all } \lambda \in X^* (G / G_\der) = X_* (\Cent (G)). 
\]
Hence the restriction of $\pi (t_q,x,\rho_q)$ to $\cH (G / G_\der)$ is
tempered. The restriction of $\pi (t_q,x,\rho_q)$ to $\cH (G / \Cent (G))$ is square-integrable,
so in particular tempered. As noted in the proof of Proposition \ref{prop:tempered}.1, these
two facts imply that $\pi (t_q,x,\rho_q)$ is also tempered as $\cH (G)$-module. Now the above 
arguments using Lemma \ref{lem:essL2} can be applied in the reverse order, and they show that 
$\pi (\Phi,\rho)$ is essentially square-integrable.
\end{proof}

\section{Functoriality}
\label{sec:func}

The fifth of Borel's desiderata in Condition \ref{cond:Borel} says that 
the LLC should be functorial with respect to some specific morphisms of reductive
groups. To show that this holds in our setting, we need substantial
technical preparations. The first results of this section are valid without
any restriction on the residual characteristic.

Let $\eta : \tilde \cG \to \cG$ be a morphism of connected reductive split
$F$-groups, with commutative kernel and cokernel. Let $\check \eta : G \to \tilde G$
be the dual homomorphism, as in \cite[\S 1.2]{BorAut}.

\begin{lem}\label{lem:etaProperties}
Define $\tilde \cT = \eta^{-1}(\cT)$.
\begin{enumerate}
\item $\tilde \cT$ is a split maximal torus of $\tilde \cG$ and 
$\ker (\eta : \tilde \cT \to \cT) = \ker (\eta : \tilde \cG \to \cG)$.
\item $\ker \eta \subset \Cent (\tilde \cG)$ and 
$\eta^{-1}(\Cent (\cG)) = \Cent (\tilde \cG)$.
\item The map $X^* (\cT) \to X^* (\tilde \cT) : \alpha \mapsto \alpha \circ \eta$ 
induces a bijection $R(\cG ,\cT) \to R (\tilde \cG, \tilde \cT)$. Similarly
$X^* (\tilde T) \to X^* (T) : \beta \mapsto \beta \circ \check \eta$ induces
a bijection $R(\tilde G, \tilde T) \to R (G,T)$.
\item $\mathrm{coker}(\eta : \tilde \cT \to \cT) \cong 
\mathrm{coker}(\eta : \tilde \cG \to \cG)$.
\end{enumerate}
\end{lem}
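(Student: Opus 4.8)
The plan is to prove the four parts in order, since each one builds on the previous, and the key input throughout is that $\eta$ has commutative kernel and cokernel together with the fact that kernel and cokernel of a morphism of split tori are themselves the obvious things.

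First I would prove (1). The torus $\tilde\cT = \eta^{-1}(\cT)$ contains the maximal split torus of $\tilde\cG$ as well as $\ker\eta$; since $\cokernel$ is commutative, the image $\eta(\tilde\cG)$ contains the derived group $\cG_\der$, hence the root subgroups of $\cG$, and a dimension/codimension count forces $\tilde\cT$ to be a maximal torus of $\tilde\cG$. It is split because $\cT$ is split and $\ker\eta$, being in the center (which I prove in (2) but can also see directly because it is a normal subgroup of connected reductive $\tilde\cG$ with commutative ambient quotient structure), is a subgroup of a split torus. For the kernel statement: clearly $\ker(\eta\colon\tilde\cT\to\cT)\subseteq\ker(\eta\colon\tilde\cG\to\cG)$, and conversely $\ker\eta\subseteq\eta^{-1}(\{1\})\subseteq\eta^{-1}(\cT)=\tilde\cT$, giving equality.

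Next, (2). Since $\ker\eta$ is a normal subgroup of the connected group $\tilde\cG$, it is central provided it is finite or a torus; in our situation $\ker\eta$ is contained in $\tilde\cT$ by (1) and $\tilde\cG$ acts on it by conjugation through a connected group into the automorphisms of a diagonalizable group, hence trivially, so $\ker\eta\subseteq\Cent(\tilde\cG)$. For $\eta^{-1}(\Cent(\cG))=\Cent(\tilde\cG)$: the inclusion $\supseteq$ holds because $\eta$ is surjective onto $\cG$ modulo a central cokernel, so $\eta(\Cent(\tilde\cG))$ commutes with $\eta(\tilde\cG)$ which together with the central cokernel generates $\cG$; the inclusion $\subseteq$ follows because if $\eta(g)\in\Cent(\cG)$ then $g$ commutes with every element of $\eta^{-1}(\cG_\der)\supseteq\tilde\cG_\der$ and with $\tilde\cT$, hence with all of $\tilde\cG$. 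Part (3) is the root-datum bookkeeping: $\eta$ restricts to an isogeny-type map on derived groups $\tilde\cG_\der\to\cG_\der$ (with central kernel), and such a map identifies root systems; concretely, pullback $\alpha\mapsto\alpha\circ\eta$ on characters of the tori carries $R(\cG,\cT)$ bijectively onto $R(\tilde\cG,\tilde\cT)$ because the root subgroups correspond under $\eta$. The dual statement for $\check\eta$ is literally the Langlands dual of this, using that $\check\eta$ is the map of dual groups and root systems dualize.

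Finally, (4), which I expect to be the main obstacle since it is the only genuinely cohomological point. Apply the snake lemma to the commutative diagram with exact rows given by $1\to\ker\eta\to\tilde\cT\to\cT\to\cokernel(\tilde\cT\to\cT)\to1$ and $1\to\ker\eta\to\tilde\cG\to\cG\to\cokernel(\tilde\cG\to\cG)\to1$, where by (1) the two kernels agree and the vertical maps are the inclusions $\tilde\cT\hookrightarrow\tilde\cG$, $\cT\hookrightarrow\cG$. The point is that the cokernel of $\tilde\cT\to\cT$, computed inside $\cT$, equals the cokernel of $\tilde\cG\to\cG$: one needs that $\cT\cdot\eta(\tilde\cG)=\cG$ (true since the cokernel of $\eta$ is commutative, so $\eta(\tilde\cG)\supseteq\cG_\der$ and $\cT$ surjects onto $\cG/\cG_\der$) and that $\cT\cap\eta(\tilde\cG)=\eta(\tilde\cT)$ (true because if $t\in\cT$ and $t=\eta(g)$ then $g\in\eta^{-1}(\cT)=\tilde\cT$). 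These two facts give a natural isomorphism $\cT/\eta(\tilde\cT)\xrightarrow{\ \sim\ }\cG/\eta(\tilde\cG)$, which is exactly the assertion. The only subtlety to watch is whether one wants cokernels as abstract groups or as algebraic groups; since everything is a quotient of diagonalizable groups (or rather the $\cokernel$ of $\eta$ is commutative, hence a quotient of a torus by a closed subgroup) the isomorphism is automatically algebraic, so no extra work is needed.
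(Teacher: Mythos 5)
Your proposal for parts (3) and (4) is sound, and your route to (4) — the second isomorphism theorem applied to $\cT \hookrightarrow \cG \twoheadrightarrow \cG/\eta(\tilde\cG)$, using $\cT\cdot\eta(\tilde\cG)=\cG$ and $\cT\cap\eta(\tilde\cG)=\eta(\tilde\cT)$ — is actually cleaner than the paper's, which invokes the Bruhat decomposition to reach the same two facts.

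The problems are in (1) and (2), and both stem from skipping the Lie-algebra step that the paper makes the engine of the whole proof: the observation that $\Lie(\eta)$ restricts to an \emph{isomorphism} $\Lie(\tilde\cG_\der)\to\Lie(\cG_\der)$. In (1) you treat $\tilde\cT=\eta^{-1}(\cT)$ as a torus from the first sentence, but connectedness is precisely the substantive point: $\eta^{-1}(\cT)$ is a priori only a closed subgroup, and it contains $\ker\eta$, which may have several components. The paper handles this by using the Lie-algebra isomorphism to see that $\tilde\cT^\circ$ is a maximal torus and that $\ker\eta\subset\tilde\cT^\circ$ (the image of $\ker\eta$ in $\tilde\cG/\Cent(\tilde\cG)^\circ$ is finite, normal, hence central in the maximal torus there), and only then concludes connectedness. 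A dimension count alone does not rule out extra components.

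The more serious gap is the inclusion $\eta^{-1}(\Cent(\cG))\subseteq\Cent(\tilde\cG)$ in (2). You assert that if $\eta(g)\in\Cent(\cG)$ then ``$g$ commutes with every element of $\eta^{-1}(\cG_\der)$,'' but this is exactly what has to be proved: from $\eta(g)$ central you only get that $[g,g']\in\ker\eta$ for all $g'\in\tilde\cG$, i.e.\ $\mathrm{Ad}(g)$ covers the identity on $\cG$, not that the commutator is trivial. Since $\ker\eta$ need not be finite (it is only assumed commutative), the ``morphism from a connected group to a finite group is constant'' shortcut is unavailable. The paper resolves this by passing to the adjoint representation: $\Cent(\cG)=\ker\big(\mathrm{Ad}:\cG\to\End(\Lie\cG_\der)\big)$, and the isomorphism $\Lie(\tilde\cG_\der)\cong\Lie(\cG_\der)$ identifies $\mathrm{Ad}\circ\eta$ with $\widetilde{\mathrm{Ad}}$, so $\eta^{-1}(\ker\mathrm{Ad})=\ker\widetilde{\mathrm{Ad}}=\Cent(\tilde\cG)$. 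Without some argument of this kind your $\subseteq$ inclusion does not close.
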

\begin{proof}
(1) Decompose the Lie algebras of $\tilde \cG$ and $\cG$ as
\begin{equation}\label{eq:N.28}
\begin{aligned}
& \Lie (\tilde \cG) = \Lie (\tilde{\cG}_{\der}) \oplus \Lie (\Cent (\tilde \cG)) , \\
& \Lie (\cG) = \Lie (\cG_{\der}) \oplus \Lie (\Cent (\cG)) .
\end{aligned}
\end{equation}
The assumptions on the kernel and cokernel of $\eta$ imply that $\Lie (\eta)$ restricts
to an isomorphism 
\begin{equation}\label{eq:N.29}
\Lie (\eta) : \Lie (\tilde{\cG}_{\der}) \to \Lie (\cG_{\der})
\end{equation}
and to a $F$-linear map
$\Lie (\Cent (\tilde \cG)) \to \Lie (\Cent (\cG))$. Hence $\Lie (\tilde \cT)$ is the
Lie algebra of $\Cent (\tilde \cG)$ times a maximal split torus of $\tilde{\cG}_{\der}$.
It follows that the unit component $\tilde{\cT}^\circ$ of $\tilde \cT$ is a split 
maximal torus of $\tilde \cG$. Now $\ker (\eta) \Cent (\tilde \cG)^\circ /  
\Cent (\tilde \cG)^\circ$ is a finite normal subgroup of $\tilde \cG / \Cent (\tilde \cG)^\circ$,
so it is central and contained in the maximal torus $\tilde{\cT}^\circ / 
\Cent (\tilde \cG)^\circ$ of $\tilde \cG / \Cent (\tilde \cG)^\circ$. Consequently
$\ker \eta$ is contained in $\tilde{\cT}^\circ$. As $\cT$ is connected and
$\tilde \cT = \eta^{-1}(\cT) ,\; \tilde \cT$ is also connected.\\
(2) We just saw that $\ker \eta$ is contained in the maximal split torus
$\tilde \cT$ of $\tilde \cG$. But $\cT$ was arbitrary, so $\ker \eta$ lies in every
split maximal torus of $\tilde \cG$. The intersection of all such tori is contained in
the centre of $\tilde \cG$, so $\ker \eta$ as well.

Since $\cG$ is connected, $\Cent (\cG)$ is the kernel of the adjoint representation
of $\cG$. As $\Lie (\Cent (\cG))$ is a trivial summand of the adjoint representation,
$\Cent (\cG)$ is also the kernel of $\mathrm{Ad} : \cG \to \End_\C (\Lie (\cG_{\der}))$.
In view of \eqref{eq:N.29} there is a commutative diagram
\[
\begin{array}{ccc}
\tilde \cG & \xrightarrow{\:\; \eta \;\:} & \cG \\
\downarrow \tilde{\mathrm{Ad}} & & \downarrow \mathrm{Ad} \\
\End_\C (\Lie (\tilde{\cG}_{\der})) & \longrightarrow & \End_\C (\Lie (\cG_{\der})) .
\end{array}
\]
Now we see that
\[
\eta^{-1}(\Cent (\cG)) = \eta^{-1} (\ker \mathrm{Ad}) = 
\ker \tilde{\mathrm{Ad}} = \Cent (\tilde \cG) .
\]
(3) From part (1) and the isomorphism \eqref{eq:N.29}
we deduce that $\eta$ maps any root subgroup of $(\tilde \cG, \tilde \cT)$ bijectively
to a root subgroup of $(\cG,\cT)$. This yields the bijection
$R(\cG,\cT) \to R (\tilde \cG, \tilde \cT)$, which is given explicitly by
$\alpha \mapsto \alpha \circ \eta$. The second claim follows by dualizing the root data.\\
(4) In view of \eqref{eq:N.29} $\eta$ restricts to an isomorphism of root subgroups
$\tilde{\cG}_{\alpha \circ \eta} \to \cG_\alpha$, for every $\alpha \in R(\cG,\cT)$.
With part (1) we see that the preimage $\tilde \cB = \eta^{-1}(\cB)$ of our Borel
subgroup $\cB \subset \cG$ is a Borel subgroup of $\tilde \cG$. Let $\cU$ (resp. $\tilde \cU$)
be the unipotent radical of $\cB$ (resp. $\tilde \cB$). The Bruhat decomposition says that
\[
\cG = \cU N_{\cG}(\cT) \cU \text{ and }
\tilde \cG = \tilde \cU N_{\tilde \cG}(\tilde \cT) \tilde \cU .
\]
As $\eta : \tilde \cU \to \cU$ is an isomorphism and 
\[
N_{\cG}(\cT) / \cT \cong \cW^\cG \cong \cW^{\tilde \cG} \cong 
N_{\tilde \cG}(\tilde \cT) / \tilde \cT ,
\]
the inclusions $\cT \to \cG$ and $\tilde \cT \to \tilde \cG$ give an isomorphism 
$\mathrm{coker}(\eta : \tilde \cT \to \cT) \to \mathrm{coker}(\eta : \tilde \cG \to \cG)$.
\end{proof}

Recall that $\chi \in \Irr (\cT)$ and $\fs = [\cT,\chi]_\cG$. Let $c_\fs : 
\fo_F^\times \to T$ be the restriction of the Langlands parameter of $\chi$.

\begin{lem}\label{lem:etaBernstein}
Define $\tilde \fs = [\tilde \cT,\chi \circ \eta ]_{\tilde \cG}$ and
\[
c_{\tilde \fs} = \check \eta \circ c_\fs : \fo_F^\times \to \tilde T .
\]
Then $c_{\tilde \fs}$ is the restriction of the Langlands parameter of $\chi \circ \eta$ 
to $\fo_F^\times$ and $\eta^*$ sends $\Rep (\cG)^\fs$ to $\Rep (\tilde \cG)^{\tilde \fs}$.
\end{lem}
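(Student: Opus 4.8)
The plan is to verify the two assertions in Lemma \ref{lem:etaBernstein} separately, using the compatibility of the local Langlands correspondence for tori with the homomorphism $\eta$, and the description of Bernstein components via restriction to the maximal compact subgroup.

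First I would check that $c_{\tilde\fs}$ is the restriction to $\fo_F^\times$ of the Langlands parameter of $\chi\circ\eta$. The local Langlands correspondence for split tori is simply the isomorphism $\Irr(\cT)\cong\Hom(F^\times,T)$ recalled before Lemma \ref{lem:cBernstein}, functorial in $\cT$; that is, if $\hat\chi\in\Hom(F^\times,T)$ corresponds to $\chi$, then $\check\eta\circ\hat\chi\in\Hom(F^\times,\tilde T)$ corresponds to $\chi\circ\eta$. This is because $\eta:\tilde\cT\to\cT$ dualizes to $\check\eta:T\to\tilde T$ (Lemma \ref{lem:etaProperties}(3) guarantees $\check\eta$ is the dual torus map) and the pairing $\Irr(\cT)\times X_*(\cT)\to\C^\times$ is natural. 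Restricting to $\fo_F^\times\subset F^\times$ then gives exactly $c_{\tilde\fs}=\check\eta\circ c_\fs$, by the definition of $c^\fs$ in Lemma \ref{lem:cBernstein}.

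Next I would show $\eta^*$ maps $\Rep(\cG)^\fs$ into $\Rep(\tilde\cG)^{\tilde\fs}$. The key point is that the inertial class $\fs=[\cT,\chi]_\cG$ is determined by the $\cW^\cG$-orbit of $\chi|_{\cT_0}$, equivalently by the $\cW^G$-orbit of $c^\fs$ (Lemma \ref{lem:cBernstein}), and by Lemma \ref{lem:etaProperties}(1) we have $\eta^{-1}(\cT)=\tilde\cT$ with $\ker\eta$ contained in every maximal torus, while $\eta$ restricts to an isomorphism on root subgroups so $\cW^{\tilde\cG}\cong\cW^\cG$ compatibly with the torus maps. Given an irreducible $\pi\in\Rep(\cG)^\fs$, its cuspidal support is $(\cT,\chi')$ for some $\chi'$ in the $\cW^\cG$-orbit of an unramified twist of $\chi$; pulling back along $\eta$, the cuspidal support of any irreducible constituent of $\pi\circ\eta$ is $(\tilde\cT,\chi'\circ\eta)$, which lies in the $\cW^{\tilde\cG}$-orbit of an unramified twist of $\chi\circ\eta$ since unramified twists and Weyl conjugation are transported by $\eta$. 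For a general (not necessarily irreducible) object one argues on the level of cuspidal supports of subquotients, or notes that $\eta^*$ is exact and commutes with parabolic induction from $\cT$ (using $\tilde\cB=\eta^{-1}(\cB)$ from the proof of Lemma \ref{lem:etaProperties}(4)), so it sends $\Rep(\cG)^\fs$ into the block generated by $\chi\circ\eta$, which is $\Rep(\tilde\cG)^{\tilde\fs}$.

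The main obstacle I anticipate is the bookkeeping around unramified twists and Weyl orbits: one must confirm that the map $\eta^*$ on characters of $\cT_0$ intertwines the $\cW^\cG$- and $\cW^{\tilde\cG}$-actions and respects the splitting $\Irr(\cT)\cong\Irr(\cT_0)\times X_{\unr}(\cT)$ of \eqref{split}, which requires knowing that $\eta$ carries the maximal compact $\tilde\cT_0$ into $\cT_0$ and induces a map $X_*(\tilde\cT)\to X_*(\cT)$ compatible with the coweight lattices. All of this follows from Lemma \ref{lem:etaProperties}, but it must be assembled carefully; once it is in place, the statement that $\eta^*$ preserves the relevant blocks is essentially formal, given that inertial classes in the principal series are classified by $\cW$-orbits of smooth characters of the maximal compact torus.
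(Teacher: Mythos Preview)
Your proposal is essentially correct and matches the paper's approach. For the first claim both you and the paper invoke functoriality of the local Langlands correspondence for split tori, and for the second claim the paper takes exactly your second route: it reduces to showing $\eta^* I_\cB^\cG(\chi\otimes t)\in\Rep(\tilde\cG)^{\tilde\fs}$ for every unramified $t$, then uses that $\eta$ induces a homeomorphism $\tilde\cG/\tilde\cB\to\cG/\cB$ (from Lemma~\ref{lem:etaProperties}(4)) to identify $\eta^*\big(\Ind_\cB^\cG(\chi\otimes t)\big)$ with $\Ind_{\tilde\cB}^{\tilde\cG}(\eta^*(\chi\otimes t))$.

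Two remarks. First, your cuspidal-support route is not self-contained: the assertion that ``the cuspidal support of any irreducible constituent of $\pi\circ\eta$ is $(\tilde\cT,\chi'\circ\eta)$'' is precisely what needs to be proved, and the clean way to justify it is via the commutation of $\eta^*$ with parabolic induction that you mention as the alternative. Second, the paper records one detail you omit: the passage from unnormalized to normalized parabolic induction involves the modulus character, and Lemma~\ref{lem:etaProperties}(3) (the bijection of root systems under $\eta$) is what ensures this twist is compatible on both sides, so that $\eta^* I_\cB^\cG = I_{\tilde\cB}^{\tilde\cG}\eta^*$ holds for the normalized functors as well.
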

\begin{proof}
It follows from the construction of the local Langlands correspondence for split tori
that the diagram
\[
\begin{array}{ccc}
\Irr (\cT) & \xrightarrow{\; \eta^* \;} & \Irr (\tilde \cT) \\
\downarrow & & \downarrow \\
\{ \text{L-parameters for } \cT \} & \xrightarrow{\; \check \eta \;} &
\{ \text{L-parameters for } \tilde \cT \}
\end{array}
\]
commutes. With Lemma \ref{lem:cBernstein} this proves the first claim.

By definition $\Rep (\cG)^\fs$ is the category of all smooth $\cG$-representations $\pi$
with the property that every irreducible subquotient of $\pi$ occurs as a subquotient
of $I_\cB^\cG (\chi \otimes t)$ for some unramified character $t \in X_\unr (\cT)$. So
for the second claim it suffices to show that $\eta^* \big( I_\cB^\cG (\chi \otimes t) \big)
\in \Rep (\tilde \cG)^{\tilde \fs}$ for all $t \in X_\unr (\cT)$. 

Clearly $\eta (\tilde{\cT_0}) \subset \cT_0$, so $\eta^* (t) \in \Irr (\tilde{\cT})$ is
unramified. Hence $\eta^* (\chi \otimes t)$ is an unramified twist of $\eta^* (\chi)$ and
$\eta^* (\chi \otimes t) \in [\tilde{\cT}, \chi \circ \eta ]_{\tilde \cT}$. By Lemma
\ref{lem:etaProperties}.(4) $\eta$ induces a homeomorphism $\tilde \cG / \tilde \cB \to
\cG / \cB$, which implies that 
\[
\eta^* \big( \mathrm{Ind}_\cB^\cG (\chi \otimes t) \big) 
= \mathrm{Ind}_{\tilde \cB}^{\tilde \cG} (\eta^* (\chi \otimes t)). 
\]
It follows from Lemma \ref{lem:etaProperties}.(3) that the difference between parabolic 
induction and normalized parabolic induction consists of twisting by essentially the same 
unramified character on both sides, so we get
\[
\eta^* \big( I_\cB^\cG (\chi \otimes t) \big) = I_{\tilde \cB}^{\tilde \cG} 
(\eta^* (\chi \otimes t)) \in \Rep (\tilde \cG)^{\tilde \fs} . \qedhere
\]
\end{proof}

As before, we define $H = \Cent_G (c_\fs)$ and $\tilde H = \Cent_{\tilde G}(c_{\tilde \fs})$. 
By Lemma \ref{lem:etaProperties}.(3) there is a bijection
\begin{multline}
R(H,T) = \{ \alpha \in R(G,T) \mid \alpha (c_\fs (\fo_F^\times)) = 1 \} \longleftrightarrow \\
\{ \tilde \alpha \in R(\tilde G,\tilde T) \mid \tilde \alpha (\eta \circ c_\fs (\fo_F^\times)) 
= 1 \} = R (\tilde H,\tilde T) . 
\end{multline}
Hence $\cW^{H^\circ} \cong \cW^{\tilde{H}^\circ}$. As $\check \eta (H) \subset \tilde H$,
we have a canonical inclusion
\begin{equation}\label{eq:N.33}
\cW^\eta : \cW^H \to \cW^{\tilde H} .
\end{equation}
However, in general it is not a bijection.
\begin{ex}
Consider the canonical homomorphism $\eta : \tilde \cG = \SL_3 (F) \to \PGL_3 (F) = \cG$. 
Let $\zeta$ be a character of order 3 of $\fo_F^\times$ and put $c_\fs = (\zeta,\zeta^2,1)$. 
Since $c_\fs$ is trivial on $Z(\GL_3 (F))$, it defines a Bernstein component for the standard 
maximal torus $\tilde \cT$ of $\PGL_3 (F)$. In this case $\cW^H = \{1\}$. Similarly 
$\check \eta \circ c_\fs$ defines a Bernstein component for the
standard maximal torus $\cT$ of $\SL_3 (F)$. For $(a,b,c) \in \cT (\fo_F)$:
\[
c_\fs (a,b,c) = \zeta (a) \zeta (b^2) = \zeta (b c^{-1}) = c_\fs (b,c,a) ,
\]
from which it follows easily that $\cW^{\tilde H} \cong \mathbb Z / 3 \mathbb Z$.
\end{ex}

Let $\tilde{H}' \subset \tilde H$ be the subgroup generated by $\tilde{H}^\circ$ and
$W^\fs = \cW^H$, via \eqref{eq:N.33}. Then $\cH (\tilde H)$ contains a subalgebra 
\[
\cH (\tilde{H}') \cong \cH (\tilde{H}^\circ) \rtimes \pi_0 (H) . 
\]
The advantage of this algebra over $\cH (\tilde H)$ is that $\eta$ induces
an algebra homomorphism
\begin{equation}\label{eq:N.39}
\begin{aligned}
& \phi_\eta : \cH (\tilde{H}') \to \cH (H) , \\
& \phi_\eta (T_w \theta_{\mu}) = T_w \theta_{\eta (\mu)}
\text{ for } w \in W^\fs , \mu \in X_* (\tilde \cT) = X^* (\tilde T).
\end{aligned}
\end{equation}

Recall that $\Phi$ is a Langlands parameter for $\cG$ as in \eqref{eqn:Phi}.
As remarked in Section \ref{sec:comppar}, all the geometric representations $\rho$
of $\pi_0 \big( \Cent_G (\Phi)$, which can be used to enhance $\Phi$ to a KLR parameter,
factor through $\pi_0 \big( \Cent_G (\Phi) / \Cent (G) \big)$.

\begin{lem}\label{lem:pi0eta}
$\check \eta$ induces injective group homomorphisms
\begin{align*}
& \pi_0 \big( \Cent_G (\Phi) / \Cent (G) \big) \to 
\pi_0 \big( \Cent_{\tilde G}(\check \eta \circ \Phi) / \Cent (\tilde G) \big) , \\
& \pi_0 \big( \Cent_G (\Phi) / \Cent (H) \big) \to 
\pi_0 \big( \Cent_{\tilde G}(\check \eta \circ \Phi) / \Cent (\tilde{H}') \big) .
\end{align*}
\end{lem}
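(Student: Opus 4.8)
The goal is to produce the two stated injections. The key geometric input is the dual homomorphism $\check\eta\colon G\to\tilde G$ coming from a morphism $\eta\colon\tilde\cG\to\cG$ with commutative kernel and cokernel. By Lemma \ref{lem:etaProperties} (applied to the dual situation), $\check\eta$ restricts to an isomorphism on derived groups and identifies the root data up to the central parts; in particular $\ker\check\eta\subset\Cent(G)$ and $\check\eta^{-1}(\Cent(\tilde G))=\Cent(G)$. Thus $\check\eta$ maps $\Phi$ to $\check\eta\circ\Phi$ in a way that sends the maximal torus $T$ into $\tilde T$, sends root subgroups isomorphically, and is ``essentially an isomorphism modulo the centre''. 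The plan is to exploit exactly this: pass to the quotients by the central subgroups and there $\check\eta$ becomes close to an isomorphism, so that centralizers and their component groups are comparable.

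\textbf{Step 1: reduce to adjoint/derived quotients.} First I would observe that, because $\ker\check\eta\subset\Cent(G)$ and $\check\eta^{-1}(\Cent(\tilde G))=\Cent(G)$, the map $\check\eta$ descends to a homomorphism of adjoint groups $G/\Cent(G)\to\tilde G/\Cent(\tilde G)$ which is an isomorphism onto its image, indeed an isomorphism onto the subgroup generated by the identity component and the image of $G/\Cent(G)$. More precisely, since $\check\eta$ is an isomorphism on derived groups, the induced map on adjoint groups is injective with image a (possibly non-open, but closed normal) subgroup; call its image $G'_{\ad}\subset\tilde G/\Cent(\tilde G)$. Then $\check\eta$ intertwines the conjugation action of $G$ on itself with that of $G'_{\ad}$ on $\tilde G$, so $\check\eta\big(\Cent_G(\Phi)\big)\subset\Cent_{\tilde G}(\check\eta\circ\Phi)$, and this inclusion is compatible with the reduction modulo $\Cent(G)$ resp. $\Cent(\tilde G)$.

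\textbf{Step 2: identify the first injection.} Modulo $\Cent(G)$ the map $\check\eta$ is injective, hence it gives an injection $\Cent_G(\Phi)/\Cent(G)\hookrightarrow\Cent_{\tilde G}(\check\eta\circ\Phi)/\Cent(\tilde G)$. To pass to component groups I would use that $\check\eta$ restricted to $G/\Cent(G)$ is an isomorphism onto a closed subgroup which contains the image of the \emph{connected} group $\big(G/\Cent(G)\big)^\circ$, so it carries identity components to identity components: $\check\eta\big((\Cent_G(\Phi)/\Cent(G))^\circ\big)$ lands in $(\Cent_{\tilde G}(\check\eta\circ\Phi)/\Cent(\tilde G))^\circ$. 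Combined with injectivity this yields a well-defined injective homomorphism on $\pi_0$'s, which is the first claim. The point here is that an injective morphism of algebraic groups which is an isomorphism onto its image always induces an injection on $\pi_0$ provided it maps the identity component into the identity component; that last condition holds because $\check\eta$ is an isomorphism on derived (hence on connected) groups up to central tori, which contribute nothing to $\pi_0$.

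\textbf{Step 3: the second injection.} For the second statement I would repeat the argument with $\Cent(H)$ in place of $\Cent(G)$ and $\Cent(\tilde H')$ in place of $\Cent(\tilde G)$, where $\tilde H'\subset\tilde H$ is the subgroup generated by $\tilde H^\circ$ and $W^\fs=\cW^H$ introduced just before \eqref{eq:N.39}. The key compatibility is that $\check\eta$ maps $H=\Cent_G(\im c^\fs)$ into $\tilde H=\Cent_{\tilde G}(\im c^{\tilde\fs})$, because $c^{\tilde\fs}=\check\eta\circ c^\fs$ by Lemma \ref{lem:etaBernstein}; moreover $\check\eta$ identifies $\cW^{H^\circ}$ with $\cW^{\tilde H^\circ}$ and identifies the $W^\fs$-part, so that $\check\eta(H)$ is generated modulo $\Cent(\tilde G)$ by $\tilde H^\circ$ and $W^\fs$, i.e.\ lands in $\tilde H'$. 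Hence $\check\eta^{-1}(\Cent(\tilde H'))\cap H=\Cent(H)$: one inclusion is clear, and the reverse uses that $\ker\check\eta\subset\Cent(G)$ together with the fact that $\Cent(\tilde H')$ meets $\check\eta(H)$ exactly in the image of $\Cent(H)$, which in turn follows because $\tilde H'$ and $H$ have identified root data and $\check\eta$ is an isomorphism on derived groups. Given this, the same ``injective morphism, identity component to identity component'' argument as in Step 2 gives the injection on $\pi_0$.

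\textbf{Main obstacle.} The routine part is the diagram-chasing; the genuine difficulty is Step 3, namely pinning down precisely that $\check\eta^{-1}(\Cent(\tilde H'))=\Cent(H)$ inside $H$, i.e.\ that no ``extra'' central directions of $\tilde H'$ are pulled back into $H$ beyond $\Cent(H)$ itself. This requires using the identification of root systems $R(H,T)\cong R(\tilde H,\tilde T)$ from the discussion around \eqref{eq:N.33}, the fact that $\check\eta$ is an isomorphism on the derived groups of $H$ and $\tilde H'$ (which follows since these are generated by the matching root subgroups), and the structure of $\tilde H'$ as $\tilde H^\circ$ extended by $\pi_0(H)$. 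Once one knows $\check\eta$ induces an isomorphism $H/\Cent(H)\to\tilde H'/\Cent(\tilde H')$ onto the subgroup generated by $\tilde H'^\circ/\Cent(\tilde H'^\circ)$ and the image of $H/\Cent(H)$, everything else is formal, exactly parallel to Steps 1--2.
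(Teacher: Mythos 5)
There is a genuine gap in your Step~2, and it is the crux of the lemma. You assert the general principle that ``an injective morphism of algebraic groups which is an isomorphism onto its image always induces an injection on $\pi_0$ provided it maps the identity component into the identity component.'' That principle is false: take the inclusion $O_2(\C)\hookrightarrow \GL_2(\C)$, which is injective and maps $SO_2(\C)$ into the connected group $\GL_2(\C)$, yet $\pi_0(O_2(\C))=\Z/2\Z\to\pi_0(\GL_2(\C))=1$ is not injective. What one actually needs is that the map send $A^\circ$ \emph{onto} $B^\circ$ (equivalently, that the map be an isomorphism on Lie algebras, i.e.\ an isogeny onto an open subgroup), for then the image of $A$ is a union of connected components of $B$ and $\pi_0$-injectivity follows. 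This is precisely the content the paper establishes and you do not.

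Concretely, the paper proves that the two injections \eqref{eq:N.30} and \eqref{eq:N.42} are \emph{isogenies}, by verifying that $\Lie(\check\eta)$ restricts to an isomorphism
\[
\Lie\bigl(\Cent_H(\Phi)/\Cent(G)\bigr)\xrightarrow{\;\sim\;}\Lie\bigl(\Cent_{\tilde H}(\check\eta\circ\Phi)/\Cent(\tilde G)\bigr) .
\]
This is not automatic from $\check\eta$ being an isomorphism on derived groups: the centralizer $\Cent_G(\Phi)$ is a much smaller group than $G$ (or even than $H$), and one has to track how its Lie algebra is cut out at each stage. The paper does this by first centralizing $\Phi(\SL_2(\C))$, using that $\Phi(\SL_2(\C))\subset H_{\der}$ so that $\Lie(\check\eta)$ identifies the two $\SL_2$-images, and then centralizing $\Phi(\varpi_F)$, identifying the surviving roots on both sides via the root bijection from Lemma \ref{lem:etaProperties}.(3). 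Your Step~3 gestures at matching root data for $H$ and $\tilde H'$, but that compares $H$ with $\tilde H'$, not $\Cent_G(\Phi)$ with $\Cent_{\tilde G}(\check\eta\circ\Phi)$, which is what the lemma is about. Without the Lie-algebra comparison at the level of $\Cent_G(\Phi)$, you cannot conclude that the image is open in the target, and the $\pi_0$-injectivity step fails. The reductions to adjoint groups in your Step~1 and the pullback computation of centres in Step~3 are sound, but the missing isogeny argument is precisely where the real work of the lemma lies.
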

\begin{proof}
Clearly $\check \eta$ restricts to a group homomorphism
\[
\Cent_H (\Phi) = \Cent_G (\Phi) \to \Cent_{\tilde G}(\check \eta \circ \Phi) = 
\Cent_{\tilde H}(\eta \circ \Phi) .
\]
By Lemma \ref{lem:etaProperties}.(4) it induces an injective homomorphism
\begin{equation}\label{eq:N.30}
\check \eta : \Cent_H (\Phi) / \Cent (G) \to  
\Cent_{\tilde H}(\check \eta \circ \Phi) / \Cent (\tilde G) .
\end{equation}
By \eqref{eq:N.33} Lie$(\eta)$ maps Lie$(Z(H))$ to Lie$(Z(\tilde{H}'))$, so by Lemma
\ref{lem:etaProperties}.(4) also
\begin{equation}\label{eq:N.42}
\check \eta : \Cent_H (\Phi) / \Cent (H) \to  
\Cent_{\tilde H}(\check \eta \circ \Phi) / \Cent (\tilde{H}')
\end{equation}
is injective. We will show that \eqref{eq:N.30} and \eqref{eq:N.42} are isogenies.

The properties of $\eta$ imply that
\begin{multline}\label{eq:N.31}
\Lie (\check \eta) : \Lie (\Cent_H (\Phi) / \Cent (G)) =
\Lie (H) / \Cent (\Lie (G)) \to \\ \Lie (\Cent_{\tilde H}(\check \eta \circ \Phi) / 
Z(\tilde G)) = \Lie (\Cent_{\tilde H}(\check \eta \circ \Phi)) / \Cent \Lie (\tilde G)  
\end{multline}
is an isomorphism of reductive Lie algebras. The group $\Phi (\SL_2 (\C))$ is contained
in $H_{\der}$, so by \eqref{eq:N.31} $\Lie (\check \eta)$ maps $\Lie \big( \Phi (\SL_2 (\C)) 
\big)$ bijectively to $\Lie \big( \check \eta \circ \Phi (\SL_2 (\C)) \big)$. Therefore
\begin{equation}\label{eq:N.32}
\Lie \big( \Cent_H \big( \Phi (\SL_2 (\C)) \big) / \Cent (G) \big) \to
\Lie \big( \Cent_{\tilde H} \big( \check \eta \circ \Phi (\SL_2 (\C)) \big) / 
\Cent (\tilde G) \big) 
\end{equation}
is another isomorphism of reductive Lie algebras. To reach the Lie algebras of
\eqref{eq:N.30}, it remains to restrict to elements that commute with $\Phi (\varpi_F)$,
respectively $\check \eta \circ \Phi (\varpi_F)$. For simplicity we assume that $T$ is
a maximal torus of $\Cent_H (\Phi)$, this can always be achieved by replacing $\Phi$
with a conjugate Langlands parameter. Then we have a bijection
\begin{multline*}
R \big( \Cent_H (\Phi),T \big ) = \big\{ \alpha \in R \big(\Cent_H \big(\Phi (\SL_2 (\C))
\big),T \big) \mid \alpha (\Phi (\varpi_F)) = 1 \big \} \longleftrightarrow \\
\big\{ \tilde \alpha \in R \big(\Cent_{\tilde H} \big(\check \eta \circ \Phi (\SL_2 (\C))
\big),\tilde T \big) \mid \tilde \alpha (\eta \circ \Phi (\varpi_F)) = 1 \big \} =
R \big( \Cent_{\tilde H} (\check \eta \circ \Phi ),\tilde T \big) .
\end{multline*}
With \eqref{eq:N.32} this this implies that 
\[
\Lie (\check \eta) : \Lie ( \Cent_H (\Phi) / \Cent (G) ) \to  
\Lie \big( \Cent_{\tilde H}(\check \eta \circ \Phi) / \Cent (\tilde G) \big) .
\]
is an isomorphism, so \eqref{eq:N.30} is indeed an isogeny. The same argument shows 
that \eqref{eq:N.42} is an isogeny.

As \eqref{eq:N.30} and \eqref{eq:N.42} are also injective, they embed the left hand side 
in the right hand side as a number of connected components. Hence the induced maps on 
the component groups are injective.
\end{proof}

Now we reinstate Condition \ref{con:char}.
Let $(\tilde J,\tilde \tau)$ be Roche's type for $\tilde \fs$. The explicit construction in 
\cite[\S 3]{Roc} shows that $\tilde J = \eta^{-1}(J)$ and $\tilde \tau = \rho \circ \eta$.
By \cite[Theorem 4.15]{Roc} the support of $\cH (\tilde \cG,\tilde \tau)$ is
$\tilde J (W^{\tilde \fs} \rtimes X_* (\tilde \cT) ) \tilde J$, where we embed 
$X_* (\tilde \cT)$ in $\cT$ via $\mu \mapsto \mu (\varpi_F)$. By Theorem \ref{thm:Roche}
\begin{equation}\label{eq:N.41}
\cH (\tilde \cG , \tilde \tau) \cong \cH (\tilde H) = 
\cH (\tilde{H}^\circ) \rtimes \pi_0 (\tilde H) .
\end{equation}
Let $\cH (\tilde \cG, \tilde \tau)'$ be the subalgebra of 
$\cH (\tilde \cG, \tilde \tau)$ isomorphic to $\cH (\tilde{H}')$ and with support 
$\tilde J (W^\fs \rtimes X_* (\tilde \cT) ) \tilde J$. We obtain an algebra 
homomorphism $\cH (\eta,\tau) : \cH (\tilde \cG,\tilde \tau)' \to \cH (\cG,\tau)$ with
\begin{equation}\label{eq:N.40}
\cH (\eta,\rho)(f) (g) = \left\{
\begin{array}{ll}
f (\eta^{-1}(g)) & g \in J (W^\fs \rtimes \eta (X_* (\cT)) ) J \\
0 & g \in G \setminus J (W^\fs \rtimes \eta (X_* (\cT)) ) J .
\end{array} \right.
\end{equation}
This is well-defined because any $f \in  \cH (\tilde \cG,\tilde \tau)'$ takes one
common value on the entire preimage $\eta^{-1}(g)$. Taking \eqref{eq:N.45} and 
Lemma \ref{lem:etaBernstein} into account, we consider the diagram
\begin{equation}\label{eq:N.34}
\xymatrix{
\Rep (\cG)^\fs \ar[d]^{\eta^*} & \Mod (\cH (\cG,\tau)) 
\ar[l]^{\mathrm{ind}_{\cH (\cG,\tau)}^{\cH (\cG)}} \ar[d]^{\cH (\eta,\tau)^*} & 
\Mod (\cH (H)) \ar[l]^{\sim} \ar[d]^{\phi_\eta^*} \\
\Rep (\tilde \cG)^{\tilde \fs} & \Mod (\cH (\tilde \cG,\tilde \tau)')
\ar[l]^{\mathrm{ind}_{\cH (\tilde \cG,\tilde \tau)'}^{\cH (\tilde \cG)}} &
\Mod (\cH (\tilde{H}')) \ar[l]^\sim 
} 
\end{equation}
The left upper horizontal arrow is invertible with as inverse the map that sends any
$\cG$-representation to its $\tau$-isotypical component, as in \eqref{eq:N.7}.

\begin{lem}\label{lem:etaDiagram}
The diagram \eqref{eq:N.34} commutes up to a natural isomorphism. 
\end{lem}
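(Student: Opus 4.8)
The plan is to establish commutativity of \eqref{eq:N.34} by checking the two squares separately and then composing. The right-hand square is about two algebra maps and the induced functors of restriction of scalars, so it reduces to an algebra-level identity; the left-hand square relates $\eta^*$ with the two instances of the equivalence $M \mapsto \cH(-) \otimes_{\cH(-,\tau)} M$ of \eqref{eq:N.45}. The key observation making both work is that $\eta$ realizes $\cH(\tilde\cG,\tilde\tau)'$ inside $\cH(\cG,\tau)$ and $\cH(\tilde{H}')$ inside $\cH(H)$ in a compatible way: the algebra homomorphisms $\cH(\eta,\tau)$ from \eqref{eq:N.40} and $\phi_\eta$ from \eqref{eq:N.39} are intertwined by the two copies of Roche's support-preserving isomorphism \eqref{eq:N.6}.

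First I would verify that the right square commutes on the nose (not just up to isomorphism). Roche's isomorphism $\cH(\tilde H)\cong\cH(\tilde\cG,\tilde\tau)$ of \eqref{eq:N.41} restricts to an isomorphism $\cH(\tilde{H}')\cong\cH(\tilde\cG,\tilde\tau)'$ since both subalgebras are singled out by the same support condition, $\tilde J(W^\fs\rtimes X_*(\tilde\cT))\tilde J$ on one side and the sublattice $W^\fs\rtimes X^*(\tilde T)$ on the other. Tracking the Bernstein basis elements $T_w\theta_\mu$ through \eqref{eq:N.39} and comparing with \eqref{eq:N.40}, both $\phi_\eta$ and $\cH(\eta,\tau)$ act by "$\mu\mapsto\eta(\mu)$ on the lattice part, identity on the $W^\fs$ part'', because Roche's isomorphism is support-preserving and sends $\theta_\mu$ (for $\mu\in X^*(\tilde T)$) to the characteristic function of the corresponding double coset. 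Hence the square of algebra maps commutes, and therefore so does the square of restriction functors $\cH(\eta,\tau)^*$ and $\phi_\eta^*$.

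Next I would handle the left square. For $M\in\Mod(\cH(\cG,\tau))$ one must compare $\eta^*\big(\cH(\cG)\otimes_{\cH(\cG,\tau)} M\big)$ with $\cH(\tilde\cG)\otimes_{\cH(\tilde\cG,\tilde\tau)'}\cH(\eta,\tau)^*(M)$. The natural map comes from the inclusion $\cH(\tilde\cG)\hookrightarrow\eta^*\cH(\cG)$ induced by pulling back functions along $\eta$ (using that $\eta$ has finite central kernel and commutative cokernel, so by Lemma \ref{lem:etaProperties}.(4) the double coset spaces match up nicely). One then checks that this map is an isomorphism after tensoring, which is the place where one uses the known structure of $\cH(\cG,\tau)$ as a module over $\cH(\tilde\cG,\tilde\tau)'$ and the fact, from \cite[Theorem 4.3]{BKtyp}, that $V\mapsto V^\tau$ is a quasi-inverse to induction. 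I would phrase the whole left square via the equivalent "$V\mapsto e_\tau V$'' formulation of \eqref{eq:N.7}, since on the level of isotypical subspaces the compatibility $(\eta^* V)^{\tilde\tau}=V^\tau$ is essentially immediate from $\tilde\tau=\tau\circ\eta$ and $\tilde J=\eta^{-1}(J)$.

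The main obstacle I expect is not the commutativity of either individual square but the bookkeeping needed to make the identifications $\cH(\tilde\cG,\tilde\tau)'\cong\cH(\tilde{H}')$ and $\cH(\eta,\tau)$ precisely match Roche's construction — in particular, confirming that Roche's type-theoretic isomorphism \eqref{eq:N.6} is genuinely $\eta$-equivariant when restricted to the primed subalgebras, rather than merely equivariant up to an unramified twist or a choice of representatives. Once that is pinned down, the naturality of the resulting isomorphism between the two composite functors follows because every arrow in \eqref{eq:N.34} is either an algebra homomorphism inducing restriction of scalars or one of Roche's/Bushnell--Kutzko's canonical equivalences, all of which come with canonical natural transformations; composing these gives the required natural isomorphism.
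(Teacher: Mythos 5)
Your treatment of the right-hand square of \eqref{eq:N.34} is fine and matches the paper, which dismisses it in one line ("commutes by definition"). The gap is in the left-hand square, which is where the actual content of the lemma lives.

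You assert that the comparison map arises from "the inclusion $\cH(\tilde\cG) \hookrightarrow \eta^*\cH(\cG)$ induced by pulling back functions along $\eta$", but this map does not exist in general. Pulling back a compactly supported locally constant function $f\in\cH(\cG)$ along $\eta$ gives $f\circ\eta$, whose support is $\eta^{-1}(\operatorname{supp} f)$; since $\ker\eta$ need not be compact (e.g.\ $\GL_n(F)\to\PGL_n(F)$ has kernel $F^\times$), $f\circ\eta$ need not be compactly supported, so this does not land in $\cH(\tilde\cG)$ — and in any case you wrote a map in the opposite direction, which would require pushing forward along $\eta$, i.e.\ integrating over fibers that may be non-compact. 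This is precisely the obstacle the paper flags explicitly ("one problem that we encounter is the lack of a reasonable map $\cH(\tilde\cG)\to\cH(\cG)$") and resolves by replacing $\cH(\cG)$ with Bernstein--Deligne's algebra $\cH^\vee(\cG)$ of essentially left-compact distributions, which is functorial in $\cG$ by Moy's theorem. The map $\cH^\vee(\eta)\otimes\mathrm{id}_V$ then exists canonically, and the paper still has to do real work to verify it is an isomorphism: surjectivity uses $\cG=\eta(\tilde\cG)\cT$ from Lemma \ref{lem:etaProperties}.(4), and injectivity uses that $\ker(\cH^\vee(\eta)\otimes\mathrm{id}_V)$ is a $\tilde\cG$-subrepresentation hence controlled by the $\tilde\fs$-type, followed by a linear-independence argument on the $\cH^\vee(\eta)(T_w)$ indexed by coset representatives of $\cW^\eta(\cW^H)$ in $\cW^{\tilde H}$. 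None of this appears in your proposal.

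Your alternative sketch — checking the left square in the direction $V\mapsto e_\tau V$, using $\tilde J=\eta^{-1}(J)$ and $\tilde\tau=\tau\circ\eta$ to get $(\eta^*V)^{\tilde\tau}=V^\tau$ as vector spaces — is a more promising idea, but as stated it is incomplete: you would still need to verify that the $\cH(\tilde\cG,\tilde\tau)'$-action on $(\eta^*V)^{\tilde\tau}$ (given by integration over $\tilde\cG$ against the Haar measure there) coincides with the $\cH(\eta,\tau)$-pullback of the $\cH(\cG,\tau)$-action on $V^\tau$ (integration over $\cG$). Matching those two integrals requires a comparison of Haar measures on double cosets in $\tilde\cG$ and $\cG$, which is nontrivial and not addressed. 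Until you either supply that measure-theoretic comparison or adopt something like the $\cH^\vee$ device, the argument has a genuine hole.

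Finally, a small but real issue with your right-square discussion: you claim Roche's isomorphism \eqref{eq:N.41} restricts to the primed subalgebras and sends $\theta_\mu$ to the characteristic function of the corresponding double coset. The first claim is correct and is how $\cH(\tilde\cG,\tilde\tau)'$ is defined; the second is not literally Roche's statement (his isomorphism is support-preserving, but the image of $\theta_\mu$ is a specific linear combination of double-coset functions, not a single one). The conclusion you need — that $\phi_\eta$ and $\cH(\eta,\tau)$ are intertwined — nevertheless holds, but it is by definition of $\cH(\eta,\tau)$ in \eqref{eq:N.40} together with the support-preserving property, not by the stronger claim about $\theta_\mu$.
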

\begin{proof}
The right hand square commutes by definition, so consider only the left hand square.
Take $V \in \Mod (\cH (\cG,\tau))$. We must compare 
\begin{equation}\label{eq:N.35}
\cH (\tilde \cG) \otimes_{\cH (\tilde \cG,\tilde \tau)'} \cH (\eta,\tau)^* (V) 
\quad \text{with} \quad \eta^* \big( \cH (\cG) \otimes_{\cH (\cG,\tau)} V \big) .
\end{equation}
One problem that we encounter is the lack of a reasonable map 
$\cH (\tilde \cG) \to \cH (\cG)$. To overcome this we make use of the algebra
$\cH^\vee (\cG)$ of essentially left-compact distributions on $\cG$, which was
introduced in \cite{BeDe}. It naturally contains both $\cH (\cG)$ and a copy
of $\cG$. From \cite[\S 1.2]{BeDe} it is known that $\Mod (\cH^\vee (\cG))$ is
naturally equivalent with $\Rep (\cG)$. Moreover $\cH (\cG)$ is a two-sided
ideal of $\cH^\vee (\cG)$, so the modules \eqref{eq:N.35} are canonically
isomorphic with
\[
\cH^\vee (\tilde \cG) \otimes_{\cH (\tilde \cG,\tilde \tau)'} \cH (\eta,\tau)^* (V) ,
\text{ respectively } \eta^* \big( \cH^\vee (\cG) \otimes_{\cH (\cG,\tau)} V \big) . 
\]
An advantage of $\cH^\vee (\cG)$ over $\cH (\cG)$ is that it is functorial in
$\cG$, see \cite[Theorem 3.1]{Moy}. The algebra homomorphism 
\[
\cH^\vee (\eta) : \cH^\vee (\tilde \cG) \to \cH^\vee (\cG) \quad \text{extends}
\quad \cH (\eta,\tau) : \cH (\tilde \cG,\tilde \tau)' \to \cH (\cG,\tau) .
\]
This yields a canonical map
\[
\cH^\vee (\eta) \otimes \mathrm{id}_V : \cH^\vee (\tilde \cG) 
\otimes_{\cH (\tilde \cG,\tilde \tau)'} \cH (\eta,\tau)^* (V)
\to \cH^\vee (\cG) \otimes_{\cH (\cG,\tau)} V .
\]
By Lemma \ref{lem:etaProperties}.(4) $\cG = \eta (\tilde \cG) \cT$, and the
action of $\cT$ on $V$ is already given $\cH (\cG,\tau)$ since $\cT \subset
J X_* (\cT) J$. Therefore $\cH^\vee (\eta) \otimes \mathrm{id}_V$ is surjective. 

It is also $\tilde \cG$-equivariant if we regard its target as 
$\eta^* \big( \cH^\vee (\cG) \otimes_{\cH (\cG,\tau)} V \big)$. In particular
its kernel is a $\tilde \cG$-subrepresentation of $\cH^\vee (\tilde \cG) 
\otimes_{\cH (\tilde \cG,\tilde \tau)'} \cH (\eta,\tau)^* (V)$. As 
$(\tilde J,\tilde \tau)$ is a $\tilde \fs$-type, 
$\ker (\cH^\vee (\eta) \otimes \mathrm{id}_V)$ is of the form 
$\cH^\vee (\tilde \cG) \otimes_{\cH (\tilde \cG,\tilde \tau)} N$ for some
$\cH (\tilde \cG,\tilde \tau)$-module 
\[
N \subset \mathrm{ind}_{\cH (\tilde \cG,\tilde \tau)'}^{\cH 
(\tilde \cG,\tilde \tau)} \cH (\eta,\tau)^* (V).
\]
Let $E$ be a set of representatives for $\cW^{\tilde H} / \cW^\eta (\cW^H)$.
Then any element of $N$ can be written as $n = \sum_{w \in E} T_w v_w$.
We have 
\[
0 = \cH^\vee (\eta) \otimes \mathrm{id}_V (n) = \sum\nolimits_{w \in E} 
\cH^\vee (\eta) (T_w) \otimes_{\cH (\cG,\tau)} v_w .
\]
The elements $\cH^\vee (\eta)(T_w)$ with $w \in E$ are linearly independent over 
$\cH (\cG.\tau)$, because the support of $\cH^\vee (\eta)(T_w)$ is
$\eta (\tilde J w \tilde J) = J w J$. It follows that $v_w = 0$ for all $w \in E$.

Hence $N = 0$ and $\cH^\vee (\eta) \otimes \mathrm{id}_V$ is injective. This 
shows that \eqref{eq:N.34} commutes up to the canonical isomorphism between the
$\tilde \cG$-representations \eqref{eq:N.35}.
\end{proof}

It is clear that the formula \eqref{eq:N.39} also defines an algebra homomorphism
$\phi_\eta : \cH_v (\tilde{H}') \to \cH_v (H)$ for any $v \in \C^\times$, and
that these maps lift to a homomorphism of $\C [\mathbf{q^{\pm 1/2}}]$-algebras
\[
\phi_\eta : \cH_{\mathbf{\sqrt q}} (\tilde{H}') \to \cH_{\mathbf{\sqrt q}} (H) .
\]
Denote the category of finite length semisimple modules of an algebra
$A$ by $\Mod_{\fss} (A)$.

\begin{lem}\label{lem:semisimplicity}
\begin{enumerate}
\item $\phi_\eta^* : \Mod (\cH_v (\tilde{H}')) \to \Mod (\cH_v (H))$ preserves
finite length and complete reducibility.
\item There is a commutative diagram
\[
\xymatrix{
\Mod_{\fss}(\cH_q (H)) \ar @{<->}[r] \ar[d]^{\phi_\eta^*} & 
\Mod_{\fss}(\cW^H \ltimes X^* (T)) \ar[d]^{\eta^*} \\
\Mod_{\fss}(\cH_q (\tilde{H}')) \ar @{<->}[r] & 
\Mod_{\fss}(\cW^H \ltimes X^* (\tilde T)) 
} 
\]
in which the horizontal arrows extend the left slanted map in Theorem \ref{thm:S.4}
additively.
\end{enumerate}
\end{lem}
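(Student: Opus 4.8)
For part (1), the plan is to first cut the problem down to the identity components. Since $\cH_v(H)=\cH_v(H^\circ)\rtimes\pi_0(H)$ and $\cH_v(\tilde H')=\cH_v(\tilde H^\circ)\rtimes\pi_0(H)$, and $\phi_\eta$ is the crossed product with $\mathrm{id}_{\pi_0(H)}$ of a $\pi_0(H)$-equivariant homomorphism $\phi_\eta^\circ\colon\cH_v(\tilde H^\circ)\to\cH_v(H^\circ)$, the usual averaging over $\pi_0(H)$ (whose order is invertible in $\C$) reduces both assertions to the corresponding ones for $\phi_\eta^\circ$. Finite length is then immediate: $\cH_v(H^\circ)$ and $\cH_v(\tilde H^\circ)$ are module-finite over their centres $\cO(T)^{\cW^{H^\circ}}$, resp.\ $\cO(\tilde T)^{\cW^{H^\circ}}$, which are finitely generated commutative $\C$-algebras, so every module of finite length is finite-dimensional over $\C$, and finite-dimensionality is preserved by restriction of scalars along any $\C$-algebra homomorphism.

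The substantive point, and the main obstacle, is complete reducibility for $\phi_\eta^\circ$. Here I would exploit the structure of the lattice map. Writing $Y=X^*(T)$ and $Y_1=\eta_*\bigl(X^*(\tilde T)\bigr)\subseteq Y$, Lemma \ref{lem:etaProperties}.(3) gives $\mathbb Z\, R(H^\circ,T)\subseteq Y_1$, and from $s_\alpha(\lambda)-\lambda\in\mathbb Z\alpha$ one gets that $\cW^{H^\circ}$ acts trivially on $Y/Y_1$. The homomorphism $\phi_\eta^\circ$ is the composite of the surjection $\cH_v(\tilde H^\circ)\twoheadrightarrow\cH_v^{(Y_1)}$ onto the subalgebra with $\theta$-lattice $Y_1$ (killing $\theta_\mu-1$ for $\mu$ in the saturated sublattice $\ker\eta_*$) with the inclusion $\cH_v^{(Y_1)}\hookrightarrow\cH_v(H^\circ)$. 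Inflation along the surjection trivially preserves semisimplicity, so everything comes down to the inclusion $\cH_v^{(Y_1)}\subseteq\cH_v(H^\circ)$, which I would factor further: let $P$ be the saturation of $Y_1$ in $Y$ and, using Maschke over $\Q$, choose a $\cW^{H^\circ}$-fixed complement $L$ to $P\otimes\Q$ in $Y\otimes\Q$; then $\cH_v^{(Y_1)}\subseteq\cH_v^{(P)}\subseteq\cH_v^{(P)}\otimes_\C\C[L]\subseteq\cH_v(H^\circ)$, where the first and last inclusions are finite extensions obtained by base change from torus torsors under finite groups of order invertible in $\C$ (hence Galois extensions of algebras), and the middle inclusion adjoins the central Laurent polynomial algebra $\C[L]$. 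Restriction along the middle step does not even destroy irreducibility—$\C[L]$ acts by scalars on any simple module, so submodules for the two algebras coincide—and restriction along a finite Galois extension of algebras whose categories of finite-length modules are Artinian preserves semisimplicity (Galois descent, via $J(B)A=J(A)$ and faithful flatness). Composing the steps gives part (1).

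For part (2), now that part (1) makes the diagram well defined, I would unwind the left slanted map of Theorem \ref{thm:S.4}: in the present notation it is, extended additively to $\Mod_{\fss}$, the bijection $\Irr(\cH_q(H))\leftrightarrow\Irr(\cJ(H))\leftrightarrow\Irr(\cW^H\ltimes X^*(T))$ of \eqref{eq:N.17}, and likewise for $\tilde H'$. The claim to check is that for a Kazhdan--Lusztig triple $(t_q,x,\rho_q)$ with KLR parameter $(\Phi,\rho)$ and $t=\Phi(\varpi_F)$, one has $\eta^*\bigl(\tau(t,x,\rho)\bigr)$ equal to the image of $\phi_\eta^*\bigl(\pi(t_q,x,\rho_q)\bigr)$ under the lower horizontal bijection. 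The key observation is that both vertical maps act by the same recipe: they leave the operators $T_w$ ($w\in\cW^H$) unchanged, restrict the $\cO(T)$-action along the surjection $\check\eta\colon T\to\tilde T$ dual to $\eta$, and decompose the enhancement $\rho$ along the injection of Lemma \ref{lem:pi0eta}; and this is compatible with the geometry, since $\cB^{t,x}_{H^\circ}$ depends only on $H^\circ$ and the central quotient map $\check\eta\colon H^\circ\to\tilde H^\circ$ identifies it with $\cB^{\check\eta(t),\check\eta(x)}_{\tilde H^\circ}$ compatibly with the Steinberg convolution action and, via \eqref{eq:thetaGB}, with passage from $\cO(T)$- to $\cO(\tilde T)$-weights. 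Thus $\phi_\eta^*\pi(t_q,x,\rho_q)$ is a direct sum of modules $\pi(\widetilde{t_q},\check\eta(x),\tilde\rho_q^{(j)})$ while $\eta^*\tau(t,x,\rho)=\bigoplus_j\tau(\check\eta(t),\check\eta(x),\tilde\rho^{(j)})$ with the same enhancements $\tilde\rho^{(j)}$; since the reparametrisation $\pi(t_q,x,\rho_q)\leftrightarrow\tau(t,x,\rho)$ only replaces the top-degree homology of $\cB^{t,x}$ by the canonical quotient of its total homology, as in \eqref{eq:N.17} and the proof of Theorem \ref{thm:ps}, it respects this decomposition, so the diagram commutes. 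The remaining bookkeeping—top homological degree versus total homology, and the ``unique irreducible quotient / minimal $a$-weight'' characterisations—I would handle exactly as in the proof of Theorem \ref{thm:ps} together with Lemma \ref{lem:S.7}.
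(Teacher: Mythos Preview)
Your argument for part (1) is correct in outline but takes a more elaborate route than the paper. You reduce to $H^\circ$, then factor the lattice inclusion $\eta_*(X^*(\tilde T))\subset X^*(T)$ through its saturation and a $\cW^{H^\circ}$-fixed complement, handling each step separately. The paper instead observes in one stroke that the image $\phi_\eta(\cH_v(\tilde H'))$ is sandwiched between $\cH_v(H'')$ and $\cH_v(H'')\rtimes\Gamma=\cH_v(H)$, where $H''$ corresponds to the lattice $\Z R(H^\circ,T)+X^*(T)^{\cW^H}$ and $\Gamma$ is the finite group of length-zero elements; Clifford theory for the outer extension then gives the result immediately. Your ``Galois descent via $J(B)A=J(A)$'' is not the right framework here---the extensions you write down are really crossed products by finite abelian groups (the $\theta_\mu$ normalise the smaller algebra since $s_\alpha(\mu)-\mu\in\Z\alpha$), so the semisimplicity preservation is again Clifford theory, not a radical argument. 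Both approaches work, but the paper's single sandwich is shorter.

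For part (2) there is a genuine gap. You assert that $\phi_\eta^*\pi(t_q,x,\rho_q)=\bigoplus_j\pi(\check\eta(t_q),\check\eta(x),\tilde\rho_q^{(j)})$ and $\eta^*\tau(t,x,\rho)=\bigoplus_j\tau(\check\eta(t),\check\eta(x),\tilde\rho^{(j)})$ with \emph{the same} enhancements $\tilde\rho^{(j)}$, and justify this by saying the bijection $\pi\leftrightarrow\tau$ ``only replaces top-degree homology by the canonical quotient of total homology''. But $\pi(t_q,x,\rho_q)$ is the unique irreducible \emph{quotient} of the standard module $M(t_q,x,\rho_q)$, and there is no a priori reason why $\phi_\eta^*$ applied to that quotient should coincide with the corresponding quotients of $\phi_\eta^*M(t_q,x,\rho_q)$. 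Your geometric identification works for the standard modules $H_*(\cB^{t_q,x})$, not for their irreducible quotients, and the reference to Theorem~\ref{thm:ps} and Lemma~\ref{lem:S.7} does not close this: those results establish the bijection $\pi\leftrightarrow\tau$, not its compatibility with $\phi_\eta^*$.

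The paper bridges $q$ and $1$ by a deformation argument that you are missing. One lifts to the standard $\cH_{\mathbf{\sqrt q}}(H)$-module $\tilde\pi(\Phi,\rho)\otimes_\C\C[\mathbf q^{\pm1/2}]$ and uses that for \emph{generic} $v\in\C^\times$ the specialisation is itself irreducible, equal to $\pi(t_v,x,\rho_v)$. At such $v$, part~(1) makes $\phi_\eta^*\pi(t_v,x,\rho_v)$ semisimple, so it equals $\bigoplus_i\pi(\check\eta(t_v),\check\eta(x),\tilde\rho_i)$ for a well-defined $\tilde\rho=\bigoplus_i\tilde\rho_i$; since the family depends algebraically on $v$ and the generic locus is dense, $\tilde\rho$ is independent of $v$. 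Specialising this identity at $v=q$ gives the decomposition of $\phi_\eta^*\pi(t_q,x,\rho_q)$, and at $v=1$ (taking top degree) that of $\eta^*\tau(t,x,\rho)$, with the \emph{same} $\tilde\rho$. This is what forces the diagram to commute, and it is precisely the step your argument does not supply.
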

\begin{proof}
(1) For these considerations the kernel of $\phi_\eta$ plays no role, we need only
look at the subalgebra $\phi_\eta (\cH_v (\tilde{H}'))$ of $\cH_v (H)$. It has a 
basis $\{ T_w \theta_\lambda \mid w \in \cW^H, \lambda \in \eta (X^* (\tilde T)) \}$.
Since $\eta$ has commutative cokernel, $\cW^H \ltimes \eta (X^* (\tilde T)) +
X^* (T)^{\cW^H}$ is of finite index in $\cW^H \ltimes X^* (T)$, and it contains
$\cW^H \ltimes \Z R (H^\circ,T) + X^* (T)^{\cW^H}$. The group extension
\begin{equation}\label{eq:N.43}
\cW^H \ltimes \Z R (H^\circ,T) + X^* (T)^{\cW^H} \subset \cW^H \ltimes X^* (T)
\end{equation}
is of the form $X \subset X \rtimes \Gamma$, where $\Gamma \subset \cW^H 
\ltimes X^* (T)$ is the finite group of elements that preserve the fundamental
alcove in the Coxeter complex of $\cW^H \ltimes \Z R (H^\circ,T) + X^* (T)^{\cW^H}$.
Hence the inclusion of affine Hecke algebras corresponding to \eqref{eq:N.43}
is of the form
\[
\cH_v (H'') \subset \cH_v (H'') \rtimes \Gamma = \cH_v (H) . 
\]
It is well-known from Clifford theory \cite[Appendix A]{RamRam} that the restriction
map $\Mod (\cH_v (H'') \rtimes \Gamma) \to \Mod (\cH_v (H''))$ preserves finite
length and complete reducibility. Since $\phi_\eta (\cH_v (\tilde{H}'))$ lies between
these two algebras, the same holds for 
\[
\Mod (\cH_v (H)) \to \Mod \big( \phi_\eta (\cH_v (\tilde{H}')) \big) .
\]
(2) Consider the standard $\cH_{\mathbf{\sqrt q}}(H)$-module 
$\tilde \pi (\Phi,\rho) \otimes_\C \C [\mathbf{q^{\pm 1/2}}]$, as in \eqref{eq:N.15}.
Its specialization at a generic $v \in \C^\times$ is irreducible and it equals 
$\pi (t_v,x,\rho_v)$. Recall from \ref{sec:repAHA} that this is a 
$\cH_v (H)$-submodule of $\H_* (\cB_H^{t_v,x},\C) \otimes \C [\pi_0 (H)]$. By Lemma
\ref{lem:etaProperties} $H$ and $\tilde H$ have isomorphic varieties of Borel 
subgroups, and the description of $\cH_v (H)$-action entails that
\[
\phi_\eta^* \big( \H_* (\cB_H^{t_v,x},\C) \otimes \C [\pi_0 (H)] \big) \cong
\H_* (\cB_{\tilde H}^{\check \eta (t_v),\check \eta (x)},\C) \otimes \C [\pi_0 (H)] .
\]
By part (1) $\phi_\eta^* (\pi (t_v,x,\rho_v))$ is completely reducible. Hence
there is a unique representation $\tilde \rho = \oplus_i \tilde{\rho}_i$ of 
\[
\pi_0 \big( Z_{\tilde{H}'} (\check \eta (t_v),\check \eta (x) ) \big) =
\pi_0 \big( Z_{\tilde{H}'} (\check \eta \circ \Phi) \big)
\]
such that
\[
\pi (\check \eta (t_v),\check \eta (x), \tilde \rho ) :=
\oplus_i \pi (\check \eta (t_v),\check \eta (x), \tilde{\rho}_i ) . 
\]
We need to identify $\tilde \rho$. Like in the proof of Lemma \ref{lem:Smod},
the family of modules $\phi_\eta^* (\pi (t_v,x,\rho_v))$ depends algebraically
on $v$, so $\tilde \rho$ does not depend on $v \in \C^\times$ as long as 
$v$ is generic. As the set of generic parameters is dense in $\C^\times$, we
must have 
\[
\phi_\eta^* \big( \tilde \pi (\Phi,\rho) \otimes_\C \C [\mathbf{q^{\pm 1/2}}] \big)
\cong \tilde \pi (\Phi,\tilde \rho) \otimes_\C \C [\mathbf{q^{\pm 1/2}}] =
\oplus_i \tilde \pi (\Phi,\tilde{\rho}_i)
\otimes_\C \C [\mathbf{q^{\pm 1/2}}] .
\]
In particular this holds for $v=q$ and for $v=1$. Looking at the unique
irreducible quotients (for $v=q$) or at the subrepresentations in top homological
degree (for $v=1$), we find
\begin{align*}
& \phi_\eta^* (\pi (t_q,x,\rho_q)) \cong 
\pi (\check \eta (t_q),\check \eta (x), \tilde \rho) = 
\oplus_i \pi (\check \eta (t_q),\check \eta (x), \tilde{\rho}_i) , \\
& \eta^* (\tau (t_1,x,\rho_1)) \cong 
\tau (\check \eta (t_1),\check \eta (x), \tilde \rho) = 
\oplus_i \tau (\check \eta (t_1),\check \eta (x), \tilde{\rho}_i) .
\end{align*}
Thus the diagram in statement commutes for irreducible representations and, 
being additive, for all semisimple modules of finite length.
\end{proof}

Now we can determine the effect of $\eta^*$ on irreducible representations.
Let $(\Phi,\rho)$ be a KLR parameter for $\cG$, with $(t_q,x,\rho_q)$ as in
Lemma \ref{lem:compareParameters}. Recall that $\rho$ is trivial on the image
of $Z(G)$ in $\pi_0 (Z_G (\Phi))$.

\begin{prop}\label{prop:functoriality}
Let $\eta : \tilde \cG \to \cG, \check \eta : G \to \tilde G$ and $(\Phi,\rho)$
be as above. Identify $\pi_0 \big( \Cent_G (\Phi) / \Cent (G) \big)$ with a subgroup of
$\pi_0 \big( \Cent_{\tilde G} (\check \eta \circ \Phi) / \Cent (\tilde G) \big)$ via 
Lemma \ref{lem:pi0eta}. Then
\[
\eta^* \big( \pi (\Phi,\rho) \big) = \pi \big( \check \eta \circ \Phi, 
\mathrm{ind}_{\pi_0 (\Cent_G (\Phi) / \Cent (G))}^{\pi_0 ( \Cent_{\tilde G} 
(\check \eta \circ \Phi) / \Cent (\tilde G) )} \rho \big) .
\]
Here we use the convention $\pi (\check \eta \circ \Phi, \oplus_i \rho_i) =
\oplus_i \pi (\check \eta \circ \Phi,\rho_i)$ for \\ $\rho_i \in \Irr \big( 
\pi_0 ( \Cent_{\tilde G} (\check \eta \circ \Phi) / \Cent (\tilde G) )$.
\end{prop}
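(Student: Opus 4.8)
The strategy is to reduce the statement about smooth $\cG$-representations to a statement about $\cH (H)$-modules via Roche's equivalence of categories, and then to a statement about $X^*(T) \rtimes \cW^H$-representations, where the induction behaviour of $\eta^*$ can be read off directly from the geometry of Borel varieties. The key tool is the commuting diagram \eqref{eq:N.34} of Lemma \ref{lem:etaDiagram}, which shows that $\eta^* \colon \Rep (\cG)^\fs \to \Rep (\tilde \cG)^{\tilde \fs}$ corresponds, under the equivalences $\Rep (\cG)^\fs \simeq \Mod (\cH (H))$ and $\Rep (\tilde \cG)^{\tilde \fs} \simeq \Mod (\cH (\tilde H))$, to the functor
\[
\Mod (\cH (H)) \xrightarrow{\ \phi_\eta^*\ } \Mod (\cH (\tilde{H}')) \xrightarrow{\ \mathrm{ind}\ } \Mod (\cH (\tilde H)),
\]
where $\phi_\eta$ is the algebra map of \eqref{eq:N.39} and the second functor is ordinary induction along $\cH (\tilde{H}') \subset \cH (\tilde H)$. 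By Lemma \ref{lem:semisimplicity}(1) both functors send finite-length modules to finite-length modules, so $\eta^* \big( \pi (\Phi,\rho) \big)$ is semisimple of finite length, as Borel's desideratum predicts.

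First I would record what $\phi_\eta^*$ does on the level of parameters. The module $\pi (\Phi,\rho) \in \Irr (\cG)^\fs$ corresponds to the $\cH (H)$-module $\pi (t_q,x,\rho_q)$ with $(t_q,x,\rho_q)$ as in Lemma \ref{lem:compareParameters}. By Lemma \ref{lem:semisimplicity}(2) — combined with the observation that, since $\eta$ induces isomorphisms on derived Lie algebras and hence on root systems (Lemma \ref{lem:etaProperties}(3)), $H$ and $\tilde H$ have isomorphic varieties of Borel subgroups and $\check\eta$ carries the $\cH (H)$-action to the $\cH (\tilde{H}')$-action — we get
\[
\phi_\eta^* \big( \pi (t_q,x,\rho_q) \big) \cong \pi \big( \check\eta (t_q), \check\eta (x), \tilde\rho_q \big)
\]
for a completely reducible $\tilde\rho_q$ of $\pi_0 \big( \Cent_{\tilde{H}'}(\check\eta \circ \Phi) \big) = \pi_0 \big( \Cent_{\tilde{H}'}(\check\eta (t_q),\check\eta (x)) \big)$. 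The point is that $\tilde\rho_q$ must be $\mathrm{Res}^{\pi_0(\cdots \tilde G \cdots)}_{\pi_0(\cdots \tilde{H}' \cdots)}$ of nothing new — it is simply the representation obtained from $\rho_q$ by transport along the isogeny $\Cent_H (\Phi)/\Cent(H) \to \Cent_{\tilde H}(\check\eta \circ \Phi)/\Cent(\tilde{H}')$ of Lemma \ref{lem:pi0eta}. One must check here that composing with $\check\eta$ really does not enlarge or change $\rho_q$: this is exactly the content of \eqref{eq:N.42} being an isogeny, so the component groups differ only by the image of $\Cent(\tilde{H}')$, which acts trivially on the relevant Borel homology.

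Second, I would handle the induction step $\mathrm{ind}_{\cH (\tilde{H}')}^{\cH (\tilde H)}$. Unravelling the constructions of Section \ref{sec:repAHA} and Theorem \ref{thm:S.5}, inducing from $\cH (\tilde{H}') = \cH (\tilde{H}^\circ) \rtimes \pi_0(H)$ to $\cH (\tilde H) = \cH (\tilde{H}^\circ) \rtimes \pi_0(\tilde H)$ along the inclusion $\pi_0(H) \hookrightarrow \pi_0(\tilde H)$ (via \eqref{eq:N.33}) corresponds, on Kazhdan--Lusztig parameters with fixed $(\check\eta(t_q),\check\eta(x))$, to inducing the enhancement from $\pi_0 \big( \Cent_{\tilde{H}'}(\check\eta \circ \Phi) \big)$ up to $\pi_0 \big( \Cent_{\tilde H}(\check\eta \circ \Phi) \big)$; since $\pi_0(\tilde H)/\pi_0(H)$ measures exactly the difference $\cW^{\tilde H}/\cW^\eta(\cW^H)$ and hence the relation between these two component groups modulo their centres, this matches $\mathrm{ind}_{\pi_0(\Cent_G(\Phi)/\Cent(G))}^{\pi_0(\Cent_{\tilde G}(\check\eta\circ\Phi)/\Cent(\tilde G))}$ once one identifies $\pi_0 \big( \Cent_{\tilde{H}'}(\check\eta\circ\Phi)/\Cent(\tilde{H}') \big) \cong \pi_0 \big( \Cent_G(\Phi)/\Cent(G) \big)$ (Lemma \ref{lem:pi0eta} again). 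Transitivity of induction then gives the stated formula. The main obstacle, in my view, is the bookkeeping of component groups in this last step: one must carefully track the three short exact sequences relating $\pi_0$ of centralizers in $G$, in $\tilde{H}'$, and in $\tilde G$ (all modulo appropriate central subgroups), verify that they are compatible with the maps induced by $\check\eta$ and by the inclusion $\tilde{H}' \subset \tilde H$, and confirm that "geometric" representations correspond under all of these — the homology arguments of Proposition \ref{prop:UP}(3) and the paragraph after Lemma \ref{lem:compareParameters} are what make the geometricity match up, but assembling them into a clean induction identity is the delicate part. Everything else is formal manipulation of the diagrams already established.
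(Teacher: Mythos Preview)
Your overall architecture is the same as the paper's: reduce $\eta^*$ on $\Rep(\cG)^\fs$ to $\mathrm{ind}_{\cH(\tilde{H}')}^{\cH(\tilde H)} \circ \phi_\eta^*$ via Lemma \ref{lem:etaDiagram}, and then track what happens to the enhancement $\rho$. But your first step contains a genuine error. You assert that $\phi_\eta^*\big(\pi(t_q,x,\rho_q)\big) \cong \pi\big(\check\eta(t_q),\check\eta(x),\tilde\rho_q\big)$ with $\tilde\rho_q$ ``simply the representation obtained from $\rho_q$ by transport along the isogeny \eqref{eq:N.42}'', and later you claim $\pi_0\big(\Cent_{\tilde{H}'}(\check\eta\circ\Phi)/\Cent(\tilde{H}')\big) \cong \pi_0\big(\Cent_G(\Phi)/\Cent(G)\big)$. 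Neither is correct. An isogeny need not induce an isomorphism on component groups, and Lemma \ref{lem:pi0eta} only gives an \emph{injection}. The point you are missing is that the stabiliser $\cW^H_{\check\eta(t)}$ of $\check\eta(t)$ in $\cW^H$ can be strictly larger than $\cW^H_t$: an element $w\in\cW^H$ with $w(t)=zt$ for some $z\in\ker\check\eta\subset\Cent(G)$ fixes $\check\eta(t)$ without fixing $t$. The paper computes explicitly (via Steinberg's description of centralisers) that
\[
\cW^H_{\check\eta(t)}/\cW^H_t \;\xrightarrow{\ \sim\ }\; \Z_{\tilde{H}'}(\check\eta(t),\check\eta(x)) \big/ \check\eta(\Z_H(t,x))\,\Z(\tilde G),
\]
so already at the $\phi_\eta^*$ stage the enhancement gets \emph{induced} up to the larger group $\pi_0\big(\Z_{\tilde{H}'}(\check\eta\circ\Phi)/\Z(\tilde G)\big)$, not merely transported.

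The paper carries this out by passing, via Lemma \ref{lem:semisimplicity}(2), to the $X^*(T)\rtimes\cW^H$-module $\tau(t,x,\rho)$, using its realisation as an induced module from $X^*(T)\rtimes\cW^H_t$ (cf.\ \eqref{eq:S.35}), and computing $\eta^*\tau(t,x,\rho)$ directly; the induction in stages $\cW^H_t \subset \cW^H_{\check\eta(t)} \subset \cW^H$ is what produces the first induction on $\rho$. Only after that does the second induction $\cH(\tilde{H}')\to\cH(\tilde H)$ contribute the remaining factor, and transitivity of induction gives the stated formula. Your proposal collapses these two inductions into one by wrongly assuming the first is trivial; the bookkeeping you flag as ``the delicate part'' is indeed where the argument lives, but it is not merely bookkeeping --- it requires the concrete identification of $\cW^H_{\check\eta(t)}/\cW^H_t$ above.
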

\noindent In particular this proves a precise version of 
Condition \ref{cond:Borel}.(5) for $\Irr (\cG,\cT)$.
\begin{proof}
By Lemma \ref{lem:etaDiagram} and \eqref{eq:N.46}
\begin{equation}\label{eq:N.36}
\begin{aligned}
\eta^* (\pi (\Phi,\rho)) & = \eta^* \big( \cH (\cG) \otimes_{\cH (\cG,\tau)}
\pi (t_q,x,\rho_q) \big) \\
& \cong \cH (\tilde \cG) \otimes_{\cH (\tilde \cG,\tilde \tau)'} \cH (\eta,\tau)^*
\pi (t_q,x,\rho_q) \\
& \cong \cH (\tilde \cG) \otimes_{\cH (\tilde \cG,\tilde \tau)} 
\mathrm{ind}_{\cH (\tilde \cG,\tilde \tau)'}^{\cH (\tilde \cG,\tilde \tau)} 
\cH (\eta,\tau)^* \pi (t_q,x,\rho_q) .
\end{aligned}
\end{equation}
By \eqref{eq:N.41} it suffices to analyse the module 
$\mathrm{ind}_{\cH (\tilde{H}')}^{\cH (\tilde H)} \phi_\eta^* \pi (t_q,x,\rho_q)$.
By Lemma \ref{lem:semisimplicity} the module $\phi_\eta^* \pi (t_q,x,\rho_q)$
has finite length and is semisimple, and its parameters can be read off from 
the $X^* (\tilde T) \rtimes \cW^H$-module $\eta^* (\tau (t_1,x,\rho_1))$.

For simplicity we drop the subscripts 1. Recall from \eqref{eq:S.35} that
$\tau (t,x,\rho)$ is isomorphic to 
\[
\mathrm{Ind}_{X^*(T) \rtimes \cW^H_t}^{X^*(T) \rtimes \cW^H} 
\big( \Hom_{\pi_0 (\Cent_H (t,x) / Z(G))} \big( \rho, H_* (\mathcal B^x_{M^\circ} 
,\C) \otimes \C [\Z_H (t,x) / \Z_{M^\circ}(x)] \big) \big) .
\]
By Lemma \ref{lem:etaProperties} $\check \eta$ induces an isomorphism
\[
M^\circ / \Z (G) = \Z_H (t)^\circ / \Z (G) \to \Z_{\tilde{H}'}(\check \eta 
(t))^\circ / \Z (\tilde{G}) =: \tilde{M}^\circ / \Z (\tilde{G}) .
\]
As $\check \eta$ also provides an isomorphism between the respective unipotent
varieties of $M$ and $\tilde{M} = \Z_{\tilde{H}}(\check \eta (t))$,
\[
\pi_0 (\Z_{M^\circ}(x) / \Z (H)) \cong 
\pi_0 (\Z_{\tilde{M}^\circ}(\check \eta (x)) / \Z (\tilde{H}')) .
\]
Steinberg's description of the centralizer of a semisimple element 
\cite[2.8]{Ste1965} and again Lemma \ref{lem:etaProperties} show that the 
inclusion $N_{\tilde{H}'} (\tilde T) \to \tilde{H}'$ induces a group isomorphism 
\[
\cW^H_{\check \eta (t)} / \cW^H_t \xrightarrow{\; \sim \;} \Z_{\tilde{H}'}
(\check \eta (t), \check \eta (x)) \big/ \check \eta (\Z_H (t,x)) \Z (\tilde{G}) .
\]
It follows that $\tau (t,x,\rho)$ is also isomorphic to
\begin{multline*}
\mathrm{Ind}_{X^*(T) \rtimes \cW^H_{\check \eta (t)}}^{X^*(T) \rtimes \cW^H} 
\big( \Hom_{\pi_0 (\Cent_H (t,x) / \Z (G))} \big( \rho, \\
H_{d(x)} (\mathcal B^x_{M^\circ} ,\C) \otimes \C [\Z_{\tilde{H}'} (\check \eta (t)
,\check \eta (x)) / \Z_{\tilde{M}^\circ}(\check \eta (x))] \big) \big) . 
\end{multline*}
By Lemma \ref{lem:pi0eta} the composition of this representation with $\eta$ is
\[
\begin{aligned}
\mathrm{Ind}_{X^*(\tilde T) \rtimes \cW^H_{\check \eta (t)}}^{X^*(\tilde T) \rtimes \cW^H} 
& \Big( \Hom_{\pi_0 (\Cent_H (t,x) / \Z (G))} \big( \rho, \\
& H_{d(x)} (\mathcal B^{\check \eta (x)}_{\tilde{M}^\circ} ,\C) \otimes \C [\Z_{\tilde{H}'} 
(\check \eta (t) ,\check \eta (x)) / \Z_{\tilde{M}^\circ}(\check \eta (x))] \big) \Big) \cong \\
\mathrm{Ind}_{X^*(\tilde T) \rtimes \cW^H_{\check \eta (t)}}^{X^*(\tilde T) \rtimes \cW^H}
& \Big( \Hom_{\pi_0 (\Z_{\tilde{H}'}(\check \eta (t),\check \eta (x)) / \Z (\tilde{G}))} \big( 
\mathrm{Ind}_{\pi_0 (\Cent_H (t,x) / \Z (G))}^{\pi_0 (\Z_{\tilde{H}'}(\check \eta (t),
\check \eta (x)) / \Z (\tilde{G}))} \rho , \\
& H_{d(x)} (\mathcal B^{\check \eta (x)}_{\tilde{M}^\circ} ,\C) \otimes \C [\Z_{\tilde{H}'} 
(\check \eta (t) ,\check \eta (x)) / \Z_{\tilde{M}^\circ}(\check \eta (x))] \big) \Big) \cong \\
\tau \big( \check \eta (t) ,\check \eta (x), & \mathrm{Ind}_{\pi_0 (\Cent_H (t,x) / \Z (G))}^{
\pi_0 (\Z_{\tilde{H}'}(\check \eta (t),\check \eta (x)) / \Z (\tilde{G}))} \rho  \big) .
\end{aligned}
\]
Now it follows from Lemma \ref{lem:semisimplicity} and Lemma \ref{lem:compareParameters} that 
\[
\phi_\eta^* \pi (t_q,x,\rho_q) \cong \pi \big( \check \eta (t_q) ,\check \eta (x), 
\mathrm{Ind}_{\pi_0 (\Cent_H (\Phi) / \Z (G))}^{\pi_0 (\Z_{\tilde{H}'}
(\check \eta \circ \Phi) / \Z (\tilde{G}))} \rho \big) .
\]
Next we induce this $\cH (\tilde{H}')$-module to $\cH (\tilde{H})$:
\[
\begin{aligned}
& \mathrm{ind}_{\cH (\tilde{H}')}^{\cH (\tilde H)} 
\pi \big( \check \eta (t_q) ,\check \eta (x), 
\mathrm{Ind}_{\pi_0 (\Cent_H (\Phi) / \Z (G))}^{\pi_0 (\Z_{\tilde{H}'}
(\check \eta \circ \Phi) / \Z (\tilde{G}))} \rho \big) = \\
& \mathrm{ind}_{\cH (\tilde{H}')}^{\cH (\tilde H)}  
\Hom_{\pi_0 (\Z_{\tilde{H}'}(\check \eta \circ \Phi))}
\big( \mathrm{Ind}_{\pi_0 (\Cent_H (\Phi) / \Z (G))}^{\pi_0 (\Z_{\tilde{H}'} 
(\check \eta \circ \Phi) / \Z (\tilde{G}))} \rho,
H_* (\cB_{\tilde H}^{\check \eta (x)},\C) \otimes \C [ \pi_0 (\tilde{H}') ] \big) \cong \\
& \Hom_{\pi_0 (\Z_{\tilde{H}'}(\check \eta \circ \Phi) / \Z (\tilde{G}))}
\big( \mathrm{Ind}_{\pi_0 (\Cent_H (\Phi) / \Z (G))}^{\pi_0 (\Z_{\tilde{H}'} 
(\check \eta \circ \Phi) / \Z (\tilde{G}))} \rho,
H_* (\cB_{\tilde H}^{\check \eta (x)},\C) \otimes \C [ \pi_0 (\tilde{H}) ] \big) \cong \\
& \Hom_{\pi_0 (\Z_{\tilde{H}}(\check \eta \circ \Phi) / \Z (\tilde{G}))}
\big( \mathrm{Ind}_{\pi_0 (\Cent_H (\Phi) / \Z (G))}^{\pi_0 (\Z_{\tilde{H}} 
(\check \eta \circ \Phi) / \Z (\tilde{G}))} \rho,
H_* (\cB_{\tilde H}^{\check \eta (x)},\C) \otimes \C [ \pi_0 (\tilde{H}) ] \big) \cong \\
& \pi \big( \check \eta (t_q) ,\check \eta (x), 
\mathrm{Ind}_{\pi_0 (\Cent_H (\Phi) / \Z (G))}^{\pi_0 (\Z_{\tilde{H}}
(\check \eta \circ \Phi) / \Z (\tilde{G}))} \rho \big) .
\end{aligned}
\]
From this we get to  the $\tilde \cG$-representation 
\[
\pi \big( \check \eta \circ \Phi, 
\mathrm{ind}_{\pi_0 (\Cent_G (\Phi) / \Cent (G))}^{\pi_0 ( \Cent_{\tilde G} 
(\check \eta \circ \Phi) / \Cent (\tilde G) )} \rho \big)
\]
via \eqref{eq:N.41} and \eqref{eq:N.36}.
\end{proof}

\section{The labelling by unipotent classes} 
\label{sec:unip}

In this and the next section we will show that the conjectures developed by the 
authors in \cite{ABP1,ABP2,ABPS1,ABPS2} hold for principal series representation
of split groups. These conjectures are expected to hold for every Bernstein
component of a quasi-split reductive group. For convenience we recall
the version that we will prove.

Let $T_{\cpt}^\fs$ denote the set of tempered representations in $T^\fs$.
Then $T^\fs_\cpt$ is a compact real torus, and it corresponds to the unique maximal 
compact subgroup of $T$ under \eqref{eq:N.22}.
The action of  $W^\fs$ on $T^\fs$ preserves $T_{\cpt}^\fs$, so we can form 
the compact orbifold $T_{\cpt}^\fs /\!/ W^\fs$.\\

\begin{conjecture} \label{conj:ABPS}
There exists a bijection 
\begin{equation}\label{eq:N.44}
\mu^\fs : T^{\mathfrak{s}}/\!/W^{\mathfrak{s}} \longrightarrow \Irr (\mathcal G)^\fs
\end{equation}
with the following properties:
\vspace{1mm}

(1) The bijection $\mu^\fs$ restricts to a bijection
\[
\mu^\fs \colon T_{\cpt}^\fs /\!/ W^\fs \longrightarrow 
\Irr (\mathcal G)^\fs_{\temp} ,
\]
where the subscript temp denotes the subset of tempered representations.
\vspace{1mm}

(2) The bijection $\mu^\fs$ is continuous, where $T^\fs /\!/W^\fs$ has the Zariski 
topology and $\Irr (\mathcal G)^\fs$ has the Jacobson topology. The composition 
\[
\pi^\fs \circ \mu^\fs : T^\fs /\!/W^\fs \to \Irr (\cG)^\fs \to T^\fs/ W^\fs
\]
of $\mu^\fs$ with the cuspidal support map $\pi^\fs$
is a finite morphism of affine algebraic varieties.
\vspace{1mm}

(3) There is an algebraic family
\[
\theta_z \colon T^\fs/\!/W^\fs \longrightarrow T^\fs/W^\fs
\]
of finite morphisms of algebraic varieties, with $z \in \Cset^{\times}$, such that
$\theta_1$ is the canonical projection and $\theta_{\sqrt{q}} = \pi^\fs \circ \mu^\fs$.
\vspace{1mm}

(4) Correcting cocharacters.  For each irreducible component  
$\bf{c}$ of the affine variety $T^\fs/\!/W^\fs$ there is a cocharacter 
(i.e. a homomorphism of algebraic groups)
\[
h_{\bc} \colon \Cset^{\times} \longrightarrow  T^\fs
\]
such that 
\begin{equation*}
\theta_{z} [w,t] = b(h_{\bc}(z) \cdot t) 
\end{equation*}
for all $[w, t]\in\bf{c}$, where 
$b\colon T^\fs \longrightarrow T^\fs/W^\fs$ is the quotient map.
\vspace{1mm}

(5) $L$-packets. This property is conditional on the existence of Langlands parameters 
for the block $\Irr(\cG)^\fs$.   In that case, the intersection of an $L$-packet with 
that block is well-defined. This property refers to the intersection of such an $L$-packet 
with the  given Bernstein block.   

Let $\{\bc_1, \ldots, \bc_r\}$ be the irreducible components of the affine 
variety $T^\fs /\!/W^\fs$. There exists a complex reductive group $H$ and, for every 
irreducible component $\bc$ of $T^\fs \q W^\fs$, a unipotent conjugacy class $\lambda (\bc)$ 
in $H$, such that for every two points $[w,t]$ and $[w',t]$ of $T^\fs/\!/W^\fs$:
\[
\mu^\fs [w,t] \;\mathrm{and}\; \mu^\fs [w',t'] \text{ are in the same L-packet}
\]
if and only
\begin{itemize}
\item $\theta_z [w,t] = \theta_z [w',t']$ for all $z \in \C^\times$; 
\item $\lambda(\bc) = \lambda(\bc')$,  where $[w,t] \in \bc$ and $[w',t'] \in \bc'$.
\end{itemize}
\end{conjecture}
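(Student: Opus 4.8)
\textbf{Plan of proof for Conjecture \ref{conj:ABPS}.}

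The strategy is to transport the entire structure through the bijections already established. First I would define $\mu^\fs$ as the composite of the left slanted map from Theorem \ref{thm:main} (going $(T^\fs/\!/W^\fs)_2 \to \Irr(\cG)^\fs$) with a choice of $c$-$\Irr$ system $\psi$, i.e.\ a non-canonical bijection $\epsilon_\psi : T^\fs/\!/W^\fs \to (T^\fs/\!/W^\fs)_2$ as in \eqref{eq:diagramsExtquots}. To make $\mu^\fs$ behave well, the $c$-$\Irr$ system should be chosen so that the trivial conjugacy class maps to the trivial representation of each isotropy group (as in the definition of $c$-$\Irr$ system), which ensures compatibility with the embeddings of the ordinary quotient. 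The cuspidal support map $\pi^\fs$ on $\Irr(\cG)^\fs$ corresponds, via Theorem \ref{thm:main}.(1) and the identification $T^\fs/W^\fs \cong c(H)_\ss$, to the map sending a KLR parameter $(\Phi,\rho)$ to $\Phi(\varpi_F,\mathrm{diag}(q^{1/2},q^{-1/2}))$; unwinding through \eqref{eqn:Phi} this is $t_q = t\,\Phi(Y_{q^{1/2}})$, which on the extended Springer/Kato side is visibly a finite morphism of affine varieties onto $T^\fs/W^\fs$. So $\pi^\fs\circ\mu^\fs$ is identified with the map $[w,t]\mapsto b(h_\bc(\sqrt q)\cdot t)$ for the appropriate cocharacter, which will give (2), (3) and (4) simultaneously once the cocharacters are pinned down.

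For (1), temperedness: by Theorem \ref{thm:main}.(2), $\pi(\Phi,\rho)$ is tempered iff $\Phi(\varpi_F)=t$ lies in a compact subgroup of $H$, equivalently (via \eqref{eq:N.22} and the identification of $T^\fs_\cpt$ with the maximal compact subgroup of $T$) iff the corresponding point of $T^\fs$ lies in $T^\fs_\cpt$. Since the extended quotient maps and $\epsilon_\psi$ preserve the $T^\fs$-coordinate up to the $W^\fs$-action, this shows $\mu^\fs$ restricts to a bijection $T^\fs_\cpt/\!/W^\fs \to \Irr(\cG)^\fs_\temp$. For (3) and (4), I would use the Kato-module description: on each irreducible component $\bc$ of $T^\fs/\!/W^\fs$ the unipotent class $\cO_x \subset M$ (equivalently the Springer parameter $x$) is locally constant, hence constant, so the second factor $\Phi(Y_{q^{1/2}}) = \gamma(Y_{q^{1/2}})$ depends only on $\bc$ and defines $h_\bc(z) := \gamma(Y_z) = \Phi(1,Y_z)$ via \eqref{eq:hPhi}; then $\theta_z[w,t] := b(h_\bc(z)\cdot t)$ gives the required algebraic family with $\theta_1$ the canonical projection and $\theta_{\sqrt q} = \pi^\fs\circ\mu^\fs$. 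The algebraicity and finiteness are then routine since everything is expressed in terms of the fixed cocharacters $h_\bc$ and the $W^\fs$-action on $T^\fs$.

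For (5), the $L$-packets: I would take $H = \Z_G(\im c^\fs)$ as already defined, and for a component $\bc$ (with Springer datum $x$) set $\lambda(\bc)$ to be the conjugacy class $\cO_x$ of $x$ in $H$ — this is the labelling by unipotent classes referred to in the section title. Two representations $\pi(\Phi,\rho)$ and $\pi(\Phi',\rho')$ lie in the same $L$-packet iff $\Phi$ and $\Phi'$ are $G$-conjugate (the $L$-packet $\Pi_\Phi(\cG)$ of \eqref{eq:N.24} being indexed by the $\rho$'s for fixed $\Phi$); I must show this is equivalent to the conjunction of ``$\theta_z[w,t]=\theta_z[w',t']$ for all $z$'' and ``$\lambda(\bc)=\lambda(\bc')$''. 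The point is that $\Phi$ is reconstructed from $(t,x)$ (see the remark after \eqref{eqn:Phi}, using the refined Jacobson--Morozov theorem of Kazhdan--Lusztig cited in the proof of Proposition \ref{prop:S.1}): conjugacy of $(t,x)$ is equivalent to conjugacy of $\Phi$. Now $\theta_z[w,t]=b(h_\bc(z)\cdot t)$ for all $z$ determines $t$ up to $W^\fs$ (set $z=1$) and also $h_\bc = h_{\bc'}$, i.e.\ the cocharacters associated to $x$ and $x'$ agree; combined with $\cO_x = \cO_{x'}$ this forces $(t,x)$ and $(t',x')$ to be $H$-conjugate, hence $\Phi \sim \Phi'$. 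Conversely $\Phi\sim\Phi'$ gives all of this. \textbf{The main obstacle} I anticipate is the last implication: knowing the associated cocharacters of $x$ and $x'$ coincide and the unipotent classes coincide is not quite the same as knowing $x,x'$ are conjugate by an element also conjugating $t$ to $t'$ — one must carefully use that $h_\bc$ lies in the derived group of a Levi in which $x$ is distinguished (as in the definition of cocharacter associated to $x$, \cite[Lem.~5.3]{J}), so that the pair $(t,x)$ is rigidified enough; threading this through the possibly-disconnected group $H$, using Lemma \ref{lem:centrals} and the splitting of Lemma \ref{lem:pinning}, is where the real care is needed.
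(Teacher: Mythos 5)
Your overall architecture mirrors the paper's Sections 15--16: define $\mu^\fs$ through the extended quotient and the Kazhdan--Lusztig--Reeder parametrisation, read off the cocharacter $h_{\bc}(z)=\Phi(1,Y_z)$ from the $\SL_2$-part of the Langlands parameter, and reduce the $L$-packet statement to conjugacy of $\Phi$. You also correctly identify where the real difficulty sits. However there are two genuine gaps.

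First, the construction of $\mu^\fs$. You propose to compose the arrow $(T^\fs/\!/W^\fs)_2 \to \Irr(\cG)^\fs$ from Theorem \ref{thm:main} with \emph{some} $c$-$\Irr$ system $\epsilon_\psi$, normalised only by sending trivial conjugacy classes to trivial characters. That gives a bijection, but it does not by itself give continuity in the Zariski/Jacobson topologies, nor the finite-morphism property in part (2), nor the locally-constant behaviour of the unipotent label along connected components used in parts (3)--(4). In the paper (Proposition \ref{prop:mus}) $\mu^\fs$ is manufactured via the $(P,\delta,t)$-parametrisation of $\Irr(\cH(H))$ and the step-by-step construction of \cite[Section 5.4]{Sol}, precisely so that the map is continuous, respects the $t$-coordinate (the diagram in Proposition \ref{prop:mus}.(1)), and sends each connected component of $T^\fs/\!/W^\fs$ to a family with fixed $(P,\delta)$ --- hence fixed residual coset and fixed unipotent class. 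This is not automatic from an arbitrary $c$-$\Irr$ system; one must build $\psi$ inductively on the dimension of the strata, matching fibre cardinalities.

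Second, and more substantively, the rigidity step in part (5). You correctly flag that ``same unipotent class plus $\theta_z$-equal for all $z$'' must imply $G$-conjugacy of the parameters, and that this is not formal. But the tool you reach for --- Jantzen's \cite[Lem.~5.3]{J}, that two cocharacters associated to a fixed $x$ are conjugate under $\Z_{M^0}(x)^\circ$ --- addresses a different question (dependence of $\gamma_x$ on choices), not the question of why an element $v\in W^\fs$ fixing the point $h_x(q^{1/2})$ and the class $\cO_x$ must glue $(t,x)$ and $(t',x')$ together. The paper's actual resolution is Proposition \ref{prop:residual}: the Langlands parameter is encoded by a pair $(t_q,L)$ with $L$ a residual coset, $cc(\delta)=h_x(q^{1/2})$ lies in the obtuse negative cone $\sum_{\alpha\in P}c_\alpha\alpha^\vee$ with $c_\alpha<0$ (an analytic fact about discrete series central characters of affine Hecke algebras), and any $v\in W^\fs$ fixing such a point must stabilise the root subsystem $R_P$ and hence permute the residual cosets attached to $x$ among themselves. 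That is what forces $v(L)=L'$ and so $\Phi\sim\Phi'$. Without this obtuse-cone input (or some substitute rigidifying $h_x(q^{1/2})$ inside $W^\fs$), the implication you need does not follow; knowing $v$ conjugates the cocharacter $h_x$ to $h_{x'}$ and that $\cO_x=\cO_{x'}$ still leaves room for $v$ to move the data in a way not realisable by $H$-conjugacy of the full pair $(t,x)$.
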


Let $\fs \in \mathfrak B (\cG,\cT)$ and construct $c^\fs$ as in Section \ref{sec:Lp}. 
By Theorem \ref{thm:main} we can parametrize $\Irr (\cG)^\fs$ with
$H$-conjugacy classes of KLR parameters $(\Phi,\rho)$ such that 
$\Phi \big|_{\fo_F^\times} = c^\fs$. We note that \{KLR parameters$\}^\fs$ 
is naturally labelled by the unipotent classes in $H$: 
\begin{equation}
\{\text{KLR parameters} \}^{\fs,[x]} := \big\{ (\Phi,\rho) \mid 
\Phi \big( 1, \matje{1}{1}{0}{1} \big) \text{ is conjugate to } x \big\} .
\end{equation}
In this way we can associate to any of the parameters in Theorem \ref{thm:main}
a unique unipotent class in $H$:
\begin{equation} \label{eq:labelling}
\Irr (\cG )^\fs = \bigcup\nolimits_{[x]} \Irr (\cG )^{\fs,[x]} ,\quad
(T^\fs /\!/ W^\fs )_2 = \bigcup\nolimits_{[x]} (T^\fs /\!/ W^\fs )_2^{[x]} .
\end{equation}
Via the affine Springer correspondence from Section \ref{sec:affSpringer} the set of
equivalence classes in \{KLR parameters$\}^\fs$ is naturally in bijection with
$(T^\fs /\!/ W^\fs )_2$. Recall from Section \ref{sec:extquot} that 
\[
\widetilde{T^\fs} = \{ (w,t) \in W^\fs \times T^\fs \mid w t = t \}
\]
and $T^\fs /\!/ W^\fs = \widetilde{T^\fs} / W^\fs$. In view of Section \ref{sec:extquot}
$(T^\fs /\!/ W^\fs )_2$ is also in bijection with $T^\fs /\!/ W^\fs$, albeit not naturally. 

Only in special cases a canonical bijection $T^\fs /\!/ W^\fs \to (T^\fs /\!/ W^\fs)_2$ 
is available. For example when $G = \GL_n (\C)$, the finite group $\cW^H_t$  
is a product of symmetric groups: in this case there is a canonical  $c$-$\Irr$ system,    
according to the classical theory of Young tableaux. 

We have to construct a map \eqref{eq:N.44} with the desired properties, but
in general it can already be hard to define any suitable map from \{KLR parameters$\}^\fs$
to $T^\fs /\!/ W^\fs$, because it is difficult to compare the parameters $\rho$ 
for different $\Phi$'s. It goes better the other way round and with $\Irr (\cG )^\fs$
as target. In this way will transfer the labellings \eqref{eq:labelling} to
$T^\fs /\!/ W^\fs$.

From \cite[Section 8]{Roc} we know that $\Irr (\cG )^\fs$ is in 
bijection with the equivalence classes of irreducible representations of the
extended affine Hecke algebra $\mathcal H (H)$. To relate it to $T^\fs /\!/ W^\fs$
the parametrization of Kazhdan, Lusztig and Reeder is unsuitable, it is more
convenient to use the methods developed in \cite{Opd,Sol}. 

Let us fix some notations. Choose a Borel subgroup $B \subset H^\circ$ 
containing $T$. Let $P$ be a set of roots of $(H^\circ,T)$ which are simple with
respect to $B$ and let $R_P$ be the root system that they span. They determine
a parabolic subgroup $W_P \subset W^\fs$ and a subtorus
\[
T^P := \{ t \in T \mid \alpha (t) = 1 \; \forall \alpha \in R_P \}^\circ .
\]
In \cite[Theorem 3.3.2]{Sol} $\Irr (\mathcal H (H))$ is mapped, in a natural
finite-to-one way, to equivalence classes of triples $(P,\delta,t)$. Here
$P$ is as above, $t \in T^P$ and $\delta$ is a discrete series
representation of a parabolic subalgebra $\mathcal H_P$ of $\mathcal H (H)$.
This $t$ is the same as in the affine Springer parameters.

The pair $(P,\delta)$ gives rise to a \emph{residual coset} $L$ in the sense of
\cite[Appendix A]{Opd}. Explicitly, it is the translation of $T^P$ by an
element $cc(\delta) \in T$ that represents the central character of $\delta$ 
(a $W_P$-orbit in a subtorus $T_P \subset T$). 
The element $cc(\delta) t \in L$ corresponds to $t_q$. 
The collection of residual cosets is stable under the action of $W^\fs$.

\begin{prop}\label{prop:residual}
\begin{enumerate}
\item There is a natural bijection between
\begin{itemize}
\item $H$-conjugacy classes of Langlands parameters $\Phi$ with 
$\Phi \big|_{\mathbf I_F} = c^\fs$;
\item $W^\fs$-conjugacy classes of pairs $(t_q,L)$ with $L$ a residual coset for
$\mathcal H (H)$ and $t_q \in L$.
\end{itemize}
\item Let $Y^P$ be the union of the residual cosets of $T^P$. The stabilizer of $Y^P$
in $W^\fs$ is the stabilizer of $R_P$.
\item Suppose that $w \in W^\fs$ fixes $cc(\delta)$. Then $w$ stabilizes $R_P$.
\end{enumerate}
\end{prop}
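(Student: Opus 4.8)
The strategy is to build the bijection in part (1) out of the correspondences already established, then deduce (2) and (3) from the geometry of residual cosets. First I would recall that by Lemma \ref{lem:compareParameters} (applied inside $H^\circ$, extended over $\pi_0(H)$ as in the proof of Theorem \ref{thm:S.5}) an $H$-conjugacy class of a Langlands parameter $\Phi$ with $\Phi|_{\mathbf I_F} = c^\fs$ is the same datum as an $H$-conjugacy class of a pair $(t_q, x)$ with $t_q \in H$ semisimple, $x \in H$ unipotent and $t_q x t_q^{-1} = x^q$, where $t_q = t\,\Phi(Y_{q^{1/2}})$ and $t = \Phi(\varpi_F)$. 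On the other side, the parametrization of $\Irr(\cH(H))$ via triples $(P,\delta,t)$ from \cite[Theorem 3.3.2]{Sol}, together with the Kazhdan--Lusztig--Reeder parametrization from Theorem \ref{thm:S.5}, already matches $(P,\delta,t)$ with a Kazhdan--Lusztig triple $(t_q,x,\rho_q)$; the element $cc(\delta)\,t \in L$ equals $t_q$ and the residual coset $L$ is determined by the pair $(t_q,x)$ alone (it is the coset through $t_q$ along $T^P$, where $P$ records the simple roots $\alpha$ with $\alpha(t)=1$ on which $\delta$ lives). So I would define the map $(\Phi) \mapsto (t_q, L)$ by $t_q = \Phi(\varpi_F\,, Y_{q^{1/2}})$ and $L$ the residual coset attached to the pair $(t_q,x)$, and check that conjugating $\Phi$ by $H$ corresponds to moving $(t_q,L)$ by $W^\fs \cong N_H(T)/T$ (Lemma \ref{lem:centrals}). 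Surjectivity is the content of the refined Jacobson--Morozov theorem of Kazhdan--Lusztig \cite[\S 2.3]{KL}: given $t_q \in L$ there exists a unipotent $x$ with $t_q x t_q^{-1}=x^q$ whose associated residual coset is exactly $L$, and $\Phi$ is recovered from $(t_q,x)$; injectivity follows because $(t_q,x)$ is recovered from $\Phi$ and $(t_q,L)$ determines the $H$-class of $(t_q,x)$ by the same Jacobson--Morozov uniqueness.

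For part (2), I would use that $Y^P$ is the union of the residual cosets contained in the coset-direction $T^P$, and that by the classification of residual cosets \cite[Appendix A]{Opd} a residual coset $L$ along $T^P$ is cut out by the parabolic root subsystem $R_P$: its linear part is $T^P$ and its ``center'' $cc(\delta)$ is a residual point for the semisimple quotient algebra $\cH_P$ attached to $R_P$. An element $w \in W^\fs$ sends residual cosets to residual cosets and preserves dimensions, so $w(Y^P)$ is again a union of residual cosets whose common linear part is $w(T^P)$. Since $T^P$ is precisely the connected identity component of $\{t : \alpha(t)=1\ \forall \alpha \in R_P\}$ and distinct parabolic subsystems give distinct such subtori, $w(Y^P) = Y^P$ forces $w(T^P)=T^P$, hence $w(R_P)=R_P$ because $R_P$ is recovered from $T^P$ as the set of roots of $(H^\circ,T)$ vanishing on $T^P$. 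Conversely if $w$ stabilizes $R_P$ it stabilizes the set of residual data supported on $R_P$, hence $Y^P$. Part (3) is then almost immediate: if $w$ fixes $cc(\delta)$, then $w$ fixes the residual coset $L = cc(\delta)\,T^P$ only after I also know $w(T^P)=T^P$; but $cc(\delta)$ is a residual point for $R_P$, i.e. it lies on a zero-dimensional residual coset of the sub-root-system, and the collection of root hyperplanes through $cc(\delta)$ (those $\alpha$ with $\alpha(cc(\delta))$ equal to the appropriate power of $q$, cut down to the residual-point condition) spans $R_P$ up to finite index; a $w$ fixing $cc(\delta)$ permutes exactly these hyperplanes, hence stabilizes their span, which is $\mathbb{Q}R_P$, and being an element of the Weyl group that preserves $\mathbb{Q}R_P \cap X^*(T)$ it preserves $R_P$ itself. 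Then part (2) applies.

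I expect the main obstacle to be part (3), specifically the claim that fixing the residual point $cc(\delta)$ already forces $w$ to stabilize the spanning root subsystem $R_P$. The subtlety is that $w$ a priori only fixes the point, not the individual roots, and one must use the defining property of a residual point --- that the ``vanishing'' root hyperplanes through it outnumber (with the correct multiplicity count from \cite[Appendix A]{Opd}) the ``pole'' hyperplanes and together have rank equal to $\dim T_P$ --- to conclude that these hyperplanes determine $R_P$ and are permuted by $w$. Getting this bookkeeping exactly right, and in particular checking that no root outside $R_P$ gets dragged into the span, is where the argument needs care; everything else is transport of structure through the already-proven equivalences of Sections \ref{sec:comppar}, \ref{sec:affSpringer} and \ref{sec:Roche} together with standard facts about residual cosets.
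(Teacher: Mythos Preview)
Your treatment of parts (1) and (2) is fine and matches the paper's approach closely. For (1) the paper is slightly more direct: rather than routing through the full parametrizations of $\Irr(\cH(H))$ by triples $(P,\delta,t)$ and by KLR triples, it simply quotes Opdam's explicit constructions \cite[Propositions~B.3 and~B.4]{Opd} for $\cH(H^\circ)$ and then divides out by $\pi_0(H)$. In one direction, given $\Phi$ one sets $T^P = \Z_T(\Phi(\SL_2(\C)))^\circ$, $t_q = \Phi(\varpi_F,Y_{q^{1/2}})$, $L = T^P t_q$; in the other, Opdam attaches to $L$ a unipotent $x$ with $\ell x \ell^{-1}=x^q$ for all $\ell\in L$, and $(t_q,x)$ determines $\Phi$. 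For (2) the paper says exactly what you say: stabilizing $Y^P$ forces stabilizing $T^P$, hence $R_P$.

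Part (3) is where your route diverges, and the gap you flag is real and not easily repaired along your lines. The problem with the hyperplane-count argument is that a $w$ fixing $cc(\delta)$ permutes \emph{all} roots $\alpha\in R(H^\circ,T)$ with $\alpha(cc(\delta))$ equal to a given value, not just those in $R_P$; there is no a~priori reason roots outside $R_P$ avoid the special values at $cc(\delta)$, so the set $w$ visibly permutes may span more than $\mathbb{Q}R_P$, and you cannot read off $w(R_P)=R_P$. The paper bypasses this entirely by passing to absolute values. Since $cc(\delta)$ is the central character of a discrete series representation of $\cH_P$, some $W_P$-conjugate $r$ of $cc(\delta)$ lies in the obtuse negative cone of $T_P$: concretely $\log|r|=\sum_{\alpha\in P}c_\alpha\alpha^\vee$ with every $c_\alpha<0$ (this is the Casselman-type criterion, cf.\ \cite[Lemma~2.21 and \S4.1]{Opd}). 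A $W_P$-conjugate $w'$ of $w$ then fixes $r$ and hence $\log|r|$; but a Weyl-group element fixing a vector written as a strictly negative combination of a subset $P$ of simple coroots must permute that subset of coroots. Thus $w'$, and hence $w$, stabilizes $R_P$. The point is that $\log|r|$ lives entirely inside $\mathrm{span}_{\mathbb R}\{\alpha^\vee:\alpha\in P\}$ with full support there, which gives the rigidity your hyperplane argument lacks.
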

\begin{proof}
(1) Opdam constructed the maps in both directions for $\mathcal H (H^\circ)$. 
To go from $\mathcal H (H^\circ)$ to $\mathcal H (H)$ is easy,
one just has to divide out the action of $\pi_0 (H)$ on both sides. 

Let us describe the maps for $H^\circ$ more explicitly. 
To a residual coset $L$ Opdam \cite[Proposition B.3]{Opd} 
associates a unipotent element $x \in B$ such that $l x l^{-1} = x^q$ for all $l \in L$. 
Then $\Phi$ is a Langlands parameter with data $t_q,x$.

For the opposite direction we may assume that 
\[
\Phi \big( \mathbf W_F ,\matje{z}{0}{0}{z^{-1}} \big) \subset T \quad \forall z \in \C^\times
\]
and that $x = \Phi \big( 1,\matje{1}{1}{0}{1} \big) \in B$. Then 
\[
T^P := \Z_T \big(\Phi (\mathbf I_F \times \SL_2 (\C)) \big)^\circ = 
\Z_T \big( \Phi (\SL_2 (\C)) \big)^\circ
\]
is a maximal torus of $\Z_{H^\circ}\big(\Phi (\mathbf I_F \times \SL_2 (\C)) \big)$. We take
\[
t_q = \Phi \Big(\varpi_F , \matje{q^{1/2}}{0}{0}{q^{-1/2}} \Big) 
\quad \text{and} \quad L = T^P t_q .
\]
This is essentially \cite[Proposition B.4]{Opd}, but our way to write it down avoids
Opdam's assumption that $H^\circ$ is simply connected.\\
(2) Clear, because any element that stabilizes $Y^P$ also stabilizes $T^P$.\\
(3) Since $cc(\delta)$ represents the central character of a discrete series representation
of $\cH_P$, at least one element (say $r$) in its $W_P$-orbit lies in the obtuse 
negative cone in the subtorus $T_P \subset T$ (see Lemma 2.21 and Section 4.1 of \cite{Opd}).
That is, $\log |r|$ can be written as $\sum_{\alpha \in P} c_\alpha \alpha^\vee$ with
$c_\alpha < 0$. Some $W_P$-conjugate $w'$ of $w \in W^\fs$ fixes $r$ and hence $\log |r|$.
But an element of $W^\fs$ can only fix $\log |r|$ if it stabilizes the
collection of coroots $\{ \alpha^\vee \mid \alpha \in P\}$. It follows that $w'$ and $w$ 
stabilize $R_P$.
\end{proof}

In particular the above natural bijection associates to any $W^\fs$-conjugacy class of
residual cosets $L$ a unique unipotent class $[x]$ in $H$. 
Conversely a unipotent class $[x]$ can correspond to more than one $W^\fs$-conjugacy 
class of residual cosets, at most the number of connected components of $\Z_T (x)$
if $x \in B$.

Let $\mathfrak U^\fs \subset B$ be a set of representatives for the unipotent classes in $H$. 
For every $x \in \mathfrak U^\fs$ we choose an algebraic homomorphism
\[
\gamma_x \colon \SL_2 (\C) \to H \quad \text{with} \quad \gamma_x \matje{1}{1}{0}{1} = x
\quad \text{and} \quad \gamma_x \matje{z}{0}{0}{z^{-1}} \in T .
\]
As noted in \eqref{eqn:gamt} all choices for $\gamma_x$ are conjugate under 
$\Z_{M^\circ}(x)$. For each $x \in \mathfrak U^\fs$ we define
\[
\{ \text{KLR parameters} \}^{\fs,x} = \{ (\Phi,\rho) \mid \Phi 
\big|_{\mathbf I_F \times \SL_2 (\C)} = c^\fs \times \gamma_x , \Phi (\varpi_F) \in T \} .
\]
We endow this set with the topology such that a subset is open if and only if its
image in $T$ under $(\Phi,\rho) \mapsto \Phi (\varpi)$ is open. In this way
\begin{equation} \label{eq:UsParam}
\bigsqcup\nolimits_{x \in \mathfrak U^\fs} \{ \text{KLR parameters} \}^{\fs,x}
\end{equation}
becomes a nonseparated scheme with maximal separated quotient \\
$\sqcup_{x \in \mathfrak U^\fs} \Z_T (\im \gamma_x)$. Notice that \eqref{eq:UsParam} 
contains representatives for all equivalence classes in \{KLR parameters$\}^\fs$.

\begin{prop}\label{prop:mus}
Assume that Condition \ref{con:char} holds.
There exists a continuous bijection
\[
\mu^\fs : T^\fs /\!/ W^\fs \to \Irr (\cG )^\fs 
\]
such that:
\begin{enumerate}
\item The diagram
\[
\xymatrix{
T^\fs /\!/ W^\fs \ar[r]^{\mu^\fs} \ar[d]^{\rho^\fs} & \Irr (\cG )^\fs \ar[r] &
\{\text{$\KLR$ parameters} \}^\fs / H \ar[d] \\
T^\fs / W^\fs & & c (H)_{\ss} \ar[ll] } 
\]
commutes. Here the unnamed horizontal maps are those from Theorem \ref{thm:main} 
and the right vertical arrow sends $(\Phi,\rho)$ to the $H$-conjugacy class of 
$\Phi (\varpi_F)$.
\item For every unipotent element $x \in H$ the preimage
\[
(T^\fs /\!/ W^\fs )^{[x]} := (\mu^\fs )^{-1} \big( \Irr (\cG )^{\fs,[x]} \big)
\]
is a union of connected components of $T^\fs /\!/ W^\fs$.
\item Let $\epsilon$ be the map that makes the diagram
\[
\xymatrix{ 
T^\fs /\!/ W^\fs \ar[rr]^\epsilon \ar[dr]^{\mu^\fs} & & 
(T^\fs /\!/ W^\fs )_2 \ar[dl] \\
& \Irr (\cG )^\fs & }
\]
commute. Then $\epsilon$ comes from a $c$-$\Irr$ system.
\item $T^\fs /\!/ W^\fs \xrightarrow{\; \mu^\fs \;} \Irr (\cG )^\fs \to 
\{\text{$\KLR$ parameters} \}^\fs / H$ lifts to a map 
\[
\widetilde{\mu^\fs} : \widetilde{T^\fs} \to 
\bigsqcup\nolimits_{x \in \mathfrak U^\fs} \{ \text{KLR parameters} \}^{\fs,x}
\]
such that the restriction of $\widetilde{\mu^\fs}$ to any connected component of 
$\widetilde{T^\fs}$ is algebraic and an isomorphism onto its image.
\end{enumerate}
\end{prop}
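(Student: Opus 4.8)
The plan is to assemble $\mu^\fs$ out of the ingredients already built: the Roche equivalence (Theorem \ref{thm:Roche}), the parametrization of $\Irr(\cH(H))$ via residual cosets and the maps $(P,\delta,t)$ from \cite{Opd,Sol}, and the affine Springer correspondence of Section \ref{sec:affSpringer} together with Proposition \ref{prop:residual}. The key point is that $(T^\fs /\!/ W^\fs)_2$ and $T^\fs /\!/ W^\fs$ have the same fibres over $T^\fs / W^\fs$, and that the first of these is already canonically in bijection with $\Irr(\cG)^\fs$ by Theorem \ref{thm:main}. So the real task is to produce a compatible family of bijections $c(\cW^H_t) \to \Irr(\cW^H_t)$ (a $c$-$\Irr$ system in the sense of Section \ref{sec:extquot}), and to do so in a way that respects the continuity and algebraicity requirements.

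First I would fix, for each unipotent class $[x]$ in $H$ (with representative $x \in \mathfrak U^\fs \subset B$ and chosen $\gamma_x$), the space $\{\text{KLR parameters}\}^{\fs,x}$ with its nonseparated-scheme structure, and note that by Theorem \ref{thm:main} and the labelling \eqref{eq:labelling} the map $(\Phi,\rho) \mapsto \pi(\Phi,\rho)$ identifies its $H$-equivalence classes with $\Irr(\cG)^{\fs,[x]}$. Next, using Proposition \ref{prop:residual} I would match the $W^\fs$-conjugacy classes of pairs $(t_q,L)$ (residual coset plus point) with the $\Phi$'s; the discrete-series datum $\delta$ over the parabolic $\cH_P$ attached to $L$ plays the role of $\rho$. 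Since over a fixed residual coset $L$ the discrete series of $\cH_P$ (by \cite{Opd}, finitely many) are in bijection with the geometric representations of the relevant component group (this is exactly the content of the affine Springer correspondence, Theorem \ref{thm:S.3}, combined with Proposition \ref{prop:S.1}), I can choose, for each $L$ and hence for each connected component of $\widetilde{T^\fs}$, an algebraic isomorphism onto the corresponding component of $\bigsqcup_x \{\text{KLR parameters}\}^{\fs,x}$. This defines $\widetilde{\mu^\fs}$ componentwise; by construction it is algebraic on each component and an isomorphism onto its image, giving (4). Descending along $W^\fs$ gives $\mu^\fs$, and since the components of $\widetilde{T^\fs}$ permuted by $W^\fs$ correspond to conjugate residual cosets (hence conjugate $[x]$), the preimage of $\Irr(\cG)^{\fs,[x]}$ is a union of components, giving (2). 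The commuting diagram in (1) is then automatic: on the right, $(\Phi,\rho)\mapsto$ $H$-class of $\Phi(\varpi_F)$ equals, via Theorem \ref{thm:main}(1) up to the $\SL_2$-factor in $G_\der$, the cuspidal support map $\rho^\fs$, and $cc(\delta)t = t_q$ maps to $t$ under $\rho^\fs$ by Proposition \ref{prop:residual}. For (3), I would check that the induced fibrewise bijections $\psi_t : c(\cW^H_t) \to \Irr(\cW^H_t)$ satisfy $\psi_t([1]) = \triv_t$ (the trivial coset $L = T$, $x = 1$, $\delta = \triv$ corresponds to the trivial representation, via \eqref{eq:trivWaff} and Proposition \ref{prop:spherical}) and the $\cW^H$-equivariance $\psi_{wt}([wgw^{-1}]) = \psi_t([g])\circ\mathrm{Ad}_w^{-1}$, which holds because the whole construction was carried out $\cW^H$-equivariantly; this is exactly the definition of a $c$-$\Irr$ system, so $\epsilon = \epsilon_\psi$.

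Continuity of $\mu^\fs$ is the final bookkeeping item: the Jacobson topology on $\Irr(\cG)^\fs$ transports, via Roche's equivalence and \cite[Theorem B]{BHK} or the analysis in \cite{Sol}, to a topology on $\Irr(\cH(H))$ under which the map sending a representation to its $(P,\delta,t)$-datum is continuous on each stratum, and the $t$-coordinate varies continuously; matching this against the Zariski topology on $T^\fs/\!/W^\fs$ (whose components are the $\Z_T(\im\gamma_x)/\!/\cW^H_{\cdot}$) gives continuity.

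\textbf{Main obstacle.} The hard part will be step establishing the algebraic, fibrewise-isomorphism property in (4) uniformly: one must show that the assignment $t \mapsto \delta$-over-$\cH_P$, or equivalently $t_q \mapsto$ the geometric $\rho_q \in \Irr(\pi_0(\Z_H(t_q,x)))$ completing the Kazhdan--Lusztig triple, can be made to vary algebraically as $t$ ranges over a component $\Z_T(\im\gamma_x)$, and that it is injective there. The subtlety is that the component group $\pi_0(\Z_H(t_q,x))$ jumps at special $t$, so the chosen "constant" $\rho$ must be the one that specializes correctly; this is precisely where the nonseparated-scheme structure on $\{\text{KLR parameters}\}^{\fs,x}$ and the freeness statements in Proposition \ref{prop:UP}.2 are needed, together with the fact (used already in the proof of Theorem \ref{thm:ps} and in Lemma \ref{lem:Smod}) that standard modules form algebraic families. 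Handling the interaction of this with the residual-coset bookkeeping of \cite{Opd} — in particular making sure that distinct residual cosets through the same $\gamma_x$ land in distinct components — is the delicate point; everything else is a matter of transporting structure through already-established equivalences.
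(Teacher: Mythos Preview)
Your approach uses the same ingredients as the paper (the Opdam--Solleveld parametrization of $\Irr(\cH(H))$ by triples $(P,\delta,t)$, residual cosets via Proposition~\ref{prop:residual}, and the affine Springer correspondence), and the overall architecture is right. The paper, however, does not construct $\mu^\fs$ from scratch: it invokes \cite[Section~5.4]{Sol}, where the bijection is produced by an \emph{induction on the dimension} of the connected components of $\widetilde{T^\fs}$, using at each step that the fibres of $T^\fs/\!/W^\fs \to T^\fs/W^\fs$ and of $\Irr(\cH(H)) \to T^\fs/W^\fs$ over any fixed $t$ have the same cardinality. The paper then only has to adjust the output so that $\mu^\fs(1,T^\fs)$ is the spherical family (for the $c$-$\Irr$ condition) and read off properties (1)--(4) a posteriori.

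Your plan instead tries to match each connected component of $\widetilde{T^\fs}$ \emph{directly} with a pair $(L,\delta)$ and then descend. The gap is the sentence ``I can choose, for each $L$ and hence for each connected component of $\widetilde{T^\fs}$, an algebraic isomorphism\ldots'': there is no a priori bijection between $W^\fs$-orbits of pairs $(P,\delta)$ and $W^\fs$-conjugacy classes of $w$ with a prescribed fixed-point component --- the numbers of such objects in a given dimension need not even agree, only the fibre counts over each $t$ do. Establishing a coherent matching is precisely what the dimension induction in \cite[5.4]{Sol} accomplishes, and it is also what resolves your ``main obstacle'' (the jumping of $\pi_0(\Z_H(t_q,x))$ at special $t$): one builds the low-dimensional families first and then assigns the remaining fibre elements to the higher-dimensional components. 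So your diagnosis of the difficulty is correct, but the cure is this induction rather than the algebraic-family/standard-module argument alone. A secondary point: the discrete-series label $\delta$ on $\cH_P$ does not itself play the role of $\rho$; the map $(P,\delta,t)\mapsto \Irr(\cH(H))$ is finite-to-one with fibres governed by an $R$-group, and it is the combined data $(\delta,\text{$R$-group rep})$ that corresponds to the enhancement $\rho$.
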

\begin{proof} 
Proposition \ref{prop:residual}.1 yields a natural finite-to-one map from $\Irr 
(\mathcal H (H))$ to $W^\fs$-conjugacy classes $(t_q,L)$, namely 
\begin{equation}\label{eq:pitqL}
\pi (\Phi,\rho) \mapsto \Phi \mapsto (t_q,L) .
\end{equation} 
In \cite[Theorem 3.3.2]{Sol} this map was obtained in 
a different way, which shows better how the representations depend on the parameters 
$t,t_q,L$. That was used in \cite[Section 5.4]{Sol} to find a continuous bijection
\begin{equation}\label{eq:mus}
\mu^\fs : T^\fs /\!/ W^\fs \to \Irr (\mathcal H (H)) \cong \Irr (\cG )^\fs .
\end{equation}
The strategy is essentially a step-by-step creation of a $c$-$\Irr$ system for
$T^\fs /\!/ W^\fs$ and $\mathcal H (H)$. One starts with the components of 
$\tilde T_\fs$ of dimension 0, and proceeds by induction on the dimension.
In step $d$ one considers $d$-dimensional families of representations, and uses that  
in \eqref{eq:mus} the fibers over a fixed $t \in T^\fs / W^\fs$ have the same
cardinality on both sides.

Only the condition on the unit element
and the trivial representation is not considered in \cite{Sol}. Fortunately
there is some freedom left, which we can exploit to ensure that $\mu^\fs (1,T^\fs)$ 
is the family of spherical $\mathcal H (H)$-representations, see Section 
\ref{sec:spherical}. 
This is possible because every principal series representation of $\mathcal H (H)$ has 
a unique irreducible spherical subquotient, so choosing that for $\mu^\fs (1,t)$ does
not interfere with the rest of the construction. Via Theorem \ref{thm:main} we can 
transfer this $c$-$\Irr$ system to a $c$-$\Irr$ system for the two extended 
quotients of $T^\fs$ by $W^\fs$, so property (3) holds.

By construction the triple $(P,\delta,t)$ associated to the representation 
$\mu^\fs (w,t)$ has the same $t \in T$, modulo $W^\fs$. 
That is, property (1) is fulfilled.

Furthermore $\mu^\fs$ sends every connected component of $T^\fs /\!/ W^\fs$ to a family
of representations with common parameters $(P,\delta)$. Hence these representation are
associated to a common residual coset $L$ and to a common unipotent class $[x]$,
which verifies property (2).

Let $\mathbf c$ be a connected component of $(T^\fs /\!/ W^\fs )^{[x]}$, with $x \in
\mathfrak U^\fs$. The proof of Proposition \ref{prop:residual}.1 shows that 
$\mathbf c$ can be realized in $\Z_T (\im \gamma_x)$. In other words, we can find a 
suitable $w = w (\mathbf c) \in W^\fs$ with $T^w \subset \Z_T (\im \gamma_x)$. Then there 
is a connected component $T^w_{\mathbf c}$ of $T^w$ such that
\[
\begin{split}
& \mathbf c := \big( w,T^w_{\mathbf c} / \Z (w,\mathbf c) \big) ,\\ 
& \text{where } \Z (w,\mathbf c) = \{ g \in \Z_{W^\fs}(w) \mid g \cdot T^w_{\mathbf c} = 
T^w_{\mathbf c} \} . 
\end{split}
\]
In this notation $\widetilde{\mathbf c} := (w,T^w_{\mathbf c})$ is a connected component of
$\widetilde{T^\fs}$. We want to define $\widetilde{\mu^\fs} : \widetilde{\mathbf c} \to
\{\text{KLR parameters} \}^{\fs,x}$. For every $[w,t] \in \mathbf c ,\;
\mu^\fs [w,t]$ determines an equivalence class in \{KLR parameters$\}^{\fs,x}$. Any
$(\Phi,\rho)$ in this equivalence class satisfies $\Phi (\varpi_F) \in \Z_T (\im \gamma_x)
\cap W^\fs t$. Hence there are only finitely many possibilities for $\Phi (\varpi_F)$,
say $t_1,\ldots,t_k$. For every such $t_i$ there is a unique 
$(\Phi_i,\rho_i) \in \{\text{KLR parameters} \}^{\fs,x}$ with $\Phi_i (\varpi_F) = t_i$ and 
$\pi (\Phi_i,\rho_i) \cong \mu^\fs [w,t]$. Every element of $\widetilde{\mathbf c}$ lying 
over $[w,t] \in \mathbf c$ is of the form $(w,t_i)$. (Not every $t_i$ is eligible though, 
for some we would have to modify $w$.) We put
\[
\widetilde{\mu^\fs} (w,t_i) := (\Phi_i ,\rho_i) .
\]
Since the $\rho$'s are irrelevant for the topology on \{KLR parameters$\}^{\fs,x} ,\;
\widetilde{\mu^\fs} (\widetilde{\mathbf c})$ is homeomorphic to $T^w_{\mathbf c}$ and 
$\widetilde{\mu^\fs} : \widetilde{\mathbf c} \to \widetilde{\mu^\fs} (\widetilde{\mathbf c})$
is an isomorphism of affine varieties. This settles the final property (4).
\end{proof}

\section{Correcting cocharacters and L-packets}
\label{sec:cochar}

In this section we construct the correcting cocharacters on the extended quotient
$T^\fs /\!/ W^\fs$. With part (5) of Conjecture \ref{conj:ABPS}, these show how
to determine when two elements of $T^\fs /\!/ W^\fs$ give rise to $\cG$-representations
in the same L-packets.

Every KLR parameter $(\Phi,\rho)$ naturally determines a cocharacter $h_\Phi$ 
and elements $\theta (\Phi,\rho,z) \in T^\fs$ by
\begin{equation}\label{eq:defhPhi}
\begin{aligned}
& h_\Phi (z) = \Phi \big( 1,\matje{z}{0}{0}{z^{-1}} \big) ,\\
& \theta (\Phi,\rho,z) = \Phi \big( \varpi_F, \matje{z}{0}{0}{z^{-1}} \big) =
\Phi (\varpi_F) h_\Phi (z) .
\end{aligned}
\end{equation}
Although these formulas obviously do not depend on $\rho$, it turns out to be convenient to 
include it in the notation anyway.
However, in this way we would end up with infinitely many correcting cocharacters, most
of them with range outside $T$. To reduce to finitely many cocharacters with values in $T$, 
we will restrict to KLR parameters associated to $x \in \mathfrak U^\fs$.

Recall that Proposition \ref{prop:mus}.2 determines a labelling of the connected 
components of $T^\fs /\!/ W^\fs$ by unipotent classes in $H$. This enables us to define the 
correcting cocharacters: for a connected component $\mathbf c$ of $T^\fs /\!/ W^\fs$ with 
label (represented by) $x \in \mathfrak U^\fs$ we take the cocharacter 
\begin{equation}\label{eq:defhx}
h_{\mathbf c} = h_x : \C^\times \to T ,\quad h_x (z) = \gamma_x \matje{z}{0}{0}{z^{-1}} .
\end{equation}
Let $\widetilde{\mathbf c}$ be a connected component of $\widetilde{T^\fs}$ that projects
onto $\mathbf c$. We define
\begin{equation} \label{eq:defThetaz}
\begin{aligned}
& \widetilde{\theta_z} : \widetilde{\mathbf c} \to T^\fs ,& & (w,t) \mapsto 
\theta \big( \widetilde{\mu^\fs}(w,t),z \big) , \\
& \theta_z : \mathbf c \to T^\fs / W^\fs ,& & [w,t] \mapsto W^\fs \widetilde{\theta_z}(w,t) . 
\end{aligned}
\end{equation}
For $\widetilde{\mathbf c}$ as in the proof of Proposition \ref{prop:mus}, which we can
always achieve by adjusting by element of $W^\fs$, our construction results in
\[
\widetilde{\theta_z} (w,t) = t \, h_{\mathbf c}(z) .
\]

\begin{lem}\label{lem:Lpackets}
Let $[w,t],[w',t'] \in T^\fs /\!/W^\fs$. Then $\mu^\fs [w,t]$ and $\mu^\fs [w',t']$ are
in the same L-packet if and only if
\begin{itemize}
\item $[w,t]$ and $[w',t']$ are labelled by the same unipotent class in $H$;
\item $\theta_z [w,t] = \theta_z [w',t']$ for all $z \in \C^\times$.
\end{itemize}
\end{lem}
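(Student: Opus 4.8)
The plan is to unwind both conditions in terms of Langlands parameters and then reduce to two elementary facts: that $H$-conjugate elements of $T$ are $W^\fs$-conjugate, and the rigidity of $\mathfrak{sl}_2$-triples with a prescribed neutral element. First I would recall from \eqref{eq:N.24} that the principal-series part of the L-packet of $\Phi$ consists of all $\pi(\Phi,\rho)$ with $\rho$ geometric; hence $\mu^\fs[w,t]$ and $\mu^\fs[w',t']$ lie in the same L-packet if and only if their KLR parameters $\Phi$ and $\Phi'$ are $G$-conjugate, which, since both restrict to $c^\fs$ on $\fo_F^\times$ (so their images lie in $H=\Z_G(\im c^\fs)$), is the same as being $H$-conjugate. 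Using $\widetilde{\mu^\fs}$ from Proposition \ref{prop:mus}.4 and adjusting the chosen lifts by elements of $W^\fs$ as in the proof of that proposition, I may assume that $[w,t]$ lies in a component $\mathbf c$ labelled by $x\in\mathfrak{U}^\fs$, that $\widetilde{\mu^\fs}(w,t)=(\Phi,\rho)$ with $\Phi|_{\mathbf I_F\times\SL_2(\C)}=c^\fs\times\gamma_x$ and $\Phi(\varpi_F)=t\in\Z_T(\im\gamma_x)$, so that $\widetilde{\theta_z}(w,t)=t\,h_x(z)=\Phi\big(\varpi_F,\matje{z}{0}{0}{z^{-1}}\big)$, and similarly $\widetilde{\mu^\fs}(w',t')=(\Phi',\rho')$ with data $x',\gamma_{x'},t'$.

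For the ``only if'' direction I would take $h\in H$ with $h\Phi h^{-1}=\Phi'$. Conjugation carries the unipotent $\Phi\big(1,\matje{1}{1}{0}{1}\big)=x$ to $x'$, so $[x]=[x']$ in $H$, and since $\mathfrak{U}^\fs$ meets each unipotent class exactly once this forces $x=x'$, i.e.\ the same unipotent label. Moreover $h$ carries $\widetilde{\theta_z}(w,t)=\Phi\big(\varpi_F,\matje{z}{0}{0}{z^{-1}}\big)$ to $\widetilde{\theta_z}(w',t')$; both are elements of $T$, and under the bijection $T^\fs/W^\fs\cong c(H)_{\ss}$ of Section \ref{sec:Roche} two elements of $T$ which are $H$-conjugate are $W^\fs$-conjugate, so $\theta_z[w,t]=\theta_z[w',t']$ for every $z$.

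For the ``if'' direction, equal unipotent labels give $x=x'\in\mathfrak{U}^\fs$, hence $\gamma_x=\gamma_{x'}$ and $h_{\mathbf c}=h_x=h_{x'}=h_{\mathbf c'}$, and $\theta_z[w,t]=\theta_z[w',t']$ says that for each $z$ some $v\in W^\fs$ satisfies $v(t\,h_x(z))=t'\,h_x(z)$. Since $W^\fs$ is finite and $\C^\times$ is irreducible, the Zariski-closed subsets $\{z\in\C^\times:v(t\,h_x(z))=t'\,h_x(z)\}$ cover $\C^\times$, so one of them is all of $\C^\times$: there is a single $v$ with $v(t\,h_x(z))=t'\,h_x(z)$ for all $z$. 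Taking $z=1$ gives $v(t)=t'$, whence $v(h_x(z))=h_x(z)$ for all $z$. Lift $v$ to $n\in\Nor_H(T)$; then $n$ centralizes $\im h_x$ and $\im c^\fs$ and conjugates $t$ to $t'$, so $n\Phi n^{-1}$ is the parameter built from $(c^\fs,t',\gamma')$ with $\gamma'(g)=n\gamma_x(g)n^{-1}$, and $\gamma'$ agrees with $\gamma_x$ on the diagonal torus of $\SL_2$. Both $\gamma_x$ and $\gamma'$ land in $M'^\circ$, where $M'=\Z_H(t')$; differentiating, they give $\mathfrak{sl}_2$-triples in $\Lie(M'^\circ)$ with the same neutral element $dh_x(1)$, so by the standard structure theory of $\mathfrak{sl}_2$-triples (the nilpositive element lies in the unique open orbit of the centralizer of the neutral element, and a triple is determined by its nilpositive and neutral elements) there is $m\in\Z_{M'^\circ}(\im h_x)^\circ$ with $m\,d\gamma_x\,m^{-1}=d\gamma'$, hence $m\gamma_x m^{-1}=\gamma'$ since $\SL_2$ is simply connected. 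As $m\in M'^\circ\subseteq H$ fixes $t'$ and $c^\fs$, the parameter $m^{-1}n\Phi n^{-1}m$ is built from $(c^\fs,t',\gamma_x)$, i.e.\ equals $\Phi'$ by the explicit formula \eqref{eqn:Phi}; thus $\Phi\sim_H\Phi'$ and the two representations lie in the same L-packet.

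The main obstacle I anticipate is the last step of the ``if'' direction: upgrading ``$n$ centralizes $\im h_x$'' to ``after a further twist the whole $\SL_2$-part is matched'', which is exactly the rigidity of $\mathfrak{sl}_2$-triples with prescribed neutral element; one must be careful to apply it inside $M'^\circ$ (so the correcting element also fixes $t'$) and to integrate the Lie-algebra conjugacy to the group level. A secondary point to check is the bookkeeping of the various $W^\fs$-adjustments, so that $\widetilde{\theta_z}$ really has the normalized form $t\,h_{\mathbf c}(z)$ on the chosen lifts of both points at once. Once the lemma is proved, one records that it verifies part (5) of Conjecture \ref{conj:ABPS}, with $\lambda(\mathbf c)$ the unipotent class of $H$ attached to $\mathbf c$ by Proposition \ref{prop:mus}.2.
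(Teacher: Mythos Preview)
Your argument is correct. The ``only if'' direction and the setup (normalizing lifts via Proposition~\ref{prop:mus}) match the paper. Where you diverge is in the ``if'' direction: the paper does not build an explicit $H$-conjugacy between $\Phi$ and $\Phi'$. Instead, having found $v\in W^\fs$ with $v(t)=t'$ and $v(h_x(z))=h_x(z)$ for all $z$, it appeals to the bijection of Proposition~\ref{prop:residual}.(1) between Langlands parameters and pairs $(t_q,L)$ with $L$ a residual coset: since $v$ fixes $h_x(q^{1/2})=cc(\delta)$, parts~(2)--(3) of that proposition force $v$ to stabilize $R_P$ and hence to permute the residual cosets attached to $x$; then $v$ sends $(t_q,L)$ to $(t'_q,L')$, and Proposition~\ref{prop:residual}.(1) yields $\Phi\sim_H\Phi'$. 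Your route is more direct and purely Lie-theoretic, bypassing the residual-coset formalism in favour of the standard rigidity of $\mathfrak{sl}_2$-triples with a prescribed neutral element (the nilpositive elements form the unique open $Z(h)^\circ$-orbit in the $2$-eigenspace of $\mathrm{ad}(h)$, so any two such triples are $Z(h)^\circ$-conjugate; then the triple determines the homomorphism from $\SL_2(\C)$ since the latter is simply connected). Both arguments start from the same single $v$; the paper's has the virtue of reusing Proposition~\ref{prop:residual}, while yours is self-contained modulo classical $\mathfrak{sl}_2$-theory and would transplant more readily to settings where the residual-coset machinery is not already in place.
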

\begin{proof}
Suppose that the two $\cG$-representations $\mu^\fs [w,t] = \pi (\Phi,\rho)$ and \\
$\mu^\fs [w',t'] = \pi (\Phi',\rho')$ belong to the 
same L-packet. By definition this means that $\Phi$ and $\Phi'$ are $G$-conjugate. 
Hence they are labelled by the same unipotent class, say $[x]$ with $x \in \mathfrak U^\fs$. 
By choosing suitable representatives we may assume that $\Phi = \Phi'$ and that 
$\{(\Phi,\rho),(\Phi,\rho')\} \subset \{\text{KLR parameters} \}^{\fs,x}$. Then
$\theta (\Phi,\rho,z) = \theta (\Phi,\rho',z)$ for all $z \in \C^\times$. Although
in general $\theta (\Phi,\rho,z) \neq \widetilde{\theta_z} (w,t)$, they differ only 
by an element of $W^\fs$. Hence $\theta_z [w,t] = \theta_z [w',t']$ for all $z \in \C^\times$.

Conversely, suppose that $[w,t],[w',t']$ fulfill the two conditions of the lemma. Let 
$x \in \mathfrak U^\fs$ be the representative for the unipotent class which labels them.
By Proposition \ref{prop:residual}.1 we may assume that $T^w \cup T^{w'} \subset 
\Z_T (\im \gamma_x)$. Then 
\[
\widetilde{\theta_z} [w,t] = t \, h_x (z) \quad \text{and} \quad
\widetilde{\theta_z} [w',t'] = t' \, h_x (z)
\]
are $W^\fs$ conjugate for all $z \in \C^\times$. As these points depend continuously on $z$
and $W^\fs$ is finite, this implies that there exists a $v \in W^\fs$ such that
\[
v (t \, h_x (z)) = t' \, h_x (z) \quad \text{for all } z \in \C^\times .
\]
For $z = 1$ we obtain $v(t) = t'$, so $v$ fixes $h_x (z)$ for all $z$. Via the Proposition
\ref{prop:residual}.1, $h_x (q^{1/2})$ becomes an element $cc(\delta)$ for a residual coset
$L_x$. By parts (2) and (3) of Proposition \ref{prop:residual} $v$ stabilizes the collection
of residual cosets determined by $x$, namely the connected components of $\Z_T (\im \gamma_x)
h_x (q^{1/2})$. 

Let $(t_q,L), (t'_q,L')$ be associated to $\mu^\fs [w,t], \mu^\fs [w',t']$
by \eqref{eq:pitqL}. Then $t_q = t h_x (q^{1/2})$ and $t'_q = t' h_x (q^{1/2})$, so the above
applies. Hence $v$ sends $L$ to another residual coset determined by $x$. As $v(L)$ 
contains $t'_q$, it must be $L'$. Thus $(t_q,L)$ and $(t'_q,L')$ are $W^\fs$-conjugate, which
by Proposition \ref{prop:residual}.1 implies that they correspond to conjugate Langlands
parameters $\Phi$ and $\Phi'$. So $\mu^\fs [w,t]$ and $\mu^\fs [w',t']$ are
in the same L-packet.
\end{proof}

\begin{cor}\label{cor:properties}
Properties 1--5 of Conjecture \ref{conj:ABPS} hold for $\mu^\fs$ as in 
Proposition \ref{prop:mus}, with the morphism $\theta_z$ from \eqref{eq:defThetaz}
and the labelling by unipotent classes in $H$. 

Together with Theorem \ref{thm:main} this proves Conjecture \ref{conj:ABPS} 
for all Bernstein components in the principal series of a split reductive 
$p$-adic group (under Condition \ref{con:char} on the residual characteristic).
\end{cor}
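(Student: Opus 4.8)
The plan is to verify the five properties of Conjecture \ref{conj:ABPS} one by one for the map $\mu^\fs$ constructed in Proposition \ref{prop:mus}, using the morphism $\theta_z$ from \eqref{eq:defThetaz} and the labelling by unipotent classes in $H$, and then to observe that Theorem \ref{thm:main} supplies the remaining ingredient --- existence of Langlands parameters --- so that the conjecture holds in full for every principal series Bernstein component.

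First I would dispose of properties (2), (3) and (4), which are essentially bookkeeping given what we have already built. For (2): $\mu^\fs$ is continuous by Proposition \ref{prop:mus}, and the composition $\pi^\fs \circ \mu^\fs$ sends $[w,t]$ to the $W^\fs$-orbit of $\widetilde{\theta_{\sqrt q}}(w,t) = t\,h_{\mathbf c}(\sqrt q)$, which on each component is the restriction of a single algebraic morphism $T^w_{\mathbf c} \to T^\fs/W^\fs$; since $\widetilde{T^\fs}$ has finitely many components and each $T^w_{\mathbf c}$ is finite over its image in $T^\fs/W^\fs$ (the fibres are orbits of a finite group), this is a finite morphism of affine varieties. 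For (3): take $\theta_z$ as in \eqref{eq:defThetaz}; by construction $\widetilde{\theta_z}(w,t) = t\,h_{\mathbf c}(z)$ depends algebraically on $z$, $\theta_1$ is the canonical projection because $h_{\mathbf c}(1) = 1$, and $\theta_{\sqrt q} = \pi^\fs \circ \mu^\fs$ by Theorem \ref{thm:main}.(1) combined with the description of $\widetilde{\mu^\fs}$ in Proposition \ref{prop:mus}.(4). Property (4) is then immediate with $h_{\mathbf c} = h_x$ from \eqref{eq:defhx}: on a component $\mathbf c$ with label $x$ we have $\theta_z[w,t] = b(t\,h_x(z)) = b(h_{\mathbf c}(z)\cdot t)$ since $T^w_{\mathbf c} \subset \Z_T(\im\gamma_x)$ so $h_x(z)$ and $t$ commute.

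Next I would treat property (1), the statement that $\mu^\fs$ restricts to a bijection between the compact part $T^\fs_\cpt/\!/W^\fs$ and the tempered representations $\Irr(\cG)^\fs_\temp$. Here the point is to combine Theorem \ref{thm:main}.(2) --- $\pi(\Phi,\rho)$ is tempered iff $\Phi(\varpi_F)$ lies in a compact subgroup of $H$, equivalently (via \eqref{eq:N.22}) iff the associated point of $T^\fs$ lies in $T^\fs_\cpt$ --- with property (1) of Proposition \ref{prop:mus}, which says $\mu^\fs$ respects the $t$-parameter (the image of $[w,t]$ in $T^\fs/W^\fs$), and with the fact that $h_{\mathbf c}(z) \in T$ does not change whether $\Phi(\varpi_F) = t$ is compact because $h_{\mathbf c}$ factors through $\SL_2(\C)$ and so has image in the derived group. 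One has to be a little careful: temperedness of $\pi(\Phi,\rho)$ is governed by $t = \Phi(\varpi_F)$ rather than by $t_q$, but these differ by $h_{\mathbf c}(q^{1/2}) \in H_\der$, so compactness of one is equivalent to compactness of the other in $T/(T\cap H_\der)$, and combined with the $H_\der$-side (where $\Phi|_{\SL_2}$ forces boundedness to be automatic or governed by the same root-theoretic condition) we get exactly the condition $t \in T^\fs_\cpt$. Finally property (5), the L-packet statement, is precisely Lemma \ref{lem:Lpackets}: two points $[w,t]$, $[w',t']$ have $\mu^\fs$-images in the same L-packet iff they carry the same unipotent label and $\theta_z[w,t] = \theta_z[w',t']$ for all $z$, with the complex reductive group in the statement being $H$.

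I expect the main obstacle to be property (1), or more precisely the careful matching of the two notions of ``compact'' --- the one appearing in the tempered criterion of Theorem \ref{thm:main}.(2) in terms of $\Phi(\varpi_F)$ versus the one cutting out $T^\fs_\cpt$ inside $T^\fs$ via the identification \eqref{eq:N.22} and the splitting \eqref{eq:cT0}. One must check that the continuous bijection $\mu^\fs$ from \cite{Sol}, which was built to respect the $t$-parameter and the residual-coset structure, actually sends $T^\fs_\cpt/\!/W^\fs$ onto the tempered part; this requires knowing that a discrete series datum $(P,\delta)$ together with a \emph{unitary} continuous parameter $t \in T^P_\cpt$ yields a tempered $\cH(H)$-module and conversely, which is part of the analytic theory of \cite{Opd,DeOp} already invoked in Lemma \ref{lem:essL2}, so the obstacle is really one of assembling existing pieces correctly rather than proving something genuinely new. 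Once all five properties are checked, the corollary follows by noting that Theorem \ref{thm:main} provides the Langlands parameters required in the hypothesis of property (5), completing the proof of Conjecture \ref{conj:ABPS} for the principal series of any split reductive $p$-adic group subject to Condition \ref{con:char}.
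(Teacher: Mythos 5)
Your proposal is correct and takes essentially the same approach as the paper, which simply assembles the previously established ingredients in the same way: Proposition \ref{prop:mus}.(1) and Theorem \ref{thm:main}.(2) for property (1), the definitions \eqref{eq:defhx} and \eqref{eq:defThetaz} for property (4), those definitions plus Theorem \ref{thm:main}.(1) and Proposition \ref{prop:mus}.(1) for properties (2)--(3), and Lemma \ref{lem:Lpackets} verbatim for property (5). Your additional commentary on the finiteness of $\pi^\fs\circ\mu^\fs$ and on matching $t$ versus $t_q$ in the tempered criterion is a correct unpacking of what the paper leaves implicit, not a departure from its route.
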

\begin{proof}
Property 1 follows from Theorem \ref{thm:main}.2 and Proposition \ref{prop:mus}.1.
The definitions of \eqref{eq:defhx} and \eqref{eq:defThetaz} establish
property 4. The construction of $\theta_z$, in combination with Theorem \ref{thm:main}.1 
and Proposition \ref{prop:mus}.1, shows that properties 2 and 3 are fulfilled. 
Property 5 is none other than Lemma \ref{lem:Lpackets}.
\end{proof}

\end{document}